\let\orgdescriptionlabel\descriptionlabel
\renewcommand*{\descriptionlabel}[1]{
	\let\orglabel\label
	\let\label\@gobble
	\phantomsection
	\edef\@currentlabel{#1} 	
	\let\label\orglabel
	\orgdescriptionlabel{#1}
}
\def\th@plain{
	\thm@notefont{}
	\itshape
}
\def\th@definition{
	\thm@notefont{}
	\normalfont
}
\g@addto@macro\th@remark{\thm@headpunct{}}
\g@addto@macro\th@definition{\thm@headpunct{}}
\g@addto@macro\th@plain{\thm@headpunct{}}
\definecolor{myblue}{rgb}{.8, .8, 1}
  \newcommand*\mybluebox[1]{
    \colorbox{myblue}{\hspace{1em}#1\hspace{1em}}}
\crefname{equation}{}{}
\crefname{chapter}{Chapter}{Chapters}
\crefname{item}{item}{items}
\crefname{figure}{Figure}{Figures}
\crefname{theorem}{Theorem}{Theorems}
\crefname{lemma}{Lemma}{Lemmas}
\crefname{proposition}{Proposition}{Propositions}
\crefname{corollary}{Corollary}{Corollarys}
\crefname{definition}{Definition}{Definitions}
\crefname{fact}{Fact}{Facts}
\crefname{example}{Example}{Examples}
\crefname{algorithm}{Algorithm}{Algorithms}
\crefname{remark}{Remark}{Remarks}
\crefname{note}{Note}{Notes}
\crefname{notation}{Notation}{Notations}
\crefname{case}{Case}{Cases}
\crefname{exercise}{Exercise}{Exercises}
\crefname{question}{Question}{Questions}
\crefname{claim}{Claim}{Claims}
\crefname{enumi}{}{}
\numberwithin{equation}{section}
\theoremstyle{plain}
\newtheorem{theorem}{Theorem}[section]
\newtheorem{corollary}[theorem]{Corollary}
\newtheorem{fact}[theorem]{Fact}
\newtheorem{lemma}[theorem]{Lemma}
\newtheorem{proposition}[theorem]{Proposition}
\theoremstyle{definition}
\newtheorem{definition}[theorem]{Definition}
\newtheorem{example}[theorem]{Example}
\newtheorem{remark}[theorem]{Remark}
\newcommand{\aff}{\ensuremath{\operatorname{aff} \,}}
\newcommand{\spn}{\ensuremath{{\operatorname{span} \,}}}
\newcommand{\weakly}{\ensuremath{{\;\operatorname{\rightharpoonup}\;}}}
\newcommand{\Fix}{\ensuremath{\operatorname{Fix}}}
\newcommand{\Id}{\ensuremath{\operatorname{Id}}}
\newcommand{\dist}{\ensuremath{\operatorname{d}}}
\newcommand{\Pro}{\ensuremath{\operatorname{P}}}
\newcommand{\R}{\ensuremath{\operatorname{R}}}
\newcommand{\I}{\ensuremath{\operatorname{I}}}
\newcommand{\J}{\ensuremath{\operatorname{J}}}
\newcommand{\pa}{\ensuremath{\operatorname{par}}}
\newcommand{\card}{\ensuremath{\operatorname{card}}}
\newcommand{\CCO}[1]{CC{#1}}
\newcommand{\CC}[1]{CC_{#1}}
\providecommand{\norm}[1]{\lVert#1\rVert}
\providecommand{\innp}[1]{\langle#1\rangle}
\providecommand{\Innp}[1]{\Big\langle#1\Big\rangle}
\begin{document}

\title{ \sffamily  Circumcentered methods induced by
isometries\footnote{Dedicated to Professor Marco L\'opez on the occasion of his 70th birthday}}

\author{
         Heinz H.\ Bauschke\thanks{
                 Mathematics, University of British Columbia, Kelowna, B.C.\ V1V~1V7, Canada.
                 E-mail: \href{mailto:heinz.bauschke@ubc.ca}{\texttt{heinz.bauschke@ubc.ca}}.},~
         Hui\ Ouyang\thanks{
                 Mathematics, University of British Columbia, Kelowna, B.C.\ V1V~1V7, Canada.
                 E-mail: \href{mailto:hui.ouyang@alumni.ubc.ca}{\texttt{hui.ouyang@alumni.ubc.ca}}.},~
         and Xianfu\ Wang\thanks{
                 Mathematics, University of British Columbia, Kelowna, B.C.\ V1V~1V7, Canada.
                 E-mail: \href{mailto:shawn.wang@ubc.ca}{\texttt{shawn.wang@ubc.ca}}.}
                 }

\date{February 21, 2020}

\maketitle

\begin{abstract}
\noindent
Motivated by the circumcentered Douglas--Rachford method recently introduced
by Behling, Bello Cruz and Santos to accelerate the Douglas--Rachford
method, we study the properness of
the circumcenter mapping and the circumcenter method induced by isometries.
Applying the demiclosedness principle for circumcenter mappings, we present
weak convergence results for circumcentered isometry methods, which include
the Douglas--Rachford method (DRM) and circumcentered reflection methods as special
instances. We provide sufficient conditions for the linear convergence
of circumcentered isometry/reflection methods. 
We explore the convergence rate of
circumcentered reflection methods by considering the required number of
iterations and as well as run time as our performance measures.
Performance
profiles on circumcentered reflection methods, DRM and method of alternating
projections for finding the best approximation to the intersection of linear
subspaces are presented.

\end{abstract}

{\small
\noindent
{\bfseries 2020 Mathematics Subject Classification:}
{Primary 47H09, 65K10;
Secondary 41A50, 65K05, 90C25.}

\noindent{\bfseries Keywords:}
circumcenter mapping, isometry, reflector, best approximation
problem, linear convergence, circumcentered reflection method, circumcentered
isometry method, Douglas--Rachford method.
}

\section{Introduction} \label{sec:Introduction}
Throughout this paper, we assume that
\begin{empheq}[box = \mybluebox]{equation*}
\text{$\mathcal{H}$ is a real Hilbert space}
\end{empheq}
with inner product $\innp{\cdot,\cdot}$ and induced norm $\|\cdot\|$.
Denote the set of all nonempty subsets of $\mathcal{H}$
containing \emph{finitely many} elements by $\mathcal{P}(\mathcal{H})$.
Given $K \in \mathcal{P}(\mathcal{H})$, the circumcenter of $K$ is defined as
either empty set or the unique point $\CCO{(K)}$ such that $\CCO{(K)} \in
\aff (K)$ and $\CCO{(K)}$ is equidistant from all points in $K$, see
\cite[Proposition~3.3]{BOyW2018}.

Let $m \in \mathbb{N} \smallsetminus \{0\}$, and let
$T_{1}, \ldots, T_{m-1}, T_{m}$ be operators
from $\mathcal{H}$ to $\mathcal{H}$. Assume
\begin{empheq}[box = \mybluebox]{equation*}
\mathcal{S} =\{ T_{1}, \ldots, T_{m-1}, T_{m} \} \quad \text{with} \quad \bigcap^{m}_{j=1} \Fix T_{j} \neq \varnothing.
\end{empheq}
The associated set-valued operator $\mathcal{S}: \mathcal{H} \rightarrow \mathcal{P}(\mathcal{H})$ is defined by
\begin{empheq}[box = \mybluebox]{equation*}
(\forall x \in \mathcal{H}) \quad \mathcal{S}(x) :=\{ T_{1}x, \ldots, T_{m-1}x, T_{m}x\}.
\end{empheq}
The circumcenter mapping $\CC{\mathcal{S}}$ induced by $\mathcal{S}$ is defined by the composition of  $\CCO{}$ and $\mathcal{S}$, that is $(\forall x \in \mathcal{H})$ $\CC{\mathcal{S}}(x) =\CCO{} \left( \mathcal{S}(x)  \right)$. If $\CC{\mathcal{S}}$ is proper, i.e., $ (\forall x \in \mathcal{H})$, $\CC{\mathcal{S}}x \in \mathcal{H}$, then we are able to define the circumcenter methods induced by  $\mathcal{S}$ as
\begin{align*}
x_{0}=x, ~\mbox{and}~x_{k}=\CC{\mathcal{S}}(x_{k-1})=\CC{\mathcal{S}}^{k}x, ~\mbox{where}~k=1,2,\ldots.
\end{align*}

Motivated by Behling, Bello Cruz and Santos \cite{BCS2017}, we worked on circumcenters of finite set in Hilbert space in \cite{BOyW2018} and on the properness of circumcenter mappings in \cite{BOyW2018Proper}.  
For other recent developments on circumcentered isometry
methods, see also \cite{BCS2019Blockwise}, \cite{DHL2019}, \cite{SBLL2020}
and \cite{BOyW2019Linear}.
In this paper,
we study the properness of the circumcenter mapping induced by isometries, and
the circumcenter methods induced by isometries.
Isometry includes reflector associated with closed affine subspaces.
We provide convergence or even linear convergence results of the circumcentered isometry methods. In particular, for circumcentered reflection methods, we also offer some applications and evaluate  their linear convergence rate by comparing them with two classical algorithms, namely,
the  Douglas-Rachford method (DRM) and the method of alternating projections (MAP).

More precisely, our main results are the following:
\begin{itemize}
	\item \cref{thm:CCS:proper:NormPres:T} provides the properness of the  circumcenter
	mapping induced by isometries.
	\item \cref{theore:CM:WeakConver} presents a sufficient condition for the weak convergence of circumcentered isometry methods.
	\item \Cref{thm:CWP:line:conv,theorem:CCSLinearConvTSFirmNone} present  sufficient conditions for the linear convergence of circumcentered isometry methods in Hilbert space and $\mathbb{R}^{n}$, respectively.
	\item \cref{prop:CW:CCS:General:linearconv} takes advantage of the linear convergence of DRM to build the linear convergence of other circumcentered reflection methods.
\end{itemize}
\cref{thm:CCS:proper:NormPres:T} extends \cite[Theorem~4.3]{BOyW2018Proper}
from reflectors to isometries. Based on the demiclosedness principle for
circumcenter mappings built in \cite[Theorem~3.20]{BOyW2018Proper}, we obtain
the \cref{theore:CM:WeakConver}, which implies the weak convergence of the
DRM and the circumcentered reflection method, the main actor in
\cite{BCS2018}. Motivated by the role played by the Douglas--Rachford
operator in the proof of \cite[Theorem~1]{BCS2017}, we establish
\Cref{thm:CWP:line:conv} and \cref{prop:CW:CCS:General:linearconv}. As a
corollary of \cref{prop:CW:CCS:General:linearconv}, we observe that \cref{prop:CW:CCS:linear}
yields \cite[Theorem~1]{BCS2017}. Motivated by the role that the
firmly nonexpansive operator $A$ played in \cite[Theorem~3.3]{BCS2018} to
deduce the linear convergence of circumcentered reflection method in
$\mathbb{R}^{n}$, we obtain \cref{prop:LinFirQuaNonOperNorm} and
\cref{theorem:CCSLinearConvTSFirmNone}\cref{theorem:CCSLinearConvTSFirmNone:LineaConv}.
\cref{theorem:CCSLinearConvTSFirmNone}\cref{theorem:CCSLinearConvTSFirmNone:LineaConv}
says that some $\alpha$-averaged operators can be applied to construct linear
convergent methods, which imply the linear convergence of the circumcentered
isometry methods. As applications of
\cref{theorem:CCSLinearConvTSFirmNone}\cref{theorem:CCSLinearConvTSFirmNone:LineaConv},
\Cref{prop:GeneLinConBCS,prop:CwAverProLinRate,prop:W1reflection:conver}
display particular classes of circumcentered reflection methods being linearly
convergent.

The rest of the paper is organized as follows. In \cref{sec:Auxiliary}, we present various basic results for subsequent use.
Our main theory results start at \cref{sec:CircumMappinIsometries}.
Some results in \cite[Section~4.1]{BOyW2018Proper} are generalized in \cref{subsec:Genera5Sec} to deduce the properness of the circumcenter mapping induced by isometries. Thanks to the properness, we are able to generate the circumcentered isometry methods in \cref{sec:CircumcenterMethodIsome}.
In \cref{subsec:Convergence}, we focus on exploring  sufficient conditions for the
(weak, strong and linear) convergence of the circumcentered isometry methods.
In \Cref{sec:CircumMethodReflectors,sec:NumericalExperiment}, we consider the circumcentered reflection methods.
In \cref{sec:CircumMethodReflectors}, first, we display some particular linearly convergent circumcentered reflection methods.
Then  the circumcentered reflection methods
 are used to accelerate the DRM, which is then used  to find best approximation onto the intersection of finitely many linear subspaces.
Finally, in \cref{sec:NumericalExperiment}, in
order to evaluate the rate of linear convergence of the circumcentered
reflection methods, we use performance profile with performance measures on
both required number of iterations and run time to compare four
circumcentered reflection methods with DRM and MAP for solving the best
approximation problems associated with two linear subspaces with Friedrichs
angle taken in certain ranges.

We now turn to notation. Let $C$ be a nonempty subset
of $\mathcal{H}$. Denote the cardinality of $C$ by $\card (C)$. The
intersection of all the linear subspaces of $\mathcal{H}$ containing $C$ is
called the \emph{span} of $C$, and is denoted by $\spn C$; its closure is the
smallest closed linear subspace of $\mathcal{H}$ containing $C$ and it is
denoted by $\overline{\spn}~C$. $C$ is an \emph{affine subspace} of
$\mathcal{H}$ if $C \neq \varnothing$ and $(\forall \rho\in\mathbb{R})$ $\rho
C + (1-\rho)C = C$; moreover, the smallest affine subspace containing $C$ is
the \emph{affine hull} of $C$, denoted $\aff C$. An affine subspace $U$ is said to be \emph{parallel} to an affine subspace $
M $ if $U = M +a $ for some $ a \in \mathcal{H}$. Every affine subspace $U$ is parallel to a unique linear
subspace $L$, which is given by
$(\forall y \in U)$  $L := U - y = U - U$.
 For every affine subspace
$U$, we denote the linear subspace parallel to $U$ by $\pa U$. The orthogonal
complement of $C$ is the set $ C^{\perp} =\{x \in \mathcal{H}~|~ \innp{x,y}=0
~\text{for all}~y \in C\}. $
The best approximation operator (or projector) onto $C$ is denoted by
$\Pro_{C}$ while $\R_{C}:=2 \Pro_{C} -\Id$ is the reflector associated with $C$.
For two subsets $A$, $B$ of $\mathcal{H}$, the distance $\dist (A,
B)$ is $\inf\norm{A-B}$. A
sequence $(x_{k})_{k \in \mathbb{N}}$ in $\mathcal{H}$ \emph{converges
weakly} to a point $x \in \mathcal{H}$ if, for every $u \in \mathcal{H}$,
$\innp{x_{k},u} \rightarrow \innp{x,u}$; in symbols, $x_{k} \weakly x$. Let
$T: \mathcal{H} \rightarrow \mathcal{H}$ be an operator. The set of fixed
points of the operator $T$ is denoted by $\Fix T$, i.e., $\Fix T := \{x \in
\mathcal{H} ~|~ Tx=x\}$. $T$ is asymptotically regular if for each $x \in
\mathcal{H}$, $T^{k}x-T^{k+1}x\to 0$.
For other notation not explicitly defined here, we refer the reader to \cite{BC2017}.

\section{Auxiliary results} \label{sec:Auxiliary}

This section contains several results that will be useful later.

\subsection{Projections} \label{subsec:AffineSubspace }
%


\begin{fact} {\rm \cite[Proposition~29.1]{BC2017}} \label{fac:SetChangeProje}
Let $C$ be a nonempty closed convex subset of 
$\mathcal{H}$, and let $x \in \mathcal{H}$. Set $D :=z+C$, where $z \in
\mathcal{H}$. Then $\Pro_{D}x=z+\Pro_{C}(x-z)$.
\end{fact}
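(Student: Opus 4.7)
The plan is to invoke the standard variational characterization of the projection onto a nonempty closed convex set $K\subseteq\mathcal{H}$: a point $p\in K$ equals $\Pro_{K}u$ if and only if $\innp{u-p,\,k-p}\leq 0$ for every $k\in K$. Since translations are affine isometric bijections, $D=z+C$ is nonempty, closed, and convex, so $\Pro_{D}x$ is well-defined. It then suffices to verify that the candidate $p:=z+\Pro_{C}(x-z)$ satisfies this variational inequality on $D$.

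Membership is immediate: $\Pro_{C}(x-z)\in C$ gives $p\in z+C=D$. For the inequality, parametrize an arbitrary element of $D$ as $z+c$ with $c\in C$ and compute
\[
\innp{x-p,\,(z+c)-p}=\innp{(x-z)-\Pro_{C}(x-z),\,c-\Pro_{C}(x-z)}\leq 0,
\]
where the last step is precisely the variational inequality characterizing $\Pro_{C}$ applied at the point $x-z$. Uniqueness of the projection forces $\Pro_{D}x=p$, as desired.

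An equivalent and slightly more pedestrian route is to argue directly from the minimization definition. Writing $y\in D$ as $y=z+c$ with $c\in C$ yields $\norm{y-x}=\norm{c-(x-z)}$, so minimizing $\norm{y-x}$ over $y\in D$ is equivalent to minimizing $\norm{c-(x-z)}$ over $c\in C$; the unique minimizer of the latter is $\Pro_{C}(x-z)$, and translating back by $z$ gives the formula. There is no genuine obstacle here, only the mild bookkeeping of tracking the translation by $z$ consistently and noting that closedness and convexity of $D$ are inherited from $C$ because $y\mapsto z+y$ is an affine homeomorphism.
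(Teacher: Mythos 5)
Your proof is correct. The paper states this result as a Fact, citing \cite[Proposition~29.1]{BC2017} without reproducing an argument; your verification that the candidate $p:=z+\Pro_{C}(x-z)$ lies in $D$ and satisfies the variational inequality $\innp{x-p,\,y-p}\leq 0$ for all $y\in D$ (or, equivalently, your change-of-variables argument in the minimization definition) is precisely the standard proof behind that citation, so no gap remains.
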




\begin{fact} {\rm \cite[Theorems~5.8 and 5.13]{D2012}} \label{MetrProSubs8}
	Let $M$ be a closed linear subspace of $\mathcal{H}$. Then:
	\begin{enumerate}
		\item \label{MetrProSubs8:ii}$x=\Pro_{M}x+\Pro_{M^{\perp}}x$ for each $x \in \mathcal{H}$. Briefly, $\Id =\Pro_{M}+\Pro_{M^{\perp}}$.
		\item \label{MetrProSubs8:iv}$M^{\perp}=\{x \in \mathcal{H}~|~ \Pro_{M}(x)=0\}$ and $M=\{x \in \mathcal{H}~|~ \Pro_{M^{\perp}}(x)=0\}=\{x \in \mathcal{H}~|~ \Pro_{M}(x)=x \}$.
	\end{enumerate}
\end{fact}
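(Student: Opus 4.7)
The plan is to derive both items from the orthogonality characterization of the projection onto a closed linear subspace, combined with the identity $(M^{\perp})^{\perp}=M$, which holds for any closed linear subspace of $\mathcal{H}$.

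For item~(i), I would fix $x\in\mathcal{H}$ and set $p:=\Pro_{M}x$. Because $M$ is a closed linear subspace, the standard characterization of the metric projection gives $p\in M$ together with $\innp{x-p,y}=0$ for every $y\in M$; in particular $x-p\in M^{\perp}$. Writing $q:=x-p$, I would then verify the two conditions that characterize $\Pro_{M^{\perp}}x$: membership $q\in M^{\perp}$ holds by construction, and the orthogonality condition $x-q=p\in M=(M^{\perp})^{\perp}$ gives $\innp{x-q,z}=0$ for every $z\in M^{\perp}$. Uniqueness of the best approximation from the closed convex subspace $M^{\perp}$ therefore yields $q=\Pro_{M^{\perp}}x$, whence $x=p+q=\Pro_{M}x+\Pro_{M^{\perp}}x$, which is item~(i).

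For item~(ii), I would leverage item~(i) repeatedly. If $\Pro_{M}x=0$, then item~(i) gives $x=\Pro_{M^{\perp}}x\in M^{\perp}$. Conversely, if $x\in M^{\perp}$, then $x$ is its own nearest point in $M^{\perp}$, so $\Pro_{M^{\perp}}x=x$, and item~(i) forces $\Pro_{M}x=0$. This establishes $M^{\perp}=\{x\in\mathcal{H}~|~\Pro_{M}x=0\}$. Swapping $M$ and $M^{\perp}$ (which is legitimate because $M^{\perp}$ is itself a closed linear subspace) and invoking $(M^{\perp})^{\perp}=M$, I would then obtain $M=\{x\in\mathcal{H}~|~\Pro_{M^{\perp}}x=0\}$. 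Finally, combining with item~(i), the condition $\Pro_{M^{\perp}}x=0$ is equivalent to $\Pro_{M}x=x$, which gives the remaining equality $M=\{x\in\mathcal{H}~|~\Pro_{M}x=x\}$.

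There is essentially no technical obstacle here: every step reduces to the fundamental orthogonality characterization of the metric projection onto a closed linear subspace of a Hilbert space, which may be invoked once and then applied symmetrically to $M$ and $M^{\perp}$. Accordingly, I would invoke the projection theorem at the outset and then close the argument by a few lines of symbol manipulation.
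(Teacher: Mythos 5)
Your argument is correct: it is the standard derivation of the orthogonal decomposition $\Id=\Pro_{M}+\Pro_{M^{\perp}}$ from the orthogonality characterization of the metric projection, with item (ii) then following from item (i), the symmetry between $M$ and $M^{\perp}$, and $(M^{\perp})^{\perp}=M$ for closed $M$. The paper itself offers no proof --- it quotes this as a known fact from \cite[Theorems~5.8 and 5.13]{D2012} --- and your reasoning matches the classical textbook argument behind that citation, so there is nothing further to reconcile.
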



\begin{fact}  {\rm \cite[Proposition~2.10]{BOyW2018Proper}}  \label{fact:PR}
	Let $C$ be a closed affine subspace of $\mathcal{H}$. Then the following hold:
	\begin{enumerate}
		\item \label{fact:PR:Affine} The projector $\Pro_{C}$ and the reflector $\R_{C}$ are affine operators.
		\item \label{fact:PR:Pythagoras} $(\forall x \in \mathcal{H})$  $(\forall v \in C)$ $\norm{x-\Pro_{C} x }^{2} + \norm{\Pro_{C} x -v}^{2}=\norm{x-v }^{2}$.
		\item \label{fact:PR:isometry}  $(\forall x \in \mathcal{H})$ $(\forall y \in \mathcal{H})$ $\norm{x-y}=\norm{\R_{C}x-\R_{C}y}$.
	\end{enumerate}
\end{fact}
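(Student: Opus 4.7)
The plan is to reduce every assertion to the corresponding statement for a closed linear subspace by translating. Fix an arbitrary $z \in C$ and set $M := C - z$, which is a closed linear subspace of $\mathcal{H}$ (indeed, $M = \pa C$). By \cref{fac:SetChangeProje} we have $\Pro_C x = z + \Pro_M(x - z)$ for every $x \in \mathcal{H}$. Thus, once each item is established with $M$ in place of $C$, it will transport back to $C$ via this translation.

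For \cref{fact:PR:Affine}, I would first show that $\Pro_M$ is linear. Given $x,y \in \mathcal{H}$ and $\lambda,\mu \in \mathbb{R}$, the candidate $\lambda\Pro_M x + \mu \Pro_M y$ lies in $M$ (since $M$ is linear), and its difference with $\lambda x + \mu y$ lies in $M^\perp$ (since $x - \Pro_M x, \, y - \Pro_M y \in M^\perp$ by the characterization of the projection, and $M^\perp$ is a linear subspace). By uniqueness of the projection, $\Pro_M(\lambda x + \mu y) = \lambda \Pro_M x + \mu \Pro_M y$. Then $\Pro_C = z + \Pro_M(\cdot - z)$ is the composition of a translation with a linear map followed by another translation, hence affine. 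Finally, $\R_C = 2\Pro_C - \Id$ is affine as well, being an affine combination of the affine operator $\Pro_C$ and the affine operator $\Id$.

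For \cref{fact:PR:Pythagoras}, I would again translate: with $u := x - z$ and $w := v - z \in M$, the claim becomes $\norm{u - \Pro_M u}^2 + \norm{\Pro_M u - w}^2 = \norm{u - w}^2$. Since $\Pro_M u - w \in M$ and $u - \Pro_M u \in M^\perp$, these two vectors are orthogonal, so expanding $\norm{u - w}^2 = \norm{(u - \Pro_M u) + (\Pro_M u - w)}^2$ and invoking orthogonality yields the identity immediately.

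For \cref{fact:PR:isometry}, the idea is to express $\R_C x - \R_C y$ in terms of $M$ and then verify that the reflector associated with a closed linear subspace is norm-preserving. Using \cref{fact:PR:Affine},
\[
\R_C x - \R_C y = 2(\Pro_C x - \Pro_C y) - (x - y) = (2\Pro_M - \Id)(x - y).
\]
It remains to check $\norm{(2\Pro_M - \Id)h} = \norm{h}$ for every $h \in \mathcal{H}$. Expanding and using $\innp{\Pro_M h, h} = \innp{\Pro_M h, \Pro_M h + (h - \Pro_M h)} = \norm{\Pro_M h}^2$ (because $h - \Pro_M h \in M^\perp$), I obtain $\norm{(2\Pro_M - \Id)h}^2 = 4\norm{\Pro_M h}^2 - 4\innp{\Pro_M h, h} + \norm{h}^2 = \norm{h}^2$. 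The only bookkeeping obstacle I anticipate is keeping the translation by $z$ consistent across parts and invoking \cref{fac:SetChangeProje} at the right moment; the underlying computations are elementary Hilbert space identities.
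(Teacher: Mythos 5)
Your argument is correct. Note that the paper itself offers no proof of this statement: it is imported verbatim as \cite[Proposition~2.10]{BOyW2018Proper}, so there is nothing internal to compare against. Your reduction to the parallel linear subspace $M=\pa C$ via \cref{fac:SetChangeProje}, the linearity of $\Pro_{M}$ from the characterization $p=\Pro_{M}u \Leftrightarrow p\in M,\ u-p\in M^{\perp}$, the orthogonal (Pythagorean) expansion for \cref{fact:PR:Pythagoras}, and the identity $\R_{C}x-\R_{C}y=(2\Pro_{M}-\Id)(x-y)$ together with $\innp{\Pro_{M}h,h}=\norm{\Pro_{M}h}^{2}$ for \cref{fact:PR:isometry} are all sound and constitute a complete, self-contained proof of the cited fact.
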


\begin{lemma} \label{lem:othorg}
	Let $M := \aff \{x, x_{1}, \ldots, x_{n}\} \subseteq \mathcal{H}$, where $x_{1}-x, \ldots, x_{n}-x$ are linearly independent. Then for every $y \in \mathcal{H}$,
	\begin{align*}
	\Pro_{M}(y)=x+ \sum^{n}_{i=1} \innp{y-x, e_{i}} e_{i},
	\end{align*}
	where $(\forall i \in \{1, \ldots, n\}) \quad e_{i}=\frac{x_{i}-x - \sum^{i-1}_{j=1} \innp{x_{i}-x, e_{j}}e_{j}  }{\norm{x_{i}-x - \sum^{i-1}_{j=1} \innp{x_{i}-x, e_{j}}e_{j}  }}$.
\end{lemma}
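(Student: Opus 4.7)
The plan is to reduce the affine projection to a linear one via translation, and then apply a Gram--Schmidt orthonormalization together with the standard orthogonal-expansion formula for projection onto a finite-dimensional subspace.

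First I would observe that $M = x + L$ with $L := \spn\{x_1-x,\ldots,x_n-x\}$, so $L = \pa M$. Since $L$ is a finite-dimensional (hence closed) linear subspace of $\mathcal{H}$, \cref{fac:SetChangeProje} applied with $C = L$, $z = x$, and the point $y$ gives
\begin{equation*}
\Pro_{M}(y) = x + \Pro_{L}(y-x).
\end{equation*}
Thus it suffices to show that $\Pro_L(y-x) = \sum_{i=1}^{n} \innp{y-x, e_i} e_i$, with the $e_i$ as defined.

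Next, I would verify by induction on $i$ that the Gram--Schmidt construction is well defined and produces an orthonormal basis of $L$. Set $v_i := x_i - x$ for $i = 1,\ldots,n$. Assuming inductively that $e_1,\ldots,e_{i-1}$ are orthonormal and $\spn\{e_1,\ldots,e_{i-1}\} = \spn\{v_1,\ldots,v_{i-1}\}$, the vector $w_i := v_i - \sum_{j=1}^{i-1}\innp{v_i,e_j} e_j$ is nonzero (otherwise $v_i$ would lie in $\spn\{v_1,\ldots,v_{i-1}\}$, contradicting linear independence of $v_1,\ldots,v_n$), so $e_i := w_i/\norm{w_i}$ is defined, has unit norm, and is orthogonal to each $e_j$ ($j<i$) by direct inner-product computation. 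The span condition then extends to $i$. Hence $(e_1,\ldots,e_n)$ is an orthonormal basis of $L$.

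Finally, for any orthonormal basis $(e_1,\ldots,e_n)$ of the closed linear subspace $L$ and any $z \in \mathcal{H}$, the point $p := \sum_{i=1}^{n}\innp{z,e_i}e_i$ lies in $L$ and satisfies $\innp{z-p,e_k} = 0$ for each $k$, so $z - p \in L^{\perp}$. By the characterization of the orthogonal projection (uniqueness of the decomposition in \cref{MetrProSubs8}\cref{MetrProSubs8:ii}), this forces $\Pro_L(z) = p$. Taking $z = y - x$ and substituting into the translation formula yields
\begin{equation*}
\Pro_{M}(y) = x + \sum_{i=1}^{n}\innp{y-x,e_i}e_i,
\end{equation*}
as claimed. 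The only genuinely delicate point is ensuring that every denominator in the Gram--Schmidt step is nonzero, which the inductive span argument handles cleanly using the linear independence hypothesis.
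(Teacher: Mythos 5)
Your proposal is correct and follows essentially the same route as the paper: translate via \cref{fac:SetChangeProje} to reduce to $\Pro_{L}(y-x)$ with $L=\spn\{x_{1}-x,\ldots,x_{n}-x\}$, build an orthonormal basis by Gram--Schmidt, and apply the orthonormal-expansion formula for the projection (the paper cites this last step rather than verifying $z-p\in L^{\perp}$ directly, but the content is the same).
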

\begin{proof}
	Since $x_{1}-x, \ldots, x_{n}-x$ are linearly independent, by the Gram-Schmidt orthogonalization process {\rm \cite[page~309]{MC2000}}, let $\big(\forall i \in \{1, \ldots, n\} \big)$ $e_{i} :=\frac{x_{i}-x - \sum^{i-1}_{j=1} \innp{x_{i}-x,e_{j}}e_{j}  }{\norm{x_{i}-x - \sum^{i-1}_{j=1} \innp{x_{i}-x,e_{j}}e_{j}  }}$,
	then $e_{1}, \ldots, e_{n}$ are orthonormal. Moreover
	\begin{align*}
	\spn \{e_{1}, \ldots, e_{n}\} =\spn \{ x_{1}-x, \ldots, x_{n}-x\}:=L.
	\end{align*}
	Since $M=x+L$, thus by \cref{fac:SetChangeProje}, we know
	$
	\Pro_{M}(y)=x+\Pro_{L}(y-x).
	$
	By \cite[Proposition~29.15]{BC2017}, we obtain that for every $z \in \mathcal{H}$,
	$
	\Pro_{L}(z)= \sum^{n}_{i=1} \innp{z, e_{i}} e_{i},
	$
	where $(\forall i \in \{1, \ldots, n\})$ $e_{i}=\frac{x_{i}-x - \sum^{i-1}_{j=1} \innp{x_{i}-x,e_{j}}e_{j}  }{\norm{x_{i}-x - \sum^{i-1}_{j=1} \innp{x_{i}-x,e_{j}}e_{j}  }}
	$.
\end{proof}

\subsection{Firmly nonexpansive mappings} \label{subsec:FirmlyNonexpansive}
\begin{definition} \label{defn:Nonexpansive} {\rm \cite[Definition~4.1]{BC2017}}
	Let $D$ be a nonempty subset of $\mathcal{H}$ and let $T:D \rightarrow \mathcal{H}$. Then $T$ is
	\begin{enumerate}
		\item \label{FirmNonex} \emph{firmly nonexpansive} if
		\begin{align*}
		(\forall x, y \in D) \quad \norm{Tx -Ty}^{2} + \norm{(\Id -T)x -(\Id -T)y}^{2} \leq \norm{x -y}^{2};
		\end{align*}
		\item \label{Nonex} \emph{nonexpansive} if it is Lipschitz continuous with constant 1, i.e.,
		\begin{align*}
		(\forall x, y \in D) \quad \norm{Tx -Ty} \leq \norm{x-y};
		\end{align*}
		\item \label{FirmQuasiNonex} \emph{firmly quasinonexpansive} if
		\begin{align*}
		(\forall x \in D)\quad (\forall y \in \Fix T) \quad \norm{Tx -y}^{2}+\norm{Tx-x}^{2} \leq \norm{x -y}^{2};
		\end{align*}
		\item \label{QuasiNonex} \emph{quasinonexpansive} if
		\begin{align*}
		(\forall x \in D)\quad (\forall y \in \Fix T) \quad \norm{Tx-y} \leq \norm{x-y}.
		\end{align*}
	\end{enumerate}
\end{definition}


\begin{fact} {\rm \cite[Corollary~4.24]{BC2017}} \label{fact:Nonexpan:Fix}
	Let $D$ be a nonempty closed convex subset of $\mathcal{H}$ and let $T : D \to \mathcal{H}$ be nonexpansive. Then $\Fix T$ is closed and convex.
\end{fact}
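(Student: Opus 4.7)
The plan is to treat closedness and convexity separately: closedness is immediate from continuity of $T$, while convexity requires unpacking the nonexpansive inequality together with the equality case of the triangle inequality in Hilbert space.

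First, I would note that $T$ is $1$-Lipschitz and hence continuous, so the map $F\colon D \to \mathcal{H}$ defined by $F(x) := Tx - x$ is continuous. Since $D$ is closed in $\mathcal{H}$, the set $\Fix T = F^{-1}(\{0\})$ is closed in $D$, and therefore closed in $\mathcal{H}$.

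For convexity, I would fix $x, y \in \Fix T$ with $x \neq y$ (the case $x = y$ is trivial), pick $\lambda \in (0, 1)$, and set $z := \lambda x + (1-\lambda) y$; by convexity of $D$, $z \in D$, so $Tz$ is defined. Applying nonexpansiveness twice and using $Tx = x$, $Ty = y$ gives
\begin{align*}
\norm{Tz - x} \leq (1-\lambda)\norm{x - y} \quad \text{and} \quad \norm{Tz - y} \leq \lambda \norm{x - y}.
\end{align*}
Summing these and comparing with the triangle inequality $\norm{x - y} \leq \norm{x - Tz} + \norm{Tz - y}$ forces equality in every step.

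The main obstacle is to convert this equality information into the identity $Tz = z$. I would exploit strict convexity of the Hilbert-space norm: if $\norm{a + b} = \norm{a} + \norm{b}$ with $a, b$ nonzero, then $a$ and $b$ are positive multiples of one another. Setting $a := x - Tz$ and $b := Tz - y$ (and handling the degenerate case where one vanishes directly, since it quickly collapses to $x = y$), this produces $x - Tz = t(Tz - y)$ for some $t > 0$; matching the already-forced norm equalities pins down $t = (1-\lambda)/\lambda$, and a short rearrangement yields $Tz = \lambda x + (1-\lambda) y = z$. Hence $z \in \Fix T$, completing convexity. I expect no further technical difficulty; the entire argument is self-contained and does not invoke any of the later machinery developed in the paper.
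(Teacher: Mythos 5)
Your argument is correct. Note that the paper does not prove this statement at all: it is quoted as a Fact with a citation to \cite[Corollary~4.24]{BC2017}, so there is no in-paper proof to compare against. Your closedness argument (continuity of $x \mapsto Tx - x$ plus closedness of $D$) is the standard one, and your convexity argument is sound: the two nonexpansiveness estimates $\norm{Tz-x}\le(1-\lambda)\norm{x-y}$ and $\norm{Tz-y}\le\lambda\norm{x-y}$ combined with the triangle inequality force equality everywhere, and since $\lambda\in\,]0,1[\,$ and $x\neq y$ both vectors $x-Tz$ and $Tz-y$ are automatically nonzero (so the degenerate case you mention cannot even occur); the equality case of the triangle inequality in Hilbert space then gives $x-Tz=\tfrac{1-\lambda}{\lambda}(Tz-y)$, which rearranges to $Tz=z$. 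This differs from the route taken in the cited reference, where the result is deduced from the quasinonexpansive case by writing $\Fix T$ as the intersection of $D$ with the closed convex sets $\{y \in \mathcal{H} \mid \norm{Tx-y}\le\norm{x-y}\}$, $x \in D$ (each a halfspace or all of $\mathcal{H}$); that argument is shorter and yields the more general quasinonexpansive statement, while yours is self-contained, uses only strict convexity of the norm, and in fact works in any strictly convex normed space.
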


\begin{definition} {\rm \cite[Definition~4.33]{BC2017}} \label{defn:AlphaAverage}
	Let $D$ be a nonempty subset of $\mathcal{H}$, let $T: D \rightarrow \mathcal{H}$ be nonexpansive, and let $\alpha \in \, ]0,1[\,$. Then $T$ is \emph{averaged with constant $\alpha$}, or \emph{$\alpha$-averaged} for short, if there exists a nonexpansive operator $R: D \rightarrow \mathcal{H}$ such that $T=(1- \alpha) \Id +\alpha R$.
\end{definition}

\begin{fact} {\rm \cite[Proposition~4.35]{BC2017}}  \label{fact:averaged:character}
	Let $D$ be a nonempty subset of $\mathcal{H}$, let $T: D \rightarrow \mathcal{H}$ be nonexpansive, and let $\alpha \in \, ]0,1[\,$. Then the following are equivalent:
	\begin{enumerate}
		\item $T$ is $\alpha$-averaged.
		\item $(\forall x \in D)$ $(\forall y \in D)$ $ \norm{Tx -Ty}^{2} +\frac{1-\alpha}{\alpha} \norm{(\Id -T)x -(\Id -T)y}^{2} \leq \norm{x -y}^{2}$.
	\end{enumerate}	
\end{fact}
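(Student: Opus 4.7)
The plan is to prove the equivalence by exploiting the fundamental identity for convex combinations in a Hilbert space:
\begin{equation*}
\|(1-\alpha)a + \alpha b\|^{2} = (1-\alpha)\|a\|^{2} + \alpha\|b\|^{2} - \alpha(1-\alpha)\|a-b\|^{2},
\end{equation*}
valid for every $a,b \in \mathcal{H}$ and $\alpha \in \,]0,1[\,$. This identity is what couples $\|Tx-Ty\|^{2}$, $\|Rx-Ry\|^{2}$, and $\|(\Id-T)x-(\Id-T)y\|^{2}$ into a single clean relation.

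For the forward direction, assume $T = (1-\alpha)\Id + \alpha R$ with $R$ nonexpansive. Fix $x,y \in D$ and set $a := x-y$, $b := Rx-Ry$. Then $Tx-Ty = (1-\alpha)a + \alpha b$, and a direct algebraic manipulation shows
\begin{equation*}
(\Id-T)x - (\Id-T)y = \alpha\bigl((\Id-R)x - (\Id-R)y\bigr) = \alpha(a-b).
\end{equation*}
Applying the identity and then invoking nonexpansiveness of $R$ (i.e., $\|b\| \le \|a\|$) yields
\begin{equation*}
\|Tx-Ty\|^{2} = (1-\alpha)\|a\|^{2} + \alpha\|b\|^{2} - \tfrac{1-\alpha}{\alpha}\|(\Id-T)x-(\Id-T)y\|^{2} \le \|x-y\|^{2} - \tfrac{1-\alpha}{\alpha}\|(\Id-T)x-(\Id-T)y\|^{2},
\end{equation*}
which is the claimed inequality.

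For the reverse direction, I would simply invert the relation $T = (1-\alpha)\Id + \alpha R$ by defining $R := \frac{1}{\alpha}T - \frac{1-\alpha}{\alpha}\Id$ (so automatically $T = (1-\alpha)\Id + \alpha R$), and check that the assumed inequality forces $R$ to be nonexpansive. Writing $u := Tx-Ty$ and $v := x-y$, we have $Rx-Ry = \frac{1}{\alpha}u - \frac{1-\alpha}{\alpha}v$ and $(\Id-T)x-(\Id-T)y = v - u$. A short expansion gives
\begin{equation*}
\|Rx-Ry\|^{2} = \tfrac{1}{\alpha}\|u\|^{2} + \tfrac{\alpha-1}{\alpha}\|v\|^{2} + \tfrac{1-\alpha}{\alpha^{2}}\|v-u\|^{2},
\end{equation*}
and rearranging shows $\|Rx-Ry\|^{2} \le \|v\|^{2}$ is equivalent to the hypothesis of (ii).

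The only genuine obstacle is bookkeeping: one must be careful with the signs and the factor $\alpha$ when converting between $R$ and $T$, and with the fact that the identity above is the ``right'' tool (as opposed to, say, the parallelogram law applied directly). Once the identity is written down and one spots that $(\Id-T)$-differences are $\alpha$ times $(\Id-R)$-differences, both implications fall out by a single line of algebra.
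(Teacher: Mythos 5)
Your proof is correct: both directions check out, including the identity $\|Rx-Ry\|^{2}=\tfrac{1}{\alpha}\|u\|^{2}+\tfrac{\alpha-1}{\alpha}\|v\|^{2}+\tfrac{1-\alpha}{\alpha^{2}}\|v-u\|^{2}$, which rearranges exactly to the inequality in (ii). The paper does not prove this statement at all—it is quoted as a Fact from \cite[Proposition~4.35]{BC2017}—and your argument via the convex-combination identity $\|(1-\alpha)a+\alpha b\|^{2}=(1-\alpha)\|a\|^{2}+\alpha\|b\|^{2}-\alpha(1-\alpha)\|a-b\|^{2}$ is essentially the standard proof given in that reference.
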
	


\begin{fact} {\rm \cite[Proposition~4.42]{BC2017}} \label{fact:AlphAvera}
	Let $D$ be a nonempty subset of $\mathcal{H}$, let $(T_{i})_{i \in \I}$ be a finite family of nonexpansive operators from $D$ to $\mathcal{H}$, let $(\omega_{i})_{i \in \I}$ be real numbers in $]0,1 ]$ such that $\sum_{i \in \I} \omega_{i}=1$, and let $(\alpha_{i})_{i \in \I}$ be real numbers in $]0,1[$ such that, for every $i \in \I$, $T_{i}$ is $\alpha_{i}$-averaged, and set $\alpha :=\sum_{i \in \I} \omega_{i} \alpha_{i}$. Then $\sum_{i \in \I} \omega_{i} T_{i}$ is $\alpha$-averaged.
\end{fact}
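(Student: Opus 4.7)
The plan is to work directly from \cref{defn:AlphaAverage} rather than through the characterizing inequality in \cref{fact:averaged:character}. The definition hands us, for each $i \in \I$, a nonexpansive operator $R_i \colon D \to \mathcal{H}$ with $T_i = (1-\alpha_i)\Id + \alpha_i R_i$, and it is natural to hope that the same kind of decomposition holds for $T := \sum_{i \in \I} \omega_i T_i$ with a suitable ``averaged'' inner operator.

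To execute this, I would substitute each decomposition into $T$ and regroup:
\begin{align*}
T \;=\; \sum_{i \in \I} \omega_i\bigl[(1-\alpha_i)\Id + \alpha_i R_i\bigr] \;=\; (1 - \alpha)\Id + \sum_{i \in \I} \omega_i \alpha_i R_i \;=\; (1-\alpha)\Id + \alpha R,
\end{align*}
where $R := \frac{1}{\alpha}\sum_{i \in \I} \omega_i \alpha_i R_i$. Before claiming this is the desired decomposition, I would verify the bookkeeping: since each $\alpha_i \in \,]0,1[\,$ and the $\omega_i \in \,]0,1]\,$ sum to $1$, the convex combination $\alpha = \sum_{i \in \I}\omega_i\alpha_i$ lies in $\,]0,1[\,$ (so division by $\alpha$ is legitimate and $T$ has the shape required by \cref{defn:AlphaAverage}), and the renormalized weights $\omega_i \alpha_i / \alpha$ are nonnegative and sum to $1$.

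The remaining step is to confirm that $R$ is nonexpansive. Since $R$ is a convex combination of the nonexpansive operators $R_i$ with weights $\omega_i\alpha_i/\alpha$, a direct application of the triangle inequality gives
\begin{align*}
\norm{Rx - Ry} \;\le\; \sum_{i \in \I} \frac{\omega_i \alpha_i}{\alpha}\,\norm{R_i x - R_i y} \;\le\; \sum_{i \in \I}\frac{\omega_i \alpha_i}{\alpha}\,\norm{x-y} \;=\; \norm{x-y}
\end{align*}
for every $x, y \in D$. With $R$ nonexpansive and $T = (1-\alpha)\Id + \alpha R$, \cref{defn:AlphaAverage} yields that $T$ is $\alpha$-averaged.

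I do not anticipate a genuine obstacle here; the only point requiring mild care is the confirmation that $\alpha \in \,]0,1[\,$ strictly, so that the regrouping and the renormalization of weights make sense. An alternative route through \cref{fact:averaged:character} is in principle available but strictly less clean: it would require an appeal to Jensen's inequality for the convex function $t \mapsto (1-t)/t$ in order to reconcile the individual ratios $(1-\alpha_i)/\alpha_i$ with the single target constant $(1-\alpha)/\alpha$, obscuring what is essentially a linear identity.
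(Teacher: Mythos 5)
Your proof is correct: the regrouping $\sum_{i \in \I}\omega_i\bigl[(1-\alpha_i)\Id+\alpha_i R_i\bigr]=(1-\alpha)\Id+\alpha R$ with $R:=\frac{1}{\alpha}\sum_{i \in \I}\omega_i\alpha_i R_i$, together with the checks that $\alpha\in\,]0,1[\,$ and that $R$ is nonexpansive as a convex combination of nonexpansive operators, is exactly what is needed by \cref{defn:AlphaAverage}. The paper itself gives no proof of this statement --- it is quoted as \cite[Proposition~4.42]{BC2017} --- and your argument is the standard one from that reference, so there is nothing to reconcile.
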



The following result is motivated by \cite[Lemma~2.1(iv)]{BCS2018}.
\begin{proposition} \label{prop:LinFirQuaNonOperNorm}
	Assume $\mathcal{H}=\mathbb{R}^{n}$. Let $T: \mathbb{R}^{n} \rightarrow \mathbb{R}^{n}$ be linear and $\alpha$-averaged with $\alpha \in \, ]0,1[\,$. Then $\norm{T\Pro_{(\Fix T)^{\perp}}} <1$.
\end{proposition}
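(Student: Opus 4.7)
The plan is to reduce $\|TP\|$, where $P := \Pro_{(\Fix T)^{\perp}}$, to a supremum over the unit sphere of the subspace $(\Fix T)^{\perp}$ and then exploit the quantitative averaged inequality together with finite-dimensional compactness.

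First I would verify that $\Fix T$ is a closed linear subspace of $\mathbb{R}^{n}$: by \cref{fact:Nonexpan:Fix} it is closed and convex, it contains $0$ since $T$ is linear, and linearity further gives $\lambda x \in \Fix T$ whenever $x \in \Fix T$ and $\lambda \in \mathbb{R}$, which combined with convexity makes it a subspace. Hence $\mathbb{R}^{n} = \Fix T \oplus (\Fix T)^{\perp}$, and \cref{MetrProSubs8} yields $Px = 0$ for $x \in \Fix T$ and $Px = x$ for $x \in (\Fix T)^{\perp}$. Therefore
\[
\|T P\| \;=\; \sup_{x \in (\Fix T)^{\perp},\ \|x\|=1} \|Tx\|.
\]

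Next, I would apply the characterization of $\alpha$-averagedness in \cref{fact:averaged:character} to the pair $(x,0)$; this is legitimate because $0 \in \Fix T$, so $T0 = 0$ and $(\Id-T)0 = 0$. This yields, for every $x \in \mathbb{R}^{n}$,
\[
\|Tx\|^{2} + \frac{1-\alpha}{\alpha}\,\|(\Id-T)x\|^{2} \;\leq\; \|x\|^{2}.
\]
Fix a unit vector $x \in (\Fix T)^{\perp}$. If $\|Tx\| = 1$, the displayed inequality forces $(\Id-T)x = 0$, i.e., $x \in \Fix T$, contradicting $x \neq 0$ together with $x \in (\Fix T)^{\perp}$. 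Hence $\|Tx\| < 1$ for every unit vector $x$ in $(\Fix T)^{\perp}$.

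The step that completes the argument, and the only one that is genuinely delicate, is the final one: because $\mathcal{H} = \mathbb{R}^{n}$ is finite-dimensional, the unit sphere of the subspace $(\Fix T)^{\perp}$ is compact and $x \mapsto \|Tx\|$ is continuous, so the supremum above is attained, and therefore strictly less than $1$. This compactness step is precisely where the finite-dimensionality hypothesis is essential; without it, the argument would only produce a pointwise strict inequality $\|Tx\| < \|x\|$ on $(\Fix T)^{\perp}\setminus\{0\}$ rather than the uniform bound $\|T P\| < 1$.
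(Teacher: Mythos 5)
Your proof is correct and follows essentially the same route as the paper's: the same reduction of $\norm{T\Pro_{(\Fix T)^{\perp}}}$ to a supremum over the unit sphere of $(\Fix T)^{\perp}$, the same use of the $\alpha$-averaged inequality to force $\norm{Tx}<1$ for unit vectors $x\in(\Fix T)^{\perp}$ (the paper substitutes $y=\Pro_{\Fix T}x$ rather than $y=0$, which amounts to the same thing on $(\Fix T)^{\perp}$), and the same finite-dimensional compactness step, merely phrased directly instead of as a contradiction via Bolzano--Weierstrass. The only point you leave implicit is the degenerate case $(\Fix T)^{\perp}=\{0\}$, where your supremum runs over an empty sphere; there $T\Pro_{(\Fix T)^{\perp}}=0$ and the claim is trivial, as the paper notes separately.
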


\begin{proof} If $(\Fix T)^{\perp} = \{0\}$, then $\Pro_{(\Fix T)^{\perp}} =0$ and so $T\Pro_{(\Fix T)^{\perp}} =0$. Hence, the required result is trivial.
	
	Now assume $(\Fix T)^{\perp} \neq \{0\}$. By definition, $(\Fix T)^{\perp}$ is a closed linear subspace of $\mathbb{R}^{n}$. Since $T$ is $\alpha$-averaged, thus by \cref{fact:averaged:character},
	\begin{align} \label{eq:FirmNonexpansiveDef}
	(\forall x \in \mathbb{R}^{n}) \quad (\forall y \in \mathbb{R}^{n}) \quad \norm{Tx -Ty}^{2} +\frac{1-\alpha}{\alpha} \norm{(\Id -T)x -(\Id -T)y}^{2} \leq \norm{x -y}^{2}.
	\end{align}
	
	Since $(\Fix T)^{\perp} \neq \{0\}$, it is easy to see that
	\begin{align} \label{eq:TPperpOperNorm}
	\norm{T\Pro_{(\Fix T)^{\perp}}} = \sup_{\substack{x \in  \mathcal{H} \\ \norm{x} \leq 1} } \norm{T\Pro_{(\Fix T)^{\perp}} x} \stackrel{y= \Pro_{(\Fix T)^{\perp}} x}{=}  \sup_{\substack{y\in (\Fix T)^{\perp} \\  \norm{y} \leq 1} } \norm{Ty} = \sup_{\substack{y\in (\Fix T)^{\perp} \\  \norm{y} = 1} } \norm{Ty}.
	\end{align}
	Suppose to the contrary that $\norm{T\Pro_{(\Fix T)^{\perp}}} = 1$. 
	Then by \cref{eq:TPperpOperNorm} and by the Bolzano-Weierstrass Theorem, there exists $\overline{y} \in (\Fix T)^{\perp}$ with $\norm{\overline{y}}=1$ and $\norm{T \overline{y}}=1$.
	
	For every $x \in \mathbb{R}^{n}$, substituting  $y=\Pro_{\Fix T}x$ in \cref{eq:FirmNonexpansiveDef}, we get,
	\begin{align*}
	\norm{Tx-\Pro_{\Fix T}x}^{2}+\frac{1-\alpha}{\alpha}\norm{x -Tx}^{2} \leq \norm{x-\Pro_{\Fix T}x}^{2},
	\end{align*}
	which implies that
	\begin{align} \label{eq:prop:LinFirQuaNonOperNorm:AvoidFixT}
	(\forall x\not \in \Fix T) \quad \norm{Tx-\Pro_{\Fix T}x} < \norm{x-\Pro_{\Fix T}x}.
	\end{align}
	Since $\Fix T \cap (\Fix T)^{\perp} =\{0\}$ and since $\overline{y} \in (\Fix T)^{\perp}$ and $\norm{\overline{y}}=1$, so  $ \overline{y} \not \in \Fix T$. By \cref{MetrProSubs8}\cref{MetrProSubs8:iv}, $\overline{y} \in (\Fix T)^{\perp}$ implies that $\Pro_{\Fix T}( \overline{y} )=0$, thus substituting $x=\overline{y} $ in \cref{eq:prop:LinFirQuaNonOperNorm:AvoidFixT}, we obtain
	\begin{align*}
	1 = \norm{ T\overline{y}}= \norm{T\overline{y}-\Pro_{\Fix T}\overline{y}} < \norm{\overline{y}-\Pro_{\Fix T}\overline{y}}=\norm{\overline{y}}=1,
	\end{align*}
	which is a contradiction.
\end{proof}


\begin{definition} \label{defn:Fejer} {\rm \cite[Definition~5.1]{BC2017}}
	Let $C$ be a nonempty subset of $\mathcal{H}$ and let $(x_{k})_{k \in \mathbb{N}}$ be a sequence in $\mathcal{H}$. Then $(x_{k})_{k \in \mathbb{N}}$ is \emph{Fej\'er monotone} with respect to $C$ if
	\begin{align*}
	(\forall x \in C) \quad (\forall k \in \mathbb{N}) \quad \norm{x_{k+1}-x} \leq \norm{x_{k}-x}.
	\end{align*}
\end{definition}


\begin{fact} {\rm  \cite[Proposition~5.4]{BC2017}  }\label{fact:Fejer:Bounded}
	Let $C$ be a nonempty subset of $\mathcal{H}$ and let $(x_{k})_{k \in \mathbb{N}}$ be Fej\'er monotone with respect to $C$. Then $(x_{k})_{k \in \mathbb{N}}$ is bounded.
\end{fact}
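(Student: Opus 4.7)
The plan is to exploit the very definition of Fej\'er monotonicity against a single fixed reference point in $C$. Since $C$ is assumed nonempty, I would first select an arbitrary $c \in C$ and then focus on the scalar sequence $\bigl(\norm{x_{k}-c}\bigr)_{k \in \mathbb{N}}$ as the main object of study.

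Next I would carry out a straightforward induction on $k$. The defining inequality $\norm{x_{k+1}-c} \leq \norm{x_{k}-c}$ from \cref{defn:Fejer} shows that the sequence $\bigl(\norm{x_{k}-c}\bigr)_{k \in \mathbb{N}}$ is nonincreasing, so that
\begin{equation*}
(\forall k \in \mathbb{N}) \quad \norm{x_{k}-c} \leq \norm{x_{0}-c}.
\end{equation*}
This gives a uniform bound on the distances from the sequence to the chosen anchor point $c$.

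Finally I would invoke the triangle inequality to transfer this into a bound on the norms themselves: $\norm{x_{k}} \leq \norm{x_{k}-c} + \norm{c} \leq \norm{x_{0}-c} + \norm{c}$ for every $k \in \mathbb{N}$, which is exactly the boundedness conclusion. The only small point that needs to be checked is that the nonemptiness of $C$ actually provides an admissible choice of $c$; beyond that, the argument is entirely mechanical and I do not foresee a genuine obstacle, which is consistent with the fact being quoted directly from \cite[Proposition~5.4]{BC2017}.
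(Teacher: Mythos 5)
Your argument is correct and is exactly the standard proof of this fact (the paper itself quotes it from \cite[Proposition~5.4]{BC2017} without proof): fixing any $c \in C$, Fej\'er monotonicity makes $\bigl(\norm{x_{k}-c}\bigr)_{k \in \mathbb{N}}$ nonincreasing, hence bounded by $\norm{x_{0}-c}$, and the triangle inequality then bounds $\norm{x_{k}}$. No gaps.
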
	

\begin{fact} {\rm \cite[Proposition~5.9]{BC2017}} \label{fact:FejerWeakCluster}
	Let $C$ be a closed affine subspace of $\mathcal{H}$ and let $(x_{k})_{k \in \mathbb{N}}$ be a sequence in $\mathcal{H}$. Suppose that $(x_{k})_{k \in \mathbb{N}}$ is Fej\'er monotone with respect to $C$. Then the following hold:
	\begin{enumerate}
		\item \label{fact:FejerWeakCluster:i} $(\forall k \in \mathbb{N})$ $\Pro_{C}x_{k}=\Pro_{C}x_{0}$.
		\item  \label{fact:FejerWeakCluster:ii}  Suppose that every weak sequential cluster point of $(x_{k})_{k \in \mathbb{N}}$ belongs to $C$. Then $x_{k} \weakly \Pro_{C}x_{0}$.
	\end{enumerate}
\end{fact}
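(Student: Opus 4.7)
The plan is to prove the two parts in sequence, with part (i) doing most of the work and part (ii) being a short consequence of (i) combined with Fact~\ref{fact:Fejer:Bounded}. Throughout, I would set $p := \Pro_{C}x_{0}$ and exploit the characterization of the projection onto a closed affine subspace: for $u \in C$, one has $u = \Pro_{C} x$ iff $x - u \perp \pa C$, equivalently $\langle x - u,\, c - u\rangle = 0$ for every $c \in C$.

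For part (i), I would fix an arbitrary $v \in C$ and, using that $C$ is affine, note that $c_{t} := p + t(v - p) \in C$ for every $t \in \mathbb{R}$. Expanding $\|x_{k} - c_{t}\|^{2}$ and $\|x_{0} - c_{t}\|^{2}$ and using that $\langle x_{0} - p,\, v - p\rangle = 0$ (since $p = \Pro_{C}x_{0}$), the Fejér inequality $\|x_{k} - c_{t}\| \leq \|x_{0} - c_{t}\|$ reduces, after cancelling the common $t^{2}\|v - p\|^{2}$ term, to
\begin{equation*}
\|x_{k} - p\|^{2} - 2t\,\langle x_{k} - p,\, v - p\rangle \leq \|x_{0} - p\|^{2} \qquad \text{for every } t \in \mathbb{R}.
\end{equation*}
Since this linear-in-$t$ inequality must hold for all real $t$, the coefficient of $t$ must vanish: $\langle x_{k} - p,\, v - p\rangle = 0$. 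As $v \in C$ was arbitrary, the orthogonality characterization of $\Pro_{C}$ yields $\Pro_{C}x_{k} = p = \Pro_{C}x_{0}$. Alternatively (and this is the step I expect to be most delicate to phrase cleanly), one can invoke Fact~\ref{fact:PR}\ref{fact:PR:Pythagoras} twice to write $\|x_{k} - p\|^{2} = \|x_{k} - \Pro_{C}x_{k}\|^{2} + \|\Pro_{C}x_{k} - p\|^{2}$ and combine it with the Fejér bound, but the linear-in-$t$ trick above avoids an induction.

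For part (ii), by Fact~\ref{fact:Fejer:Bounded} the sequence $(x_{k})_{k \in \mathbb{N}}$ is bounded, so in the Hilbert space $\mathcal{H}$ it has at least one weak sequential cluster point. Let $\overline{x}$ be any such cluster point, with $x_{k_{n}} \weakly \overline{x}$; by hypothesis $\overline{x} \in C$. From part (i) we have $\langle x_{k_{n}} - p,\, c - p\rangle = 0$ for every $c \in C$ and every $n$, and passing to the weak limit gives $\langle \overline{x} - p,\, c - p\rangle = 0$ for all $c \in C$. Choosing $c = \overline{x} \in C$ yields $\|\overline{x} - p\|^{2} = 0$, i.e., $\overline{x} = p$. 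Hence every weak cluster point equals $p$, and a standard subsequence argument (every subsequence has a further subsequence converging weakly to $p$) gives $x_{k} \weakly p = \Pro_{C}x_{0}$, completing the proof.
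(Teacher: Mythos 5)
Your proof is correct; note that the paper does not prove this statement itself but imports it from \cite[Proposition~5.9]{BC2017}, and your argument is essentially the standard one given there: the affine/linear-in-$t$ expansion of $\|x_{k}-c_{t}\|^{2}\le\|x_{0}-c_{t}\|^{2}$ forcing $\langle x_{k}-\Pro_{C}x_{0},\,v-\Pro_{C}x_{0}\rangle=0$ for part \cref{fact:FejerWeakCluster:i}, and boundedness (\cref{fact:Fejer:Bounded}) plus uniqueness of the weak cluster point for part \cref{fact:FejerWeakCluster:ii}. No gaps: the iterated Fej\'er inequality, the orthogonality characterization of the projection onto a closed affine subspace, and the subsequence argument for weak convergence are all used correctly.
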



\subsection{The Douglas--Rachford method} \label{subsec:DRM}

\begin{definition} {\rm \cite[page~2]{BCNPW2014}} \label{defn:DRO}
	Let $U$ and $V$ be closed convex subsets of $\mathcal{H}$ such that $U \cap V \neq \varnothing $.  The \emph{Douglas--Rachford splitting operator} is
	$	T_{V,U}:=\Pro_{V}(2\Pro_{U}-\Id)+\Id -\Pro_{U}$.
\end{definition}
It is well known that
\begin{empheq}[box = \mybluebox]{equation*}
T_{V,U} = \Pro_{V}(2\Pro_{U}-\Id)+\Id -\Pro_{U} = \frac{\Id +\R_{V}\R_{U}}{2}.
\end{empheq}



\begin{definition} {\rm \cite[Definition~9.4]{D2012}} \label{defn:FredrichAngleClassical}
	The  \emph{Friedrichs angle} between two linear subspaces $U$ and $V$ is the angle $\alpha(U,V)$ between $0$ and $\frac{\pi}{2}$ whose cosine, $c(U,V) :=\cos \alpha(U,V)$, is defined by the expression
	\begin{align*}
	c(U,V)
	=  \sup \{ |\innp{u,v}| ~|~ u \in U \cap (U \cap V)^{\perp}, v \in V \cap (U \cap V)^{\perp}, \norm{u} \leq 1, \norm{v} \leq 1 \}.
	\end{align*}
\end{definition}

\begin{fact} {\rm \cite[Theorem~9.35]{D2012}} \label{fac:cFLess1}
Let $U$ and $V$ be closed linear subspaces of $\mathcal{H}$. 
Then the following  are equivalent:
	\begin{enumerate}
		\item $c(U,V) <1$;
		\item $U +V$ is closed.
	\end{enumerate}
\end{fact}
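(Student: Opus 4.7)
The plan is to prove the two implications after first reducing to the case $U \cap V = \{0\}$. Set $W := U \cap V$, $\tilde{U} := U \cap W^{\perp}$ and $\tilde{V} := V \cap W^{\perp}$. I would first check that $\tilde{U}$ and $\tilde{V}$ are closed linear subspaces with $\tilde{U} \cap \tilde{V} = \{0\}$, that $U = W \oplus \tilde{U}$ and $V = W \oplus \tilde{V}$ are orthogonal decompositions, and consequently that $U + V = W \oplus (\tilde{U} + \tilde{V})$. Since $W$ is closed and the sum is orthogonal, $U+V$ is closed if and only if $\tilde{U}+\tilde{V}$ is closed. Likewise, the definition of $c(U,V)$ depends only on the restrictions to $W^{\perp}$, so $c(U,V) = \sup\{|\innp{u,v}| : u \in \tilde{U}, v \in \tilde{V}, \norm{u}\leq 1, \norm{v}\leq 1\}$. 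This reduces the whole statement to: if $\tilde{U}\cap\tilde{V}=\{0\}$, then $c(U,V)<1 \iff \tilde{U}+\tilde{V}$ is closed.

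For (i)$\Rightarrow$(ii), set $c_0 := c(U,V)<1$. The heart of the argument is the inequality
\begin{equation*}
(\forall u \in \tilde{U})(\forall v \in \tilde{V}) \quad \norm{u+v}^{2} \geq (1-c_{0})\bigl(\norm{u}^{2}+\norm{v}^{2}\bigr),
\end{equation*}
which follows from $|\innp{u,v}| \leq c_{0}\norm{u}\norm{v}$ combined with $2\norm{u}\norm{v}\leq \norm{u}^{2}+\norm{v}^{2}$. Given a Cauchy sequence $(u_{n}+v_{n})$ in $\tilde{U}+\tilde{V}$, apply this inequality to the differences $(u_{n}-u_{m})+(v_{n}-v_{m})$ to conclude that $(u_{n})$ and $(v_{n})$ are themselves Cauchy. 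Since $\tilde{U}$ and $\tilde{V}$ are closed, they converge to limits in $\tilde{U}$ and $\tilde{V}$, so the limit of $(u_{n}+v_{n})$ lies in $\tilde{U}+\tilde{V}$.

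For (ii)$\Rightarrow$(i), I would argue by contradiction via the open mapping theorem. Equip $\tilde{U}\times\tilde{V}$ with the norm $\norm{(u,v)}:=\norm{u}+\norm{v}$; this is a Banach space. The map $T: \tilde{U}\times\tilde{V}\to \tilde{U}+\tilde{V}$ given by $T(u,v):=u+v$ is linear, continuous, and (because $\tilde{U}\cap\tilde{V}=\{0\}$) bijective onto $\tilde{U}+\tilde{V}$, which is a Banach space by hypothesis. The open mapping theorem therefore yields a constant $\kappa>0$ with $\norm{u}+\norm{v}\leq \kappa\norm{u+v}$ for all $(u,v)\in\tilde{U}\times\tilde{V}$. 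If $c(U,V)=1$, pick unit vectors $u_{n}\in\tilde{U}$, $v_{n}\in\tilde{V}$ with $|\innp{u_{n},v_{n}}|\to 1$; after replacing $v_{n}$ by $-v_{n}$ if necessary, $\innp{u_{n},v_{n}}\to -1$, hence $\norm{u_{n}+v_{n}}^{2}=2+2\innp{u_{n},v_{n}}\to 0$, while $\norm{u_{n}}+\norm{v_{n}}=2$, contradicting the bound.

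The main obstacle I anticipate is the reduction step: one must cleanly justify that peeling off the common part $W=U\cap V$ preserves both the Friedrichs angle and the closedness of the sum, because the raw definition of $c(U,V)$ restricts $u,v$ to $W^{\perp}$ while the sum $U+V$ does not. Once that bookkeeping is in place, the two implications are short: a direct estimate for (i)$\Rightarrow$(ii), and an application of the open mapping theorem for (ii)$\Rightarrow$(i).
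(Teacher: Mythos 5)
Your proof is correct, but note that the paper does not prove this statement at all: it is imported verbatim as a Fact from Deutsch \cite[Theorem~9.35]{D2012}, so there is no internal argument to compare against. Your route is the standard textbook one, and all three pieces check out. The reduction step is lighter than you fear: with the definition of $c(U,V)$ used in \cref{defn:FredrichAngleClassical}, the supremum is \emph{already} taken over $\tilde{U}=U\cap W^{\perp}$ and $\tilde{V}=V\cap W^{\perp}$, so no argument about the angle is needed; the only bookkeeping is the closedness transfer, which your orthogonal decomposition handles (for the direction you did not spell out, observe $\tilde{U}+\tilde{V}=(U+V)\cap W^{\perp}$, an intersection of closed sets). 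The estimate $\norm{u+v}^{2}\geq(1-c_{0})\bigl(\norm{u}^{2}+\norm{v}^{2}\bigr)$ correctly yields that $\tilde{U}+\tilde{V}$ is complete, and the converse via the bounded inverse/open mapping theorem on $T(u,v)=u+v$ is sound, since $\tilde{U}\cap\tilde{V}=\{0\}$ gives injectivity and closedness gives a Banach range. Two small details worth writing out in a polished version: if $c(U,V)=1$ you only get $\norm{u_{n}}\leq 1$, $\norm{v_{n}}\leq 1$ with $|\innp{u_{n},v_{n}}|\to 1$, and you must normalize (Cauchy--Schwarz forces $\norm{u_{n}},\norm{v_{n}}\to 1$, so this is harmless); and the degenerate case $\tilde{U}=\{0\}$ or $\tilde{V}=\{0\}$ should be noted, where $c(U,V)=0$ and the sum is trivially closed.
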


\begin{fact}{\rm \cite[Theorem~4.1]{BCNPW2014}}  \label{fac:DRCFn}
	Let $U$ and $V$ be closed linear subspaces of $\mathcal{H}$ and  $T :=T_{V,U}$ defined in \cref{defn:DRO}.  Let $n \in \mathbb{N} \smallsetminus \{0\}$ and let $x \in \mathcal{H}$. Denote the $c(U,V)$ defined in \cref{defn:FredrichAngleClassical} by $c_{F}$. Then
	\begin{align*}
	\norm{T^{n}x - \Pro_{\Fix T}x} \leq c^{n}_{F} \norm{x - \Pro_{\Fix T}x} \leq c^{n}_{F} \norm{x}.
	\end{align*}
\end{fact}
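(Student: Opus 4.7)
The plan is to exploit the linearity and self-adjointness of the projectors onto the closed linear subspaces $U$ and $V$. Since $\Pro_{U}$ and $\Pro_{V}$ are linear and self-adjoint, the Douglas--Rachford operator $T = \frac{1}{2}(\Id + \R_{V}\R_{U})$ is a bounded linear operator on $\mathcal{H}$. A direct computation of the fixed-point equation $Tx = x$ reveals that
\begin{equation*}
\Fix T = (U \cap V) + (U + V)^{\perp},
\end{equation*}
a closed linear subspace symmetric in $U$ and $V$. Since $T$ is the identity on $\Fix T$, linearity yields, for every $x \in \mathcal{H}$,
\begin{equation*}
T^{n}x - \Pro_{\Fix T}x = T^{n}\bigl(x - \Pro_{\Fix T}x\bigr),
\end{equation*}
with $y := x - \Pro_{\Fix T}x \in (\Fix T)^{\perp}$. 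Moreover $(\Fix T)^{\perp}$ is $T$-invariant: self-adjointness of $\Pro_{U}, \Pro_{V}$ gives $T^{*} = T_{U,V}$, which has the same fixed point set as $T$, so $\innp{Ty, z} = \innp{y, T^{*}z} = 0$ for every $z \in \Fix T$ whenever $y \in (\Fix T)^{\perp}$. Thus it suffices to prove $\norm{Ty} \leq c_{F}\norm{y}$ on $(\Fix T)^{\perp}$ and then iterate.

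Next I would establish the key identity
\begin{equation*}
\norm{Ty}^{2} = \norm{y}^{2} - \norm{\Pro_{U}y - \Pro_{V}y}^{2},
\end{equation*}
by expanding $\norm{Ty}^{2} = \frac{1}{4}\norm{y + \R_{V}\R_{U}y}^{2}$ via $\R_{U} = 2\Pro_{U} - \Id$, $\R_{V} = 2\Pro_{V} - \Id$, using that $\R_{V}\R_{U}$ is a linear isometry (composition of the two involutive isometries), and invoking the self-adjointness identities $\innp{y, \Pro_{U}y} = \norm{\Pro_{U}y}^{2}$ and $\innp{y, \Pro_{V}y} = \norm{\Pro_{V}y}^{2}$. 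Consequently, the target bound $\norm{Ty} \leq c_{F}\norm{y}$ on $(\Fix T)^{\perp}$ reduces to the lower bound
\begin{equation*}
(1 - c_{F}^{2})\norm{y}^{2} \leq \norm{\Pro_{U}y - \Pro_{V}y}^{2} \qquad \text{for all } y \in (\Fix T)^{\perp}.
\end{equation*}

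The main obstacle is this last inequality, which is where the Friedrichs angle enters in an essential way. For $y \in (\Fix T)^{\perp} \subseteq (U \cap V)^{\perp} \cap \overline{U+V}$, both $\Pro_{U}y$ and $\Pro_{V}y$ lie in $(U \cap V)^{\perp}$, so $\Pro_{U}y \in U \cap (U \cap V)^{\perp}$ and $\Pro_{V}y \in V \cap (U \cap V)^{\perp}$; \Cref{defn:FredrichAngleClassical} then yields $\abs{\innp{\Pro_{U}y, \Pro_{V}y}} \leq c_{F}\norm{\Pro_{U}y}\norm{\Pro_{V}y}$. Combining this with the expansion $\norm{\Pro_{U}y - \Pro_{V}y}^{2} = \norm{\Pro_{U}y}^{2} + \norm{\Pro_{V}y}^{2} - 2\innp{\Pro_{U}y, \Pro_{V}y}$ and the Pythagorean identity $\norm{y}^{2} = \norm{\Pro_{U}y}^{2} + \norm{y - \Pro_{U}y}^{2}$ on the relevant subspace, a careful algebraic manipulation (using $y \in \overline{U+V}$ to control $\norm{y - \Pro_{U}y}$ by $\norm{\Pro_{V}y}$) delivers the required $(1 - c_{F}^{2})\norm{y}^{2}$ lower bound. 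Iterating $\norm{Ty} \leq c_{F}\norm{y}$ on the $T$-invariant subspace $(\Fix T)^{\perp}$ produces the first inequality of the statement, while the second follows immediately from $0 \in \Fix T$, whence $\norm{x - \Pro_{\Fix T}x} \leq \norm{x}$.
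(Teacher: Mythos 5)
This Fact is not proved in the paper at all — it is quoted from \cite[Theorem~4.1]{BCNPW2014} — so your proposal has to be judged on its own terms. The first half of your argument is correct and well organized: the identification $\Fix T=(U\cap V)\oplus(U+V)^{\perp}$, the $T$-invariance of $(\Fix T)^{\perp}$ via $T^{*}=T_{U,V}$, the identity $\norm{Ty}^{2}=\norm{y}^{2}-\norm{\Pro_{U}y-\Pro_{V}y}^{2}$, the resulting reduction to proving $(1-c_{F}^{2})\norm{y}^{2}\leq\norm{\Pro_{U}y-\Pro_{V}y}^{2}$ for $y\in(\Fix T)^{\perp}$, and the final remark that $0\in\Fix T$ gives $\norm{x-\Pro_{\Fix T}x}\leq\norm{x}$ are all fine.

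The gap is the last step, which is exactly the content of the theorem (already for $n=1$ it asserts $\norm{T-\Pro_{\Fix T}}\leq c_{F}$), and the route you sketch cannot deliver it. First, the auxiliary claim that $y\in\overline{U+V}$ lets you control $\norm{y-\Pro_{U}y}$ by $\norm{\Pro_{V}y}$ is false: in $\mathbb{R}^{2}$ with $U=\mathbb{R}e_{1}$ and $V=\mathbb{R}(\cos\theta\,e_{1}+\sin\theta\,e_{2})$, $\theta\in\,]0,\pi/2[\,$, one has $\Fix T=\{0\}$, and $y:=-\sin\theta\,e_{1}+\cos\theta\,e_{2}\in(\Fix T)^{\perp}$ satisfies $\Pro_{V}y=0$ while $y-\Pro_{U}y=\cos\theta\,e_{2}\neq 0$, so no constant can work. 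Second, and more fundamentally, once you replace $\innp{\Pro_{U}y,\Pro_{V}y}$ by the worst-case bound $c_{F}\norm{\Pro_{U}y}\norm{\Pro_{V}y}$, the data you retain ($\norm{\Pro_{U}y}$, $\norm{\Pro_{V}y}$, $\norm{y}$) are insufficient: take unit vectors $u\in U\cap V^{\perp}$ and $v\in U^{\perp}\cap V$ (such configurations exist with $c_{F}>0$, e.g.\ $U=\spn\{e_{1},e_{2}\}$, $V=\spn\{e_{3},\cos\theta\,e_{2}+\sin\theta\,e_{4}\}$ in $\mathbb{R}^{4}$) and $y=u+v\in(\Fix T)^{\perp}$; your intermediate lower bound equals $2-2c_{F}$, which is strictly smaller than the target $(1-c_{F}^{2})\norm{y}^{2}=2(1-c_{F}^{2})$ whenever $c_{F}>0$ (the desired inequality holds there only because the true inner product is $0$, not $c_{F}$). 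In other words, the Cauchy--Schwarz-with-$c_{F}$ estimate discards precisely the directional information the lower bound needs; the known proofs obtain the rate through a per-angle (spectral) analysis — e.g.\ the Halmos two-subspace decomposition, under which $T$ acts on the generic part as $\cos\Theta$ composed with a rotation, or the argument of \cite{BCNPW2014} itself — and without such an ingredient your final step cannot be completed.
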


\begin{lemma} \label{lem:PUVFixSpnUV}
	Let $U$ and $V$ be closed linear subspaces of $\mathcal{H}$ and  $T :=T_{V,U}$.  Let $x \in \mathcal{H}$. Then
	\begin{align*}
	\Pro_{U \cap V} (x) = \Pro_{\Fix T}(x) \Leftrightarrow x \in \overline{\spn} (U \cup V) \Leftrightarrow x \in \overline{U+V}.
	\end{align*}
\end{lemma}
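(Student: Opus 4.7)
The plan is to compute $\Fix T$ explicitly and then use the resulting orthogonal decomposition to read off the claimed equivalence.

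First, the second equivalence is immediate: for linear (not merely affine) subspaces $U$ and $V$, $U \cup V$ is already closed under scalar multiplication, so $\spn(U \cup V) = U + V$, hence $\overline{\spn}(U \cup V) = \overline{U+V}$.

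For the first equivalence, I plan to identify the fixed point set as
$$\Fix T = (U \cap V) \oplus (U^{\perp} \cap V^{\perp}),$$
where the sum is orthogonal. To show the inclusion ``$\subseteq$'', I will rewrite the fixed-point equation $Tx = x$ as $\Pro_{V}(2\Pro_{U}x - x) = \Pro_{U}x$. Since $\Pro_{U}x \in U$ and the left-hand side lies in $V$, this forces $\Pro_{U}x \in U \cap V$. Moreover, the residual $(2\Pro_{U}x - x) - \Pro_{U}x = \Pro_{U}x - x$ must lie in $V^{\perp}$, and by \cref{MetrProSubs8}\cref{MetrProSubs8:ii} it already lies in $U^{\perp}$, so $x - \Pro_{U}x \in U^{\perp}\cap V^{\perp}$. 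This gives the decomposition $x = \Pro_{U}x + (x - \Pro_{U}x)$ with components in $U \cap V$ and $U^{\perp} \cap V^{\perp}$, respectively. The reverse inclusion is a direct verification: if $x = a + b$ with $a \in U \cap V$ and $b \in U^{\perp} \cap V^{\perp}$, then $\Pro_{U}x = a$, $2\Pro_{U}x - x = a - b$, and $\Pro_{V}(a-b) = a$, so $Tx = a + x - a = x$.

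Since $U \cap V$ and $U^{\perp}\cap V^{\perp}$ are orthogonal closed linear subspaces of $\mathcal{H}$, their sum is closed, and the projector onto the direct sum splits as
$$\Pro_{\Fix T} = \Pro_{U \cap V} + \Pro_{U^{\perp}\cap V^{\perp}}.$$
Therefore $\Pro_{U \cap V}(x) = \Pro_{\Fix T}(x)$ if and only if $\Pro_{U^{\perp}\cap V^{\perp}}(x) = 0$. Applying \cref{MetrProSubs8}\cref{MetrProSubs8:iv} to the closed linear subspace $U^{\perp}\cap V^{\perp}$, this is equivalent to $x \in (U^{\perp}\cap V^{\perp})^{\perp} = \overline{U+V}$, which closes the chain of equivalences.

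The only mildly delicate step is the identification of $\Fix T$; once that is in hand, the rest is an application of the orthogonal decomposition. No technical obstacle is expected, since we are working with linear subspaces throughout and \cref{MetrProSubs8} supplies the needed complement identities.
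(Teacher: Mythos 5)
Your proposal is correct, and every step checks out: the identification $\Fix T_{V,U} = (U\cap V)\oplus(U^{\perp}\cap V^{\perp})$ is verified accurately in both directions, the splitting $\Pro_{\Fix T} = \Pro_{U\cap V} + \Pro_{U^{\perp}\cap V^{\perp}}$ for an orthogonal sum of closed subspaces is standard, and the conclusion via \cref{MetrProSubs8}\cref{MetrProSubs8:iv} and the duality $(U^{\perp}\cap V^{\perp})^{\perp} = \overline{U+V}$ matches the statement. The difference from the paper is one of self-containment rather than strategy: the paper simply cites \cite[Proposition~3.6]{BCNPW2014} for the identity $\Pro_{\Fix T} = \Pro_{U\cap V} + \Pro_{U^{\perp}\cap V^{\perp}}$ and \cite[Theorems~4.6(5) \& 4.5(8)]{D2012} for $U^{\perp}\cap V^{\perp} = (\overline{U+V})^{\perp} = (\overline{\spn}(U\cup V))^{\perp}$, then finishes exactly as you do; you instead compute $\Fix T$ from the fixed-point equation and deduce the projector splitting yourself. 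Your route buys a fully elementary, citation-free argument (and makes the fixed-point set explicit, which the paper never states), at the cost of a slightly longer proof; the paper's route is shorter but leans on the external literature. Two cosmetic points: you should still cite (or prove in one line) the duality identity $(U^{\perp}\cap V^{\perp})^{\perp} = \overline{U+V}$, since you currently assert it, and note that your observation $\spn(U\cup V) = U+V$ is a clean justification of the second equivalence that the paper handles only through the Deutsch citation.
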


\begin{proof}
By \cite[Proposition~3.6]{BCNPW2014}, $\Pro_{\Fix T} = \Pro_{U\cap V} +
\Pro_{U^{\perp} \cap V^{\perp}}$. Moreover, by \cite[Theorems~4.6(5) \&
4.5(8)]{D2012}, we have $U^{\perp} \cap V^{\perp} =
(\overline{U+V})^{\perp}=(\overline{\spn} (U \cup V))^{\perp}$. Hence, by
\cref{MetrProSubs8}\cref{MetrProSubs8:iv}, we obtain that $\Pro_{U \cap V}
(x) = \Pro_{\Fix T}(x) \Leftrightarrow \Pro_{U^{\perp} \cap V^{\perp}}x=0
\Leftrightarrow \Pro_{(\overline{\spn} (U \cup V))^{\perp}}x=0
\Leftrightarrow x \in ( ( \overline{\spn} (U \cup V))^{\perp} )^{\perp}
=\overline{\spn} (U \cup V) = \overline{U+V} $.
\end{proof}

\begin{lemma} \label{lem:PFixTPWPUCapV}
	Let $U$ and $V$ be closed linear subspaces of $\mathcal{H}$ and  $T :=T_{V,U}$. Let $x \in \mathcal{H}$. Let $K$ be a closed linear subspace of $\mathcal{H}$ such that
	$
	U\cap V \subseteq K \subseteq  \overline{U+V}.
	$
	Then
	\begin{align*}
	\Pro_{\Fix T}\Pro_{K}x=\Pro_{U \cap V}\Pro_{K}x =\Pro_{U\cap V}x.
	\end{align*}
\end{lemma}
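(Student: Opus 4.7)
The statement is a chain of two equalities $\Pro_{\Fix T}\Pro_{K}x = \Pro_{U\cap V}\Pro_{K}x = \Pro_{U\cap V}x$, and the plan is to attack them separately, each with a single short argument.

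For the first equality, the plan is to apply \cref{lem:PUVFixSpnUV} to the point $y := \Pro_{K}x$ rather than to $x$ itself. Since $\Pro_{K}x \in K$ and by hypothesis $K \subseteq \overline{U+V}$, we have $y \in \overline{U+V}$, so \cref{lem:PUVFixSpnUV} immediately yields $\Pro_{\Fix T}(y) = \Pro_{U\cap V}(y)$, which is exactly $\Pro_{\Fix T}\Pro_{K}x = \Pro_{U\cap V}\Pro_{K}x$.

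For the second equality, the plan is to use the nesting $U\cap V \subseteq K$ together with linearity of the projector onto a closed linear subspace. By \cref{MetrProSubs8}\cref{MetrProSubs8:ii}, we can write $x = \Pro_{K}x + \Pro_{K^{\perp}}x$, so applying the linear operator $\Pro_{U\cap V}$ gives
\begin{equation*}
\Pro_{U\cap V}x = \Pro_{U\cap V}\Pro_{K}x + \Pro_{U\cap V}\Pro_{K^{\perp}}x.
\end{equation*}
It remains to check that the second summand vanishes. Since $U\cap V \subseteq K$, we have $K^{\perp} \subseteq (U\cap V)^{\perp}$, hence $\Pro_{K^{\perp}}x \in K^{\perp} \subseteq (U\cap V)^{\perp}$, and by \cref{MetrProSubs8}\cref{MetrProSubs8:iv} this forces $\Pro_{U\cap V}\Pro_{K^{\perp}}x = 0$.

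There is no real obstacle here: the lemma is essentially a bookkeeping observation combining the content of \cref{lem:PUVFixSpnUV} with the standard identity $\Pro_{A}\Pro_{B} = \Pro_{A}$ whenever $A \subseteq B$ are closed linear subspaces. The only mild subtlety is remembering to invoke \cref{lem:PUVFixSpnUV} at the auxiliary point $\Pro_{K}x$ rather than at $x$ itself, since $x$ need not lie in $\overline{U+V}$.
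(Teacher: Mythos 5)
Your proposal is correct and takes essentially the same route as the paper: the first equality is obtained exactly as in the paper's proof by applying \cref{lem:PUVFixSpnUV} at the auxiliary point $\Pro_{K}x \in K \subseteq \overline{U+V}$. For the second equality the paper simply cites the standard nested-projection identity $\Pro_{U\cap V}\Pro_{K}=\Pro_{K}\Pro_{U\cap V}=\Pro_{U\cap V}$ (Deutsch, Lemma~9.2), whereas you rederive that fact from the decomposition $x=\Pro_{K}x+\Pro_{K^{\perp}}x$ together with $K^{\perp}\subseteq(U\cap V)^{\perp}$ and \cref{MetrProSubs8}\cref{MetrProSubs8:iv}; this is a valid, self-contained substitute for the same step.
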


\begin{proof}
	Since $\Pro_{K}x \in K \subseteq   \overline{U+V}$, by \cref{lem:PUVFixSpnUV},
	\begin{align*}
	\Pro_{\Fix T}\Pro_{K}x=\Pro_{U \cap V}\Pro_{K}x.
	\end{align*}
	On the other hand, by assumption, $U\cap V \subseteq K$. 
	Hence, by \cite[Lemma~9.2]{D2012}, we get
	$\Pro_{U \cap V}\Pro_{K}x =\Pro_{K} \Pro_{U \cap V}x = \Pro_{U \cap V}x$.
\end{proof}

\subsection{Isometries} \label{subsec:Isometries}

\begin{definition} \label{defn:isometry} {\rm \cite[Definition~1.6-1]{Kreyszig1989}}
	A mapping $T: \mathcal{H} \rightarrow \mathcal{H}$ is said to be \emph{isometric} or an \emph{isometry} if
	\begin{align} \label{eq:T:normpreserving}
(\forall x \in \mathcal{H}) \quad (\forall y \in \mathcal{H}) \quad \norm{Tx -Ty} =\norm{x-y}.
	\end{align}
\end{definition}

Note that in some references, the definition of isometry is the linear operator satisfying \cref{eq:T:normpreserving}. In this paper, the definition of isometry follows  from \cite[Definition~1.6-1]{Kreyszig1989} where the linearity is not required.


\begin{corollary} \label{coro:averaNOTisometry}
Let $\alpha \in \,]0,1[\,$, and let $T: \mathcal{H} \to
\mathcal{H}$ be $\alpha$-averaged with $\Fix T \neq \varnothing$. Assume
that $T \neq \Id $. Then $T$ is not an isometry.
\end{corollary}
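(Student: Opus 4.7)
The plan is to derive a contradiction from assuming $T$ is an isometry, by exploiting the strict improvement built into the $\alpha$-averaged inequality of \cref{fact:averaged:character}.

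First I would invoke \cref{fact:averaged:character} to write, for all $x,y \in \mathcal{H}$,
\begin{equation*}
\norm{Tx -Ty}^{2} +\frac{1-\alpha}{\alpha} \norm{(\Id -T)x -(\Id -T)y}^{2} \leq \norm{x -y}^{2}.
\end{equation*}
If $T$ were an isometry, then \cref{eq:T:normpreserving} would give $\norm{Tx-Ty}^2 = \norm{x-y}^2$, so subtracting yields
\begin{equation*}
\frac{1-\alpha}{\alpha}\norm{(\Id-T)x-(\Id-T)y}^2 \leq 0.
\end{equation*}
Since $\alpha \in \,]0,1[\,$ gives $\frac{1-\alpha}{\alpha}>0$, this forces $(\Id-T)x = (\Id-T)y$ for all $x,y \in \mathcal{H}$, i.e.\ the map $\Id - T$ is constant.

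Next I would pin down that constant using the nonempty fixed point set: pick any $p \in \Fix T$, so $(\Id-T)p = 0$, which forces $(\Id-T)x = 0$ for every $x \in \mathcal{H}$. Hence $T = \Id$, contradicting the standing hypothesis $T \neq \Id$. Therefore $T$ cannot be an isometry.

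No genuine obstacle is expected here; the only mild care needed is to note explicitly that $\tfrac{1-\alpha}{\alpha}$ is strictly positive (so the inequality really does force the difference to vanish) and to use $\Fix T \neq \varnothing$ to identify the constant value of $\Id - T$ as zero.
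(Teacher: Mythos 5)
Your proof is correct and rests on essentially the same ingredients as the paper's: \cref{fact:averaged:character} applied with a fixed point of $T$, together with the strict positivity of $\tfrac{1-\alpha}{\alpha}$. The paper argues directly, choosing $x \notin \Fix T$ and $y \in \Fix T$ to get the strict inequality $\norm{Tx-Ty} < \norm{x-y}$, while you assume isometry and deduce that $\Id - T$ is constant, hence zero by the fixed point, so $T=\Id$; this is just a contrapositive rearrangement of the same computation.
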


\begin{proof}
	Because $T \neq \Id $, $\Fix T \neq \mathcal{H}$. Take $x \in \mathcal{H} \smallsetminus \Fix T$. Then 
	\begin{align} \label{eq:coro:averaNOTisometry:norm}
	\norm{x -Tx} >0.
	\end{align}
	By assumption, $\Fix T \neq \varnothing$, take $y \in \Fix T$, that is, $y-Ty =0$. Because $T: \mathcal{H} \to \mathcal{H}$ is $\alpha$-averaged, by \cref{fact:averaged:character}, 
	\begin{align*}
	\norm{Tx -Ty}^{2} +\frac{1-\alpha}{\alpha} \norm{(\Id -T)x -(\Id -T)y}^{2} \leq \norm{x -y}^{2} & \Leftrightarrow \norm{Tx -Ty}^{2} +\frac{1-\alpha}{\alpha} \norm{x -Tx}^{2} \leq \norm{x -y}^{2} \\
	& \stackrel{\cref{eq:coro:averaNOTisometry:norm}}{\Rightarrow}  \norm{Tx -Ty} < \norm{x -y},
	\end{align*}
	which, by \cref{defn:isometry}, imply that $T$ is not isometric.
\end{proof}

\begin{definition} {\rm \cite[Page~32]{BC2017}}
	If $\mathcal{K}$ is a real Hilbert space and $T \in \mathcal{B}(\mathcal{H},\mathcal{K})$, then the \emph{adjoint}  of $T$ is the unique operator $T^{*}  \in \mathcal{B}(\mathcal{K}, \mathcal{H})$ that satisfies
	\begin{align*}
	(\forall x \in \mathcal{H}) \quad (\forall y \in \mathcal{K}) \quad \innp{Tx, y} =\innp{x, T^{*}y}.
	\end{align*}
\end{definition}

\begin{lemma} \label{lem:examples:normpreserving}
	\begin{enumerate}
		\item \label{lem:examples:normpreserving:R} Let $C$ be a closed affine subspace of $\mathcal{H}$. Then the reflector $\R_{C}:=2\Pro_{C}-\Id$ is isometric.
		\item \label{lem:examples:normpreserving:Trans} Let $a \in \mathcal{H}$. The translation operator $(\forall x \in \mathcal{H})$ $T_{a}x:=x+a$ is isometric.
		\item \label{lem:examples:normpreserving:TTStar} Let $T \in \mathcal{B}(\mathcal{H},\mathcal{H})$ and let $T^{*}$ be the adjoint of $T$. Then $T$ is isometric  if and only if $T^{*}T =\Id$.
		\item \label{lem:examples:normpreserving:Id}The identity operator is isometric.
	\end{enumerate}
\end{lemma}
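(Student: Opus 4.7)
All four items are essentially direct verifications; the only part that requires a short argument is \cref{lem:examples:normpreserving:TTStar}.

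For \cref{lem:examples:normpreserving:R}, the statement is immediate from \cref{fact:PR}\cref{fact:PR:isometry}, which already gives $\norm{x-y} = \norm{\R_{C}x - \R_{C}y}$ whenever $C$ is a closed affine subspace. For \cref{lem:examples:normpreserving:Trans}, the plan is to compute directly: for every $x,y \in \mathcal{H}$, $\norm{T_{a}x - T_{a}y} = \norm{(x+a) - (y+a)} = \norm{x-y}$. For \cref{lem:examples:normpreserving:Id}, one simply notes $\norm{\Id x - \Id y} = \norm{x-y}$.

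The main content is \cref{lem:examples:normpreserving:TTStar}. The plan is the standard polarization argument. For the ``only if'' direction, I would first use the linearity of $T$ together with $T0=0$ to deduce from \cref{eq:T:normpreserving} (setting $y=0$) that $\norm{Tx}=\norm{x}$ for every $x\in\mathcal{H}$. Then, by the polarization identity
\begin{align*}
\innp{Tx,Ty} = \tfrac{1}{4}\bigl(\norm{Tx+Ty}^{2}-\norm{Tx-Ty}^{2}\bigr) = \tfrac{1}{4}\bigl(\norm{T(x+y)}^{2}-\norm{T(x-y)}^{2}\bigr) = \innp{x,y},
\end{align*}
one obtains $\innp{T^{*}Tx - x,\, y} = 0$ for all $x,y \in \mathcal{H}$, and hence $T^{*}T=\Id$. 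For the ``if'' direction, assuming $T^{*}T=\Id$, for every $x,y\in\mathcal{H}$ I would use linearity to write
\begin{align*}
\norm{Tx-Ty}^{2} = \norm{T(x-y)}^{2} = \innp{T(x-y),T(x-y)} = \innp{T^{*}T(x-y),x-y} = \norm{x-y}^{2},
\end{align*}
which is \cref{eq:T:normpreserving}.

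I do not anticipate any real obstacle: each item reduces to one line of algebra, with the only subtlety being the (standard) appeal to polarization in \cref{lem:examples:normpreserving:TTStar}, together with the observation that linearity of $T$ is what allows one to pass from $\norm{Tx-Ty}=\norm{x-y}$ to $\norm{Tx}=\norm{x}$.
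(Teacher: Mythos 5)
Your proposal is correct, and items \cref{lem:examples:normpreserving:R}, \cref{lem:examples:normpreserving:Trans} and the ``if'' direction of \cref{lem:examples:normpreserving:TTStar} coincide with the paper's argument (the paper also verifies the ``if'' direction by expanding $\norm{Tx-Ty}^{2}=\innp{x-y,T^{*}T(x-y)}$). The one genuine difference is the ``only if'' direction of \cref{lem:examples:normpreserving:TTStar}: the paper does not prove it at all, but simply refers the reader to an exercise in Kreyszig, whereas you supply the standard self-contained argument — using linearity and $T0=0$ to get $\norm{Tx}=\norm{x}$, then the real polarization identity to conclude $\innp{Tx,Ty}=\innp{x,y}$, hence $\innp{(T^{*}T-\Id)x,y}=0$ for all $x,y$ and so $T^{*}T=\Id$. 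This is valid in the paper's setting (a real Hilbert space, where the two-term polarization identity you use holds), and it buys a fully self-contained lemma at the cost of a few extra lines; the paper's citation keeps the proof shorter but outsources exactly this step. A minor cosmetic difference: you verify \cref{lem:examples:normpreserving:Id} directly, while the paper deduces it from \cref{lem:examples:normpreserving:TTStar}; both are immediate.
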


\begin{proof}
	\cref{lem:examples:normpreserving:R}: The result follows from \cref{fact:PR}\cref{fact:PR:isometry}.
	
	\cref{lem:examples:normpreserving:Trans}: It is clear from the definitions.
	
	\cref{lem:examples:normpreserving:TTStar}: Assume that $T^{*}T =\Id$. Let $x \in \mathcal{H}$ and $y \in \mathcal{H}$. Now $	\norm{Tx-Ty}^{2}  = \innp{Tx-Ty,Tx -Ty}=  \innp{T(x-y), T(x-y)} = \innp{x-y,T^{*}T(x-y)} =  \innp{x-y,x-y} = \norm{x-y}^{2}$.	For the proof of the opposite direction, refer to \cite[Exercise~8 in Page~207]{Kreyszig1989}.
	
	\cref{lem:examples:normpreserving:Id}: The required result follows easily from  \cref{lem:examples:normpreserving:TTStar}.
\end{proof}
Clearly, the reflector associated with an affine subspace is affine and not necessarily linear. The translation operator $T_{a}$ defined in \cref{lem:examples:normpreserving}\cref{lem:examples:normpreserving:Trans} is not linear whenever $a \neq 0$.
\begin{lemma} \label{lem:Composi:NormPreserOpera}
	Assume $F:\mathcal{H} \rightarrow \mathcal{H}$ and $T: \mathcal{H} \rightarrow \mathcal{H}$ are isometric. Then the composition $F \circ T$ of $T$ and $F$ is isometric. In particular, the composition of finitely many isometries is an isometry.
\end{lemma}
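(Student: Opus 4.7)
The plan is to apply the defining identity of \cref{defn:isometry} twice in succession, which reduces the claim to a one-line chain of equalities. First I would fix arbitrary $x, y \in \mathcal{H}$ and compute
\begin{align*}
\norm{(F \circ T)(x) - (F \circ T)(y)} = \norm{F(Tx) - F(Ty)} = \norm{Tx - Ty} = \norm{x - y},
\end{align*}
where the middle equality uses that $F$ is isometric applied to the points $Tx, Ty \in \mathcal{H}$, and the last equality uses that $T$ is isometric. Since $x, y$ were arbitrary, $F \circ T$ satisfies \cref{eq:T:normpreserving} and is therefore an isometry by \cref{defn:isometry}.

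For the \emph{in particular} statement, I would argue by induction on the number $m$ of isometries $T_{1}, \ldots, T_{m}$. The base case $m = 1$ is trivial, and $m = 2$ is the first part of the lemma. For the inductive step, assuming $T_{m-1} \circ \cdots \circ T_{1}$ is an isometry, the first part applied with $T := T_{m-1} \circ \cdots \circ T_{1}$ and $F := T_{m}$ yields that $T_{m} \circ T_{m-1} \circ \cdots \circ T_{1}$ is an isometry.

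There is no real obstacle here; the proof is immediate from the definition and the only thing to be careful about is making explicit that the intermediate points $Tx$ and $Ty$ lie in $\mathcal{H}$, which is automatic since $T$ maps $\mathcal{H}$ into itself. No linearity or continuity arguments are required because \cref{defn:isometry} does not assume linearity.
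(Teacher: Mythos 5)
Your proof is correct and follows essentially the same route as the paper: the two-operator case is immediate from \cref{defn:isometry}, and the general case follows by induction. The paper's proof is just a terser version of exactly this argument.
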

\begin{proof}
	The first statement comes directly from the definition of isometry. Then by induction, we obtain the last assertion.
\end{proof}

\begin{lemma} \label{lem:normpreser:FixSet}
	Let $T: \mathcal{H} \rightarrow  \mathcal{H}$  be an isometry. Then the following  hold:
	\begin{enumerate}
		\item \label{lem:normpreser:FixSet:Nonexpansive} $T$ is nonexpansive.
		\item \label{lem:normpreser:FixSet:Fix} $\Fix T$ is closed and convex.
	\end{enumerate}
\end{lemma}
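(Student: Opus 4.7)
The plan is to dispatch both parts quickly by reducing to already-stated facts.

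For \cref{lem:normpreser:FixSet:Nonexpansive}, I would simply compare \cref{defn:isometry} with \cref{defn:Nonexpansive}\cref{Nonex}: the identity $\|Tx-Ty\| = \|x-y\|$ trivially implies $\|Tx-Ty\| \leq \|x-y\|$ for all $x,y \in \mathcal{H}$, so $T$ is nonexpansive (in particular, Lipschitz with constant $1$ and hence continuous on all of $\mathcal{H}$).

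For \cref{lem:normpreser:FixSet:Fix}, I would invoke \cref{fact:Nonexpan:Fix} with $D := \mathcal{H}$. Since $\mathcal{H}$ is a nonempty closed convex subset of itself and $T: \mathcal{H} \to \mathcal{H}$ is nonexpansive by \cref{lem:normpreser:FixSet:Nonexpansive} just proved, \cref{fact:Nonexpan:Fix} immediately yields that $\Fix T$ is closed and convex.

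There is no real obstacle here; the only mild subtlety is that the paper's isometries are not assumed linear (so one cannot argue convexity of $\Fix T$ via linearity of $T-\Id$), but this is already handled by routing convexity through nonexpansiveness and \cref{fact:Nonexpan:Fix}.
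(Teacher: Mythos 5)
Your proposal is correct and follows exactly the same route as the paper: part \cref{lem:normpreser:FixSet:Nonexpansive} is immediate from comparing \cref{defn:isometry} with \cref{defn:Nonexpansive}\cref{Nonex}, and part \cref{lem:normpreser:FixSet:Fix} follows by combining the first part with \cref{fact:Nonexpan:Fix} applied on $D=\mathcal{H}$. Nothing is missing.
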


\begin{proof}
	\cref{lem:normpreser:FixSet:Nonexpansive}: This 
	is trivial from  \cref{defn:isometry} and  \cref{defn:Nonexpansive}\cref{Nonex}.
	\cref{lem:normpreser:FixSet:Fix}: Combine \cref{lem:normpreser:FixSet:Nonexpansive} and \cref{fact:Nonexpan:Fix}.
\end{proof}

\subsection{Circumcenter operators and circumcenter mappings} \label{subsec:CircumcenterOM}
In order to study circumcentered isometry methods, we require facts
on circumcenter operators and circumcenter mappings. 
Recall that  $\mathcal{P}(\mathcal{H})$ is the set of all nonempty subsets of $\mathcal{H}$ containing \emph{finitely many} elements.
By  \cite[Proposition~3.3]{BOyW2018}, we know that the following definition is well defined.
\begin{definition}[circumcenter operator]  {\rm \cite[Definition~3.4]{BOyW2018}} \label{defn:Circumcenter}
	The \emph{circumcenter operator} is
	\begin{empheq}[box=\mybluebox]{equation*}
	\CCO{} \colon \mathcal{P}(\mathcal{H}) \to \mathcal{H} \cup \{ \varnothing \} \colon K \mapsto \begin{cases} p, \quad ~\text{if}~p \in \aff (K)~\text{and}~\{\norm{p-y} ~|~y \in K \}~\text{is a singleton};\\
	\varnothing, \quad~ \text{otherwise}.
	\end{cases}
	\end{empheq}
	In particular, when $\CCO(K) \in \mathcal{H}$, that is, $\CCO(K) \neq \varnothing$, we say that the circumcenter of $K$ exists and we call $\CCO(K)$ the \emph{circumcenter} of $K$.
\end{definition}

%
%
%

Recall that  $T_{1}, \ldots, T_{m-1}, T_{m}$ are operators from $\mathcal{H}$ to $\mathcal{H}$ with $\cap^{m}_{j=1} \Fix T_{j} \neq \varnothing$ and that
\begin{empheq}[box=\mybluebox]{equation*}
\mathcal{S}=\{ T_{1}, \ldots, T_{m-1}, T_{m} \} \quad \text{and} \quad (\forall x \in \mathcal{H}) \quad \mathcal{S}(x)=\{ T_{1}x, \ldots, T_{m-1}x, T_{m}x\} .
\end{empheq}

\begin{definition}[circumcenter mapping] {\rm \cite[Definition~3.1]{BOyW2018Proper}} \label{def:cir:map}
	The \emph{circumcenter mapping} induced by $\mathcal{S}$ is
	\begin{empheq}[box=\mybluebox]{equation*}
	\CC{\mathcal{S}} \colon \mathcal{H} \to \mathcal{H} \cup \{ \varnothing \} \colon x \mapsto \CCO(\mathcal{S}(x)),
	\end{empheq}
	that is, for every $x \in \mathcal{H}$, if the circumcenter of the set $\mathcal{S}(x)$ defined in \cref{defn:Circumcenter} does not exist, then  $\CC{\mathcal{S}}x= \varnothing $. Otherwise, $\CC{\mathcal{S}}x$ is the unique point satisfying the two conditions below:
	\begin{enumerate}
		\item $\CC{\mathcal{S}}x \in \aff(\mathcal{S}(x))=\aff\{T_{1}(x), \ldots,  T_{m-1}(x),  T_{m}(x)\}$, and
		\item $\left\{ \norm{\CC{\mathcal{S}}x - T_{i}(x)}~\big|~ i \in \{1, \ldots, m-1,m\} \right\}$ is a singleton, that is,
		\begin{align*}
		\norm{\CC{\mathcal{S}}x -T_{1}(x)}=\cdots =\norm{\CC{\mathcal{S}}x -T_{m-1}(x)}=\norm{\CC{\mathcal{S}}x -T_{m}(x)}.
		\end{align*}
	\end{enumerate}
	
	In particular, if for every $x \in \mathcal{H}$, $\CC{\mathcal{S}}x \in \mathcal{H}$, then we say the circumcenter mapping $\CC{\mathcal{S}}$ induced by $\mathcal{S}$ is \emph{proper}. Otherwise, we call the $\CC{\mathcal{S}}$ \emph{improper}.
\end{definition}


\begin{fact} \label{fact:CW:FixPointSet} {\rm \cite[Proposition~3.10(i)\&(iii)]{BOyW2018Proper} }
	Assume $\CC{\mathcal{S}}$ is proper. Then the following  hold:
	\begin{enumerate}
		\item \label{fact:CW:FixPointSet:Basic} $\cap^{m}_{j=1} \Fix T_{j} \subseteq \Fix \CC{\mathcal{S}}$.
		\item \label{fact:CW:FixPointSet:Id} If $T_{1} =\Id$,
		then $\cap^{m}_{i=1} \Fix T_{i} =\Fix \CC{\mathcal{S}}$.
	\end{enumerate}
\end{fact}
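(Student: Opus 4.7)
The plan is to handle the two inclusions in sequence, invoking nothing more than the defining conditions of the circumcenter mapping in \cref{def:cir:map}.

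For \cref{fact:CW:FixPointSet:Basic}, I would take an arbitrary $x \in \bigcap_{j=1}^{m} \Fix T_{j}$. Then $T_{j} x = x$ for every $j \in \{1,\ldots,m\}$, so the set $\mathcal{S}(x) = \{T_{1}x, \ldots, T_{m}x\} = \{x\}$ is a singleton. Hence $\aff(\mathcal{S}(x)) = \{x\}$, and $x$ is trivially equidistant from every point of $\mathcal{S}(x)$ (the common distance being zero). By the uniqueness clause in \cref{def:cir:map}, this forces $\CC{\mathcal{S}}(x) = x$, so $x \in \Fix \CC{\mathcal{S}}$.

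For \cref{fact:CW:FixPointSet:Id}, the inclusion $\bigcap_{i=1}^{m} \Fix T_{i} \subseteq \Fix \CC{\mathcal{S}}$ is immediate from \cref{fact:CW:FixPointSet:Basic}, so only the reverse inclusion requires work. I would take $x \in \Fix \CC{\mathcal{S}}$, so $\CC{\mathcal{S}}(x) = x$. By the equidistance condition in \cref{def:cir:map},
\begin{align*}
\norm{x - T_{1}x} = \norm{x - T_{2}x} = \cdots = \norm{x - T_{m}x}.
\end{align*}
Since $T_{1} = \Id$, the leftmost expression equals $\norm{x - x} = 0$, whence $\norm{x - T_{i}x} = 0$ and therefore $T_{i} x = x$ for every $i \in \{1,\ldots,m\}$. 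This gives $x \in \bigcap_{i=1}^{m} \Fix T_{i}$, completing the proof.

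There is no real obstacle here; the argument is essentially a careful unpacking of the definition. The only point that deserves explicit acknowledgement is that properness of $\CC{\mathcal{S}}$ must be available so that the equalities $\CC{\mathcal{S}}(x) = x$ and the equidistance condition can be applied unambiguously. For \cref{fact:CW:FixPointSet:Basic} this is automatic from the singleton reduction, and for \cref{fact:CW:FixPointSet:Id} it is supplied by the standing hypothesis.
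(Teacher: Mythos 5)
Your proof is correct: part (i) follows because $\mathcal{S}(x)$ collapses to the singleton $\{x\}$, and part (iii) follows from the equidistance condition in \cref{def:cir:map} together with $T_{1}=\Id$, exactly as you argue. The paper itself quotes this statement as a fact from \cite[Proposition~3.10(i)\&(iii)]{BOyW2018Proper} without reproducing a proof, and your argument is the same straightforward unpacking of the definition used there, so there is nothing further to add.
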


	To facilitate the notations, from now on, for any nonempty and finite family of operators $F_{1}, \ldots, F_{t}$,
\begin{empheq}[box=\mybluebox]{equation} \label{eq:Omega}
\Omega ( F_{1}, \ldots, F_{t}) :=  \Big\{ F_{i_{r}}\cdots F_{i_{2}}F_{i_{1}}  ~\Big|~ r \in \mathbb{N}, ~\mbox{and}~ i_{1}, \ldots,  i_{r} \in \{1, \ldots,t\}    \Big\}
\end{empheq}
which is the set consisting of all finite composition of operators from $\{F_{1}, \ldots, F_{t}\}$. We use the empty product convention, so for $r=0$, $F_{i_{0}}\cdots F_{i_{1}} = \Id$.

\begin{proposition} \label{prop:Fix:CCS:EQ2}
	Let $t$ be a positive integer. Let $F_{1}, \ldots, F_{t}$ be $t$ operators  from $\mathcal{H}$ to $\mathcal{H}$. Assume that $\CC{\mathcal{S}}$ is proper. Assume that $\mathcal{S}$ is a finite subset of $\Omega ( F_{1}, \ldots, F_{t}) $ defined in \cref{eq:Omega} such that $\{ \Id, F_{1}, F_{2}F_{1}, \ldots, F_{t} F_{t-1} \cdots F_{2}F_{1} \} \subseteq \mathcal{S}$ or $\{ \Id, F_{1}, F_{2}, \ldots, F_{t}  \} \subseteq \mathcal{S}$. Then $\Fix \CC{\mathcal{S}} = \cap^{t}_{j=1} \Fix F_{j}$.
\end{proposition}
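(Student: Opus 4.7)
The plan is to reduce the problem to computing $\cap_{T \in \mathcal{S}} \Fix T$, then show this intersection coincides with $\cap_{j=1}^{t} \Fix F_j$. The key observation is that in both hypothesized cases we have $\Id \in \mathcal{S}$. After a harmless reindexing that places $\Id$ as the first element of $\mathcal{S}$, \cref{fact:CW:FixPointSet}\cref{fact:CW:FixPointSet:Id} applies and yields
\begin{align*}
\Fix \CC{\mathcal{S}} = \bigcap_{T \in \mathcal{S}} \Fix T.
\end{align*}
So the remaining task is purely about fixed-point sets of the operators making up $\mathcal{S}$.

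For the inclusion $\cap_{j=1}^{t} \Fix F_j \subseteq \cap_{T \in \mathcal{S}} \Fix T$, I would simply observe that every element of $\Omega(F_1, \ldots, F_t)$ is a finite composition of the $F_j$'s, so any common fixed point of $F_1, \ldots, F_t$ is fixed by every such composition. Since $\mathcal{S} \subseteq \Omega(F_1, \ldots, F_t)$, this direction is immediate (the empty composition $\Id$ is of course fixed by everything).

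For the reverse inclusion $\cap_{T \in \mathcal{S}} \Fix T \subseteq \cap_{j=1}^{t} \Fix F_j$, I would split into the two hypothesized cases. If $\{\Id, F_1, F_2, \ldots, F_t\} \subseteq \mathcal{S}$, the inclusion is trivial since each $F_j$ itself lies in $\mathcal{S}$. The more interesting case is $\{\Id, F_1, F_2 F_1, \ldots, F_t F_{t-1} \cdots F_1\} \subseteq \mathcal{S}$. Here, for $x \in \cap_{T \in \mathcal{S}} \Fix T$, I would argue by induction on $k$ that $F_k x = x$ for $k = 1, \ldots, t$. The base case $k=1$ comes from $F_1 \in \mathcal{S}$. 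For the inductive step, assuming $F_j x = x$ for all $j < k$, one has $F_{k-1} \cdots F_1 x = x$, and combining with $F_k F_{k-1} \cdots F_1 x = x$ (which holds since $F_k F_{k-1} \cdots F_1 \in \mathcal{S}$) yields $F_k x = x$.

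The main (mild) obstacle is just the induction in the second case; no convergence or metric machinery is needed beyond \cref{fact:CW:FixPointSet}\cref{fact:CW:FixPointSet:Id}. The two halves of the argument combine to give $\Fix \CC{\mathcal{S}} = \cap_{T \in \mathcal{S}} \Fix T = \cap_{j=1}^{t} \Fix F_j$, as required.
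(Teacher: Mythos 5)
Your proposal is correct and follows essentially the same route as the paper: reduce to $\cap_{T \in \mathcal{S}} \Fix T$ via \cref{fact:CW:FixPointSet}\cref{fact:CW:FixPointSet:Id} (valid since $\Id \in \mathcal{S}$), handle the case $\{\Id, F_{1}, \ldots, F_{t}\} \subseteq \mathcal{S}$ trivially, and in the chain case extract $F_{k}x = x$ successively from $x = F_{1}x = F_{2}F_{1}x = \cdots = F_{t}\cdots F_{1}x$. The only cosmetic difference is that the paper re-derives these chain equalities from the equidistance condition in \cref{def:cir:map}, whereas you obtain them directly from the fixed-point identity; both are equally valid.
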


\begin{proof}
 Because each element of $\mathcal{S}$ is composition of operators from $\{
 F_{1}, \ldots, F_{t} \}$, and because $(\forall i \in \{1, \ldots, t \})$ $
 \cap^{t}_{j=1} \Fix F_{j} \subseteq \Fix F_{i}$, we obtain that 
 \begin{align} \label{eq:prop:Fix:CCS:EQ2}
 \cap^{t}_{j=1} \Fix
 F_{j} \subseteq \cap_{T \in \mathcal{S}} \Fix T = \Fix \CC{\mathcal{S}}, 
 \end{align}
 where the equality is from
 \cref{fact:CW:FixPointSet}\cref{fact:CW:FixPointSet:Id}.
	
	On the other hand, if $\{ \Id, F_{1}, F_{2}, \ldots, F_{t}  \} \subseteq \mathcal{S}$, then clearly $\cap_{T \in \mathcal{S}} \Fix T \subseteq  \cap^{t}_{j=1} \Fix F_{j}$.  Hence, by \cref{eq:prop:Fix:CCS:EQ2}, $\Fix \CC{\mathcal{S}} = \cap^{t}_{j=1} \Fix F_{j}$.
	
	Suppose that $\{ \Id, F_{1}, F_{2}F_{1}, \ldots, F_{t} F_{t-1} \cdots F_{2}F_{1} \} \subseteq \mathcal{S}$. Then for every $x \in \mathcal{H}$, by \cref{def:cir:map},
	\begin{align*}
	x \in  \Fix \CC{\mathcal{S}}  & \Rightarrow \norm{x -x} = \norm{x -F_{1}x} =\norm{x - F_{2}F_{1}x}= \ldots = \norm{x - F_{t} F_{t-1} \cdots F_{2}F_{1}x}\\
	& \Leftrightarrow x= F_{1}x= F_{2}F_{1}x=\ldots= F_{t} F_{t-1} \cdots F_{2}F_{1}x\\
	& \Leftrightarrow x= F_{1}x= F_{2}x=\ldots= F_{t-1}x= F_{t}x\\
	& \Leftrightarrow x \in \cap^{t}_{j=1} \Fix F_{j} ,
	\end{align*}
	which imply that $\Fix \CC{\mathcal{S}} \subseteq \cap^{t}_{j=1} \Fix F_{j}$.  Again, by \cref{eq:prop:Fix:CCS:EQ2}, $\Fix \CC{\mathcal{S}} = \cap^{t}_{j=1} \Fix F_{j}$.
	Therefore, the proof is complete.
\end{proof}
The following example says that the condition \enquote{$\{ \Id, F_{1},
F_{2}F_{1}, \ldots, F_{t} F_{t-1} \cdots F_{2}F_{1} \} \subseteq
\mathcal{S}$} in \cref{prop:Fix:CCS:EQ2} above is indeed critical. Clearly,
for each reflector $\R_{U}$, $\Fix \R_{U} = U$.
\begin{example} \label{exam:FixPint:IdRU321}
	Assume $\mathcal{H} = \mathbb{R}^{2}$. Set $U_{1}:= \mathbb{R}\cdot(1,0)$, $U_{2} :=\mathbb{R}\cdot(1,1)$ and $U_{3} :=\mathbb{R}\cdot(0,1)$. Assume $\mathcal{S} = \{\Id, \R_{U_{3}}\R_{U_{2}}\R_{U_{1}}\}$. Since $(\forall x \in U_{2})$ $\R_{U_{3}}\R_{U_{2}}\R_{U_{1}}x=x$, $\CC{\mathcal{S}} = \frac{1}{2} (\Id + \R_{U_{3}}\R_{U_{2}}\R_{U_{1}})$ and since the set of fixed points of linear and continuous operator is a linear space, thus $\cap^{3}_{i=1}U_{i} = \{(0,0)\} \varsubsetneqq U_{2} = \Fix \CC{\mathcal{S}}$.
\end{example}

\begin{fact}[demiclosedness principle for circumcenter mappings]{\rm \cite[Theorem~3.20]{BOyW2018Proper}} \label{fact:demi:Nonexpan}
	Suppose that $T_{1} =\Id$, that each operator in
	$\mathcal{S} = \{T_{1}, T_{2}, \ldots, T_{m}\}$ is nonexpansive,
	and that
	$\CC{\mathcal{S}}$ is proper. Then $\Fix \CC{\mathcal{S}} = \cap^{m}_{i=1}\Fix T_{i}$ and
	the demiclosedness principle holds for $\CC{\mathcal{S}}$, that is,
	\begin{align}
	\label{eq:fact:sadresult:assum}
	\left.
	\begin{array}{c}
	x_{k} \weakly  \overline{x}\\
	x_{k} - \CC{\mathcal{S}}x_{k} \to 0
	\end{array}
	\right\}
	\;\; \Rightarrow \;\;
	\overline{x} \in  \Fix \CC{\mathcal{S}}.
	\end{align}
\end{fact}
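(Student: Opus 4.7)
The plan is to separate the statement into two parts. First, the identification $\Fix \CC{\mathcal{S}} = \bigcap_{i=1}^{m} \Fix T_i$ is immediate: since $T_1 = \Id$ and $\CC{\mathcal{S}}$ is proper, this follows directly from \cref{fact:CW:FixPointSet}\cref{fact:CW:FixPointSet:Id} (or equivalently from \cref{prop:Fix:CCS:EQ2} with $\mathcal{S} \supseteq \{\Id, T_2, \ldots, T_m\}$). So the genuine work is to establish the demiclosedness implication.

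The key observation is that $T_1 = \Id$ gives us a bridge between the circumcenter sequence and the individual orbits. By \cref{def:cir:map}, for every $x \in \mathcal{H}$ and every $i \in \{1,\ldots,m\}$,
\begin{align*}
\norm{\CC{\mathcal{S}}x - x} = \norm{\CC{\mathcal{S}}x - T_1 x} = \norm{\CC{\mathcal{S}}x - T_i x}.
\end{align*}
Applied to the sequence $(x_k)$ and combined with the triangle inequality,
\begin{align*}
\norm{x_k - T_i x_k} \leq \norm{x_k - \CC{\mathcal{S}}x_k} + \norm{\CC{\mathcal{S}}x_k - T_i x_k} = 2\norm{x_k - \CC{\mathcal{S}}x_k} \to 0.
\end{align*}
Hence for each $i \in \{1,\ldots,m\}$ we have $x_k - T_i x_k \to 0$ strongly, while $x_k \weakly \overline{x}$ by assumption.

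Now I would invoke the classical demiclosedness principle for nonexpansive operators (Browder's theorem, see \cite[Corollary~4.28]{BC2017}): if $T_i$ is nonexpansive, $x_k \weakly \overline{x}$, and $x_k - T_i x_k \to 0$, then $\overline{x} \in \Fix T_i$. Applying this for each $i \in \{1,\ldots,m\}$ yields $\overline{x} \in \bigcap_{i=1}^{m} \Fix T_i$, which by the first part equals $\Fix \CC{\mathcal{S}}$, completing the proof.

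The main obstacle, if any, is conceptual rather than technical: one must recognize that the equidistance condition embedded in the definition of the circumcenter, together with the presence of $\Id$ among the operators, automatically transfers the strong-to-zero information from $x_k - \CC{\mathcal{S}}x_k$ to each residual $x_k - T_i x_k$. Once this transfer is made, the argument reduces to $m$ applications of the standard Browder demiclosedness principle. No continuity or linearity of $\CC{\mathcal{S}}$ is needed, which is important because $\CC{\mathcal{S}}$ is generally a highly nonlinear mapping.
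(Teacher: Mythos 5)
Your proof is correct and is essentially the standard argument behind this result (which the paper itself only quotes from \cite[Theorem~3.20]{BOyW2018Proper} without reproving): use properness plus $T_{1}=\Id$ to transfer $x_{k}-\CC{\mathcal{S}}x_{k}\to 0$ to each residual $x_{k}-T_{i}x_{k}\to 0$ via the equidistance condition, then apply Browder's demiclosedness principle to each nonexpansive $T_{i}$ and conclude with $\Fix\CC{\mathcal{S}}=\cap_{i=1}^{m}\Fix T_{i}$ from \cref{fact:CW:FixPointSet}\cref{fact:CW:FixPointSet:Id}. No gaps.
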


\begin{fact} {\rm \cite[Proposition~3.3]{BOyW2018Proper}} \label{fact:form:m2:Oper}
	Assume $m=2$ and $\mathcal{S}=\{T_{1}, T_{2}\}$. Then $\CC{\mathcal{S}}$ is proper. Moreover,
	$
	(\forall x \in \mathcal{H})$ $ \CC{\mathcal{S}}x = \frac{T_{1}x +T_{2}x}{2}.
	$
\end{fact}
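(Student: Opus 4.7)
The plan is to verify directly, for an arbitrary $x \in \mathcal{H}$, that the point $p := \frac{T_{1}x + T_{2}x}{2}$ satisfies the two defining properties of $\CCO(\mathcal{S}(x))$ listed in \cref{def:cir:map}. Since the circumcenter, when it exists, is unique by \cite[Proposition~3.3]{BOyW2018}, this will simultaneously establish properness of $\CC{\mathcal{S}}$ and yield the formula.

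First I would check the affine hull condition. Writing $p = \tfrac{1}{2}T_{1}x + \tfrac{1}{2}T_{2}x$ exhibits $p$ as an affine combination of $T_{1}x$ and $T_{2}x$ (the coefficients sum to $1$), so $p \in \aff\{T_{1}x, T_{2}x\} = \aff(\mathcal{S}(x))$. Next, I would verify equidistance by direct computation:
\begin{equation*}
p - T_{1}x = \tfrac{1}{2}(T_{2}x - T_{1}x), \qquad p - T_{2}x = \tfrac{1}{2}(T_{1}x - T_{2}x),
\end{equation*}
so $\norm{p - T_{1}x} = \norm{p - T_{2}x} = \tfrac{1}{2}\norm{T_{1}x - T_{2}x}$; thus $\{\norm{p - T_{1}x}, \norm{p - T_{2}x}\}$ is a singleton.

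By \cref{defn:Circumcenter} and the uniqueness assertion of \cite[Proposition~3.3]{BOyW2018}, these two properties force $\CCO(\mathcal{S}(x)) = p \in \mathcal{H}$. Since $x$ was arbitrary, $\CC{\mathcal{S}}x = \CCO(\mathcal{S}(x)) \in \mathcal{H}$ for every $x \in \mathcal{H}$, which is precisely properness in the sense of \cref{def:cir:map}, and the explicit formula $\CC{\mathcal{S}}x = \tfrac{1}{2}(T_{1}x + T_{2}x)$ follows.

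There is no real obstacle here: the only subtle point is to invoke the uniqueness of the circumcenter of a finite set (so that a candidate satisfying the two conditions must \emph{be} the circumcenter, ruling out the $\varnothing$ alternative in \cref{defn:Circumcenter}). The degenerate case $T_{1}x = T_{2}x$ is not special, since then $p = T_{1}x = T_{2}x$ and both conditions hold trivially.
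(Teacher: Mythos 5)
Your verification is correct: exhibiting the midpoint $\frac{T_{1}x+T_{2}x}{2}$ as a point of $\aff(\mathcal{S}(x))$ equidistant from $T_{1}x$ and $T_{2}x$, and then invoking the uniqueness of the circumcenter from \cite[Proposition~3.3]{BOyW2018}, does rule out the $\varnothing$ alternative and gives both properness and the formula, including the degenerate case $T_{1}x=T_{2}x$. The paper itself states this result as a fact imported from \cite[Proposition~3.3]{BOyW2018Proper} without reproducing a proof, and your direct computation is essentially the standard argument behind it.
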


The following result plays a critical role in our
calculations of circumcentered reflection methods in our numerical
experiments in \cref{sec:NumericalExperiment} below.

\begin{proposition}\label{prop:CW:Welldefined:Formula}
	Assume $\CC{\mathcal{S}}$ is proper. Let $x \in \mathcal{H}$. Set $d_{x} :=\dim \big( \spn\{ T_{2}x-T_{1}x, \ldots, T_{m}x-T_{1}x\} \big) $. Let $\widetilde{\mathcal{S}} := \{ T_{1}, T_{i_{1}}, \ldots, T_{i_{d_{x}}}\} \subseteq \mathcal{S}$ be such that \footnotemark
	\begin{align*}
	T_{i_{1}}x -T_{1}x, \ldots,T_{i_{d_{x}}}x-T_{1}x ~\text{is a basis of } \spn\{ T_{2}x-T_{1}x, \ldots, T_{m}x-T_{1}x\}.
	\end{align*}
	Then
	\begin{align}
	\CC{\mathcal{S}}x= \CC{\widetilde{\mathcal{S}}}x=
	T_{1}x+ \sum^{d_{x}}_{j=1} \alpha_{i_{j}}(x) (T_{i_{j}}x-T_{1}x)
	\end{align}
	where
	\begin{align*}
	\begin{pmatrix}
	\alpha_{i_{1}}(x)\\
	\vdots\\
	\alpha_{i_{d_{x}}}(x)\\
	\end{pmatrix}
	=\frac{1}{2} G( T_{i_{1}}x -T_{1}x, \ldots,  T_{i_{d_{x}}}x-T_{1}x)^{-1}
	\begin{pmatrix}
	\norm{T_{i_{1}}x -T_{1}x}^{2} \\
	\vdots\\
	\norm{T_{i_{d_{x}}}x-T_{1}x}^{2} \\
	\end{pmatrix},
	\end{align*}
	and  $G( T_{i_{1}}x -T_{1}x, \ldots,  T_{i_{d_{x}}}x-T_{1}x)$ is the Gram matrix of $ T_{i_{1}}x -T_{1}x, \ldots,  T_{i_{d_{x}}}x-T_{1}x$.
	\footnotetext{Note that if $\card \left(\mathcal{S}(x)\right)=1$, then $d_{x}=0$ and so $\CC{\mathcal{S}}x=T_{1}x$. }
\end{proposition}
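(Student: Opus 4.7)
The plan is to split the argument into two parts: first establish that passing from $\mathcal{S}$ to the reduced family $\widetilde{\mathcal{S}}$ does not change the circumcenter, and then derive the closed-form expression by imposing the equidistance conditions and solving a Gram-matrix linear system.

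For the first part, I would observe that since $T_{i_{1}}x-T_{1}x,\ldots,T_{i_{d_{x}}}x-T_{1}x$ is a basis of $\spn\{T_{2}x-T_{1}x,\ldots,T_{m}x-T_{1}x\}$, the points $T_{1}x,T_{i_{1}}x,\ldots,T_{i_{d_{x}}}x$ are affinely independent and span (affinely) the same subspace as $T_{1}x,\ldots,T_{m}x$; hence $\aff(\widetilde{\mathcal{S}}(x))=\aff(\mathcal{S}(x))$. By hypothesis $\CC{\mathcal{S}}x\in\mathcal{H}$, so $\CC{\mathcal{S}}x$ lies in this common affine hull and is equidistant from every element of $\mathcal{S}(x)$, in particular from every element of $\widetilde{\mathcal{S}}(x)$. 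By the uniqueness clause of \cref{defn:Circumcenter}, this forces $\CC{\widetilde{\mathcal{S}}}x=\CC{\mathcal{S}}x$.

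For the second part, I would parametrize an arbitrary point $c\in\aff(\widetilde{\mathcal{S}}(x))$ as
\begin{align*}
c = T_{1}x+\sum_{j=1}^{d_{x}}\alpha_{i_{j}}(x)\,v_{j},\qquad v_{j}:=T_{i_{j}}x-T_{1}x,
\end{align*}
and translate the equidistance requirement $\|c-T_{1}x\|=\|c-T_{i_{j}}x\|$ for $j=1,\ldots,d_{x}$. Expanding $\|c-T_{i_{j}}x\|^{2}=\|(c-T_{1}x)-v_{j}\|^{2}$ and cancelling $\|c-T_{1}x\|^{2}$ on both sides gives the linear system
\begin{align*}
\sum_{k=1}^{d_{x}}\alpha_{i_{k}}(x)\,\langle v_{k},v_{j}\rangle = \tfrac{1}{2}\|v_{j}\|^{2}, \qquad j=1,\ldots,d_{x},
\end{align*}
that is, $G\,\alpha=\tfrac{1}{2}(\|v_{1}\|^{2},\ldots,\|v_{d_{x}}\|^{2})^{\top}$ where $G=G(v_{1},\ldots,v_{d_{x}})$ is the Gram matrix. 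Since $v_{1},\ldots,v_{d_{x}}$ are linearly independent, $G$ is invertible, so the stated formula for $(\alpha_{i_{1}}(x),\ldots,\alpha_{i_{d_{x}}}(x))^{\top}$ follows immediately, and the resulting $c$ is the required $\CC{\widetilde{\mathcal{S}}}x$.

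No step here looks genuinely difficult: the affine-hull argument is a short bookkeeping step, and the Gram-matrix computation is standard. The only minor subtlety worth flagging explicitly is the footnote case $\card(\mathcal{S}(x))=1$, i.e.\ $d_{x}=0$, where the sum is empty and both claims reduce to $\CC{\mathcal{S}}x=T_{1}x$; I would mention this at the very beginning of the proof to keep the indexing in the main argument honest.
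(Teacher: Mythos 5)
Your proof is correct. Note, though, that the paper itself does not prove this statement from scratch: its proof is a one-line citation of \cite[Corollary~4.3]{BOyW2018}, so what you have written is essentially a self-contained re-derivation of that cited result. Both halves of your argument are sound: the reduction to $\widetilde{\mathcal{S}}$ works because the basis assumption gives $\aff(\widetilde{\mathcal{S}}(x)) = T_{1}x + \spn\{T_{i_{1}}x-T_{1}x,\ldots,T_{i_{d_{x}}}x-T_{1}x\} = \aff(\mathcal{S}(x))$, and then $\CC{\mathcal{S}}x$ satisfies the two defining conditions for the circumcenter of the subset $\widetilde{\mathcal{S}}(x)$, so the uniqueness built into \cref{defn:Circumcenter} (i.e.\ \cite[Proposition~3.3]{BOyW2018}) forces $\CC{\widetilde{\mathcal{S}}}x=\CC{\mathcal{S}}x$ --- it is worth saying explicitly that you are invoking that uniqueness. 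The Gram-matrix computation is the standard one: equidistance from $T_{1}x$ and $T_{i_{j}}x$ is equivalent to $2\langle c-T_{1}x,v_{j}\rangle=\|v_{j}\|^{2}$, the linear independence of $v_{1},\ldots,v_{d_{x}}$ makes $G$ invertible, and existence of the circumcenter (properness) together with uniqueness of the solution of the system identifies the computed point with $\CC{\widetilde{\mathcal{S}}}x$. Handling the empty case $d_{x}=0$ up front, as you propose, is exactly right. The only practical difference between the two routes is that the paper buys brevity by outsourcing the formula to \cite{BOyW2018}, while your version keeps the paper self-contained at the cost of a page of routine linear algebra.
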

\begin{proof}
	The desired result follows from \cite[Corollary~4.3]{BOyW2018}.
\end{proof}


%
%
%

\section{Circumcenter mappings  induced by isometries} \label{sec:CircumMappinIsometries}
Denote $\I :=\{1, \ldots, m\}$. Recall that $(\forall i \in \I)$ $T_{i} : \mathcal{H} \rightarrow \mathcal{H}$ and that
\begin{empheq}[box=\mybluebox]{equation*}
\mathcal{S}=\{ T_{1}, \ldots, T_{m-1}, T_{m} \} \quad \text{with} \quad \cap^{m}_{j=1} \Fix T_{j} \neq \varnothing.
\end{empheq}
In the remaining part of the paper, we assume additionally that
\begin{empheq}[box=\mybluebox]{equation*}
(\forall i \in \I) \quad T_{i} : \mathcal{H} \rightarrow \mathcal{H} ~\text{is isometry}.
\end{empheq}
\subsection{Properness of circumcenter mapping induced by isometries} \label{subsec:Genera5Sec}

The following three results generalize Lemma 4.1,
Proposition 4.2 and Theorem 4.3 respectively in
\cite[Section~4]{BOyW2018Proper} from reflectors associated with affine
subspaces to isometries. 
In view of 
\cite[Theorem~3.14(ii)]{BOyW2019Linear}, we know that isometries are indeed
more general than reflectors associated with affine subspaces.
The proofs are similar to those given in \cite[Section~4]{BOyW2018Proper}.

\begin{lemma} \label{lem:EquDistS:T}
	Let $x \in \mathcal{H}$. Then
	\begin{align*}
	(\forall z \in \cap^{m}_{j=1} \Fix T_{j}) \quad (\forall i \in \{1,2,\ldots, m\}) \quad \norm{T_{i}x -z} = \norm{x-z}.
	\end{align*}
\end{lemma}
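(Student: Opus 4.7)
The plan is immediate: the statement says each $T_i$ preserves the distance from $x$ to any common fixed point $z$, which is exactly what the isometric and fixed‑point hypotheses together deliver.

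First I would fix an arbitrary $z \in \bigcap_{j=1}^{m} \Fix T_{j}$ and an arbitrary index $i \in \{1, \ldots, m\}$. From $z \in \bigcap_{j=1}^{m} \Fix T_{j}$ it follows in particular that $z \in \Fix T_{i}$, which by definition of the fixed‑point set means $T_{i} z = z$. This rewriting is the only non‑trivial idea: it lets us replace the bare $z$ on the right of $\norm{T_{i}x - z}$ by $T_{i} z$.

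Next I would apply the standing assumption that $T_{i}$ is an isometry, i.e., $T_{i}$ satisfies \cref{eq:T:normpreserving} of \cref{defn:isometry}. Thus
\begin{equation*}
\norm{T_{i} x - z} \;=\; \norm{T_{i} x - T_{i} z} \;=\; \norm{x - z},
\end{equation*}
which is precisely the desired identity. Since $z$ and $i$ were arbitrary, the conclusion holds for every $z \in \bigcap_{j=1}^{m} \Fix T_{j}$ and every $i \in \{1, \ldots, m\}$.

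There is no genuine obstacle here; the only thing to be careful about is to note that isometry in this paper is used in the sense of \cref{defn:isometry} (no linearity required), so one must invoke the two‑point distance preservation rather than any linear‑algebraic argument. The lemma will serve later as the basic geometric fact that every common fixed point is equidistant to all $T_{i}x$, which is the starting point for proving properness of $\CC{\mathcal{S}}$ in \cref{subsec:Genera5Sec}.
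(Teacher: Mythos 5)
Your proof is correct and coincides with the paper's own argument: both fix $z \in \cap^{m}_{j=1}\Fix T_{j}$ and $i$, use $z \in \Fix T_{i}$ to write $z = T_{i}z$, and then apply the isometry property of $T_{i}$ to get $\norm{T_{i}x - z} = \norm{T_{i}x - T_{i}z} = \norm{x - z}$. Nothing is missing.
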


\begin{proof}
	Let $z \in \cap^{m}_{j=1}  \Fix T_{j}$ and  $ i \in \{1,2,\ldots, m\} $. Since $T_{i}$ is  isometric, and since $z \in \cap^{m}_{j=1}  \Fix T_{j} \subseteq \Fix T_{i}$,   thus $\norm{T_{i}x -z} = \norm{T_{i}x - T_{i} z} = \norm{x-z}$.
\end{proof}

\begin{proposition} \label{prop:CCS:proper:Normpreserv}
	For every $z \in \cap^{m}_{j=1} \Fix T_{j}$, and for every $x \in \mathcal{H}$, we have
	\begin{enumerate}
		\item \label{thm:CCS:proper:belong:normPres} $\Pro_{\aff (\mathcal{S}(x))}(z) \in \aff (\mathcal{S}(x))$, and
		\item  \label{thm:CCS:proper:EquaDistance:Normprese} $
		\big\{  \norm{\Pro_{\aff (\mathcal{S}(x))}(z) -Tx } ~\big|~ T \in \mathcal{S} \big\} $ is a singleton.
	\end{enumerate}
\end{proposition}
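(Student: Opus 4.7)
The statement follows by combining the two ingredients developed just beforehand: \cref{lem:EquDistS:T} (which repackages the isometry property into a uniform distance statement about $z$) and the Pythagorean identity in \cref{fact:PR}\cref{fact:PR:Pythagoras}. The argument is entirely parallel to the reflector case in \cite[Section~4]{BOyW2018Proper}, with \cref{lem:EquDistS:T} taking the place of the corresponding property of reflectors.

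First, for \cref{thm:CCS:proper:belong:normPres}, I would simply observe that $\mathcal{S}(x) = \{T_{1}x,\ldots,T_{m}x\}$ is finite, so $\aff(\mathcal{S}(x))$ is a finite-dimensional (hence closed) affine subspace of $\mathcal{H}$. The projector $\Pro_{\aff(\mathcal{S}(x))}$ is therefore well-defined on all of $\mathcal{H}$, and by definition it takes values in $\aff(\mathcal{S}(x))$. In particular, $\Pro_{\aff(\mathcal{S}(x))}(z)\in \aff(\mathcal{S}(x))$.

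For \cref{thm:CCS:proper:EquaDistance:Normprese}, set $C:=\aff(\mathcal{S}(x))$. Fix any $i\in\{1,\ldots,m\}$. Since $T_{i}x\in \mathcal{S}(x)\subseteq C$, \cref{fact:PR}\cref{fact:PR:Pythagoras} applied with the point $z$ and $v=T_{i}x$ gives
\begin{align*}
\norm{\Pro_{C}(z)-T_{i}x}^{2} \;=\; \norm{z-T_{i}x}^{2}-\norm{z-\Pro_{C}(z)}^{2}.
\end{align*}
By \cref{lem:EquDistS:T}, $\norm{z-T_{i}x}=\norm{x-z}$, which is independent of $i$. Hence
\begin{align*}
\norm{\Pro_{C}(z)-T_{i}x}^{2} \;=\; \norm{x-z}^{2}-\norm{z-\Pro_{C}(z)}^{2}
\end{align*}
for every $i\in\{1,\ldots,m\}$, and the set in \cref{thm:CCS:proper:EquaDistance:Normprese} is a singleton.

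There is no real obstacle here: the heart of the argument has already been packaged into \cref{lem:EquDistS:T}, which is precisely the point at which the isometry hypothesis is used (rather than the affine structure exploited in the reflector version). Once one recognises that the uniform distance of $z$ to the $T_{i}x$'s is all one needs for the Pythagoras step, the proof reduces to two lines.
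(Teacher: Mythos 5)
Your proposal is correct and follows essentially the same route as the paper's own proof: part (i) via the finite-dimensionality (hence closedness) of $\aff(\mathcal{S}(x))$, and part (ii) by combining the Pythagorean identity of \cref{fact:PR}\cref{fact:PR:Pythagoras} with the uniform distance statement of \cref{lem:EquDistS:T}. No gaps; nothing further is needed.
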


\begin{proof}
	Let $z \in \cap^{m}_{j=1}  \Fix T_{j}$, and let  $x \in \mathcal{H}$.
	
	\cref{thm:CCS:proper:belong:normPres}: Because $\aff (\mathcal{S}(x))$ is a nonempty  finite-dimensional affine subspace, we know $\Pro_{\aff (\mathcal{S}(x))}(z)$ is well-defined.
	Clearly, $\Pro_{\aff (\mathcal{S}(x))}(z) \in \aff (\mathcal{S}(x))$.
	
	\cref{thm:CCS:proper:EquaDistance:Normprese}: Take an arbitrary but fixed element $T \in \mathcal{S}$. Then $T x \in \mathcal{S}(x) \subseteq  \aff (\mathcal{S}(x))$. Denote $p := \Pro_{\aff (\mathcal{S}(x))}(z)$. 	By \cref{fact:PR}\cref{fact:PR:Pythagoras},
	\begin{align} \label{thm:eq:2}
	\norm{z-p}^{2}+\norm{p- Tx }^{2} &= \norm{z-Tx}^{2}.
	\end{align}
	By \cref{lem:EquDistS:T}, $\norm{z-Tx} = \norm{z -x}$. Thus, \cref{thm:eq:2} yields that
	\begin{align*}
	(\forall T \in \mathcal{S} ) \quad \norm{p- Tx} = \big(  \norm{z -x}^{2} - \norm{z-p}^{2} \big)^{\frac{1}{2}},
	\end{align*}
	which implies that $\left\{  \norm{p - Tx } ~|~ T \in \mathcal{S} \right\}$ is a singleton.
\end{proof}

The following
\cref{thm:CCS:proper:NormPres:T}\cref{thm:CCS:proper:NormPres:T:prop} states
that the circumcenter mapping induced by isometries is proper, which makes
the circumcentered isometry method well-defined and is therefore fundamental for 
our study on circumcentered isometry methods.

\begin{theorem} \label{thm:CCS:proper:NormPres:T}
	Let $x\in \mathcal{H}$. Then the  following  hold:
	\begin{enumerate}
  \item \label{thm:CCS:proper:NormPres:T:prop} The circumcenter mapping
  $\CC{\mathcal{S}} : \mathcal{H} \rightarrow \mathcal{H}$ induced by
  $\mathcal{S}$ is proper; moreover, 
  $\CC{\mathcal{S}}x$ is the unique point satisfying the two conditions
  below:
		\begin{enumerate}
			\item  \label{thm:CCS:proper:NormPres:T:i} $\CC{\mathcal{S}}x\in  \aff (\mathcal{S}(x))$, and
			\item  \label{thm:CCS:proper:NormPres:T:ii} $
			\left\{  \norm{\CC{\mathcal{S}}x-Tx } ~|~ T \in \mathcal{S} \right\} $ is a singleton.
		\end{enumerate}
		\item \label{thm:CCS:proper:NormPres:T:AllU} $(\forall z \in \cap^{m}_{j=1}  \Fix T_{j})$  $\CC{\mathcal{S}}x= \Pro_{\aff (\mathcal{S}(x))}(z)$.
		
  \item \label{thm:CCS:proper:NormPres:T:PaffU} Assume that $\varnothing \neq
  W \subseteq \cap^{m}_{j=1} \Fix T_{j}$ and that $W$ is closed and convex.
  Then $\CC{\mathcal{S}}x= \Pro_{\aff
  (\mathcal{S}(x))}(\Pro_{\cap^{m}_{j=1} \Fix T_{j}} x) = \Pro_{\aff
  (\mathcal{S}(x))}(\Pro_{W} x)$.
	\end{enumerate}
\end{theorem}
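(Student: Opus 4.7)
The plan is to view the theorem as essentially a packaging of \cref{prop:CCS:proper:Normpreserv} together with the uniqueness built into the definition of the circumcenter. The assumption $\bigcap_{j=1}^{m} \Fix T_{j} \neq \varnothing$ will be invoked to produce a concrete candidate point via a projection, and everything else will follow by tracing definitions.

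For \cref{thm:CCS:proper:NormPres:T:prop}, I would fix $x\in\mathcal{H}$, pick any $z\in\bigcap_{j=1}^{m}\Fix T_{j}$ (nonempty by standing hypothesis), and set
\begin{equation*}
p \;:=\; \Pro_{\aff(\mathcal{S}(x))}(z).
\end{equation*}
By \cref{prop:CCS:proper:Normpreserv}\cref{thm:CCS:proper:belong:normPres}, $p\in\aff(\mathcal{S}(x))$, and by \cref{prop:CCS:proper:Normpreserv}\cref{thm:CCS:proper:EquaDistance:Normprese}, $\{\norm{p-Tx}:T\in\mathcal{S}\}$ is a singleton. Thus $p$ witnesses the two defining properties in \cref{defn:Circumcenter}/\cref{def:cir:map}, so $\CCO(\mathcal{S}(x))\neq\varnothing$ and hence $\CC{\mathcal{S}}x\in\mathcal{H}$; this gives properness. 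The uniqueness assertion that $\CC{\mathcal{S}}x$ is the \emph{only} point satisfying \cref{thm:CCS:proper:NormPres:T:i} and \cref{thm:CCS:proper:NormPres:T:ii} is already part of the definition of the circumcenter (see \cite[Proposition~3.3]{BOyW2018}), so it need only be quoted.

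For \cref{thm:CCS:proper:NormPres:T:AllU}, I would observe that the argument above produced $p=\Pro_{\aff(\mathcal{S}(x))}(z)$ as a point satisfying the two defining conditions of $\CC{\mathcal{S}}x$; by the uniqueness in \cref{thm:CCS:proper:NormPres:T:prop}, $\CC{\mathcal{S}}x=\Pro_{\aff(\mathcal{S}(x))}(z)$, and since $z\in\bigcap_{j=1}^{m}\Fix T_{j}$ was arbitrary, the formula holds for every such $z$.

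For \cref{thm:CCS:proper:NormPres:T:PaffU}, I would first note that, by \cref{lem:normpreser:FixSet}\cref{lem:normpreser:FixSet:Fix} applied to each $T_{j}$, the set $\bigcap_{j=1}^{m}\Fix T_{j}$ is an intersection of closed convex sets, hence closed and convex; together with nonemptiness it admits a well-defined projector. Then $\Pro_{\bigcap_{j=1}^{m}\Fix T_{j}}(x)\in\bigcap_{j=1}^{m}\Fix T_{j}$, and similarly $\Pro_{W}(x)\in W\subseteq\bigcap_{j=1}^{m}\Fix T_{j}$. Applying \cref{thm:CCS:proper:NormPres:T:AllU} with $z=\Pro_{\bigcap_{j=1}^{m}\Fix T_{j}}(x)$ and then with $z=\Pro_{W}(x)$ yields both equalities.

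The only minor subtlety (not really an obstacle) is to make sure that $\aff(\mathcal{S}(x))$ is nonempty and finite-dimensional, so that $\Pro_{\aff(\mathcal{S}(x))}$ is well defined; this holds because $\mathcal{S}(x)$ has at most $m$ elements and contains, e.g., $T_{1}x$. No genuinely hard step is anticipated: once \cref{prop:CCS:proper:Normpreserv} is in place, the theorem is a rearrangement of that proposition plus the uniqueness clause of \cref{def:cir:map}.
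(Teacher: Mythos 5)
Your proposal is correct and follows essentially the same route as the paper: both rest on \cref{prop:CCS:proper:Normpreserv} to exhibit $\Pro_{\aff(\mathcal{S}(x))}(z)$ as a point satisfying the defining conditions of the circumcenter, and both obtain \cref{thm:CCS:proper:NormPres:T:PaffU} from \cref{thm:CCS:proper:NormPres:T:AllU} via \cref{lem:normpreser:FixSet}, which makes $\cap^{m}_{j=1}\Fix T_{j}$ nonempty, closed and convex so the projections are well defined. The only cosmetic difference is that the paper compresses items \cref{thm:CCS:proper:NormPres:T:prop} and \cref{thm:CCS:proper:NormPres:T:AllU} into a citation of \cite[Proposition~3.6]{BOyW2018Proper}, whereas you unfold that step directly from the definition and uniqueness of the circumcenter.
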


\begin{proof}
	\cref{thm:CCS:proper:NormPres:T:prop} and \cref{thm:CCS:proper:NormPres:T:AllU}  come from  \cref{prop:CCS:proper:Normpreserv} and \cite[Proposition~3.6]{BOyW2018Proper}. 
	
	Using \cref{lem:normpreser:FixSet} and the underlying assumptions, we know $\cap^{m}_{j=1}  \Fix T_{j} $ is nonempty, closed and convex, so $\Pro_{\cap^{m}_{j=1}  \Fix T_{j}} x \in \cap^{m}_{j=1}  \Fix T_{j}$ is well-defined. Hence \cref{thm:CCS:proper:NormPres:T:PaffU} comes from \cref{thm:CCS:proper:NormPres:T:AllU}.
\end{proof}

\subsection{Further properties of circumcenter mappings induced by isometries} \label{subsec:PropoertiesCircmMapping}

Similarly to \cref{prop:CW:Welldefined:Formula}, 
we provide a formula of the circumcenter
mapping in the following result. 
Because $\Pro_{\cap^{m}_{i=1} \Fix T_{i} } x$ or $\Pro_{W} x$ is
unknown in general, 
\cref{prop:CW:Welldefined:Formula} is more practical.

\begin{proposition}  \label{prop:CCSformula:GSOthorgonal}
	Let  $\varnothing \neq W \subseteq \cap^{m}_{j=1}  \Fix T_{j}$ and let $W$ be closed and convex.	   Let $x \in \mathcal{H}$. Set $d_{x}:= \dim \big( \spn\{ T_{2}x-T_{1}x, \ldots, T_{m}x-T_{1}x\} \big) $.  Let $\widetilde{\mathcal{S}} := \{ T_{1}, T_{i_{1}}, \ldots, T_{i_{d_{x}}}\} \subseteq \mathcal{S}$ be such that \footnotemark
	
	\footnotetext{Note that if $\card \left(\mathcal{S}(x)\right)=1$, then $d_{x}=0$ and so $\CC{\mathcal{S}}x=T_{1}x$. }
	\begin{align} \label{eq:Prop:CCSformula:assum}
	T_{i_{1}}x -T_{1}x, \ldots,T_{i_{d_{x}}}x-T_{1}x ~\text{is a basis of } \spn\{ T_{2}x-T_{1}x, \ldots, T_{m}x-T_{1}x\}.
	\end{align}
	Then
	\begin{align*}
	\CC{\mathcal{S}}x=T_{1}x+ \sum^{d_{x}}_{j=1} \innp{\Pro_{\cap^{m}_{i=1}  \Fix T_{i} } x-T_{1}x, e_{j}} e_{j} = T_{1}x+ \sum^{d_{x}}_{j=1} \innp{\Pro_{W} x-T_{1}x, e_{j}} e_{j}.
	\end{align*}
	where $(j \in \{1, \ldots, d_{x}\})$ $e_{j} =\frac{T_{i_{j}}x-T_{1}x - \sum^{j-1}_{k=1} \innp{T_{i_{j}}x-T_{1}x,e_{k}}e_{k} }{\norm{T_{i_{j}}x-T_{1}x - \sum^{j-1}_{k=1} \innp{T_{i_{j}}x-T_{1}x,e_{k}}e_{k} }}$.
\end{proposition}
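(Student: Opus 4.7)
The strategy is to combine two ingredients already in hand: the identity
\[
\CC{\mathcal{S}}x = \Pro_{\aff (\mathcal{S}(x))}\bigl(\Pro_{\cap^{m}_{j=1}\Fix T_j} x\bigr) = \Pro_{\aff (\mathcal{S}(x))}(\Pro_{W} x)
\]
supplied by \cref{thm:CCS:proper:NormPres:T}\cref{thm:CCS:proper:NormPres:T:PaffU}, together with the explicit Gram--Schmidt projection formula of \cref{lem:othorg}. So the entire task reduces to rewriting the projection onto $\aff (\mathcal{S}(x))$ using a well-chosen basis of its parallel subspace.

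\textbf{Step 1: Reduce the affine hull.} I would first observe that
\[
\aff (\mathcal{S}(x)) = T_{1}x + \spn\{T_{2}x - T_{1}x, \ldots, T_{m}x - T_{1}x\}.
\]
By the choice of $\widetilde{\mathcal{S}}$ in \cref{eq:Prop:CCSformula:assum}, the vectors $T_{i_{1}}x - T_{1}x, \ldots, T_{i_{d_{x}}}x - T_{1}x$ form a basis of the span on the right, and hence are linearly independent. Therefore
\[
\aff (\mathcal{S}(x)) = \aff\{T_{1}x, T_{i_{1}}x, \ldots, T_{i_{d_{x}}}x\},
\]
where the defining vectors in the right-hand affine hull have linearly independent differences from $T_{1}x$. (For the degenerate case $d_{x}=0$, one has $\mathcal{S}(x) = \{T_{1}x\}$, so $\aff(\mathcal{S}(x)) = \{T_{1}x\}$ and both sides of the claimed formula collapse to $T_{1}x$ in agreement with the footnote.)

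\textbf{Step 2: Apply Gram--Schmidt.} With the identification of Step~1, \cref{lem:othorg} (taking the base point to be $T_{1}x$ and the remaining points to be $T_{i_{1}}x,\ldots,T_{i_{d_{x}}}x$) yields, for every $y \in \mathcal{H}$,
\[
\Pro_{\aff (\mathcal{S}(x))}(y) = T_{1}x + \sum_{j=1}^{d_{x}} \innp{y - T_{1}x, e_{j}}\, e_{j},
\]
with exactly the orthonormal vectors $e_{j}$ prescribed in the statement.

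\textbf{Step 3: Substitute.} Finally I would specialize this identity twice: once at $y = \Pro_{\cap^{m}_{i=1}\Fix T_{i}} x$ and once at $y = \Pro_{W}x$. By \cref{thm:CCS:proper:NormPres:T}\cref{thm:CCS:proper:NormPres:T:PaffU}, both specializations equal $\CC{\mathcal{S}}x$, giving the two claimed expressions. No genuine obstacle is expected here; the only subtlety is ensuring the basis hypothesis on $T_{i_{1}}x - T_{1}x, \ldots, T_{i_{d_{x}}}x - T_{1}x$ is precisely what \cref{lem:othorg} requires, which is the case by construction of $\widetilde{\mathcal{S}}$.
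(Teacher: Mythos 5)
Your proposal is correct and follows essentially the same route as the paper: invoke \cref{thm:CCS:proper:NormPres:T}\cref{thm:CCS:proper:NormPres:T:PaffU} to write $\CC{\mathcal{S}}x$ as $\Pro_{\aff (\mathcal{S}(x))}(\Pro_{\cap^{m}_{j=1}\Fix T_{j}}x)=\Pro_{\aff (\mathcal{S}(x))}(\Pro_{W}x)$, identify $\aff(\mathcal{S}(x))$ with $T_{1}x+\spn\{T_{i_{1}}x-T_{1}x,\ldots,T_{i_{d_{x}}}x-T_{1}x\}$ via \cref{eq:Prop:CCSformula:assum}, and apply the Gram--Schmidt projection formula of \cref{lem:othorg}. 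Your explicit treatment of the degenerate case $d_{x}=0$ is a harmless addition the paper leaves to the footnote.
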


\begin{proof}
	By \cref{thm:CCS:proper:NormPres:T}\cref{thm:CCS:proper:NormPres:T:PaffU} ,
	\begin{align*}
	\CC{\mathcal{S}}x= \Pro_{\aff (\mathcal{S}(x))}(\Pro_{\cap^{m}_{j=1}  \Fix T_{j} } x)  = \Pro_{\aff (\mathcal{S}(x))}(\Pro_{W} x).
	\end{align*}
	By \cref{eq:Prop:CCSformula:assum}, we know that
	\begin{align*}
	\aff (\mathcal{S}(x))=\aff \{T_{1}x, T_{i_{1}}x,  \ldots, T_{i_{d_{x}}}x \}=T_{1}x +\spn\{T_{i_{1}}x-T_{1}x, \ldots, T_{i_{d_{x}}}x-T_{1}x\}.
	\end{align*}
	Substituting $(x,x_{1},\ldots,x_{n},M)$ by 
	$(T_{1}x,T_{i_{1}}x, \ldots,T_{i_{d_{x}}}x,\aff (\mathcal{S}(x))$ in
	\cref{lem:othorg}, we obtain the desired result.
\end{proof}

The following result plays a important role for the proofs
of the linear convergence of circumcentered isometry methods.

\begin{lemma} \label{lem:CCS:Equations}
	Let $x \in \mathcal{H}$, and  $z \in \cap^{m}_{j=1} \Fix T_{j}$. Then the following  hold:
	\begin{enumerate}
		\item \label{lem:CCS:Equations:F} Let $F: \mathcal{H} \to \mathcal{H}$ satisfy  $(\forall y \in \mathcal{H})$ $F(y) \in \aff (\mathcal{S}(y))$. Then $\norm{z- \CC{\mathcal{S}}x }^{2} + \norm{\CC{\mathcal{S}}x -Fx}^{2} =\norm{z- Fx}^{2} $;
		\item  \label{lem:CCS:Equations:TS} If $T_{\mathcal{S}} \in \aff \mathcal{S} $, then $\norm{z - \CC{\mathcal{S}}x }^{2}  + \norm{ \CC{\mathcal{S}}x - T_{\mathcal{S}}x }^{2}  =\norm{z - T_{\mathcal{S}}x}^{2}$;
		
		\item \label{lem:CCS:Equations:Id} If $\Id \in \aff \mathcal{S}$, then$\norm{z- \CC{\mathcal{S}}x }^{2} + \norm{\CC{\mathcal{S}}x -x}^{2} =\norm{z- x}^{2}$;
		
		\item \label{lem:CCS:Equations:TinS} $( \forall T \in \mathcal{S})$  $\norm{z- \CC{\mathcal{S}}x }^{2} + \norm{\CC{\mathcal{S}}x -Tx}^{2} =\norm{z- x}^{2}$.
	\end{enumerate}
\end{lemma}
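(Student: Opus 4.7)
The unifying observation is \cref{thm:CCS:proper:NormPres:T}\cref{thm:CCS:proper:NormPres:T:AllU}, which identifies $\CC{\mathcal{S}}x$ with the projection $\Pro_{\aff(\mathcal{S}(x))}(z)$ for any $z \in \cap^{m}_{j=1}\Fix T_{j}$. Combined with the Pythagorean identity from \cref{fact:PR}\cref{fact:PR:Pythagoras} applied to the closed (finite-dimensional) affine subspace $\aff(\mathcal{S}(x))$, this yields (i) at once: since $F(x) \in \aff(\mathcal{S}(x))$ by hypothesis and $\CC{\mathcal{S}}x = \Pro_{\aff(\mathcal{S}(x))}(z)$, taking $v := Fx$ in \cref{fact:PR}\cref{fact:PR:Pythagoras} (with the roles relabeled so that the point being projected is $z$) gives
\begin{align*}
\norm{z-\CC{\mathcal{S}}x}^{2}+\norm{\CC{\mathcal{S}}x-Fx}^{2}=\norm{z-Fx}^{2}.
\end{align*}

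For \cref{lem:CCS:Equations:TS}, I would observe that if $T_{\mathcal{S}}\in\aff\mathcal{S}$, i.e., $T_{\mathcal{S}}=\sum_{i=1}^{m}\lambda_{i}T_{i}$ with $\sum_{i=1}^{m}\lambda_{i}=1$, then $T_{\mathcal{S}}x=\sum_{i=1}^{m}\lambda_{i}T_{i}x$ is an affine combination of the points in $\mathcal{S}(x)$, hence lies in $\aff(\mathcal{S}(x))$. Thus $F:=T_{\mathcal{S}}$ satisfies the hypothesis of (i), and the conclusion follows. For \cref{lem:CCS:Equations:Id} just specialize \cref{lem:CCS:Equations:TS} with $T_{\mathcal{S}}=\Id$, noting that $\Id\in\aff\mathcal{S}$ is exactly the hypothesis.

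Finally, for \cref{lem:CCS:Equations:TinS}, given any $T\in\mathcal{S}$ we have $Tx\in\mathcal{S}(x)\subseteq\aff(\mathcal{S}(x))$, so (i) applies with $F:=T$ and produces $\norm{z-\CC{\mathcal{S}}x}^{2}+\norm{\CC{\mathcal{S}}x-Tx}^{2}=\norm{z-Tx}^{2}$. To finish, use \cref{lem:EquDistS:T} (which itself is a one-line consequence of isometry and $z\in\Fix T$) to rewrite $\norm{z-Tx}=\norm{x-z}$, giving the advertised right-hand side $\norm{z-x}^{2}$. No serious obstacle is anticipated; the only point to watch is that (i)--(iii) are purely geometric and do not use isometry, whereas (iv) is where the isometry hypothesis on $\mathcal{S}$ enters, through \cref{lem:EquDistS:T}.
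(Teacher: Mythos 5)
Your proposal is correct and follows essentially the same route as the paper: identify $\CC{\mathcal{S}}x$ with $\Pro_{\aff(\mathcal{S}(x))}(z)$ via \cref{thm:CCS:proper:NormPres:T}\cref{thm:CCS:proper:NormPres:T:AllU}, apply the Pythagorean identity of \cref{fact:PR}\cref{fact:PR:Pythagoras} to get (i), specialize to get (ii)--(iii), and use the isometry of $T$ together with $z\in\Fix T$ (i.e., \cref{lem:EquDistS:T}) to convert $\norm{z-Tx}$ into $\norm{z-x}$ for (iv). No gaps.
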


\begin{proof}
	Using \cref{thm:CCS:proper:NormPres:T}\cref{thm:CCS:proper:NormPres:T:AllU}, we obtain
	\begin{align} \label{eq:lem:CCS:FormP}
	\CC{\mathcal{S}}x=\Pro_{\aff(\mathcal{S}(x))}(z).
	\end{align}
	
	\cref{lem:CCS:Equations:F}: Since $F(x) \in \aff (\mathcal{S}(x))$,
	\cref{fact:PR}\cref{fact:PR:Pythagoras} implies
	\begin{align*}
	\norm{z-\CC{\mathcal{S}}x}^{2} + \norm{ \CC{\mathcal{S}}x-Fx}^{2}= \norm{z-Fx}^{2}.
	\end{align*}
	
	\cref{lem:CCS:Equations:TS} and \cref{lem:CCS:Equations:Id}  come directly from \cref{lem:CCS:Equations:F}.
	
	Note that $( \forall T \in \mathcal{S})$ $T$ is isometric and $z \in
	\cap^{m}_{j=1} \Fix T_{j} \subseteq \Fix T$. Hence,
	\cref{lem:CCS:Equations:TinS} follows easily from \cref{lem:CCS:Equations:TS}.
\end{proof}

We now present some calculus rules for circumcenter mappings.

\begin{corollary} \label{cor:HomogeAdditiveModulo}
	Assume  $(\forall T \in \mathcal{S})$ $T$ is linear. Then
	\begin{enumerate}
		\item \label{cor:HomogeAdditiveModulo:Homoge} $\CC{\mathcal{S}}$ is homogeneous, that is
		$(\forall x \in \mathcal{H})$ $(\forall \lambda \in \mathbb{R})$ $ \CC{\mathcal{S}} (\lambda x)=\lambda \CC{\mathcal{S}} x$;
		\item \label{cor:HomogeAdditiveModulo:AdditiveModulo} $\CC{\mathcal{S}}$ is quasitranslation, that is, $(\forall x \in \mathcal{H})$  $(\forall z \in \cap^{m}_{j=1} \Fix T_{j}) \quad \CC{\mathcal{S}} (x+ z ) = \CC{\mathcal{S}} (x) + z.$
	\end{enumerate}
\end{corollary}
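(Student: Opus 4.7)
The plan is to apply the uniqueness characterization of $\CC{\mathcal{S}}x$ given in \cref{thm:CCS:proper:NormPres:T}\cref{thm:CCS:proper:NormPres:T:prop}: in each part I will exhibit an explicit candidate point and verify that it lies in the correct affine hull and is equidistant from all of the $Tx$'s; uniqueness then forces it to be $\CC{\mathcal{S}}$ evaluated at the relevant input.

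For \cref{cor:HomogeAdditiveModulo:Homoge}, the candidate is $\lambda \CC{\mathcal{S}} x$. Because each $T \in \mathcal{S}$ is linear, $T(\lambda x) = \lambda Tx$, so $\mathcal{S}(\lambda x) = \lambda \, \mathcal{S}(x)$ and therefore $\aff(\mathcal{S}(\lambda x)) = \lambda \aff (\mathcal{S}(x))$ when $\lambda \neq 0$ (and both sides equal $\{0\}$ when $\lambda = 0$, since linearity forces $T0 = 0$). Hence $\lambda \CC{\mathcal{S}} x \in \aff(\mathcal{S}(\lambda x))$ from $\CC{\mathcal{S}}x \in \aff(\mathcal{S}(x))$. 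For the equidistance condition, for every $T \in \mathcal{S}$
\begin{align*}
\norm{\lambda \CC{\mathcal{S}} x - T(\lambda x)} = \norm{\lambda \CC{\mathcal{S}} x - \lambda Tx} = |\lambda|\, \norm{\CC{\mathcal{S}} x - Tx},
\end{align*}
which is independent of $T$ because $\{\norm{\CC{\mathcal{S}}x - Tx} \mid T \in \mathcal{S}\}$ is a singleton by definition of $\CC{\mathcal{S}}x$. Uniqueness in \cref{thm:CCS:proper:NormPres:T}\cref{thm:CCS:proper:NormPres:T:prop} then gives $\CC{\mathcal{S}}(\lambda x) = \lambda \CC{\mathcal{S}} x$.

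For \cref{cor:HomogeAdditiveModulo:AdditiveModulo}, the candidate is $\CC{\mathcal{S}} x + z$. Since each $T \in \mathcal{S}$ is linear and $z \in \Fix T$, we have $T(x+z) = Tx + Tz = Tx + z$, whence $\mathcal{S}(x+z) = \mathcal{S}(x) + z$ and $\aff(\mathcal{S}(x+z)) = \aff(\mathcal{S}(x)) + z$. Therefore $\CC{\mathcal{S}}x + z \in \aff(\mathcal{S}(x+z))$, and for every $T \in \mathcal{S}$,
\begin{align*}
\norm{(\CC{\mathcal{S}} x + z) - T(x+z)} = \norm{\CC{\mathcal{S}} x - Tx},
\end{align*}
which is again independent of $T$. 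Invoking once more the uniqueness statement, $\CC{\mathcal{S}}(x+z) = \CC{\mathcal{S}} x + z$.

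There is essentially no substantive obstacle; the argument is a direct verification of the two defining conditions of the circumcenter. The only small point requiring care is the degenerate case $\lambda = 0$ in \cref{cor:HomogeAdditiveModulo:Homoge}, which is handled by observing that linearity of every $T \in \mathcal{S}$ makes $\mathcal{S}(0) = \{0\}$, so that $\CC{\mathcal{S}}(0) = 0$ trivially matches $0 \cdot \CC{\mathcal{S}}x$.
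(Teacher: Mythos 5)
Your proof is correct, but it takes a different route from the paper. The paper's own argument is essentially a reduction: after recording that every $T \in \mathcal{S}$ is linear, that $\CC{\mathcal{S}}$ is proper by \cref{thm:CCS:proper:NormPres:T}\cref{thm:CCS:proper:NormPres:T:prop}, and that $0 \in \cap^{m}_{j=1}\Fix T_{j} \subseteq \Fix \CC{\mathcal{S}}$ (which settles the case $\lambda = 0$), it simply invokes Propositions~6.1 and~6.3 of the authors' earlier paper on circumcenters of finite sets, i.e.\ the homogeneity $\CCO(\lambda K) = \lambda\, \CCO(K)$ and translation property $\CCO(K + y) = \CCO(K) + y$ of the circumcenter operator, applied to $K = \mathcal{S}(x)$ after observing $\mathcal{S}(\lambda x) = \lambda\,\mathcal{S}(x)$ and $\mathcal{S}(x + z) = \mathcal{S}(x) + z$. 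You instead verify the two defining conditions directly for the candidate points $\lambda\CC{\mathcal{S}}x$ and $\CC{\mathcal{S}}x + z$ — membership in $\aff(\mathcal{S}(\lambda x))$ resp.\ $\aff(\mathcal{S}(x+z))$, and equidistance from the images — and conclude by the uniqueness in \cref{thm:CCS:proper:NormPres:T}\cref{thm:CCS:proper:NormPres:T:prop}. In effect you have inlined the proofs of the cited external propositions, which makes your argument self-contained (and you correctly isolate the only delicate point, $\lambda = 0$, handling it via $T0=0$ exactly where the paper instead uses $0 \in \Fix\CC{\mathcal{S}}$), whereas the paper's version is shorter by outsourcing the scaling and translation behaviour of $\CCO{}$ to the earlier work. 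Both arguments ultimately rest on the same two ingredients: linearity of the operators to transport $\mathcal{S}(x)$, and properness/uniqueness of the circumcenter.
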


\begin{proof}
	By assumption, $(\forall T \in \mathcal{S})$ $T$ is linear, so for every $\alpha, \beta \in \mathbb{R}$, and for every $x ,y \in \mathcal{H}$,
	\begin{align*}
	(\forall T \in \mathcal{S}) \quad T(\alpha x + \beta y) = \alpha T x+ \beta Ty.
	\end{align*}
	
	Note that by \cref{thm:CCS:proper:NormPres:T}\cref{thm:CCS:proper:NormPres:T:prop}, $\CC{\mathcal{S}}$ is proper. By \cref{fact:CW:FixPointSet}\cref{fact:CW:FixPointSet:Basic}, $0 \in \cap^{m}_{j=1} \Fix T_{j} \subseteq \Fix \CC{\mathcal{S}}$. Hence,
	\begin{align*}
	(\forall x \in \mathcal{H}) \quad \CC{\mathcal{S}} (0 x)=0=0 \CC{\mathcal{S}} x.
	\end{align*}
	
	Therefore, \cref{cor:HomogeAdditiveModulo:Homoge} is from  \cite[Proposition~6.1]{BOyW2018} and \cref{cor:HomogeAdditiveModulo:AdditiveModulo}  comes from \cite[Proposition~6.3]{BOyW2018}.
\end{proof}

The following result characterizes the fixed point set of
circumcenter mappings induced by isometries under some conditions.

\begin{proposition} \label{prop:FixCCS:F1Ft}
	Recall that $\mathcal{S}=\{ T_{1}, \ldots, T_{m-1}, T_{m} \} $. Then the following  hold:
	\begin{enumerate}
		\item  \label{prop:FixCCS:F1Ft:T} Assume $T_{1} =\Id$. Then $\Fix \CC{\mathcal{S}} = \cap^{m}_{j=1} \Fix T_{j}$.
		\item  \label{prop:FixCCS:F1Ft:F} Let $F_{1}, \ldots, F_{t}$ be isometries from $\mathcal{H}$ to $\mathcal{H}$.  Assume that $\CC{\mathcal{S}}$ is proper, and that $\mathcal{S}$ is a finite subset of $\Omega ( F_{1}, \ldots, F_{t}) $ defined in \cref{eq:Omega} such that $\{ \Id, F_{1}, F_{2}F_{1}, \ldots, F_{t} F_{t-1} \cdots F_{2}F_{1} \} \subseteq \mathcal{S}$ or $\{ \Id, F_{1}, F_{2}, \ldots, F_{t}  \} \subseteq \mathcal{S}$.  Then $\Fix \CC{\mathcal{S}} = \cap^{t}_{j=1} \Fix F_{j}  = \cap^{m}_{j=1} \Fix T_{j} $.
	\end{enumerate}
\end{proposition}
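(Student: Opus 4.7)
The plan is to reduce both parts to results already established in the excerpt, the only verification being that the ambient hypotheses of those results are met in the isometry setting.

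For part \cref{prop:FixCCS:F1Ft:T}, I would first record that each $T_{i} \in \mathcal{S}$, being an isometry, is nonexpansive by \cref{lem:normpreser:FixSet}\cref{lem:normpreser:FixSet:Nonexpansive}, and that $\CC{\mathcal{S}}$ is proper by \cref{thm:CCS:proper:NormPres:T}\cref{thm:CCS:proper:NormPres:T:prop}. With $T_{1}=\Id$, the conclusion $\Fix \CC{\mathcal{S}} = \cap^{m}_{j=1} \Fix T_{j}$ is then immediate from \cref{fact:CW:FixPointSet}\cref{fact:CW:FixPointSet:Id} (or equivalently from the demiclosedness statement in \cref{fact:demi:Nonexpan}, which explicitly bundles this fixed-point identity).

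For part \cref{prop:FixCCS:F1Ft:F}, I would apply \cref{prop:Fix:CCS:EQ2} directly: $\CC{\mathcal{S}}$ is proper by \cref{thm:CCS:proper:NormPres:T}\cref{thm:CCS:proper:NormPres:T:prop}, and the combinatorial hypothesis on $\mathcal{S}$ is exactly one of the two alternatives permitted there, so \cref{prop:Fix:CCS:EQ2} yields $\Fix \CC{\mathcal{S}} = \cap^{t}_{j=1} \Fix F_{j}$. It remains to identify $\cap^{t}_{j=1} \Fix F_{j}$ with $\cap^{m}_{j=1} \Fix T_{j}$. The inclusion $\cap^{t}_{j=1} \Fix F_{j} \subseteq \cap^{m}_{j=1} \Fix T_{j}$ is trivial because every $T_{i} \in \mathcal{S} \subseteq \Omega(F_{1},\ldots,F_{t})$ is a finite composition of operators from $\{F_{1},\ldots,F_{t}\}$, and any common fixed point of the $F_{j}$'s is preserved by every such composition. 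For the reverse inclusion, I would invoke part \cref{prop:FixCCS:F1Ft:T}: since $\Id \in \mathcal{S}$ and $\mathcal{S}$ is a set (so we may relabel so that $T_{1}=\Id$), part \cref{prop:FixCCS:F1Ft:T} gives $\cap^{m}_{j=1} \Fix T_{j} = \Fix \CC{\mathcal{S}} = \cap^{t}_{j=1} \Fix F_{j}$, closing the chain of equalities.

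I do not anticipate a genuine obstacle: the proposition is essentially an assembly of \cref{thm:CCS:proper:NormPres:T}, \cref{lem:normpreser:FixSet}, \cref{fact:CW:FixPointSet}, and \cref{prop:Fix:CCS:EQ2}. The only small point worth flagging explicitly is the harmless relabeling to put $\Id$ in the first slot of $\mathcal{S}$ in part \cref{prop:FixCCS:F1Ft:F}, which is legitimate because $\CC{\mathcal{S}}$ depends on $\mathcal{S}$ only as a set and not on any enumeration of its elements.
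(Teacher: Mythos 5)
Your proposal is correct and follows essentially the same route as the paper: part \cref{prop:FixCCS:F1Ft:T} from \cref{thm:CCS:proper:NormPres:T}\cref{thm:CCS:proper:NormPres:T:prop} together with \cref{fact:CW:FixPointSet}\cref{fact:CW:FixPointSet:Id}, and part \cref{prop:FixCCS:F1Ft:F} by combining properness with \cref{prop:Fix:CCS:EQ2} and then closing the chain via part \cref{prop:FixCCS:F1Ft:T}. Your explicit remark about relabeling so that $T_{1}=\Id$ (since $\CC{\mathcal{S}}$ depends only on $\mathcal{S}$ as a set) makes precise a step the paper leaves implicit, but it is the same argument.
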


\begin{proof}
	\cref{prop:FixCCS:F1Ft:T} is clear from \cref{thm:CCS:proper:NormPres:T}\cref{thm:CCS:proper:NormPres:T:prop} and \cref{fact:CW:FixPointSet}\cref{fact:CW:FixPointSet:Id}.
	
	\cref{prop:FixCCS:F1Ft:F}:
	Combining \cref{thm:CCS:proper:NormPres:T}\cref{thm:CCS:proper:NormPres:T:prop} with \cref{prop:Fix:CCS:EQ2}, we obtain $\Fix \CC{\mathcal{S}} = \cap^{t}_{j=1} \Fix F_{j} $.  In addition,  the \cref{prop:FixCCS:F1Ft:T} proved above implies that $  \Fix \CC{\mathcal{S}}  = \cap^{m}_{j=1} \Fix T_{j} $. Hence, the proof is complete.
\end{proof}	

\begin{proposition} \label{prop:FirmQuasiNon}
	Let $F_{1}, \ldots, F_{t}$ be isometries from $\mathcal{H}$ to $\mathcal{H}$.
	Assume that $\CC{\mathcal{S}}$ is proper, and that $\mathcal{S}$ is a finite subset of $\Omega ( F_{1}, \ldots, F_{t}) $ defined in \cref{eq:Omega} such that $\{ \Id, F_{1}, F_{2}F_{1}, \ldots, F_{t} F_{t-1} \cdots F_{2}F_{1} \} \subseteq \mathcal{S}$ or $\{ \Id, F_{1}, F_{2}, \ldots, F_{t}  \} \subseteq \mathcal{S}$.  Then
	\begin{align}  \label{eq:prop:FirmQuasiNon}
	(\forall x \in \mathcal{H})\quad (\forall y \in \Fix \CC{\mathcal{S}}) \quad \norm{\CC{\mathcal{S}}x -y}^{2}+\norm{\CC{\mathcal{S}}x-x}^{2} = \norm{x -y}^{2}.
	\end{align}
	In particular,
	$\CC{\mathcal{S}}$ is firmly quasinonexpansive.
\end{proposition}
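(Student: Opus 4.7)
The plan is to combine \cref{prop:FixCCS:F1Ft}\cref{prop:FixCCS:F1Ft:F} with \cref{lem:CCS:Equations}\cref{lem:CCS:Equations:Id}, both of which are already available. The identity \cref{eq:prop:FirmQuasiNon} is in fact stronger than the firmly quasinonexpansive inequality from \cref{defn:Nonexpansive}\cref{FirmQuasiNonex}, so once it is established the \enquote{in particular} assertion is immediate.

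First, I would invoke \cref{prop:FixCCS:F1Ft}\cref{prop:FixCCS:F1Ft:F} under the standing hypotheses to conclude
\begin{align*}
\Fix \CC{\mathcal{S}} \;=\; \bigcap_{j=1}^{t}\Fix F_{j} \;=\; \bigcap_{j=1}^{m}\Fix T_{j}.
\end{align*}
In particular, any $y\in\Fix\CC{\mathcal{S}}$ belongs to $\cap_{j=1}^{m}\Fix T_{j}$, which is precisely the set over which \cref{lem:CCS:Equations} is stated.

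Next, I would observe that under either of the two alternatives for $\mathcal{S}$, the identity operator is an element of $\mathcal{S}$, hence trivially $\Id\in\aff\mathcal{S}$. Applying \cref{lem:CCS:Equations}\cref{lem:CCS:Equations:Id} with the choice $z=y\in\Fix\CC{\mathcal{S}}\subseteq\cap_{j=1}^{m}\Fix T_{j}$ then yields
\begin{align*}
\norm{\CC{\mathcal{S}}x-y}^{2}+\norm{\CC{\mathcal{S}}x-x}^{2} \;=\; \norm{x-y}^{2},
\end{align*}
which is exactly \cref{eq:prop:FirmQuasiNon}. The firmly quasinonexpansive property of $\CC{\mathcal{S}}$ follows because this equality implies the inequality required by \cref{defn:Nonexpansive}\cref{FirmQuasiNonex}.

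There is essentially no obstacle here: this proposition is a clean bookkeeping corollary that packages \cref{prop:FixCCS:F1Ft}\cref{prop:FixCCS:F1Ft:F} (to identify $\Fix\CC{\mathcal{S}}$ as a subset of $\cap_{j=1}^{m}\Fix T_{j}$) together with the Pythagoras-type identity of \cref{lem:CCS:Equations}\cref{lem:CCS:Equations:Id} (whose hypothesis $\Id\in\aff\mathcal{S}$ is guaranteed by the two alternative forms of $\mathcal{S}$). The only point worth double-checking is that in both alternatives for $\mathcal{S}$ we genuinely have $\Id\in\mathcal{S}$ (not merely $\Id\in\aff\mathcal{S}$), so that \cref{lem:CCS:Equations}\cref{lem:CCS:Equations:Id} applies directly; this is visible by inspection of the listed generators.
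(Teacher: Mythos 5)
Your argument is correct and matches the paper's own proof: both identify $\Fix\CC{\mathcal{S}}$ with $\cap^{m}_{j=1}\Fix T_{j}$ via \cref{prop:FixCCS:F1Ft}\cref{prop:FixCCS:F1Ft:F} and then apply the Pythagorean identity of \cref{lem:CCS:Equations}\cref{lem:CCS:Equations:Id} (whose hypothesis $\Id\in\aff\mathcal{S}$ holds since $\Id\in\mathcal{S}$ in either alternative), after which firm quasinonexpansiveness is immediate from \cref{defn:Nonexpansive}\cref{FirmQuasiNonex}. No gaps.
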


\begin{proof}
	\cref{prop:FixCCS:F1Ft}\cref{prop:FixCCS:F1Ft:F}  says that in both cases stated in the assumptions, $\Fix \CC{\mathcal{S}} = \cap^{t}_{j=1} \Fix F_{j}  = \cap_{T \in \mathcal{S}} \Fix T $.  Combining this result with \cref{lem:CCS:Equations}\cref{lem:CCS:Equations:Id}, we obtain \cref{eq:prop:FirmQuasiNon}.
	
	Hence, by \cref{defn:Nonexpansive}\cref{FirmQuasiNonex}, $\CC{\mathcal{S}}$ is firmly quasinonexpansive.
\end{proof}	

\begin{corollary} \label{coro:RUtRU1}
 Let $U_{1}, \ldots, U_{t}$ be closed affine subspaces in $\mathcal{H}$.
 Assume that $\mathcal{S}_{1} =\{\Id, \R_{U_{1}}, \ldots, \R_{U_{t}}\}$ and
that  $\mathcal{S}_{2} =\{\Id, \R_{U_{1}}, \R_{U_{2}}\R_{U_{1}}, \ldots,
 \R_{U_{t}}\cdots\R_{U_{2}}\R_{U_{1}} \}$. Then
	\begin{enumerate}
		\item \label{coro:RUtRU1:Fix} $(\forall i \in \{ 1,2\} )$  $\Fix \CC{ \mathcal{S}_{i}} = \bigcap_{T \in \mathcal{S}_{i} } \Fix T = \cap^{t}_{j=1} \Fix \R_{U_{j}} = \cap^{t}_{j=1} U_{j}$.
		\item \label{coro:RUtRU1:FirmQuasi} $\CC{\mathcal{S}_{1}}$ and  $\CC{\mathcal{S}_{2}}$  are firmly quasinonexpansive.
	\end{enumerate}
\end{corollary}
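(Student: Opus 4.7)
The plan is to verify that both $\mathcal{S}_{1}$ and $\mathcal{S}_{2}$ fit into the framework of \cref{prop:FixCCS:F1Ft}\cref{prop:FixCCS:F1Ft:F} and \cref{prop:FirmQuasiNon}, taking $F_{j} := \R_{U_{j}}$ for $j \in \{1,\ldots,t\}$. By \cref{lem:examples:normpreserving}\cref{lem:examples:normpreserving:R}, each reflector $\R_{U_{j}}$ is an isometry, and by \cref{lem:Composi:NormPreserOpera} so is every finite composition $\R_{U_{i_{r}}}\cdots\R_{U_{i_{1}}}$; together with $\Id$ (cf.\ \cref{lem:examples:normpreserving}\cref{lem:examples:normpreserving:Id}), this shows that every element of $\mathcal{S}_{1}$ and $\mathcal{S}_{2}$ is isometric, so the properness of $\CC{\mathcal{S}_{1}}$ and $\CC{\mathcal{S}_{2}}$ follows from \cref{thm:CCS:proper:NormPres:T}\cref{thm:CCS:proper:NormPres:T:prop}.

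For \cref{coro:RUtRU1:Fix}, observe that $\mathcal{S}_{1}$ already contains $\{\Id, F_{1}, F_{2}, \ldots, F_{t}\}$ and $\mathcal{S}_{2}$ already contains $\{\Id, F_{1}, F_{2}F_{1}, \ldots, F_{t}F_{t-1}\cdots F_{1}\}$, and every element of each $\mathcal{S}_{i}$ lies in $\Omega(\R_{U_{1}}, \ldots, \R_{U_{t}})$ as defined in \cref{eq:Omega}. Hence \cref{prop:FixCCS:F1Ft}\cref{prop:FixCCS:F1Ft:F} applies and yields
\begin{align*}
\Fix \CC{\mathcal{S}_{i}} \;=\; \bigcap_{T \in \mathcal{S}_{i}} \Fix T \;=\; \bigcap^{t}_{j=1} \Fix \R_{U_{j}} \qquad (i \in \{1,2\}),
\end{align*}
and because $\Fix \R_{U_{j}} = U_{j}$ for each closed affine subspace $U_{j}$, the rightmost expression equals $\cap^{t}_{j=1} U_{j}$, completing the proof of \cref{coro:RUtRU1:Fix}.

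Finally, \cref{coro:RUtRU1:FirmQuasi} is immediate: the hypotheses just verified are exactly those of \cref{prop:FirmQuasiNon}, so both $\CC{\mathcal{S}_{1}}$ and $\CC{\mathcal{S}_{2}}$ satisfy the (stronger) identity \cref{eq:prop:FirmQuasiNon} and, in particular, are firmly quasinonexpansive. There is no real obstacle here — the entire corollary reduces to checking that the two specific choices of $\mathcal{S}$ match the two admissible patterns allowed in \cref{prop:FixCCS:F1Ft}\cref{prop:FixCCS:F1Ft:F} and \cref{prop:FirmQuasiNon}.
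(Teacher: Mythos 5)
Your proposal is correct and follows essentially the same route as the paper, which also proves the corollary by substituting $F_{j}=\R_{U_{j}}$ into \cref{prop:FixCCS:F1Ft} and \cref{prop:FirmQuasiNon}; you merely spell out the verification of the hypotheses (reflectors are isometries, properness from \cref{thm:CCS:proper:NormPres:T}, and that $\mathcal{S}_{1},\mathcal{S}_{2}$ match the two admissible patterns) that the paper leaves implicit.
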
	
\begin{proof}
	We obtain \cref{coro:RUtRU1:Fix} and  \cref{coro:RUtRU1:FirmQuasi} by substituting $F_{1}=\R_{U_{1}}, \ldots, F_{t}=\R_{U_{t}}$ in \cref{prop:FixCCS:F1Ft,prop:FirmQuasiNon} respectively.
\end{proof}
In fact,  the $\CC{\mathcal{S}_{2}}$ in \cref{coro:RUtRU1} is the main actor in \cite{BCS2018}.

\section{Circumcenter methods induced by isometries} \label{sec:CircumcenterMethodIsome}

Recall that $\mathcal{S}=\{ T_{1}, \ldots, T_{m-1}, T_{m} \}$ with
$\cap^{m}_{j=1} \Fix T_{j} \neq \varnothing$ and that every element of
$\mathcal{S}$ is isometric and affine.

Let $x \in \mathcal{H}$. The \emph{circumcenter method} induced by $\mathcal{S}$ is
\begin{align*}
x_{0} :=x, ~\mbox{and}~x_{k} :=\CC{\mathcal{S}}(x_{k-1})=\CC{\mathcal{S}}^{k}x, ~\mbox{where}~k=1,2,\ldots.
\end{align*}
\cref{thm:CCS:proper:NormPres:T}\cref{thm:CCS:proper:NormPres:T:prop} says that $\CC{\mathcal{S}}$ is proper, which ensures that the circumcenter method induced by $\mathcal{S}$ is well defined. Since every element of  $\mathcal{S}$ is isometric, we say that the circumcenter method is the \emph{circumcenter method induced by isometries}.

\subsection{Properties of circumcentered isometry methods} \label{subsec:ProperCMIsometries}
In this subsection, we provide some properties of
circumcentered isometry methods. All of the properties are interesting in
their own right. Moreover, the following
\Cref{prop:CW:NormPreserv:Feje,prop:CCS:Equations} play an important role in
the convergence proofs later.

\begin{proposition} \label{prop:CW:NormPreserv:Feje}
	Let $x \in \mathcal{H}$.
	Then the following  hold:
	\begin{enumerate}
		\item \label{prop:CW:NormPreserv:Feje:i} $(\CC{\mathcal{S}}^{k}x)_{k \in \mathbb{N}}$ is a Fej\'er monotone sequence with respect to $\cap^{m}_{j=1}  \Fix T_{j}$.
		\item \label{prop:CW:NormPreserv:Feje:limitexits} $(\forall z \in  \cap^{m}_{j=1}  \Fix T_{j} )$ the limit $\lim_{k \rightarrow + \infty} \norm{\CC{\mathcal{S}}^{k}x-z}$ exists.
		\item \label{prop:CW:NormPreserv:Feje:bounded}  $(\CC{\mathcal{S}}^{k}x)_{k \in \mathbb{N}}$ is bounded sequence.
		\item \label{prop:CW:NormPreserv:Feje:subset} Assume $\varnothing \neq W \subseteq \cap^{m}_{j=1}  \Fix T_{j}$. Then $(\CC{\mathcal{S}}^{k}x)_{k \in \mathbb{N}}$ is a Fej\'er monotone sequence with respect to $W$.
		\item \label{prop:CW:NormPreserv:Feje:Asym} Assume $\Id \in \aff \mathcal{S}$. Then $\CC{\mathcal{S}}$ is asymptotically regular, that is for every $y \in \mathcal{H}$,
		\begin{align*}
		\lim_{k \rightarrow \infty}\CC{\mathcal{S}}^{k} y-\CC{\mathcal{S}}^{k+1} y =0.
		\end{align*}
	\end{enumerate}
\end{proposition}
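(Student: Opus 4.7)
The plan is to derive all five statements from the Pythagoras-type identities in \cref{lem:CCS:Equations}, combined with elementary facts about Fej\'er monotone sequences. For \cref{prop:CW:NormPreserv:Feje:i}, I fix an arbitrary $z \in \cap^{m}_{j=1}\Fix T_{j}$ and apply \cref{lem:CCS:Equations}\cref{lem:CCS:Equations:TinS} with $y := \CC{\mathcal{S}}^{k}x$ and any $T \in \mathcal{S}$ to obtain
$$\norm{z - \CC{\mathcal{S}}^{k+1}x}^{2} + \norm{\CC{\mathcal{S}}^{k+1}x - T\CC{\mathcal{S}}^{k}x}^{2} = \norm{z - \CC{\mathcal{S}}^{k}x}^{2},$$
from which $\norm{\CC{\mathcal{S}}^{k+1}x - z} \leq \norm{\CC{\mathcal{S}}^{k}x - z}$, i.e., Fej\'er monotonicity. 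Parts \cref{prop:CW:NormPreserv:Feje:limitexits} and \cref{prop:CW:NormPreserv:Feje:bounded} are then immediate: the sequence $(\norm{\CC{\mathcal{S}}^{k}x - z})_{k \in \mathbb{N}}$ is nonincreasing and bounded below by $0$, hence convergent, while \cref{fact:Fejer:Bounded} delivers boundedness of $(\CC{\mathcal{S}}^{k}x)_{k \in \mathbb{N}}$. Item \cref{prop:CW:NormPreserv:Feje:subset} is a direct consequence of \cref{prop:CW:NormPreserv:Feje:i}, since every $w \in W \subseteq \cap^{m}_{j=1}\Fix T_{j}$ is a valid anchor point for the monotonicity already established.

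The part that requires a separate argument is \cref{prop:CW:NormPreserv:Feje:Asym}. Here, the extra hypothesis $\Id \in \aff \mathcal{S}$ is used specifically to unlock \cref{lem:CCS:Equations}\cref{lem:CCS:Equations:Id}. Applied to $x_{k} := \CC{\mathcal{S}}^{k}y$ with any fixed $z \in \cap^{m}_{j=1}\Fix T_{j}$, it gives
$$\norm{z - \CC{\mathcal{S}}^{k+1}y}^{2} + \norm{\CC{\mathcal{S}}^{k+1}y - \CC{\mathcal{S}}^{k}y}^{2} = \norm{z - \CC{\mathcal{S}}^{k}y}^{2}.$$
Rearranging yields
$$\norm{\CC{\mathcal{S}}^{k+1}y - \CC{\mathcal{S}}^{k}y}^{2} = \norm{z - \CC{\mathcal{S}}^{k}y}^{2} - \norm{z - \CC{\mathcal{S}}^{k+1}y}^{2},$$
and by \cref{prop:CW:NormPreserv:Feje:limitexits} (applied to the starting point $y$) the right-hand side tends to $0$ as $k \to \infty$, proving $\CC{\mathcal{S}}^{k+1}y - \CC{\mathcal{S}}^{k}y \to 0$.

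I do not anticipate any substantive obstacle, since each conclusion is a short deduction from \cref{lem:CCS:Equations}. The one point to handle cleanly is to separate the two identities invoked: \cref{lem:CCS:Equations}\cref{lem:CCS:Equations:TinS} suffices for Fej\'er monotonicity (since it only exploits that each $T \in \mathcal{S}$ is an isometry fixing $z$), whereas the asymptotic regularity in \cref{prop:CW:NormPreserv:Feje:Asym} genuinely needs $\Id \in \aff \mathcal{S}$ so that the telescoping term reads $\norm{\CC{\mathcal{S}}x - x}$ rather than $\norm{\CC{\mathcal{S}}x - Tx}$. Making that distinction explicit is the only thing worth being careful about.
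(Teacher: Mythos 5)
Your proposal is correct and follows essentially the same route as the paper: part (i) via \cref{lem:CCS:Equations}\cref{lem:CCS:Equations:TinS} applied to $\CC{\mathcal{S}}^{k}x$, parts (ii)--(iv) as immediate consequences, and part (v) via \cref{lem:CCS:Equations}\cref{lem:CCS:Equations:Id}. The only cosmetic difference is in (v): the paper telescopes and sums the squared increments to conclude they vanish, whereas you deduce it directly from the existence of the limit in (ii); these are equivalent arguments.
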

\begin{proof}
	For every $k \in \mathbb{N}$, substitute $x$ by $\CC{\mathcal{S}}^{k}x$ in \cref{lem:CCS:Equations}\cref{lem:CCS:Equations:TinS} to obtain
	\begin{align} \label{eq:prop:CW:NormPreserv:Feje:star}
	(\forall T \in \mathcal{S})	\quad 	(\forall z \in  \cap^{m}_{j=1}  \Fix T_{j}) \quad  \norm{z -  \CC{\mathcal{S}}^{k+1}x}^{2}   + \norm{\CC{\mathcal{S}}^{k+1}x-T \CC{\mathcal{S}}^{k}x}^{2} = \norm{z - \CC{\mathcal{S}}^{k}x}^{2}.
	\end{align}
	
	\cref{prop:CW:NormPreserv:Feje:i}: By \cref{eq:prop:CW:NormPreserv:Feje:star}, it is clear that
	\begin{align} \label{eq:prop:CW:Feje}
	(\forall z \in  \cap^{m}_{j=1}  \Fix T_{j}) \quad (\forall k \in \mathbb{N}) \quad \norm{\CC{\mathcal{S}}^{k+1}x -z} \leq \norm{\CC{\mathcal{S}}^{k}x-z}.
	\end{align}
	By \cref{defn:Fejer}, $(\CC{\mathcal{S}}^{k}x)_{k \in \mathbb{N}}$ is a Fej\'er monotone sequence with respect to $\cap^{m}_{j=1}  \Fix T_{j}$.
	
	\cref{prop:CW:NormPreserv:Feje:limitexits}: By \cref{eq:prop:CW:Feje}, clearly $(\forall z \in  \cap^{m}_{j=1}  \Fix T_{j} )$  $ \lim_{k \rightarrow + \infty} \norm{\CC{\mathcal{S}}^{k}x-z}$	exists.

	\cref{prop:CW:NormPreserv:Feje:bounded}:   It directly comes from \cref{prop:CW:NormPreserv:Feje:i} and \cref{fact:Fejer:Bounded}.
	
	\cref{prop:CW:NormPreserv:Feje:subset}: The desired result is directly from \cref{prop:CW:NormPreserv:Feje:i} and \cref{defn:Fejer}.

	\cref{prop:CW:NormPreserv:Feje:Asym}: Let $z \in \cap^{m}_{j=1} \Fix
	T_{j}$. By \cref{prop:CW:NormPreserv:Feje:limitexits} above, we know
	$L_{z} := \lim_{k \rightarrow + \infty} \norm{\CC{\mathcal{S}}^{k}x-z}$
	exists. Since $\Id \in \aff \mathcal{S}$, for every $k \in
	\mathbb{N}$, substituting $x$ by $\CC{\mathcal{S}}^{k}x$ in
	\cref{lem:CCS:Equations}\cref{lem:CCS:Equations:Id}, we have
	\begin{align} \label{prop:CW:essential}
	\norm{ \CC{\mathcal{S}}^{k}x-\CC{\mathcal{S}}^{k+1}x}^{2}  = \norm{\CC{\mathcal{S}}^{k}x-z}^{2} - \norm{\CC{\mathcal{S}}^{k+1}x -z}^{2}.
	\end{align}
	
	Summing over $k$ from $0$ to infinity in both sides of $\cref{prop:CW:essential}$, we obtain
	\begin{align*}
	\sum^{\infty}_{k=0} \norm{ \CC{\mathcal{S}}^{k}x-\CC{\mathcal{S}}^{k+1}x}^{2}  = \norm{x-z}^{2} - L^{2}_{z} < +\infty,
	\end{align*}
	which yields $ \lim_{k \rightarrow + \infty} \CC{\mathcal{S}}^{k}x-\CC{\mathcal{S}}^{k+1}x=0$, i.e., $\CC{\mathcal{S}}$ is asymptotically regular.

\end{proof}

The following results are motivated by \cite[Lemmas~1 and 3]{BCS2017}.
Note that by \cref{lem:normpreser:FixSet}\cref{lem:normpreser:FixSet:Fix}, $\cap^{m}_{j=1}  \Fix T_{j}$  is always closed and convex.
\begin{proposition} \label{prop:CCS:Equations}
Let $\varnothing \neq W \subseteq \cap^{m}_{j=1} \Fix T_{j}$ such that
$W$ is convex and closed. Let $x \in \mathcal{H}$. Then 
the following hold:
	\begin{enumerate}
		\item  \label{prop:CCS:Equations:T}  $(\forall T \in \mathcal{S})$ $\Pro_{W} Tx=T\Pro_{W}x=\Pro_{W}x$ and $d(x, W)= d(Tx, W)$.
		\item \label{prop:CCS:Equations:CCSxk:RIGHT}  $(\forall k \in \mathbb{N})$  $\CC{\mathcal{S}}^{k}\Pro_{W} x =\Pro_{W} x$.
		\item \label{prop:CCS:Equations:CCSxk} Assume $W$ is closed and affine. Then $(\forall k \in \mathbb{N})$ $\Pro_{W} (\CC{\mathcal{S}}^{k}x)=\Pro_{W} x$.
		\item \label{prop:CCS:Equations:CCSTSP} Let $T_{\mathcal{S}} \in \aff (\mathcal{S})$. Then $\norm{ \Pro_{W} x - \CC{\mathcal{S}}x}^{2} + \norm{\CC{\mathcal{S}}x - T_{\mathcal{S}}x}^{2} = \norm{ \Pro_{W} x - T_{\mathcal{S}}x}^{2}$.
	\end{enumerate}
\end{proposition}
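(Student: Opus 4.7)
For part \cref{prop:CCS:Equations:T}, the plan is to exploit that each $T\in\mathcal{S}$ is an isometry with $W\subseteq\Fix T$. Since $\Pro_{W}x\in W\subseteq \Fix T$, immediately $T\Pro_{W}x=\Pro_{W}x$. For the other equality, I would note that for every $w\in W$ we have $Tw=w$, hence by the isometry property $\|Tx-w\|=\|Tx-Tw\|=\|x-w\|$. Therefore the functions $w\mapsto\|Tx-w\|$ and $w\mapsto\|x-w\|$ coincide on $W$, so they attain their infimum at the same point of the closed convex set $W$, giving $\Pro_{W}(Tx)=\Pro_{W}x$ and $d(Tx,W)=d(x,W)$.

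For \cref{prop:CCS:Equations:CCSxk:RIGHT}, the idea is simply that $\Pro_{W}x\in W\subseteq \cap^{m}_{j=1}\Fix T_{j}\subseteq \Fix\CC{\mathcal{S}}$ by \cref{fact:CW:FixPointSet}\cref{fact:CW:FixPointSet:Basic} (which applies since $\CC{\mathcal{S}}$ is proper by \cref{thm:CCS:proper:NormPres:T}\cref{thm:CCS:proper:NormPres:T:prop}). An easy induction on $k$ then yields $\CC{\mathcal{S}}^{k}\Pro_{W}x=\Pro_{W}x$. For \cref{prop:CCS:Equations:CCSxk}, when $W$ is a closed affine subspace, I would appeal to \cref{prop:CW:NormPreserv:Feje}\cref{prop:CW:NormPreserv:Feje:subset} to obtain that $(\CC{\mathcal{S}}^{k}x)_{k\in\mathbb{N}}$ is Fej\'er monotone with respect to $W$, and then invoke \cref{fact:FejerWeakCluster}\cref{fact:FejerWeakCluster:i} to conclude $\Pro_{W}(\CC{\mathcal{S}}^{k}x)=\Pro_{W}x$ for every $k$.

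For \cref{prop:CCS:Equations:CCSTSP}, the key observation is to reinterpret $\CC{\mathcal{S}}x$ as an orthogonal projection and then apply Pythagoras. By \cref{thm:CCS:proper:NormPres:T}\cref{thm:CCS:proper:NormPres:T:PaffU}, we have $\CC{\mathcal{S}}x=\Pro_{\aff(\mathcal{S}(x))}(\Pro_{W}x)$. Since $T_{\mathcal{S}}\in \aff(\mathcal{S})$, writing $T_{\mathcal{S}}=\sum_{i=1}^{m}\lambda_{i}T_{i}$ with $\sum_{i=1}^{m}\lambda_{i}=1$ gives $T_{\mathcal{S}}x=\sum_{i=1}^{m}\lambda_{i}T_{i}x\in \aff(\mathcal{S}(x))$. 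Applying \cref{fact:PR}\cref{fact:PR:Pythagoras} to the closed affine subspace $\aff(\mathcal{S}(x))$ with the point $\Pro_{W}x$ and the element $T_{\mathcal{S}}x\in \aff(\mathcal{S}(x))$ yields exactly the identity
\begin{align*}
\norm{\Pro_{W}x-\CC{\mathcal{S}}x}^{2}+\norm{\CC{\mathcal{S}}x-T_{\mathcal{S}}x}^{2}=\norm{\Pro_{W}x-T_{\mathcal{S}}x}^{2}.
\end{align*}

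No step is particularly difficult; the only subtlety, and the main thing to check carefully, is the membership $T_{\mathcal{S}}x\in\aff(\mathcal{S}(x))$ and the correct identification of $\CC{\mathcal{S}}x$ as the projection of $\Pro_{W}x$ (rather than of some element of $\cap^{m}_{j=1}\Fix T_{j}$) onto $\aff(\mathcal{S}(x))$, which is precisely what \cref{thm:CCS:proper:NormPres:T}\cref{thm:CCS:proper:NormPres:T:PaffU} supplies under the hypothesis that $W$ is a nonempty closed convex subset of $\cap^{m}_{j=1}\Fix T_{j}$.
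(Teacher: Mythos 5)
Your proposal is correct and follows essentially the same route as the paper: (ii) via $\Pro_{W}x\in\Fix\CC{\mathcal{S}}$, (iii) via Fej\'er monotonicity and \cref{fact:FejerWeakCluster}\cref{fact:FejerWeakCluster:i}, and (iv) via \cref{thm:CCS:proper:NormPres:T}\cref{thm:CCS:proper:NormPres:T:PaffU} together with \cref{fact:PR}\cref{fact:PR:Pythagoras}. For (i) the paper instead runs a two-sided inequality chain and invokes uniqueness of the best approximation, but your observation that $w\mapsto\norm{Tx-w}$ and $w\mapsto\norm{x-w}$ coincide on $W$ is just a cleaner packaging of the same isometry-plus-fixed-point argument, so no substantive difference.
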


\begin{proof}
	\cref{prop:CCS:Equations:T}: Let $T \in \mathcal{S}$.  Since $W \subseteq \cap^{m}_{j=1} \Fix T_{j} \subseteq \Fix T$, thus it is clear that $T\Pro_{W}x=\Pro_{W}x$. Moreover, since $\Pro_{W} x \in W \subseteq \cap^{m}_{j=1} \Fix T_{j} \subseteq \Fix T$,   $\Pro_{W} Tx \in W \subseteq \cap^{m}_{i=1} \Fix T_{i} \subseteq \Fix T$
	and since $T$ is isometric, thus
	\begin{align*}
	\norm{x - \Pro_{W}x} & \leq \norm{x - \Pro_{W}Tx} \quad (\text{by definition of best approximation and $ \Pro_{W}Tx \in W$}) \\
	&	= \norm{Tx - \Pro_{W}Tx}  \quad (\text{$T$ is isometric})\\
	&	\leq \norm{Tx - \Pro_{W}x}  \quad (\text{by definition of best approximation and $ \Pro_{W}x \in W$}) \\
	&	= \norm{x - \Pro_{W}x}, \quad (\text{$T$ is isometric})
	\end{align*}
	which imply that
	\begin{align} \label{eq:xProWxTx}
	\norm{x - \Pro_{W}x} =\norm{Tx - \Pro_{W}Tx}  = \norm{x - \Pro_{W}Tx} .
	\end{align}
	Since $W$ is nonempty, closed and convex, the best approximation of $x$ onto $W$ uniquely exists. So \cref{eq:xProWxTx} implies that $\Pro_{W}Tx =\Pro_{W}x $ and $d(x, W)= d(Tx, W)$.
	
	\cref{prop:CCS:Equations:CCSxk:RIGHT}:  By  assumption and by \cref{fact:CW:FixPointSet}\cref{fact:CW:FixPointSet:Basic}, $\Pro_{W}x \in W \subseteq \cap^{m}_{j=1} \Fix T_{j} \subseteq \Fix \CC{\mathcal{S}}$, thus it is clear that $(\forall k \in \mathbb{N})$  $\CC{\mathcal{S}}^{k}\Pro_{W} x =\Pro_{W} x$.
	
	\cref{prop:CCS:Equations:CCSxk}: The required result comes from \cref{prop:CW:NormPreserv:Feje}\cref{prop:CW:NormPreserv:Feje:subset} and \cref{fact:FejerWeakCluster}\cref{fact:FejerWeakCluster:i}.

	\cref{prop:CCS:Equations:CCSTSP}: 	By \cref{thm:CCS:proper:NormPres:T}\cref{thm:CCS:proper:NormPres:T:PaffU}, $\CC{\mathcal{S}}x = \Pro_{\aff(\mathcal{S}(x))}\Pro_{W}x$. Since $T_{\mathcal{S}} \in \aff (\mathcal{S})$, which implies that $T_{\mathcal{S}}x \in \aff (\mathcal{S}(x))$, thus by \cref{fact:PR}\cref{fact:PR:Pythagoras}, $\norm{ \Pro_{W} x - \CC{\mathcal{S}}x}^{2} + \norm{\CC{\mathcal{S}}x - T_{\mathcal{S}}x}^{2} = \norm{ \Pro_{W} x - T_{\mathcal{S}}x}^{2}$.
\end{proof}

With $W=\cap^{m}_{j=1} \Fix T_{j}$ in the following
result, we know that $(\forall x \in \mathcal{H})$ the distance between
$\CC{\mathcal{S}}x \in \aff (\mathcal{S}(x))$ and $\Pro_{\cap^{m}_{j=1} \Fix
T_{j}}x \in \cap^{m}_{j=1} \Fix T_{j}$ is exactly the distance between the
two affine subspaces $\aff (\mathcal{S}(x))$ and $\cap^{m}_{j=1} \Fix
T_{j}$.

\begin{corollary} \label{cor:CapUiAffSxDist}
	Let $\varnothing \neq W \subseteq \cap^{m}_{j=1}  \Fix T_{j}$ such that $W$ is closed and  affine. Let $x \in \mathcal{H}$. Then
	\begin{align*}
	\norm{\CC{\mathcal{S}}x - \Pro_{W}x}  =\dist (\aff (\mathcal{S}(x)),  W ).
	\end{align*}
\end{corollary}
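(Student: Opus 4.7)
The plan is to use \cref{thm:CCS:proper:NormPres:T}\cref{thm:CCS:proper:NormPres:T:PaffU}, which under the present hypotheses gives $\CC{\mathcal{S}}x = \Pro_{\aff(\mathcal{S}(x))}(\Pro_{W}x)$, and then prove the equality via two inequalities.

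One direction is immediate: since $\CC{\mathcal{S}}x \in \aff(\mathcal{S}(x))$ and $\Pro_{W}x \in W$, the definition of the distance between two sets yields
\begin{align*}
\dist\bigl(\aff(\mathcal{S}(x)),\, W\bigr) \;\le\; \norm{\CC{\mathcal{S}}x - \Pro_{W}x}.
\end{align*}

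For the reverse inequality, the key observation is that $\Pro_{W}$ is \emph{constant} on $\aff(\mathcal{S}(x))$ with value $\Pro_{W}x$. Indeed, \cref{prop:CCS:Equations}\cref{prop:CCS:Equations:T} asserts that $\Pro_{W}(Tx) = \Pro_{W}x$ for every $T \in \mathcal{S}$, so $\Pro_{W}$ takes the value $\Pro_{W}x$ on all points of $\mathcal{S}(x)$; since $W$ is closed and affine, \cref{fact:PR}\cref{fact:PR:Affine} shows that $\Pro_{W}$ is an affine operator, and an affine operator that is constant on a finite set is constant on its affine hull. Therefore, for every $a \in \aff(\mathcal{S}(x))$ and every $w \in W$,
\begin{align*}
\norm{a - w} \;\ge\; \norm{a - \Pro_{W}a} \;=\; \norm{a - \Pro_{W}x} \;\ge\; \dist\bigl(\Pro_{W}x,\, \aff(\mathcal{S}(x))\bigr) \;=\; \norm{\CC{\mathcal{S}}x - \Pro_{W}x},
\end{align*}
where the last equality uses $\CC{\mathcal{S}}x = \Pro_{\aff(\mathcal{S}(x))}(\Pro_{W}x)$. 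Taking the infimum over $a \in \aff(\mathcal{S}(x))$ and $w \in W$ gives the reverse inequality, completing the proof.

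The only step requiring genuine care is justifying that $\Pro_{W}$ is truly constant on all of $\aff(\mathcal{S}(x))$, which follows from combining the affinity of $\Pro_{W}$ (valid because $W$ is affine, not merely convex) with \cref{prop:CCS:Equations}\cref{prop:CCS:Equations:T}; I expect this to be the one point worth stating explicitly, since the rest is a direct chain of inequalities.
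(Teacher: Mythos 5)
Your proof is correct, and it takes a slightly different route from the paper's. The paper invokes \cref{thm:CCS:proper:NormPres:T}\cref{thm:CCS:proper:NormPres:T:AllU}, i.e.\ $\CC{\mathcal{S}}x=\Pro_{\aff(\mathcal{S}(x))}(z)$ for \emph{every} $z\in\cap^{m}_{j=1}\Fix T_{j}$, so that $\norm{\CC{\mathcal{S}}x-z}=\dist(\aff(\mathcal{S}(x)),z)$ pointwise for each $z\in W$; taking the infimum over $z\in W$ gives $\dist(\CC{\mathcal{S}}x,W)=\dist(\aff(\mathcal{S}(x)),W)$, and then \cref{prop:CCS:Equations}\cref{prop:CCS:Equations:CCSxk} ($\Pro_{W}\CC{\mathcal{S}}x=\Pro_{W}x$) identifies $\dist(\CC{\mathcal{S}}x,W)$ with $\norm{\CC{\mathcal{S}}x-\Pro_{W}x}$. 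You instead argue by two inequalities from \cref{thm:CCS:proper:NormPres:T}\cref{thm:CCS:proper:NormPres:T:PaffU}, with the key step being that $\Pro_{W}$ is constant on $\aff(\mathcal{S}(x))$, obtained by combining \cref{prop:CCS:Equations}\cref{prop:CCS:Equations:T} with the affinity of $\Pro_{W}$ (\cref{fact:PR}\cref{fact:PR:Affine}) and the fact that an affine map constant on a finite set is constant on its affine hull. Both arguments are sound; yours buys independence from \cref{prop:CCS:Equations}\cref{prop:CCS:Equations:CCSxk} (whose proof in the paper passes through Fej\'er monotonicity), needing only the more elementary part \cref{prop:CCS:Equations}\cref{prop:CCS:Equations:T}, at the cost of the extra extension-by-affinity step; the paper's version is shorter because \cref{thm:CCS:proper:NormPres:T}\cref{thm:CCS:proper:NormPres:T:AllU} already packages the statement that every point of $W$ projects onto $\aff(\mathcal{S}(x))$ at the single point $\CC{\mathcal{S}}x$. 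Your chain of inequalities is valid as written (in particular $\dist(\Pro_{W}x,\aff(\mathcal{S}(x)))$ is attained at $\CC{\mathcal{S}}x$ because $\aff(\mathcal{S}(x))$ is a finite-dimensional, hence closed, affine subspace), so no gap remains.
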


\begin{proof}
	By \cref{thm:CCS:proper:NormPres:T}\cref{thm:CCS:proper:NormPres:T:AllU}, $(\forall z \in \cap^{m}_{j=1} \Fix T_{j})$ $\CC{\mathcal{S}}x= \Pro_{\aff (\mathcal{S}(x))}(z)$, which implies that
	\begin{align} \label{eq:cor:CapUiAffSxDist}
	(\forall z \in W \subseteq  \cap^{m}_{j=1} \Fix T_{j} ) \quad \norm{\CC{\mathcal{S}}x - z} = \dist (\aff (\mathcal{S}(x)), z).
	\end{align}
	Now taking infimum over all $z$ in $W$ in \cref{eq:cor:CapUiAffSxDist}, we obtain
	\begin{align*}
	\dist (\CC{\mathcal{S}}x, W ) = 	\inf_{z \in  W} \norm{\CC{\mathcal{S}}x - z} = \inf_{z \in W} \dist (\aff (\mathcal{S}(x)), z) =\dist (\aff (\mathcal{S}(x)),  W).
	\end{align*}
	Hence, using \cref{prop:CCS:Equations}\cref{prop:CCS:Equations:CCSxk}, we
	 deduce that $\norm{\CC{\mathcal{S}}x - \Pro_{  W }x }  = \norm{\CC{\mathcal{S}}x -\Pro_{ W}(\CC{\mathcal{S}}x ) }
	= \dist (\CC{\mathcal{S}}x, W )
	=   \dist (\aff (\mathcal{S}(x)),  W)$.
\end{proof}


\begin{proposition} \label{prop:CCSkxInPCapUi}
	Let $\varnothing \neq W \subseteq \cap^{m}_{j=1}  \Fix T_{j}$ such that $W$ is closed and  affine. Let $x \in \mathcal{H}$. Then the following  are equivalent:
	\begin{enumerate}
		\item \label{prop:CCSkxInPCapUi:i} $\CC{\mathcal{S}}x \in W$;
		\item \label{prop:CCSkxInPCapUi:ii} $\CC{\mathcal{S}}x = \Pro_{W}x$;
		\item \label{prop:CCSkxInPCapUi:iii} $(\forall k \geq 1)$ $\CC{\mathcal{S}}^{k}x = \Pro_{W}x$.
	\end{enumerate}
\end{proposition}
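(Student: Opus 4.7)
The plan is to establish the equivalences via the cyclic chain \cref{prop:CCSkxInPCapUi:i} $\Rightarrow$ \cref{prop:CCSkxInPCapUi:ii} $\Rightarrow$ \cref{prop:CCSkxInPCapUi:iii} $\Rightarrow$ \cref{prop:CCSkxInPCapUi:i}, using the identities already established in \cref{prop:CCS:Equations} together with the fact that $\Pro_{W}$ is the identity on $W$.

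For \cref{prop:CCSkxInPCapUi:i} $\Rightarrow$ \cref{prop:CCSkxInPCapUi:ii}, assume $\CC{\mathcal{S}}x \in W$. Since $W$ is closed and convex, $\Pro_{W}(\CC{\mathcal{S}}x) = \CC{\mathcal{S}}x$. On the other hand, by \cref{prop:CCS:Equations}\cref{prop:CCS:Equations:CCSxk} applied with $k=1$ (which uses that $W$ is closed and affine), we have $\Pro_{W}(\CC{\mathcal{S}}x) = \Pro_{W}x$. Combining these two identities gives $\CC{\mathcal{S}}x = \Pro_{W}x$.

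For \cref{prop:CCSkxInPCapUi:ii} $\Rightarrow$ \cref{prop:CCSkxInPCapUi:iii}, suppose $\CC{\mathcal{S}}x = \Pro_{W}x$. By \cref{prop:CCS:Equations}\cref{prop:CCS:Equations:CCSxk:RIGHT}, we have $\CC{\mathcal{S}}^{j}(\Pro_{W}x) = \Pro_{W}x$ for every $j \in \mathbb{N}$. Hence for each $k \geq 1$,
\begin{align*}
\CC{\mathcal{S}}^{k}x = \CC{\mathcal{S}}^{k-1}(\CC{\mathcal{S}}x) = \CC{\mathcal{S}}^{k-1}(\Pro_{W}x) = \Pro_{W}x.
\end{align*}

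Finally, \cref{prop:CCSkxInPCapUi:iii} $\Rightarrow$ \cref{prop:CCSkxInPCapUi:i} is immediate: specializing to $k=1$ yields $\CC{\mathcal{S}}x = \Pro_{W}x \in W$. There is no real obstacle here; the entire argument is an assembly of \cref{prop:CCS:Equations}\cref{prop:CCS:Equations:CCSxk:RIGHT} and \cref{prop:CCS:Equations:CCSxk}, and the only subtle point is remembering that the affine-hull hypothesis on $W$ (not merely convexity) is what permits invoking \cref{prop:CCS:Equations:CCSxk} in the first step.
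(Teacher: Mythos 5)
Your proof is correct and follows essentially the same route as the paper: (i)$\Rightarrow$(ii) via $\Pro_{W}(\CC{\mathcal{S}}x)=\Pro_{W}x$ from \cref{prop:CCS:Equations}\cref{prop:CCS:Equations:CCSxk}, (ii)$\Rightarrow$(iii) by iterating from the fixed point $\Pro_{W}x$, and (iii)$\Rightarrow$(i) by taking $k=1$. The only cosmetic difference is that you invoke \cref{prop:CCS:Equations}\cref{prop:CCS:Equations:CCSxk:RIGHT} where the paper cites \cref{fact:CW:FixPointSet}\cref{fact:CW:FixPointSet:Basic} directly, but these encode the same fact that $\Pro_{W}x\in\Fix\CC{\mathcal{S}}$.
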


\begin{proof}
	\enquote{\cref{prop:CCSkxInPCapUi:i} $\Rightarrow$ \cref{prop:CCSkxInPCapUi:ii}}: If $\CC{\mathcal{S}}x \in W$, then $\CC{\mathcal{S}}x = \Pro_{W} \CC{\mathcal{S}}x  = \Pro_{W}x$ using \cref{prop:CCS:Equations}\cref{prop:CCS:Equations:CCSxk}.
	
	\enquote{\cref{prop:CCSkxInPCapUi:ii} $\Rightarrow$ \cref{prop:CCSkxInPCapUi:iii}}: Assume $\CC{\mathcal{S}}x = \Pro_{W}x$. By \cref{fact:CW:FixPointSet}\cref{fact:CW:FixPointSet:Basic}, $\Pro_{W}x \in W \subseteq \cap^{m}_{j=1} \Fix T_{j} \subseteq \Fix \CC{\mathcal{S}}$. Hence,
	\begin{align*}
	(\forall k \geq 2) \quad  \CC{\mathcal{S}}^{k}x = \CC{\mathcal{S}}^{k-1}(\CC{\mathcal{S}}x) =  \CC{\mathcal{S}}^{k-1}(\Pro_{W}x) = \Pro_{W}x.
	\end{align*}
	
	\enquote{\cref{prop:CCSkxInPCapUi:iii} $\Rightarrow$ \cref{prop:CCSkxInPCapUi:i}}: Take $k=1$.
\end{proof}

\begin{corollary} \label{cor:CCSkPCapUiNotinCapUi}
	Let $\varnothing \neq W \subseteq \cap^{m}_{j=1}  \Fix T_{j}$ such that $W$ is closed and affine. Let $x \in \mathcal{H}$. Assume that $\lim_{k \rightarrow \infty} \CC{\mathcal{S}}^{k}x \neq \Pro_{W}x$. Then
	\begin{align}
	(\forall k \in \mathbb{N}) \quad \CC{\mathcal{S}}^{k}x \not \in W.
	\end{align}
\end{corollary}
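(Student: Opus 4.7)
The plan is to argue by contrapositive: assume that $\CC{\mathcal{S}}^{k_{0}}x \in W$ for some $k_{0} \in \mathbb{N}$, and derive that the iterates are eventually equal to $\Pro_{W}x$, which forces $\lim_{k \to \infty} \CC{\mathcal{S}}^{k}x = \Pro_{W}x$ and contradicts the hypothesis. The key ingredients are \cref{prop:CCSkxInPCapUi}, which turns \enquote{one iterate in $W$} into \enquote{all subsequent iterates equal $\Pro_{W}$ applied to some earlier iterate}, and \cref{prop:CCS:Equations}\cref{prop:CCS:Equations:CCSxk}, which guarantees that $\Pro_{W}$ is preserved along the orbit.

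First I would dispose of the trivial case $k_{0}=0$: then $x \in W \subseteq \cap_{j=1}^{m}\Fix T_{j} \subseteq \Fix\CC{\mathcal{S}}$ by \cref{fact:CW:FixPointSet}\cref{fact:CW:FixPointSet:Basic}, so $\CC{\mathcal{S}}^{k}x = x = \Pro_{W}x$ for every $k$, contradicting the standing assumption $\lim_{k \to \infty}\CC{\mathcal{S}}^{k}x \neq \Pro_{W}x$. For $k_{0} \geq 1$, set $y := \CC{\mathcal{S}}^{k_{0}-1}x$, so that $\CC{\mathcal{S}}y = \CC{\mathcal{S}}^{k_{0}}x \in W$. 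Applying \cref{prop:CCSkxInPCapUi} with $x$ replaced by $y$, the implication \cref{prop:CCSkxInPCapUi:i}$\Rightarrow$\cref{prop:CCSkxInPCapUi:iii} gives $\CC{\mathcal{S}}^{j}y = \Pro_{W}y$ for every $j \geq 1$.

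Finally, by \cref{prop:CCS:Equations}\cref{prop:CCS:Equations:CCSxk} applied to $x$ at iteration $k_{0}-1$, we have $\Pro_{W}y = \Pro_{W}(\CC{\mathcal{S}}^{k_{0}-1}x) = \Pro_{W}x$. Combining this with the previous display yields $\CC{\mathcal{S}}^{k}x = \CC{\mathcal{S}}^{k-k_{0}+1}y = \Pro_{W}x$ for every $k \geq k_{0}$, so the sequence is eventually constant with value $\Pro_{W}x$, and therefore $\lim_{k \to \infty}\CC{\mathcal{S}}^{k}x = \Pro_{W}x$. This contradicts the assumption and completes the proof. I do not anticipate any real obstacle here; the whole argument is essentially a one-line bookkeeping exercise once \cref{prop:CCSkxInPCapUi,prop:CCS:Equations} are in hand, the only mild subtlety being to remember to treat $k_{0}=0$ separately and to apply \cref{prop:CCSkxInPCapUi} to the shifted iterate $\CC{\mathcal{S}}^{k_{0}-1}x$ rather than to $x$ itself.
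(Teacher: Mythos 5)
Your proof is correct and follows essentially the same route as the paper: argue by contradiction, treat the case of the zeroth iterate separately via \cref{fact:CW:FixPointSet}\cref{fact:CW:FixPointSet:Basic}, and for a later iterate apply \cref{prop:CCSkxInPCapUi} to the shifted point $\CC{\mathcal{S}}^{k_{0}-1}x$ together with \cref{prop:CCS:Equations}\cref{prop:CCS:Equations:CCSxk}. The only difference is that you spell out the step $\Pro_{W}\CC{\mathcal{S}}^{k_{0}-1}x=\Pro_{W}x$ explicitly, which the paper leaves implicit in its final ``which is absurd.''
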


\begin{proof}
	We argue by contradiction and thus assume there exists $n \in \mathbb{N}$ such that $\CC{\mathcal{S}}^{n}x  \in W$. If $n=0$, then, by \cref{fact:CW:FixPointSet}\cref{fact:CW:FixPointSet:Basic}, $(\forall k \in \mathbb{N})$ $\CC{\mathcal{S}}^{k}x =x = \Pro_{W}x$, which contradicts the assumption, $\lim_{k \rightarrow \infty} \CC{\mathcal{S}}^{k}x \neq \Pro_{W}x$. Assume $n \geq 1$.
	Then \cref{prop:CCSkxInPCapUi} implies 	$(\forall k \geq n)$  $\CC{\mathcal{S}}^{k}x = \Pro_{W} \CC{\mathcal{S}}^{n-1}x$, which is absurd.
\end{proof}

\begin{proposition} \label{prop:CCSK}
	Assume  $(\forall T \in \mathcal{S})$ $T$ is linear. Then
	\begin{enumerate}
		\item  \label{prop:CCSK:homo} $(\forall x \in \mathcal{H})$  $(\forall \lambda \in \mathbb{R})$ $\CC{\mathcal{S}}^{k} (\lambda x)=\lambda \CC{\mathcal{S}}^{k} x$.
		\item  \label{prop:CCSK:Addit} $(\forall x \in \mathcal{H})$  $(\forall z \in \cap^{m}_{j=1} \Fix T_{j})$ $ \CC{\mathcal{S}}^{k} (x+ z ) = \CC{\mathcal{S}}^{k} (x) + z$.
	\end{enumerate}
\end{proposition}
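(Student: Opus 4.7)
The plan is a straightforward induction on $k \in \mathbb{N}$, with \cref{cor:HomogeAdditiveModulo} (applied to the linear isometries in $\mathcal{S}$) supplying both the base case and the key step in the inductive argument.

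For \cref{prop:CCSK:homo}, I would argue as follows. When $k=0$, $\CC{\mathcal{S}}^{0} = \Id$ and the identity $\Id(\lambda x) = \lambda \Id(x)$ is trivial; when $k=1$, the statement is exactly \cref{cor:HomogeAdditiveModulo}\cref{cor:HomogeAdditiveModulo:Homoge}. Assume the identity holds at level $k$, i.e., $\CC{\mathcal{S}}^{k}(\lambda x) = \lambda\,\CC{\mathcal{S}}^{k} x$ for all $x \in \mathcal{H}$ and $\lambda \in \mathbb{R}$. Then
\begin{align*}
\CC{\mathcal{S}}^{k+1}(\lambda x) = \CC{\mathcal{S}}\bigl(\CC{\mathcal{S}}^{k}(\lambda x)\bigr) = \CC{\mathcal{S}}\bigl(\lambda\,\CC{\mathcal{S}}^{k} x\bigr) = \lambda\,\CC{\mathcal{S}}\bigl(\CC{\mathcal{S}}^{k} x\bigr) = \lambda\,\CC{\mathcal{S}}^{k+1} x,
\end{align*}
where the third equality is again \cref{cor:HomogeAdditiveModulo}\cref{cor:HomogeAdditiveModulo:Homoge}, applied now to the point $\CC{\mathcal{S}}^{k} x$. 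This closes the induction.

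For \cref{prop:CCSK:Addit}, fix $z \in \cap^{m}_{j=1} \Fix T_{j}$ and induct on $k$ in the same way. The case $k=0$ is trivial, and $k=1$ is precisely \cref{cor:HomogeAdditiveModulo}\cref{cor:HomogeAdditiveModulo:AdditiveModulo}. Assuming $\CC{\mathcal{S}}^{k}(x+z) = \CC{\mathcal{S}}^{k}(x) + z$, we compute
\begin{align*}
\CC{\mathcal{S}}^{k+1}(x+z) = \CC{\mathcal{S}}\bigl(\CC{\mathcal{S}}^{k}(x+z)\bigr) = \CC{\mathcal{S}}\bigl(\CC{\mathcal{S}}^{k}(x) + z\bigr) = \CC{\mathcal{S}}^{k+1}(x) + z,
\end{align*}
where the final equality uses the quasitranslation property of \cref{cor:HomogeAdditiveModulo}\cref{cor:HomogeAdditiveModulo:AdditiveModulo} with the base point $\CC{\mathcal{S}}^{k}(x)$ (the element $z$ still lies in $\cap^{m}_{j=1} \Fix T_{j}$, so the hypothesis of that corollary is met).

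There is really no main obstacle here: the only subtlety worth pointing out is that \cref{cor:HomogeAdditiveModulo} applies at each iterate because its hypotheses (linearity of every $T \in \mathcal{S}$ and $z \in \cap^{m}_{j=1}\Fix T_{j}$) depend only on $\mathcal{S}$ and $z$, not on the point to which $\CC{\mathcal{S}}$ is applied. Thus the induction closes immediately in both parts.
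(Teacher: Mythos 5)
Your proof is correct and follows the same route as the paper, which simply invokes \cref{cor:HomogeAdditiveModulo} together with an easy induction on $k$; your write-up just makes the induction explicit. Nothing is missing.
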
	

\begin{proof}
	The required results follow easily from \cref{cor:HomogeAdditiveModulo} and some easy induction.
\end{proof}	

\subsection{Convergence} \label{subsec:Convergence}

In this subsection, we consider the weak, strong and linear convergence 
of circumcentered isometry methods.

\begin{theorem} \label{theore:CM:WeakConver}
	Assume $T_{1} =\Id$ and $ \cap^{m}_{j=1}  \Fix T_{j}$ is an affine subspace of $\mathcal{H}$. Let $x \in \mathcal{H}$. Then $(\CC{\mathcal{S}}^{k}x)$ weakly converges to $\Pro_{ \cap^{m}_{j=1}  \Fix T_{j}} x$ and $(\forall k \in \mathbb{N})$ $\Pro_{ \cap^{m}_{j=1}  \Fix T_{j}} \left( \CC{\mathcal{S}}^{k}x \right) =\Pro_{ \cap^{m}_{j=1}  \Fix T_{j}} x$.  In particular, if $\mathcal{H} $ is finite-dimensional space, then $(\CC{\mathcal{S}}^{k}x)_{k \in \mathbb{N}}$ converges to $\Pro_{ \cap^{m}_{j=1}  \Fix T_{j}}x$.
\end{theorem}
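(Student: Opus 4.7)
The plan is to combine the Fejér-monotonicity and asymptotic regularity established for $\CC{\mathcal{S}}$ in \cref{prop:CW:NormPreserv:Feje} with the demiclosedness principle from \cref{fact:demi:Nonexpan}, and then invoke the general convergence result for Fejér monotone sequences in \cref{fact:FejerWeakCluster}. The assumption that $T_1 = \Id$ is exactly what unlocks both the demiclosedness principle and the asymptotic regularity, while the hypothesis that $F := \cap_{j=1}^{m} \Fix T_j$ is an affine subspace is what makes \cref{fact:FejerWeakCluster} applicable (note that $F$ is closed because each $T_j$, being isometric, is nonexpansive by \cref{lem:normpreser:FixSet}\cref{lem:normpreser:FixSet:Nonexpansive}, so $\Fix T_j$ is closed by \cref{lem:normpreser:FixSet}\cref{lem:normpreser:FixSet:Fix}; hence $F$ is closed and affine).

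First, I would note that $\CC{\mathcal{S}}$ is proper by \cref{thm:CCS:proper:NormPres:T}\cref{thm:CCS:proper:NormPres:T:prop}, so the iterates $(\CC{\mathcal{S}}^{k}x)_{k \in \mathbb{N}}$ are well-defined. By \cref{prop:CW:NormPreserv:Feje}\cref{prop:CW:NormPreserv:Feje:i}, the sequence $(\CC{\mathcal{S}}^{k}x)_{k \in \mathbb{N}}$ is Fejér monotone with respect to $F$. Since $T_1 = \Id \in \aff \mathcal{S}$, \cref{prop:CW:NormPreserv:Feje}\cref{prop:CW:NormPreserv:Feje:Asym} yields $\CC{\mathcal{S}}^{k}x - \CC{\mathcal{S}}^{k+1}x \to 0$; writing $y_k := \CC{\mathcal{S}}^{k}x$, this reads $y_k - \CC{\mathcal{S}} y_k \to 0$.

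Next, let $\bar{x}$ be any weak sequential cluster point of $(y_k)_{k \in \mathbb{N}}$; such points exist because the sequence is bounded by \cref{prop:CW:NormPreserv:Feje}\cref{prop:CW:NormPreserv:Feje:bounded}. Each $T_i$ is nonexpansive by \cref{lem:normpreser:FixSet}\cref{lem:normpreser:FixSet:Nonexpansive}, so the demiclosedness principle of \cref{fact:demi:Nonexpan} applies: the combination $y_k \weakly \bar{x}$ and $y_k - \CC{\mathcal{S}} y_k \to 0$ forces $\bar{x} \in \Fix \CC{\mathcal{S}} = \cap_{i=1}^{m} \Fix T_i = F$. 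Thus every weak cluster point of $(y_k)$ lies in $F$. Since $F$ is a closed affine subspace and $(y_k)$ is Fejér monotone with respect to $F$, \cref{fact:FejerWeakCluster}\cref{fact:FejerWeakCluster:ii} gives $y_k \weakly \Pro_{F} x$, and \cref{fact:FejerWeakCluster}\cref{fact:FejerWeakCluster:i} yields $\Pro_{F}(\CC{\mathcal{S}}^{k}x) = \Pro_{F} x$ for every $k \in \mathbb{N}$.

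Finally, in a finite-dimensional Hilbert space, weak convergence coincides with norm convergence, so the last assertion follows immediately. The only moderately delicate point here is invoking the demiclosedness principle correctly: one must verify its hypotheses—namely, $T_1 = \Id$, nonexpansiveness of each $T_i$, and properness of $\CC{\mathcal{S}}$—all of which are either in the theorem's assumptions or already proved in this paper, so there is no real obstacle.
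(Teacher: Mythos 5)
Your proposal is correct and follows essentially the same route as the paper: Fejér monotonicity and asymptotic regularity from \cref{prop:CW:NormPreserv:Feje}, the demiclosedness principle \cref{fact:demi:Nonexpan} to locate weak cluster points in $\cap^{m}_{j=1}\Fix T_{j}$, and \cref{fact:FejerWeakCluster} to conclude. The only cosmetic difference is that the paper cites \cref{prop:CCS:Equations}\cref{prop:CCS:Equations:CCSxk} for the projection identity while you read it off \cref{fact:FejerWeakCluster}\cref{fact:FejerWeakCluster:i}, which amounts to the same thing.
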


\begin{proof}
	By \cref{prop:CCS:Equations}\cref{prop:CCS:Equations:CCSxk}, we have $(\forall k \in \mathbb{N} \smallsetminus \{0\})$ $\Pro_{ \cap^{m}_{j=1}  \Fix T_{j}} \left( \CC{\mathcal{S}}^{k}x \right) =\Pro_{ \cap^{m}_{j=1}  \Fix T_{j}} x$.
	
	In \cref{prop:CW:NormPreserv:Feje}\cref{prop:CW:NormPreserv:Feje:i}, we proved that $(\CC{\mathcal{S}}^{k}x)_{k \in \mathbb{N}}$ is a Fej\'er monotone sequence with respect to $ \cap^{m}_{j=1}  \Fix T_{j}$.  	
	
	By assumptions above and \cref{fact:FejerWeakCluster}\cref{fact:FejerWeakCluster:ii}, in order to prove the weak convergence, it suffices to show that every weak sequential cluster point of $(\CC{\mathcal{S}}^{k}x)_{k \in \mathbb{N}}$ belongs to $\cap^{m}_{j=1}  \Fix T_{j}$.  	
	
	Because every bounded sequence in a Hilbert space possesses weakly convergent subsequence, by \cref{fact:Fejer:Bounded}, there exist weak sequential cluster points of $(\CC{\mathcal{S}}^{k}x)_{k \in \mathbb{N}}$. Assume $\bar{x}$ is a weak sequential cluster point of $(\CC{\mathcal{S}}^{k}x)_{k \in \mathbb{N}}$, that is, there exists a subsequence $(\CC{\mathcal{S}}^{k_{j}}x)_{j \in \mathbb{N}}$ of $ (\CC{\mathcal{S}}^{k}x)_{k \in \mathbb{N}}$ such that $\CC{\mathcal{S}}^{k_{j}}x \weakly \bar{x}$. Applying \cref{prop:CW:NormPreserv:Feje}\cref{prop:CW:NormPreserv:Feje:Asym}, we know that $\CC{\mathcal{S}}^{k}x -\CC{\mathcal{S}}\left(\CC{\mathcal{S}}^{k}x \right) \rightarrow 0$. So $\CC{\mathcal{S}}^{k_{j}}x -\CC{\mathcal{S}}\left(\CC{\mathcal{S}}^{k_{j}}x \right) \rightarrow 0$.
	Combining the results above with
	\cref{lem:normpreser:FixSet}\cref{lem:normpreser:FixSet:Nonexpansive},
	\cref{thm:CCS:proper:NormPres:T}\cref{thm:CCS:proper:NormPres:T:prop} and
	\cref{fact:demi:Nonexpan}, we conclude that $\bar{x} \in \Fix
	\CC{\mathcal{S}} = \cap^{m}_{j=1} \Fix T_{j}$. 
\end{proof}	

From \cref{theore:CM:WeakConver}, 
we obtain the well-known weak convergence of the Douglas-Rachford method next.

\begin{corollary} \label{cor:DRM:Conver}
	Let $U_{1}, U_{2}$ be two closed affine subspaces in $\mathcal{H}$. Denote $T_{U_{2},U_{1}} :=  \frac{\Id +\R_{U_{2}}\R_{U_{1}}}{2}$ the Douglas-Rachford operator. Let $x \in \mathcal{H}$. Then the Douglas-Rachford method $(T^{k}_{U_{2},U_{1}}x )_{k \in \mathbb{N}}$ weakly converges to $\Pro_{\Fix T_{U_{2},U_{1}}  }x$. In particular, if $\mathcal{H} $ is finite-dimensional space, then $(T^{k}_{U_{2},U_{1}}x )_{k \in \mathbb{N}}$ converges to $\Pro_{\Fix T_{U_{2},U_{1}}  }x$.
\end{corollary}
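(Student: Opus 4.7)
The plan is to realize the Douglas--Rachford iteration as a circumcentered isometry iteration and then invoke \cref{theore:CM:WeakConver}.

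First, I would set $T_{1} := \Id$ and $T_{2} := \R_{U_{2}}\R_{U_{1}}$, and let $\mathcal{S} := \{T_{1}, T_{2}\}$. By \cref{lem:examples:normpreserving}\cref{lem:examples:normpreserving:R}, each $\R_{U_{i}}$ is an isometry, so $T_{2}$ is an isometry by \cref{lem:Composi:NormPreserOpera}; and $T_{1}$ is an isometry by \cref{lem:examples:normpreserving}\cref{lem:examples:normpreserving:Id}. Next, since $m=2$, \cref{fact:form:m2:Oper} gives
\begin{align*}
\CC{\mathcal{S}}x = \frac{T_{1}x + T_{2}x}{2} = \frac{x + \R_{U_{2}}\R_{U_{1}}x}{2} = T_{U_{2},U_{1}}x,
\end{align*}
so by a trivial induction $T^{k}_{U_{2},U_{1}}x = \CC{\mathcal{S}}^{k}x$ for every $k \in \mathbb{N}$.

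Second, I would verify the hypotheses of \cref{theore:CM:WeakConver}. The condition $T_{1} = \Id$ is built in. Since $T_{U_{2},U_{1}} = \frac{\Id + \R_{U_{2}}\R_{U_{1}}}{2}$, we have $T_{U_{2},U_{1}}x = x \Leftrightarrow \R_{U_{2}}\R_{U_{1}}x = x$, and therefore
\begin{align*}
\bigcap^{2}_{j=1}\Fix T_{j} = \Fix(\R_{U_{2}}\R_{U_{1}}) = \Fix T_{U_{2},U_{1}},
\end{align*}
which is the standing nonempty assumption (implicitly needed for $\Pro_{\Fix T_{U_{2},U_{1}}}x$ to be defined). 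By \cref{fact:PR}\cref{fact:PR:Affine}, each reflector $\R_{U_{i}}$ is affine; hence $\R_{U_{2}}\R_{U_{1}}$ is affine, and its fixed point set is an affine subspace of $\mathcal{H}$ (which is also closed by \cref{lem:normpreser:FixSet}\cref{lem:normpreser:FixSet:Fix}).

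Third, I would apply \cref{theore:CM:WeakConver} to conclude $\CC{\mathcal{S}}^{k}x \weakly \Pro_{\cap^{2}_{j=1} \Fix T_{j}}x = \Pro_{\Fix T_{U_{2},U_{1}}}x$, and therefore $T^{k}_{U_{2},U_{1}}x \weakly \Pro_{\Fix T_{U_{2},U_{1}}}x$. The finite-dimensional strong convergence is immediate since weak convergence coincides with norm convergence in finite dimensions (also directly delivered by the last sentence of \cref{theore:CM:WeakConver}). There is no real obstacle here; the entire argument is a bookkeeping exercise showing that the two-operator circumcenter framework specializes to the Douglas--Rachford operator, with the identification of fixed point sets $\Fix(\R_{U_{2}}\R_{U_{1}}) = \Fix T_{U_{2},U_{1}}$ being the only point that merits explicit verification.
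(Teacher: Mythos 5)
Your proposal is correct and follows essentially the same route as the paper's own proof: identify $\CC{\mathcal{S}}$ with $T_{U_{2},U_{1}}$ via \cref{fact:form:m2:Oper} for $\mathcal{S}=\{\Id,\R_{U_{2}}\R_{U_{1}}\}$, check that $\R_{U_{2}}\R_{U_{1}}$ is an isometry and affine so that $\Fix T_{U_{2},U_{1}}=\Fix \R_{U_{2}}\R_{U_{1}}$ is closed and affine, and then apply \cref{theore:CM:WeakConver}. No discrepancies worth noting.
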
	

\begin{proof}
	Set $\mathcal{S} := \{\Id, \R_{U_{2}}\R_{U_{1}} \}$. By \cref{fact:form:m2:Oper}, we know that $\CC{\mathcal{S}} = T_{U_{2},U_{1}}$.  Since $U_{1}, U_{2}$ are closed affine, thus, by	\cref{lem:examples:normpreserving}\cref{lem:examples:normpreserving:R} and \cref{lem:Composi:NormPreserOpera},  $\R_{U_{2}}\R_{U_{1}}$ is isometric and,  by \cref{lem:normpreser:FixSet}\cref{lem:normpreser:FixSet:Nonexpansive} and \cref{fact:PR}\cref{fact:PR:Affine}, $\R_{U_{2}}\R_{U_{1}}$ is nonexpansive and affine. So $\Fix \Id \cap \Fix \R_{U_{2}}\R_{U_{1}} = \Fix \R_{U_{2}}\R_{U_{1}} $ is closed and affine.  In addition, by definition of $T_{U_{2},U_{1}} $, it is clear that $\Fix T_{U_{2},U_{1}}  = \Fix \R_{U_{2}}\R_{U_{1}}$.
	
	Hence, the result comes from \cref{theore:CM:WeakConver}.
\end{proof}	

We now 
provide examples of weakly convergent circumcentered reflection methods. 

\begin{corollary} \label{coro:RUtRU1Demi}
 Let $U_{1}, \ldots, U_{t}$ be closed affine subspaces in $\mathcal{H}$.
 Assume that $\mathcal{S}_{1} =\{\Id, \R_{U_{1}}, \ldots, \R_{U_{t}}\}$ and
that  $\mathcal{S}_{2} =\{\Id, \R_{U_{1}}, \R_{U_{2}}\R_{U_{1}}, \ldots,
 \R_{U_{t}}\cdots\R_{U_{2}}\R_{U_{1}} \}$. Let $x \in \mathcal{H}$. Then both
 $(\CC{\mathcal{S}_{1}}^{k}x)$ and $(\CC{\mathcal{S}_{2}}^{k}x)$ weakly
 converge to $\Pro_{ \cap^{t}_{j=1} U_{j}} x$. In particular, if $\mathcal{H}
 $ is finite-dimensional space, then both $(\CC{\mathcal{S}_{1}}^{k}x)$ and
 $(\CC{\mathcal{S}_{2}}^{k}x)$ converges to $\Pro_{ \cap^{t}_{j=1} U_{j}} x$.
\end{corollary}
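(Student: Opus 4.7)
The plan is a direct reduction to \cref{theore:CM:WeakConver}, so I would simply check that the hypotheses of that theorem are satisfied by both $\mathcal{S}_1$ and $\mathcal{S}_2$.

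First, I would verify the isometry hypothesis imposed throughout \cref{sec:CircumMappinIsometries}. Each reflector $\R_{U_j}$ is an isometry by \cref{lem:examples:normpreserving}\cref{lem:examples:normpreserving:R}, the identity is an isometry by \cref{lem:examples:normpreserving}\cref{lem:examples:normpreserving:Id}, and by \cref{lem:Composi:NormPreserOpera} each composition $\R_{U_j}\cdots\R_{U_1}$ appearing in $\mathcal{S}_2$ is again an isometry. Hence every element of $\mathcal{S}_1$ and $\mathcal{S}_2$ is an isometry, and the first operator listed in each is $\Id$.

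Next, I would check the structural condition on the fixed-point set. By \cref{coro:RUtRU1}\cref{coro:RUtRU1:Fix}, for $i\in\{1,2\}$ one has $\bigcap_{T\in\mathcal{S}_i}\Fix T = \bigcap_{j=1}^{t}\Fix\R_{U_j}=\bigcap_{j=1}^{t}U_j$, and since each $U_j$ is a closed affine subspace, the intersection $\bigcap_{j=1}^{t}U_j$ is again a closed affine subspace of $\mathcal{H}$ (it is nonempty by the underlying assumption $\bigcap_{T\in\mathcal{S}_i}\Fix T\neq\varnothing$). This verifies the affine-subspace hypothesis in \cref{theore:CM:WeakConver}.

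With both hypotheses in hand, \cref{theore:CM:WeakConver} applied to $\mathcal{S}=\mathcal{S}_i$ yields $\CC{\mathcal{S}_i}^{k}x\weakly \Pro_{\bigcap_{j=1}^{t}U_j}x$, and also gives the strong-convergence conclusion in the finite-dimensional case. The only mild subtlety is bookkeeping with the indexing convention \enquote{$T_1=\Id$} of \cref{theore:CM:WeakConver}: one has to list the operators of $\mathcal{S}_1$ and $\mathcal{S}_2$ so that $\Id$ appears as the first entry, which is exactly how both families are written, so no obstacle arises. This is really the only step that requires any care; everything else is a citation.
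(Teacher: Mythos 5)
Your proposal is correct and follows essentially the same route as the paper's own proof: verify that every element of $\mathcal{S}_{1}$ and $\mathcal{S}_{2}$ is an isometry via \cref{lem:examples:normpreserving} and \cref{lem:Composi:NormPreserOpera}, identify $\bigcap_{T\in\mathcal{S}_i}\Fix T=\bigcap_{j=1}^{t}U_{j}$ (a nonempty closed affine subspace) via \cref{coro:RUtRU1}\cref{coro:RUtRU1:Fix}, and then invoke \cref{theore:CM:WeakConver}. No gaps.
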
	

\begin{proof}
	Since $U_{1}, \ldots, U_{t}$ are closed affine subspaces in $\mathcal{H}$, thus $\cap^{t}_{j=1} U_{j}$ is closed and affine subspace in $\mathcal{H}$. Moreover, by	\cref{lem:examples:normpreserving}\cref{lem:examples:normpreserving:R} and \cref{lem:Composi:NormPreserOpera}, every element of $\mathcal{S}$ is isometric. In addition, by \cref{coro:RUtRU1}\cref{coro:RUtRU1:Fix}, $(\forall i \in \{ 1,2\} )$  $ \bigcap_{T \in \mathcal{S}_{i} } \Fix T = \cap^{t}_{j=1} U_{j}$.
	Therefore, the required results follow from \cref{theore:CM:WeakConver}.
\end{proof}	

In fact, in \cref{sec:LinearConve:Reflector} below, we will show that  if $\mathcal{H} $ is finite-dimensional space, then both $(\CC{\mathcal{S}_{1}}^{k}x)$ and $(\CC{\mathcal{S}_{2}}^{k}x)$ defined in \cref{coro:RUtRU1Demi} above linearly  converge to $\Pro_{ \cap^{t}_{j=1} U_{j}} x$.
\begin{corollary}
	Assume that $A_{1}, \ldots, A_{d}$ are orthogonal matrices in $\mathbb{R}^{n \times n}$ and that $\mathcal{S}=\{\Id, A_{1}, \ldots, A_{d}\}$. Let $x \in \mathbb{R}^{n}$. Then  $(\CC{\mathcal{S}}^{k}x)_{k \in \mathbb{N}}$ converges to $\Pro_{ \cap^{d}_{j=1}  \Fix A_{j}}x$.
\end{corollary}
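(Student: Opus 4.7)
The plan is to deduce this corollary directly from \cref{theore:CM:WeakConver} applied in the finite-dimensional setting $\mathcal{H}=\mathbb{R}^{n}$, so most of the work is simply verifying the hypotheses of that theorem for $\mathcal{S}=\{\Id, A_{1}, \ldots, A_{d}\}$.

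First I would check that each operator in $\mathcal{S}$ is an isometry. The identity is an isometry by \cref{lem:examples:normpreserving}\cref{lem:examples:normpreserving:Id}. For each orthogonal matrix $A_{j}$, we have $A_{j}^{*}A_{j}=\Id$, so \cref{lem:examples:normpreserving}\cref{lem:examples:normpreserving:TTStar} yields that $A_{j}$ is an isometry. Thus the family $\mathcal{S}$ satisfies the standing assumption of \cref{sec:CircumMappinIsometries}.

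Next I would verify the structural hypotheses on $\cap^{d}_{j=1}\Fix A_{j}$. Since each $A_{j}$ is linear, $\Fix A_{j}=\ker(A_{j}-\Id)$ is a linear subspace of $\mathbb{R}^{n}$, and hence so is the intersection $\cap^{d}_{j=1}\Fix A_{j}$; in particular it contains $0$, so it is nonempty, closed, and affine. Since $T_{1}=\Id$, we also have $\Fix T_{1}\cap\bigl(\cap^{d}_{j=1}\Fix A_{j}\bigr)=\cap^{d}_{j=1}\Fix A_{j}$, so the intersection of the fixed-point sets of all operators in $\mathcal{S}$ is a nonempty closed affine subspace.

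With these hypotheses in place, \cref{theore:CM:WeakConver} applies: the sequence $(\CC{\mathcal{S}}^{k}x)_{k\in\mathbb{N}}$ weakly converges to $\Pro_{\cap^{d}_{j=1}\Fix A_{j}}x$. Because $\mathcal{H}=\mathbb{R}^{n}$ is finite-dimensional, weak convergence coincides with norm convergence, which is exactly the \enquote{in particular} clause of \cref{theore:CM:WeakConver}. This gives the desired conclusion. There is no real obstacle here: the entire content lies in recognizing that orthogonal matrices are isometries fixing $0$, so that the general convergence theorem for circumcentered isometry methods applies verbatim.
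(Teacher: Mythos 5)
Your proposal is correct and follows essentially the same route as the paper: verify that the orthogonal matrices (and $\Id$) are isometries, note that $\cap^{d}_{j=1}\Fix A_{j}$ is a closed linear subspace, and then invoke \cref{theore:CM:WeakConver} together with finite-dimensionality. The only cosmetic difference is that you justify the isometry property via \cref{lem:examples:normpreserving}\cref{lem:examples:normpreserving:TTStar} ($A_{j}^{*}A_{j}=\Id$), whereas the paper cites the textbook fact that linear isometries of $\mathbb{R}^{n}$ are exactly the orthogonal matrices.
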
	

\begin{proof}
	Since $\Fix \Id = \mathbb{R}^{n}$, we have $ \Fix \Id \bigcap (\cap^{d}_{j=1} \Fix A_{j} ) = \cap^{d}_{j=1} \Fix A_{j}$ is a closed linear subspace in $\mathbb{R}^{n}$. Moreover, by \cite[Page~321]{MC2000}, the linear isometries on $\mathbb{R}^{n}$ are precisely the orthogonal matrices. Hence, the result comes from 
 \cref{lem:examples:normpreserving}\cref{lem:examples:normpreserving:Id} and  \cref{theore:CM:WeakConver}.
\end{proof}	
\begin{remark}
	If we replace $\Pro_{ \cap^{m}_{j=1}  \Fix T_{j}} x$ by $\Pro_{ W} x$ for any $\varnothing \neq W \subseteq \cap^{m}_{j=1}  \Fix T_{j}$, the result showing in \cref{theore:CM:WeakConver} may not  hold. For instance, consider  $\mathcal{H} =\mathbb{R}^{n}$, $\mathcal{S}=\{\Id\}$ and  $W \varsubsetneqq \mathbb{R}^{n}$ being closed and affine and $x \in \mathbb{R}^{n} \setminus W$. Then $\CC{\mathcal{S}}^{k}x \equiv x \not\to \Pro_{W}x$.
\end{remark}	

Let us now present sufficient conditions for the strong convergence 
of circumcentered isometry methods. 

\begin{theorem} \label{thm:CWP}
	Let $W$ be a nonempty closed affine subset of $\cap^{m}_{j=1}  \Fix T_{j}$, and let $x \in \mathcal{H}$.
	Then the following hold:
	\begin{enumerate}
		\item \label{prop:CWP:clust} If $(\CC{\mathcal{S}}^{k}x)_{k \in \mathbb{N}}$ has a norm cluster point in $W$, then $(\CC{\mathcal{S}}^{k}x)_{k \in \mathbb{N}}$ converges in norm to $\Pro_{W} (x)$.
		\item  \label{prop:CWP:norm:conv} The following  are equivalent:
		\begin{enumerate}
			\item \label{conv:it:it:0} $(\CC{\mathcal{S}}^{k}x)_{k \in \mathbb{N}}$  converges in norm to $\Pro_{W} (x)$.
			\item \label{conv:it:it:1} $(\CC{\mathcal{S}}^{k}x)_{k \in \mathbb{N}}$  converges in norm to some point in $W$.
			\item \label{conv:it:it:2} $(\CC{\mathcal{S}}^{k}x)_{k \in \mathbb{N}}$ has norm cluster points, all lying in $W$.
			\item \label{conv:it:it:3} $(\CC{\mathcal{S}}^{k}x)_{k \in \mathbb{N}}$  has norm cluster points, one lying in $W$.
		\end{enumerate}
	\end{enumerate}
\end{theorem}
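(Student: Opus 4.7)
The plan is to reduce everything to the Fej\'er monotonicity of $(\CC{\mathcal{S}}^{k}x)_{k\in\mathbb{N}}$ with respect to $W$, together with the shadow-identity $\Pro_{W}\CC{\mathcal{S}}^{k}x=\Pro_{W}x$. Both of these are already at hand: the Fej\'er monotonicity comes from \cref{prop:CW:NormPreserv:Feje}\cref{prop:CW:NormPreserv:Feje:subset}, and the shadow-identity from \cref{prop:CCS:Equations}\cref{prop:CCS:Equations:CCSxk} (which requires $W$ affine, exactly our hypothesis). In particular, for every $z\in W$ the scalar sequence $(\norm{\CC{\mathcal{S}}^{k}x-z})_{k\in\mathbb{N}}$ is monotone nonincreasing, hence convergent.

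For \cref{prop:CWP:clust}, suppose that $\bar{x}\in W$ is a norm cluster point, so there is a subsequence $\CC{\mathcal{S}}^{k_{j}}x\to\bar{x}$. Apply the Fej\'er inequality with $z=\bar{x}\in W$: the sequence $\norm{\CC{\mathcal{S}}^{k}x-\bar{x}}$ is monotone nonincreasing and admits a subsequence converging to $0$, so the whole sequence converges to $0$, i.e., $\CC{\mathcal{S}}^{k}x\to\bar{x}$ in norm. To identify $\bar{x}$ with $\Pro_{W}x$, use continuity of $\Pro_{W}$ together with the shadow-identity:
\[
\Pro_{W}x=\Pro_{W}\CC{\mathcal{S}}^{k}x\longrightarrow\Pro_{W}\bar{x}=\bar{x},
\]
where the last equality holds since $\bar{x}\in W$. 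Hence $\CC{\mathcal{S}}^{k}x\to\Pro_{W}x$.

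For \cref{prop:CWP:norm:conv}, the implications \cref{conv:it:it:0}$\Rightarrow$\cref{conv:it:it:1}$\Rightarrow$\cref{conv:it:it:2}$\Rightarrow$\cref{conv:it:it:3} are almost immediate: \cref{conv:it:it:0}$\Rightarrow$\cref{conv:it:it:1} holds because $\Pro_{W}x\in W$; \cref{conv:it:it:1}$\Rightarrow$\cref{conv:it:it:2} holds because a convergent sequence has exactly one cluster point, namely its limit, which by hypothesis belongs to $W$; and \cref{conv:it:it:2}$\Rightarrow$\cref{conv:it:it:3} is trivial. The final implication \cref{conv:it:it:3}$\Rightarrow$\cref{conv:it:it:0} is precisely \cref{prop:CWP:clust} applied to any cluster point lying in $W$.

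I do not foresee a genuine obstacle; the only mild care needed is to make sure the Fej\'er inequality is applied with $z=\bar{x}\in W$ (not merely with generic $z$), and to invoke affineness of $W$ exactly where the shadow-identity \cref{prop:CCS:Equations}\cref{prop:CCS:Equations:CCSxk} is used. No other structural hypothesis on $\mathcal{S}$ beyond what is already standing is required.
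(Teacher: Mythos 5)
Your proposal is correct and takes essentially the same route as the paper: Fej\'er monotonicity of $(\CC{\mathcal{S}}^{k}x)_{k\in\mathbb{N}}$ with respect to $W$ combined with the shadow identity $\Pro_{W}\CC{\mathcal{S}}^{k}x=\Pro_{W}x$ from \cref{prop:CCS:Equations}\cref{prop:CCS:Equations:CCSxk}. The only cosmetic differences are that in \cref{prop:CWP:clust} you first prove norm convergence to the cluster point and then identify it with $\Pro_{W}x$ via continuity of $\Pro_{W}$ (the paper proceeds in the reverse order, sending the subsequence to $\Pro_{W}x$ and then invoking existence of the limit of the distances), and in \cref{prop:CWP:norm:conv} you close the cycle of implications directly rather than citing the Fej\'er-monotonicity result of Bauschke and Borwein.
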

\begin{proof}
	\cref{prop:CWP:clust}: Assume $\overline{x} \in W$ is a norm cluster point of $(\CC{\mathcal{S}}^{k}x)_{k \in \mathbb{N}}$, that is, there exists a subsequence $(\CC{\mathcal{S}}^{k_{j}}x)_{j \in \mathbb{N}}$ of $(\CC{\mathcal{S}}^{k}x)_{k \in \mathbb{N}}$ such that $\lim_{j \rightarrow \infty}\CC{\mathcal{S}}^{k_{j}}x$ $=$ $\overline{x}$. Now for every $j \in \mathbb{N}$,
	\begin{align*}
	\norm{\CC{\mathcal{S}}^{k_{j}}x-\Pro_{W} x} &= \norm{\CC{\mathcal{S}}^{k_{j}}x-\Pro_{W} (\CC{\mathcal{S}}^{k_{j}}x)} \quad \text{(by \cref{prop:CCS:Equations}\cref{prop:CCS:Equations:CCSxk})}\\
	& \leq \norm{\CC{\mathcal{S}}^{k_{j}}x-\overline{x}}. \quad  (\text{since}~\overline{x} \in W)
	\end{align*}
	So
	\begin{align*}
	0 \leq \lim_{j \rightarrow \infty} \norm{\CC{\mathcal{S}}^{k_{j}}x-\Pro_{W} (x)} \leq \lim_{j \rightarrow \infty} \norm{\CC{\mathcal{S}}^{k_{j}}x-\overline{x}}=0.
	\end{align*}
	Hence, $ \lim_{j \rightarrow + \infty} \CC{\mathcal{S}}^{k_{j}}x = \Pro_{W} (x)$.
	
	Substitute $z$ in \cref{prop:CW:NormPreserv:Feje}\cref{prop:CW:NormPreserv:Feje:limitexits} by $\Pro_{W} x$, then we know that $
	\lim_{k \rightarrow + \infty} \norm{\CC{\mathcal{S}}^{k}x- \Pro_{W} x }
	$  exists.
	Hence,
	\begin{align*}
	\lim_{k \rightarrow + \infty} \norm{\CC{\mathcal{S}}^{k}x-\Pro_{W} x}  = \lim_{j \rightarrow + \infty} \norm{\CC{\mathcal{S}}^{k_{j}}x-\Pro_{W} x} =0,
	\end{align*}
	from which follows that $(\CC{\mathcal{S}}^{k}x)_{k \in \mathbb{N}}$ converges strongly to $\Pro_{W}x$.
	
	\cref{prop:CWP:norm:conv}: By \cref{prop:CW:NormPreserv:Feje} \cref{prop:CW:NormPreserv:Feje:subset}, $(\CC{\mathcal{S}}^{k}x)_{k \in \mathbb{N}}$ is a Fej\'er monotone sequence with respect to $W$. Then the equivalences follow from \cite[Theorem~2.16(v)]{BB1996} and \cref{prop:CWP:clust} above.
\end{proof}

To facilitate a later proof, we provide the following lemma.
\begin{lemma} \label{lem:CCS:CCSK}
	Let $\varnothing \neq W \subseteq \cap^{m}_{j=1}  \Fix T_{j}$ such that $W$ is closed and affine. Assume there exists $\gamma \in [0,1[$ such that
	\begin{align}  \label{CWP:eq:ind:BaseCase}
	(\forall x \in \mathcal{H}) \quad \norm{\CC{\mathcal{S}}x-\Pro_{W} x} \leq \gamma \norm{x -\Pro_{W} x}.
	\end{align}
	Then
	\begin{align*}
	(\forall x \in \mathcal{H}) \quad (\forall k \in \mathbb{N}) \quad \norm{\CC{\mathcal{S}}^{k}x -\Pro_{W} x} \leq \gamma^{k} \norm{x -  \Pro_{W} x}.
	\end{align*}
\end{lemma}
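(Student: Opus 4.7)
The plan is to proceed by induction on $k$, with the inductive step relying crucially on the invariance $\Pro_{W}(\CC{\mathcal{S}}^{k}x) = \Pro_{W} x$ for all $k$. This invariance was established in \cref{prop:CCS:Equations}\cref{prop:CCS:Equations:CCSxk} under exactly the hypotheses at hand (namely that $W$ is a closed affine subset of $\cap^{m}_{j=1} \Fix T_{j}$), and it is what lets the hypothesis \cref{CWP:eq:ind:BaseCase}, originally written with $\Pro_{W} x$ on both sides of the inequality, be iterated without accumulating error in the projection.

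First I would handle the trivial case $k=0$, for which the inequality reduces to $\norm{x - \Pro_{W} x} \leq \norm{x - \Pro_{W} x}$. The case $k=1$ is exactly the hypothesis \cref{CWP:eq:ind:BaseCase}. For the inductive step, assuming $\norm{\CC{\mathcal{S}}^{k}x - \Pro_{W} x} \leq \gamma^{k} \norm{x - \Pro_{W} x}$, I would apply \cref{CWP:eq:ind:BaseCase} with $x$ replaced by $\CC{\mathcal{S}}^{k}x$ to obtain
\begin{align*}
\norm{\CC{\mathcal{S}}^{k+1}x - \Pro_{W}(\CC{\mathcal{S}}^{k}x)} \leq \gamma \norm{\CC{\mathcal{S}}^{k}x - \Pro_{W}(\CC{\mathcal{S}}^{k}x)}.
\end{align*}
Using $\Pro_{W}(\CC{\mathcal{S}}^{k}x) = \Pro_{W} x$ from \cref{prop:CCS:Equations}\cref{prop:CCS:Equations:CCSxk} and then the inductive hypothesis, the right-hand side is bounded by $\gamma \cdot \gamma^{k}\norm{x - \Pro_{W} x} = \gamma^{k+1}\norm{x - \Pro_{W} x}$, while the left-hand side equals $\norm{\CC{\mathcal{S}}^{k+1}x - \Pro_{W} x}$, completing the induction.

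There is no real obstacle here; the only point to be careful about is verifying that the hypotheses of \cref{prop:CCS:Equations}\cref{prop:CCS:Equations:CCSxk} are in force (closedness and affineness of $W$, and $W \subseteq \cap^{m}_{j=1}\Fix T_{j}$), which is precisely what we assumed. The proof is a clean two-line induction once that invariance is invoked.
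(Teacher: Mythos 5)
Your proof is correct and follows essentially the same route as the paper: an induction on $k$ whose step applies \cref{CWP:eq:ind:BaseCase} at the point $\CC{\mathcal{S}}^{k}x$ and then uses the invariance $\Pro_{W}(\CC{\mathcal{S}}^{k}x)=\Pro_{W}x$ from \cref{prop:CCS:Equations}\cref{prop:CCS:Equations:CCSxk}, exactly as the paper does. No gaps.
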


\begin{proof}
	Let $x \in \mathcal{H}$.  For $k=0$, the result is trivial.
	
	Assume for some $k \in \mathbb{N}$ we have
	\begin{align} \label{CWP:eq:ind:L}
	(\forall y \in \mathcal{H}) \quad \norm{\CC{\mathcal{S}}^{k}y-\Pro_{W} y} \leq \gamma^{k} \norm{y -\Pro_{W} y}.
	\end{align}
	Now
	\begin{align*}
	\norm{\CC{\mathcal{S}}^{k+1}x-\Pro_{W} x}
	&= \norm{\CC{\mathcal{S}}(\CC{\mathcal{S}}^{k}x)-\Pro_{W} (\CC{\mathcal{S}}^{k}x)} \quad (\text{by \cref{prop:CCS:Equations}\cref{prop:CCS:Equations:CCSxk}})\\
	&\stackrel{ \cref{CWP:eq:ind:BaseCase} }{\leq} \gamma \norm{\CC{\mathcal{S}}^{k}x-\Pro_{W} (\CC{\mathcal{S}}^{k}x)} \\
	&= \gamma \norm{\CC{\mathcal{S}}^{k}x-\Pro_{W} x} \quad (\text{by \cref{prop:CCS:Equations}\cref{prop:CCS:Equations:CCSxk}})\\
	&\stackrel{\text{\cref{CWP:eq:ind:L}}}{\leq}  \gamma^{k+1} \norm{x -\Pro_{W} x}.
	\end{align*}
	Hence, we obtain the desired result inductively.
\end{proof}

The following powerful result will play an essential role to prove the linear convergence of the circumcenter method induced by reflectors.
\begin{theorem}  \label{thm:CWP:line:conv}
	Let $W$ be a nonempty, closed and affine subspace of $\cap^{m}_{j=1}  \Fix T_{j}$. 	
	\begin{enumerate}
		\item \label{thm:CWP:line:conv:F} Assume that there exist $F: \mathcal{H} \to \mathcal{H}$ and $\gamma \in [0,1[$ such that $(\forall y \in \mathcal{H})$ $F(y) \in \aff(\mathcal{S}(y) )$ and
		\begin{equation} \label{CWP:eq:F}
		(\forall x \in \mathcal{H}) \quad \norm{Fx-\Pro_{W} x} \leq \gamma \norm{x -\Pro_{W} x}.
		\end{equation}
		Then
		\begin{align}  \label{ineq:CCSkx}
		(\forall x \in \mathcal{H})\quad (\forall k \in \mathbb{N}) \quad \norm{\CC{\mathcal{S}}^{k}x-\Pro_{W} x} \leq \gamma^{k} \norm{x -\Pro_{W} x}.
		\end{align}
		Consequently, $(\CC{\mathcal{S}}^{k}x)_{k \in \mathbb{N}}$ converges linearly to $\Pro_{W} x$ with a linear rate $\gamma$.
		
		\item \label{thm:CWP:line:conv:T} If there exist $T_{\mathcal{S}} \in \aff (\mathcal{S})$ and $\gamma \in [0,1[$, such that
		\begin{equation*}
		(\forall x \in \mathcal{H}) \quad \norm{T_{\mathcal{S}}x-\Pro_{W} x} \leq \gamma \norm{x -\Pro_{W} x},
		\end{equation*}
		then $(\CC{\mathcal{S}}^{k}x)_{k \in \mathbb{N}}$ converges linearly to $\Pro_{W} x$ with a linear rate $\gamma$.
	\end{enumerate}
\end{theorem}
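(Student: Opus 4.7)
My plan is to prove part (i) first by combining the Pythagorean-type identity from \cref{lem:CCS:Equations}\cref{lem:CCS:Equations:F} with the contraction hypothesis on $F$, and then to derive part (ii) as an immediate consequence of part (i).

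For part \cref{thm:CWP:line:conv:F}, the key observation is that since $W \subseteq \cap^{m}_{j=1}\Fix T_{j}$, for any $x \in \mathcal{H}$ the point $\Pro_{W}x$ lies in $\cap^{m}_{j=1}\Fix T_{j}$. Applying \cref{lem:CCS:Equations}\cref{lem:CCS:Equations:F} with $z := \Pro_{W}x$ yields
\begin{equation*}
\norm{\Pro_{W}x-\CC{\mathcal{S}}x}^{2}+\norm{\CC{\mathcal{S}}x-Fx}^{2}=\norm{\Pro_{W}x-Fx}^{2},
\end{equation*}
which immediately gives $\norm{\CC{\mathcal{S}}x-\Pro_{W}x}\le\norm{Fx-\Pro_{W}x}$. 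Combining this with the hypothesis \cref{CWP:eq:F} produces the one-step contraction
\begin{equation*}
(\forall x \in \mathcal{H})\quad \norm{\CC{\mathcal{S}}x-\Pro_{W}x}\le \gamma\,\norm{x-\Pro_{W}x},
\end{equation*}
which is exactly the hypothesis of \cref{lem:CCS:CCSK}. An appeal to that lemma then promotes this one-step bound to the iterated estimate \cref{ineq:CCSkx}, and since $\gamma\in[0,1[$ this is R-linear convergence of $(\CC{\mathcal{S}}^{k}x)_{k\in\mathbb{N}}$ to $\Pro_{W}x$ with rate $\gamma$.

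For part \cref{thm:CWP:line:conv:T}, I would simply specialize part \cref{thm:CWP:line:conv:F} with $F := T_{\mathcal{S}}$. The only thing to verify is the pointwise inclusion $F(y)\in\aff(\mathcal{S}(y))$: writing $T_{\mathcal{S}}=\sum_{i=1}^{m}\alpha_{i}T_{i}$ with $\sum_{i=1}^{m}\alpha_{i}=1$ (since $T_{\mathcal{S}}\in\aff(\mathcal{S})$), we have $T_{\mathcal{S}}y=\sum_{i=1}^{m}\alpha_{i}T_{i}y\in\aff\{T_{1}y,\ldots,T_{m}y\}=\aff(\mathcal{S}(y))$ for every $y\in\mathcal{H}$. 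The contraction hypothesis on $T_{\mathcal{S}}$ then plays exactly the role required for \cref{CWP:eq:F}, so part \cref{thm:CWP:line:conv:F} delivers the conclusion.

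The proof is essentially bookkeeping once \cref{lem:CCS:Equations}\cref{lem:CCS:Equations:F} and \cref{lem:CCS:CCSK} are in hand, so there is no real obstacle; the only subtle point is choosing the right $z$ in \cref{lem:CCS:Equations}\cref{lem:CCS:Equations:F} (namely $z=\Pro_{W}x$) to convert the contraction of an auxiliary operator $F$ into a contraction of $\CC{\mathcal{S}}$ itself, and recognizing that the affinity of $T_{\mathcal{S}}\in\aff(\mathcal{S})$ suffices to embed its image inside $\aff(\mathcal{S}(y))$ pointwise.
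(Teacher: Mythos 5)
Your proof is correct and follows essentially the same route as the paper: the one-step contraction is obtained from \cref{lem:CCS:Equations}\cref{lem:CCS:Equations:F} with $z=\Pro_{W}x$ (valid since $\Pro_{W}x\in W\subseteq\cap^{m}_{j=1}\Fix T_{j}$) combined with \cref{CWP:eq:F}, the iterated bound then comes from \cref{lem:CCS:CCSK}, and part (ii) is deduced from part (i) by taking $F=T_{\mathcal{S}}$ after noting $T_{\mathcal{S}}y\in\aff(\mathcal{S}(y))$. No gaps.
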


\begin{proof}
	\cref{thm:CWP:line:conv:F}: Using the assumptions and applying \cref{lem:CCS:Equations}\cref{lem:CCS:Equations:F} with $(\forall x \in \mathcal{H} )$ $z=\Pro_{ W}x$, we obtain that
	\begin{align*}
	(\forall x \in \mathcal{H}) \quad \norm{\CC{\mathcal{S}}x-\Pro_{W} x} \leq \norm{Fx-\Pro_{W} x} \stackrel{\cref{CWP:eq:F}}{\leq} \gamma \norm{x -\Pro_{W} x}.
	\end{align*}
	Hence, \cref{ineq:CCSkx} follows directly from \cref{lem:CCS:CCSK}.
	
	\cref{thm:CWP:line:conv:T}:	Since $T_{\mathcal{S}} \in \aff (\mathcal{S})$ implies that $(\forall y \in \mathcal{H})$ $T_{\mathcal{S}}y \in \aff (\mathcal{S}(y))$, thus the required result follows from \cref{thm:CWP:line:conv:F} above by substituting $F=T_{\mathcal{S}}$.
\end{proof}

\begin{theorem} \label{theorem:CCSLinearConvTSFirmNone}
	Let $T_{\mathcal{S}} \in \aff (\mathcal{S})$  satisfy that $ \Fix T_{\mathcal{S}} \subseteq \cap_{T \in \mathcal{S}} \Fix T$.  Then the following  hold:
	\begin{enumerate}
		\item \label{theorem:CCSLinearConvTSFirmNone:FixT} $\Fix T_{\mathcal{S}} = \cap_{T \in \mathcal{S}} \Fix T$.
		\item \label{theorem:CCSLinearConvTSFirmNone:LineaConv} Let $\mathcal{H}= \mathbb{R}^{n}$.  Assume that $T_{\mathcal{S}} $ is linear and $\alpha$-averaged with $\alpha \in \, ]0,1[\,$.
		For every $x \in \mathcal{H}$, $ (\CC{\mathcal{S}}^{k}x)_{k \in \mathbb{N}}$ converges  to $ \Pro_{ \cap_{T \in \mathcal{S}} \Fix T }x$ with a linear rate $\norm{T_{\mathcal{S}} \Pro_{(\cap_{T \in \mathcal{S}} \Fix T)^{\perp} }} \in [0,1[\,$.
	\end{enumerate}
\end{theorem}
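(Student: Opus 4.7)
For part \cref{theorem:CCSLinearConvTSFirmNone:FixT}, the nontrivial inclusion is $\cap_{T \in \mathcal{S}} \Fix T \subseteq \Fix T_{\mathcal{S}}$, and the hypothesis supplies the other one. Since $T_{\mathcal{S}} \in \aff(\mathcal{S})$, I would write $T_{\mathcal{S}} = \sum_{i=1}^{m} \lambda_{i} T_{i}$ with $\sum_{i=1}^{m} \lambda_{i} = 1$. For any $x \in \cap_{T \in \mathcal{S}} \Fix T$ we have $T_{i}x = x$ for every $i$, whence $T_{\mathcal{S}}x = \sum_{i=1}^{m}\lambda_{i} T_{i}x = \sum_{i=1}^{m}\lambda_{i}x = x$. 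This immediately gives the inclusion and hence the equality.

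For part \cref{theorem:CCSLinearConvTSFirmNone:LineaConv}, my plan is to set $W := \cap_{T \in \mathcal{S}} \Fix T$ and verify the hypotheses of \cref{thm:CWP:line:conv}\cref{thm:CWP:line:conv:T}. By \cref{theorem:CCSLinearConvTSFirmNone:FixT}, $W = \Fix T_{\mathcal{S}}$, and since $T_{\mathcal{S}}$ is linear on $\mathbb{R}^n$ we have $W = \ker(T_{\mathcal{S}} - \Id)$, which is a closed linear subspace. By \cref{MetrProSubs8}\cref{MetrProSubs8:ii}, every $x \in \mathbb{R}^{n}$ admits the decomposition $x = \Pro_{W}x + \Pro_{W^{\perp}}x$. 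Using linearity of $T_{\mathcal{S}}$ together with $\Pro_{W}x \in W = \Fix T_{\mathcal{S}}$ yields
\begin{equation*}
T_{\mathcal{S}}x - \Pro_{W}x = T_{\mathcal{S}}\Pro_{W}x + T_{\mathcal{S}}\Pro_{W^{\perp}}x - \Pro_{W}x = T_{\mathcal{S}}\Pro_{W^{\perp}}x.
\end{equation*}

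The key observation is that $\Pro_{W^{\perp}}x \in W^{\perp}$, so $\Pro_{W^{\perp}}x = \Pro_{W^{\perp}}(\Pro_{W^{\perp}}x)$, and therefore $T_{\mathcal{S}}\Pro_{W^{\perp}}x = (T_{\mathcal{S}}\Pro_{W^{\perp}})(\Pro_{W^{\perp}}x)$. Writing $\gamma := \norm{T_{\mathcal{S}}\Pro_{W^{\perp}}}$ and applying the operator-norm inequality to the argument $\Pro_{W^{\perp}}x$ (rather than $x$) delivers the crucial estimate
\begin{equation*}
\norm{T_{\mathcal{S}}x - \Pro_{W}x} \leq \gamma \norm{\Pro_{W^{\perp}}x} = \gamma \norm{x - \Pro_{W}x}.
\end{equation*}
Since $T_{\mathcal{S}}$ is linear and $\alpha$-averaged, \cref{prop:LinFirQuaNonOperNorm} gives $\gamma \in [0,1[$. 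Invoking \cref{thm:CWP:line:conv}\cref{thm:CWP:line:conv:T} with this $T_{\mathcal{S}}$, this $W$, and this $\gamma$ then yields the linear convergence $\CC{\mathcal{S}}^{k}x \to \Pro_{W}x$ with rate $\gamma$, as claimed.

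The only genuinely delicate point is the second line of the estimate: the naive operator-norm bound would give $\gamma \norm{x}$, which is too weak to apply \cref{thm:CWP:line:conv}. Exploiting idempotence of $\Pro_{W^{\perp}}$ to replace the argument $x$ by $\Pro_{W^{\perp}}x$ is what converts $\gamma\norm{x}$ into $\gamma\norm{x - \Pro_{W}x}$ and makes the rest of the proof a direct appeal to previously established results.
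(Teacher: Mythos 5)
Your proposal is correct and follows essentially the same route as the paper: both parts rely on the affine-combination observation for the fixed-point equality, and the linear-rate estimate is obtained exactly as in the paper's proof via the decomposition $\Id = \Pro_{W}+\Pro_{W^{\perp}}$, idempotence of $\Pro_{W^{\perp}}$, the bound $\norm{T_{\mathcal{S}}\Pro_{W^{\perp}}}<1$ from \cref{prop:LinFirQuaNonOperNorm}, and the appeal to \cref{thm:CWP:line:conv}\cref{thm:CWP:line:conv:T} with $W=\Fix T_{\mathcal{S}}$. The only cosmetic difference is that you justify closedness of $\Fix T_{\mathcal{S}}$ via $\ker(T_{\mathcal{S}}-\Id)$ in $\mathbb{R}^{n}$ rather than via \cref{fact:Nonexpan:Fix}, which is immaterial.
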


\begin{proof}
	\cref{theorem:CCSLinearConvTSFirmNone:FixT}: Clearly, $T_{\mathcal{S}} \in \aff (\mathcal{S})$  implies that $\cap_{T \in \mathcal{S}} \Fix T \subseteq \Fix T_{\mathcal{S}}$.
	Combining the result with the assumption, $ \Fix T_{\mathcal{S}} \subseteq \cap_{T \in \mathcal{S}} \Fix T$, we get \cref{theorem:CCSLinearConvTSFirmNone:FixT}.
	
	\cref{theorem:CCSLinearConvTSFirmNone:LineaConv}: Since $T_{\mathcal{S}}$ is linear and $\alpha$-averaged, thus by \cref{fact:Nonexpan:Fix}, $\Fix T_{\mathcal{S}}$ is a nonempty closed linear subspace.  It is clear that
	\begin{align} \label{cor:TFixPP}
	T_{\mathcal{S}} \Pro_{ \Fix T_{\mathcal{S}} } = \Pro_{ \Fix T_{\mathcal{S}} }.
	\end{align}
	Using \cref{prop:LinFirQuaNonOperNorm}, we know
	\begin{align} \label{cor:CCSLinearConvTSFirmNone:TSP}
	\gamma :=\norm{ T_{\mathcal{S}}\Pro_{(\Fix T_{\mathcal{S}})^{\perp}} }<1.
	\end{align}
	Now for every $x \in \mathbb{R}^{n}$,
	\begin{align*}
	\norm{T_{\mathcal{S}}x-\Pro_{ \Fix T_{\mathcal{S}} } x}
	& \stackrel{\cref{cor:TFixPP}}{=}  \norm{T_{\mathcal{S}}x-T_{\mathcal{S}}\Pro_{ \Fix T_{\mathcal{S}} } x} \\
	&= \norm{T_{\mathcal{S}} (x-\Pro_{ \Fix T_{\mathcal{S}} } x)} \quad (T_{\mathcal{S}}~\text{linear}) \\
	&= \norm{T_{\mathcal{S}} \Pro_{(\Fix T_{\mathcal{S}} )^{\perp}} (x)} \quad (\text{by \cref{MetrProSubs8}\cref{MetrProSubs8:ii}})\\
	&= \norm{T_{\mathcal{S}} \Pro_{(\Fix T_{\mathcal{S}} )^{\perp}} \Pro_{(\Fix T_{\mathcal{S}} )^{\perp}} (x)} \quad \\
	& \leq \norm{ T_{\mathcal{S}} \Pro_{(\Fix T_{\mathcal{S}} )^{\perp}} } \norm{\Pro_{(\Fix T_{\mathcal{S}} )^{\perp}} (x)} \\
	&=\gamma \norm{x-\Pro_{\Fix T_{\mathcal{S}} } (x)}. \quad (\text{by \cref{MetrProSubs8}\cref{MetrProSubs8:ii}})
	\end{align*}
	Hence, the desired result follows from \cref{thm:CWP:line:conv}\cref{thm:CWP:line:conv:T} by substituting $W =\Fix T_{\mathcal{S}}$ and \cref{theorem:CCSLinearConvTSFirmNone:FixT} above.
\end{proof}

Useful properties of the $T_{\mathcal{S}}$ in
\cref{theorem:CCSLinearConvTSFirmNone} can be found in the following results.

\begin{proposition} \label{prop:CCS:Equations:Affine}
	Let $\varnothing \neq W \subseteq \cap^{m}_{j=1}  \Fix T_{j}$ such that $W$ is a closed and affine subspace of $\mathcal{H}$ and let $T_{\mathcal{S}} \in \aff (\mathcal{S})$. Let $x \in \mathcal{H}$. Then
	\begin{enumerate}
		\item \label{prop:CCS:Equations:Affine:TSk}  $(\forall k \in \mathbb{N})$ $\Pro_{W} (T^{k}_{\mathcal{S}}x)=T^{k}_{\mathcal{S}}\Pro_{W} x=\Pro_{W} x$.
		\item \label{prop:CCS:Equations:Affine:CCsTs} $\norm{\Pro_{W} (\CC{\mathcal{S}}x) -\CC{\mathcal{S}}x}^{2} = \norm{\Pro_{W} (T_{\mathcal{S}}x) -T_{\mathcal{S}}x}^{2} -\norm{\CC{\mathcal{S}}x-T_{\mathcal{S}}x}^{2}$.
		\item  \label{prop:CCS:Equations:Affine:Dist} $ d(\CC{\mathcal{S}}x, W) =  \norm{\CC{\mathcal{S}}x - \Pro_{W} (x) }
		\leq  \norm{T_{\mathcal{S}}x - \Pro_{W} x}
		=  d(T_{\mathcal{S}}x, W)$.
	\end{enumerate}
	
\end{proposition}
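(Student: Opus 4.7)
The plan is to exploit the key formula $\CC{\mathcal{S}}x = \Pro_{\aff(\mathcal{S}(x))}(\Pro_{W} x)$ from \cref{thm:CCS:proper:NormPres:T}\cref{thm:CCS:proper:NormPres:T:PaffU}, and to leverage that \emph{affinity} is preserved under affine combinations, just as $W$ is closed and affine so $\Pro_{W}$ is affine by \cref{fact:PR}\cref{fact:PR:Affine}.

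For \cref{prop:CCS:Equations:Affine:TSk}, I would write $T_{\mathcal{S}} = \sum_{i=1}^{m} \lambda_{i} T_{i}$ with $\sum \lambda_{i} = 1$. Since $W \subseteq \cap_{j=1}^{m} \Fix T_{j}$, every $z \in W$ satisfies $T_{\mathcal{S}} z = \sum \lambda_{i} z = z$, so $W \subseteq \Fix T_{\mathcal{S}}$; in particular $T_{\mathcal{S}} \Pro_{W} x = \Pro_{W} x$, which gives $T_{\mathcal{S}}^{k} \Pro_{W} x = \Pro_{W} x$ by a trivial induction. For the other equality, since $\Pro_{W}$ is affine,
\begin{equation*}
\Pro_{W}(T_{\mathcal{S}} y) \;=\; \Pro_{W}\Big(\textstyle\sum_{i} \lambda_{i} T_{i} y\Big) \;=\; \sum_{i} \lambda_{i} \Pro_{W}(T_{i} y) \;=\; \sum_{i} \lambda_{i} \Pro_{W} y \;=\; \Pro_{W} y,
\end{equation*}
where $\Pro_{W}(T_{i} y) = \Pro_{W} y$ is provided by \cref{prop:CCS:Equations}\cref{prop:CCS:Equations:T}. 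Inductively applying this with $y = T_{\mathcal{S}}^{k-1} x$ yields $\Pro_{W}(T_{\mathcal{S}}^{k} x) = \Pro_{W} x$.

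For \cref{prop:CCS:Equations:Affine:CCsTs}, the key observation is that both $\CC{\mathcal{S}} x$ and $T_{\mathcal{S}} x$ lie in $\aff(\mathcal{S}(x))$ (the former by \cref{def:cir:map}\cref{thm:CCS:proper:NormPres:T:i}, the latter because $T_{\mathcal{S}} \in \aff(\mathcal{S})$). Using \cref{thm:CCS:proper:NormPres:T}\cref{thm:CCS:proper:NormPres:T:PaffU}, $\CC{\mathcal{S}} x = \Pro_{\aff(\mathcal{S}(x))}(\Pro_{W} x)$, so the Pythagoras identity \cref{fact:PR}\cref{fact:PR:Pythagoras} applied with $v = T_{\mathcal{S}} x$ gives
\begin{equation*}
\norm{\Pro_{W} x - \CC{\mathcal{S}} x}^{2} + \norm{\CC{\mathcal{S}} x - T_{\mathcal{S}} x}^{2} = \norm{\Pro_{W} x - T_{\mathcal{S}} x}^{2}.
\end{equation*}
I then rewrite the two outer terms via \cref{prop:CCS:Equations}\cref{prop:CCS:Equations:CCSxk} (which gives $\Pro_{W}(\CC{\mathcal{S}} x) = \Pro_{W} x$) and via \cref{prop:CCS:Equations:Affine:TSk} (which gives $\Pro_{W}(T_{\mathcal{S}} x) = \Pro_{W} x$), producing the stated identity.

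For \cref{prop:CCS:Equations:Affine:Dist}, I unfold the distances: $d(\CC{\mathcal{S}} x, W) = \norm{\CC{\mathcal{S}} x - \Pro_{W}(\CC{\mathcal{S}} x)} = \norm{\CC{\mathcal{S}} x - \Pro_{W} x}$ by \cref{prop:CCS:Equations}\cref{prop:CCS:Equations:CCSxk}, and similarly $d(T_{\mathcal{S}} x, W) = \norm{T_{\mathcal{S}} x - \Pro_{W} x}$ by \cref{prop:CCS:Equations:Affine:TSk}. The middle inequality is immediate from \cref{prop:CCS:Equations:Affine:CCsTs} since $\norm{\CC{\mathcal{S}} x - T_{\mathcal{S}} x}^{2} \geq 0$. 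There is no real obstacle here; the only mildly delicate point is verifying in \cref{prop:CCS:Equations:Affine:TSk} that affinity of $\Pro_{W}$ can be combined with the affine-combination representation of $T_{\mathcal{S}}$, which is why the hypothesis that $W$ be affine (not merely convex) is necessary.
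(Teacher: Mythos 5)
Your proposal is correct and follows essentially the same route as the paper: item (i) via affinity of $\Pro_{W}$ plus \cref{prop:CCS:Equations}\cref{prop:CCS:Equations:T} and the fixed-point observation, and items (ii)--(iii) by combining this with $\Pro_{W}(\CC{\mathcal{S}}x)=\Pro_{W}x$ and the Pythagoras identity relating $\CC{\mathcal{S}}x$, $T_{\mathcal{S}}x$ and $\Pro_{W}x$. The only cosmetic difference is that you re-derive the identity of \cref{prop:CCS:Equations}\cref{prop:CCS:Equations:CCSTSP} from \cref{fact:PR}\cref{fact:PR:Pythagoras} instead of citing it, which is exactly how the paper proves that cited result anyway.
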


\begin{proof}
	\cref{prop:CCS:Equations:Affine:TSk} : Denote $\I :=\{1,\ldots, m\}$. By assumption, $T_{\mathcal{S}} \in \aff (\mathcal{S})$, that is, there exist $(\alpha_{i})_{i\in \I} \in \mathbb{R}^{m}$ such that $\sum^{m}_{i=1} \alpha_{i} =1$ and $T_{\mathcal{S}} =\sum^{m}_{i=1} \alpha_{i} T_{i}$. By assumption, $W$ is closed and affine, thus by \cref{fact:PR}\cref{fact:PR:Affine}, $\Pro_{W}$ is affine. Hence, using \cref{prop:CCS:Equations}\cref{prop:CCS:Equations:T}, we obtain that
	\begin{align*}
	\Pro_{W}T_{\mathcal{S}}x = \Pro_{W} \left( \sum^{m}_{i=1} \alpha_{i} T_{i} x \right) =
	\sum^{m}_{i=1}  \alpha_{i} \Pro_{W}T_{i} x  = \sum^{m}_{i=1}  \alpha_{i} \Pro_{W}x = \Pro_{W}x .
	\end{align*}
	Using $T_{\mathcal{S}} \in \aff (\mathcal{S})$ again, we know $\Pro_{W}x \in W \subseteq \cap^{m}_{j=1}  \Fix T_{j} \subseteq \Fix T_{\mathcal{S}}$. So it is clear that $T_{\mathcal{S}}\Pro_{W}x= \Pro_{W}x$. Then \cref{prop:CCS:Equations:Affine:TSk} follows easily by induction on $k$.
	
	\cref{prop:CCS:Equations:Affine:CCsTs}: The result comes from \cref{prop:CCS:Equations}\cref{prop:CCS:Equations:CCSxk}, \cref{prop:CCS:Equations}\cref{prop:CCS:Equations:CCSTSP} and the item \cref{prop:CCS:Equations:Affine:TSk} above.
	
	\cref{prop:CCS:Equations:Affine:Dist}: The desired result follows from \cref{prop:CCS:Equations}\cref{prop:CCS:Equations:CCSxk} and from the \cref{prop:CCS:Equations:Affine:CCsTs} $\&$ \cref{prop:CCS:Equations:Affine:TSk}  above.
\end{proof}

\begin{remark}
Recall our global assumptions that $\mathcal{S}=\{
T_{1}, \ldots, T_{m-1}, T_{m} \}$ with $\cap^{m}_{j=1} \Fix T_{j} \neq
\varnothing$ and that every element of $\mathcal{S}$ is isometric. So, by
\cref{coro:averaNOTisometry}, for every $i \in \{1, \ldots, m\}$, if
$T_{i} \neq \Id$, $T_{i}$ is not averaged. Hence, we cannot construct the
operator $T_{\mathcal{S}}$ used in
\cref{theorem:CCSLinearConvTSFirmNone}\cref{theorem:CCSLinearConvTSFirmNone:LineaConv}
as in \cref{fact:AlphAvera}. 
See also 
\cref{prop:GeneLinConBCS} and \Cref{lem:AmP:TS,lem:AIdmP:TS} below for 
further examples of $T_{\mathcal{S}}$.
\end{remark}

\begin{remark}[relationship to \cite{BOyW2019Linear}]
In this present paper, we study systematically on the circumcentered isometry method. We first show that the circumcenter
mapping induced by isometries is proper which makes the circumcentered
isometry method well-defined and gives probability for any study on
circumcentered isometry methods. Then we consider the weak, strong and
linear convergence of the circumcentered isometry method. In addition, we
provide examples of linear convergent circumcentered reflection methods
in $\mathbb{R}^{n}$ and some applications of circumcentered reflection
methods. We also display performance profiles showing the outstanding
performance of two of our new circumcentered reflection methods without
theoretical proofs. 
The paper plays a fundamental role for our study of \cite{BOyW2019Linear}. 
In particular, \cref{thm:CWP:line:conv}\cref{thm:CWP:line:conv:F} and
\cref{theorem:CCSLinearConvTSFirmNone}\cref{theorem:CCSLinearConvTSFirmNone:LineaConv}
are two principal facts used in some proofs of \cite{BOyW2019Linear} which 
is an in-depth study of the linear convergence of circumcentered isometry methods.
Indeed, in \cite{BOyW2019Linear}, we first show the
corresponding linear convergent circumcentered isometry methods for
all of the linear convergent circumcentered reflection methods in
$\mathbb{R}^{n}$ shown in this paper. We provide two sufficient
conditions for the linear convergence of circumcentered isometry
methods in Hilbert spaces with first applying another operator on the
initial point. In fact, one of the sufficient conditions is inspired
by \cref{prop:CW:CCS:General:linearconv} in this paper. Moreover, we
present sufficient conditions for the linear convergence of
circumcentered reflection methods in Hilbert space. In addition, we
find some circumcentered reflection methods attaining the known
linear convergence rate of the accelerated symmetric MAP in Hilbert
spaces, which explains the dominant performance of the CRMs in the
numerical experiments in this paper. 
\end{remark}

\section{Circumcenter methods induced by reflectors} \label{sec:CircumMethodReflectors}

As \cref{lem:examples:normpreserving}\cref{lem:examples:normpreserving:R} showed, the reflector associated with any closed and affine subspace is isometry.
This section is devoted to study particularly the circumcenter method induced by reflectors.
In the whole section, we assume that $t \in \mathbb{N} \smallsetminus \{0\}$ and that
\begin{empheq}{equation*}
U_{1}, \ldots, U_{t}~\text{are closed affine subspaces in}~\mathcal{H}~\text{with}~\cap^{t}_{i=1} U_{i} \neq \varnothing,
\end{empheq}
and set that
\begin{empheq}[box=\mybluebox]{equation*}
\Omega := \Big\{ \R_{U_{i_{r}}}\cdots \R_{U_{i_{2}}}\R_{U_{i_{1}}}  ~\Big|~ r \in \mathbb{N}, ~\mbox{and}~ i_{1}, \ldots,  i_{r} \in \{1, \ldots,t\}    \Big\}.
\end{empheq}
Suppose $\mathcal{S}$ is a finite set such that
\begin{empheq}[box=\mybluebox]{equation*}
\varnothing \neq \mathcal{S} \subseteq \Omega.
\end{empheq}
We assume that
\begin{empheq}{equation*}
\R_{U_{i_{r}}}\cdots \R_{U_{i_{1}}}~\text{is the representative element of the set}~\mathcal{S}.
\end{empheq}
 In order to prove some convergence results on the circumcenter methods induced by reflectors later, we consider the linear subspace $\pa U$ paralleling to the associated affine subspace $U$. 
 We denote
\begin{align} \label{defn:L}
L_{1} :=\pa U_{1}, \ldots, L_{t} := \pa U_{t}.
\end{align}
We set
\begin{empheq}[box=\mybluebox]{equation*}
\mathcal{S}_{L} :=  \left\{\R_{L_{i_{r}}}\cdots \R_{L_{i_{2}}}\R_{L_{i_{1}}}  ~|~ \R_{U_{i_{r}}}\cdots \R_{U_{i_{2}}}\R_{U_{i_{1}}}   \in \mathcal{S} \right\} .
\end{empheq}
Note that if $\Id \in \mathcal{S}$, then the corresponding element in $\mathcal{S}_{L}$ is $\Id$.

For example, if $\mathcal{S} =\{\Id, \R_{U_{1}}, \R_{U_{2}}\R_{U_{1}}, \R_{U_{3}}\R_{U_{1}}\}$, then $\mathcal{S}_{L}= \{\Id, \R_{L_{1}}, \R_{L_{2}}\R_{L_{1}}, \R_{L_{3}}\R_{L_{1}}\}$.

\subsection{Properties of circumcentered reflection methods} \label{subsec:ProperCCMethod}
\begin{lemma} \label{lemma:capUi}
	$\cap^{t}_{i=1} U_{i} $ is closed and affine.   Moreover,  $\varnothing \neq \cap^{t}_{i=1} U_{i} \subseteq \cap_{T \in \mathcal{S}} \Fix T$.
\end{lemma}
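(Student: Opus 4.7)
The statement packages three small claims, each proved by direct unfolding of definitions, so the plan is correspondingly short.

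First, for closedness and affineness I would appeal to standard set-theoretic facts. A finite intersection of closed sets is closed, so $\cap^{t}_{i=1} U_{i}$ is closed. For the affine property, I would verify the definition recalled in the introduction: for every $\rho \in \mathbb{R}$ and every $x,y \in \cap^{t}_{i=1} U_{i}$, the point $\rho x + (1-\rho) y$ belongs to each $U_{i}$ because $U_{i}$ is affine, hence to the intersection. Nonemptiness is included in the standing assumption and needs no argument.

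Second, for the inclusion $\cap^{t}_{i=1} U_{i} \subseteq \cap_{T \in \mathcal{S}} \Fix T$, I would fix an arbitrary $x \in \cap^{t}_{i=1} U_{i}$ and an arbitrary $T \in \mathcal{S} \subseteq \Omega$. By definition of $\Omega$, there exist $r \in \mathbb{N}$ and indices $i_{1}, \ldots, i_{r} \in \{1, \ldots, t\}$ with $T = \R_{U_{i_{r}}} \cdots \R_{U_{i_{1}}}$ (with the empty product convention giving $T=\Id$ when $r=0$, in which case $x \in \Fix T$ trivially). Since $x \in U_{i_{1}}$ and $\Fix \R_{U_{i_{1}}} = U_{i_{1}}$ (as $\R_{U_{i_{1}}} = 2\Pro_{U_{i_{1}}} - \Id$ fixes exactly $U_{i_{1}}$ by \cref{MetrProSubs8}\cref{MetrProSubs8:iv} applied to the parallel subspace, combined with \cref{fac:SetChangeProje}), we obtain $\R_{U_{i_{1}}} x = x$. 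Proceeding inductively on $j$, since $x \in U_{i_{j}}$ we get $\R_{U_{i_{j}}} \cdots \R_{U_{i_{1}}} x = x$, so $Tx = x$, i.e., $x \in \Fix T$. Taking intersection over $T \in \mathcal{S}$ gives the inclusion.

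There is no genuine obstacle here; the only minor care is the empty product convention and the reminder that $\Fix \R_{U_{i}} = U_{i}$ for each closed affine subspace $U_{i}$, which follows from the definition $\R_{U_{i}} = 2\Pro_{U_{i}} - \Id$ together with the characterization of the projector's fixed points.
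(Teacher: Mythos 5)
Your proof is correct and follows essentially the same route as the paper: closedness and affineness of $\cap^{t}_{i=1}U_{i}$ come directly from the standing assumptions, and the inclusion is obtained by writing each $T\in\mathcal{S}$ as $\R_{U_{i_{r}}}\cdots\R_{U_{i_{1}}}$ and using $\Fix\R_{U_{j}}=U_{j}$ so that every point of the intersection is fixed by each factor. The only difference is that you spell out the induction over the composition and the empty-product case, which the paper compresses into a single ``clearly''.
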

\begin{proof}
	By the underlying assumptions, $\cap^{t}_{i=1} U_{i}$ is closed and affine.
	
	Take an arbitrary but fixed  $\R_{U_{i_{r}}}\cdots \R_{U_{i_{1}}} \in \mathcal{S}$. If $\R_{U_{i_{r}}}\cdots \R_{U_{i_{1}}} = \Id$, then $\cap^{t}_{i=1} U_{i} \subseteq \mathcal{H} = \Fix \Id$. Assume $\R_{U_{i_{r}}}\cdots \R_{U_{i_{1}}} \neq \Id$. Let $x \in \cap^{t}_{i=1} U_{i}$. Since $(\forall j \in \{1, \ldots, t\})$ $\cap^{t}_{i=1} U_{i} \subseteq U_{j} = \Fix \R_{U_{j}}$, thus clearly $\R_{U_{i_{r}}}\cdots \R_{U_{i_{1}}}x =x$. Hence,  $\cap^{t}_{i=1} U_{i}  \subseteq \cap_{T \in \mathcal{S}} \Fix T$ as required.
\end{proof}

\cref{lemma:capUi} tells us that we are able to substitute the $W$ in
all of the results in \cref{sec:CircumcenterMethodIsome} by the $\cap^{t}_{i=1} U_{i} $. Therefore, the circumcenter methods induced by reflectors can be used in the best approximation problem associated with the intersection $\cap^{t}_{i=1} U_{i}$ of finitely many affine subspaces.

\begin{lemma} \label{lemma:RU:RL}
	Let $x\in \mathcal{H}$ and let $z \in \cap^{t}_{i=1} U_{i}$. Then the following   hold:
	\begin{enumerate}
		\item \label{lemma:RU:RL:R} $(\forall \R_{U_{i_{r}}}\cdots \R_{U_{i_{1}}} \in \mathcal{S})$ $\R_{U_{i_{r}}}\cdots \R_{U_{i_{1}}}x = z + \R_{L_{i_{r}}}\cdots \R_{L_{i_{1}}} (x-z)$.
		\item \label{lemma:RU:RL:S} $\mathcal{S}(x)=z + \mathcal{S}_{L}(x-z)$.
		\item \label{lemma:RU:RL:CCSk} $(\forall k \in \mathbb{N})$ $\CC{\mathcal{S}}^{k}x = z+ \CC{\mathcal{S}_{L}}^{k}(x-z)$.
	\end{enumerate}	
\end{lemma}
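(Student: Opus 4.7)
The plan is to first establish a single-reflector version of (i), then bootstrap by induction on the word length, and finally lift to the circumcenter mapping using the translation-equivariance of the circumcenter operator.

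For \cref{lemma:RU:RL:R}, I will use the standard decomposition of an affine reflector through a point on the affine subspace. Fix $j \in \{1,\dots,t\}$. Since $z \in U_j$ and $L_j = \pa U_j$, we have $U_j = z + L_j$, so \cref{fac:SetChangeProje} gives $\Pro_{U_j} x = z + \Pro_{L_j}(x-z)$. Therefore
\begin{equation*}
\R_{U_j} x = 2\Pro_{U_j}x - x = 2z + 2\Pro_{L_j}(x-z) - x = z + \bigl(2\Pro_{L_j}(x-z) - (x-z)\bigr) = z + \R_{L_j}(x-z).
\end{equation*}
Now I induct on the word length $r$ of a composition $\R_{U_{i_r}}\cdots\R_{U_{i_1}} \in \mathcal{S}$. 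The base case $r=0$ is trivial and the case $r=1$ is the computation above. Assuming the claim for words of length $r$, I apply $\R_{U_{i_{r+1}}}$ to $\R_{U_{i_r}}\cdots\R_{U_{i_1}}x = z + \R_{L_{i_r}}\cdots\R_{L_{i_1}}(x-z)$. Since $z \in U_{i_{r+1}}$, the single-reflector identity yields
\begin{equation*}
\R_{U_{i_{r+1}}}\bigl(z + \R_{L_{i_r}}\cdots\R_{L_{i_1}}(x-z)\bigr) = z + \R_{L_{i_{r+1}}}\R_{L_{i_r}}\cdots\R_{L_{i_1}}(x-z),
\end{equation*}
completing the induction.

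Item \cref{lemma:RU:RL:S} is then immediate from \cref{lemma:RU:RL:R} together with the definition of $\mathcal{S}_L$: the map $\R_{U_{i_r}}\cdots\R_{U_{i_1}} \mapsto \R_{L_{i_r}}\cdots\R_{L_{i_1}}$ is a bijection between $\mathcal{S}$ and $\mathcal{S}_L$, and \cref{lemma:RU:RL:R} states exactly that each element of $\mathcal{S}(x)$ is obtained by adding $z$ to the corresponding element of $\mathcal{S}_L(x-z)$.

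For \cref{lemma:RU:RL:CCSk}, the key observation is that the circumcenter operator $\CCO$ is translation-equivariant: for any finite $K \in \mathcal{P}(\mathcal{H})$ and any $z \in \mathcal{H}$, $\aff(z+K) = z + \aff(K)$, and a point $p$ is equidistant from $K$ iff $z+p$ is equidistant from $z+K$, so $\CCO(z+K) = z + \CCO(K)$ by \cref{defn:Circumcenter}. Combining this with \cref{lemma:RU:RL:S} gives, for every $x \in \mathcal{H}$,
\begin{equation*}
\CC{\mathcal{S}}x = \CCO(\mathcal{S}(x)) = \CCO(z + \mathcal{S}_L(x-z)) = z + \CCO(\mathcal{S}_L(x-z)) = z + \CC{\mathcal{S}_L}(x-z).
\end{equation*}
Induction on $k$ now finishes: the case $k=0$ is trivial, and assuming the statement at step $k$, I apply the boxed identity with $x$ replaced by $\CC{\mathcal{S}}^k x = z + \CC{\mathcal{S}_L}^k(x-z)$ to obtain
\begin{equation*}
\CC{\mathcal{S}}^{k+1}x = z + \CC{\mathcal{S}_L}\bigl((\CC{\mathcal{S}}^k x) - z\bigr) = z + \CC{\mathcal{S}_L}\bigl(\CC{\mathcal{S}_L}^k(x-z)\bigr) = z + \CC{\mathcal{S}_L}^{k+1}(x-z).
\end{equation*}
The only mildly subtle point is the translation-equivariance of $\CCO$, but this is immediate from the defining conditions of \cref{defn:Circumcenter}; otherwise the argument is a clean induction.
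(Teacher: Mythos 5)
Your proof is correct and follows essentially the same route as the paper: the single-reflector identity via \cref{fac:SetChangeProje}, induction on the word length for (i), the set identity for (ii), and translation of the circumcenter plus induction on $k$ for (iii). The only difference is that you verify the translation-equivariance $\CCO(z+K)=z+\CCO(K)$ directly from \cref{defn:Circumcenter}, whereas the paper simply cites \cite[Proposition~6.3]{BOyW2018} for it.
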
	

\begin{proof}
	\cref{lemma:RU:RL:R}: Let $ \R_{U_{i_{r}}}\cdots \R_{U_{i_{1}}} \in \mathcal{S}$.
	Since for every $y \in \mathcal{H}$ and for every $ i \in \{1, \ldots, t\}$, $\R_{U_{i}}y = \R_{z+L_{i}}y =2\Pro_{z+L_{i}}y  -y= 2 \left( z+ \Pro_{L_{i}}(y-z) \right)-y =z+ \left( 2 \Pro_{L_{i}}(y-z)  - (y-z) \right) =z + \R_{L_{i}}(y-z)$, where the third and the fifth equality is by using \cref{fac:SetChangeProje}, thus
	\begin{align}  \label{eq:lemma:RU:RL:i}
	(\forall y \in \mathcal{H}) \quad 	(\forall i \in \{1, \ldots, t\}) \quad \R_{U_{i}}y = z + \R_{L_{i}}(y-z).
	\end{align}
	Then assume for some $k \in \{1,\ldots,r-1\}$,
	\begin{align} \label{eq:lemma:RU:RL:Rk}
	\R_{U_{i_{k}}}\cdots \R_{U_{i_{1}}}x = z + \R_{L_{i_{k}}}\cdots \R_{L_{i_{1}}} (x-z).
	\end{align}
	Now
	\begin{align*}
	\R_{U_{i_{k+1}}}\R_{U_{i_{k}}}\cdots \R_{U_{i_{1}}}x & \stackrel{\cref{eq:lemma:RU:RL:Rk}}{=} \R_{U_{i_{k+1}}} \left(
	z + \R_{L_{i_{k}}}\cdots \R_{L_{i_{1}}} (x-z) \right) \\
	&\stackrel{\cref{eq:lemma:RU:RL:i}}{=} z + \R_{L_{i_{k+1}}}\R_{L_{i_{k}}}\cdots \R_{L_{i_{1}}} (x-z).
	\end{align*}
	Hence, by induction, we know \cref{lemma:RU:RL:R} is true.
	
	\cref{lemma:RU:RL:S}: Combining the result proved in \cref{lemma:RU:RL:R}
	above with the definitions of the set-valued operator $\mathcal{S}$ and
	$\mathcal{S}_{L}$, we obtain 
	\begin{align*}
	\mathcal{S}(x) & = \left\{\R_{U_{i_{r}}}\cdots \R_{U_{i_{2}}}\R_{U_{i_{1}}}x  ~|~ \R_{U_{i_{r}}}\cdots \R_{U_{i_{2}}}\R_{U_{i_{1}}}   \in \mathcal{S} \right\} \\
	&=  \left\{ z + \R_{L_{i_{r}}}\cdots \R_{L_{i_{2}}}\R_{L_{i_{1}}}(x-z)  ~|~ \R_{U_{i_{r}}}\cdots \R_{U_{i_{2}}}\R_{U_{i_{1}}}   \in \mathcal{S} \right\} \\
	& = z + \left\{ \R_{L_{i_{r}}}\cdots \R_{L_{i_{2}}}\R_{L_{i_{1}}}(x-z)  ~|~ \R_{U_{i_{r}}}\cdots \R_{U_{i_{2}}}\R_{U_{i_{1}}}   \in \mathcal{S} \right\} \\
	& = z + \mathcal{S}_{L}(x-z).
	\end{align*}
	
	\cref{lemma:RU:RL:CCSk}: By \cite[Proposition~6.3]{BOyW2018}, for every  $K \in \mathcal{P}(\mathcal{H})$ and $y \in \mathcal{H}$,
	$\CCO(K+y)=\CCO(K)+y$. Because $z \in \cap^{t}_{i=1} U_{i} \subseteq \cap_{T \in \mathcal{S}} \Fix T$, by \cref{def:cir:map},
	\begin{align} \label{eq:lemma:RU:RL:CCSk:base}
	(\forall y \in \mathcal{H})\quad	\CC{\mathcal{S}}y = \CCO{\left(\mathcal{S}(y) \right) }  \stackrel{\text{\cref{lemma:RU:RL:S}}}{=} \CCO{\left(z + \mathcal{S}_{L}(y-z) \right)} = z + \CCO{\left(\mathcal{S}_{L}(y-z) \right) }  = z + \CC{\mathcal{S}_{L}}(y-z).
	\end{align}
	Assume for some $k \in \mathbb{N}$,
	\begin{align} \label{eq:lemma:RU:RL:CCSk:AssCCS}
	(\forall y \in \mathcal{H}) \quad \CC{\mathcal{S}}^{k}y = z+ \CC{\mathcal{S}_{L}}^{k}(y-z).
	\end{align}
	Now
	\begin{align*}
	\CC{\mathcal{S}}^{k+1}x & =\CC{\mathcal{S}} \left(\CC{\mathcal{S}}^{k}x  \right) \\
	& =  \CC{\mathcal{S}} \left( z+ \CC{\mathcal{S}_{L}}^{k}(x-z) \right) \quad (\text{by \cref{eq:lemma:RU:RL:CCSk:AssCCS}})\\
	& =z + \CC{\mathcal{S}_{L}} \left( z+ \CC{\mathcal{S}_{L}}^{k}(x-z) - z\right) \quad (\text{by \cref{eq:lemma:RU:RL:CCSk:base} }) \\
	& =  z+ \CC{\mathcal{S}_{L}}^{k+1}(x-z).
	\end{align*}
	Hence, by induction, we know \cref{lemma:RU:RL:CCSk}  is true.
\end{proof}
The following \cref{prop:LineConvCCS:CCSL} says that the convergence of the circumcenter methods induced by reflectors associated with linear subspaces is equivalent to the convergence of the corresponding circumcenter methods induced by reflectors associated with affine subspaces. In fact, \cref{prop:LineConvCCS:CCSL} is a generalization of  \cite[Corollary~3]{BCS2017}.

\begin{proposition} \label{prop:LineConvCCS:CCSL}
	Let $x\in \mathcal{H}$ and let $z \in \cap^{t}_{i=1} U_{i}$.  Then $\left(\CC{\mathcal{S}}^{k}x \right)_{k \in \mathbb{N}}$ converges to $\Pro_{ \cap^{t}_{i=1} U_{i} }x $ $($with a linear rate $\gamma \in \, [0,1[\,$$)$  if and only if  $\left( \CC{\mathcal{S}_{L}}^{k}(x-z) \right)_{k \in \mathbb{N}}$ converges to $\Pro_{ \cap^{t}_{i=1} L_{i} }(x-z)$ $($with a linear rate $\gamma \in \, [0,1[\,$$)$.
\end{proposition}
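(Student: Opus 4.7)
The plan is to reduce the statement to a one-line identity by combining \cref{lemma:RU:RL}\cref{lemma:RU:RL:CCSk} with the translation rule for projections onto affine subspaces (\cref{fac:SetChangeProje}). First, since $z \in \cap^{t}_{i=1} U_{i}$ and each $U_{i}$ is parallel to $L_{i} = \pa U_{i}$, one has $U_{i} = z + L_{i}$ for every $i$, and therefore
\begin{align*}
\cap^{t}_{i=1} U_{i} = z + \cap^{t}_{i=1} L_{i}.
\end{align*}
Applying \cref{fac:SetChangeProje} to this affine translation yields the key formula
\begin{align*}
\Pro_{\cap^{t}_{i=1} U_{i}}(x) = z + \Pro_{\cap^{t}_{i=1} L_{i}}(x-z).
\end{align*}

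Next, I would invoke \cref{lemma:RU:RL}\cref{lemma:RU:RL:CCSk}, which asserts $\CC{\mathcal{S}}^{k}x = z + \CC{\mathcal{S}_{L}}^{k}(x-z)$ for every $k \in \mathbb{N}$. Subtracting the two displays gives the identity
\begin{align*}
\CC{\mathcal{S}}^{k}x - \Pro_{\cap^{t}_{i=1} U_{i}}(x) = \CC{\mathcal{S}_{L}}^{k}(x-z) - \Pro_{\cap^{t}_{i=1} L_{i}}(x-z),
\end{align*}
and in particular
\begin{align*}
\norm{\CC{\mathcal{S}}^{k}x - \Pro_{\cap^{t}_{i=1} U_{i}}(x)} = \norm{\CC{\mathcal{S}_{L}}^{k}(x-z) - \Pro_{\cap^{t}_{i=1} L_{i}}(x-z)}.
\end{align*}
Because the two error sequences coincide pointwise in $k$, one tends to zero if and only if the other does, proving the convergence equivalence.

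For the linear rate part, I would take the standard interpretation (consistent with \cref{thm:CWP:line:conv}) that linear convergence with rate $\gamma$ means the error is bounded by $\gamma^{k}$ times the initial error. Setting $k=0$ in the boxed identity already gives $\norm{x - \Pro_{\cap^{t}_{i=1} U_{i}}(x)} = \norm{(x-z) - \Pro_{\cap^{t}_{i=1} L_{i}}(x-z)}$, so the two estimates $\norm{\CC{\mathcal{S}}^{k}x - \Pro_{\cap^{t}_{i=1} U_{i}}(x)} \leq \gamma^{k}\norm{x - \Pro_{\cap^{t}_{i=1} U_{i}}(x)}$ and its analogue for $\mathcal{S}_{L}$ are term-by-term equivalent. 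No genuine obstacle arises in this argument; the whole content of the proposition is packaged in \cref{lemma:RU:RL}\cref{lemma:RU:RL:CCSk}, and the proof reduces to noting that the same translation by $z$ aligns both the iterates and the best approximations.
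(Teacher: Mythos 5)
Your proposal is correct and follows essentially the same route as the paper's own proof: both rest on the translation identity $\CC{\mathcal{S}}^{k}x = z + \CC{\mathcal{S}_{L}}^{k}(x-z)$ from \cref{lemma:RU:RL}\cref{lemma:RU:RL:CCSk} together with $\Pro_{\cap^{t}_{i=1} U_{i}}x = z + \Pro_{\cap^{t}_{i=1} L_{i}}(x-z)$ via \cref{fac:SetChangeProje}, so that the error sequences coincide term by term. Your explicit note that $\cap^{t}_{i=1} U_{i} = z + \cap^{t}_{i=1} L_{i}$ and the $k=0$ observation matching the initial errors are just slightly more detailed versions of what the paper leaves implicit.
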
	

\begin{proof}
	By \cref{lemma:RU:RL}\cref{lemma:RU:RL:CCSk}, we know that $(\forall k \in \mathbb{N})$ $\CC{\mathcal{S}}^{k}x = z+ \CC{\mathcal{S}_{L}}^{k}(x-z)$. Moreover, by \cref{fac:SetChangeProje}, $\Pro_{ \cap^{t}_{i=1} U_{i} }x = \Pro_{ z +\cap^{t}_{i=1} L_{i} }x  = z+ \Pro_{ \cap^{t}_{i=1} L_{i} }(x-z)$. Hence, the equivalence holds.
\end{proof}

The proof of 
\cref{prop:CCSxxIntUPerp} requires the following result. 

\begin{lemma} \label{lem:ElemSinIntUperp}
	Let $x \in \mathcal{H}$ and let $\R_{U_{i_{r}}}\cdots \R_{U_{i_{1}}} \in \mathcal{S}$. Let
	$L_{1}, L_{2}, \ldots, L_{t}$ be the closed linear subspaces defined in \cref{defn:L}.
	Then $\R_{U_{i_{r}}}\cdots \R_{U_{i_{1}}}x-x \in (\cap^{t}_{i=1} L_{i})^{\perp}$, that is,
	\begin{align*}
	(\forall z \in \cap^{t}_{i=1} L_{i}) \quad \innp{\R_{U_{i_{r}}}\cdots \R_{U_{i_{1}}}x-x, z}=0.
	\end{align*}
\end{lemma}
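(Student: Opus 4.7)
The plan is to reduce the affine statement to the corresponding linear-subspace statement, and then exploit the self-adjointness of projectors (and hence reflectors) onto closed linear subspaces.

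First I would pick an arbitrary point $p \in \cap_{i=1}^t U_i$, which exists by the standing assumption of this section. Applying \cref{lemma:RU:RL}\cref{lemma:RU:RL:R} with this $p$ (in place of $z$) yields
\begin{align*}
\R_{U_{i_{r}}}\cdots \R_{U_{i_{1}}}x \;-\; x \;=\; \R_{L_{i_{r}}}\cdots \R_{L_{i_{1}}}(x-p) \;-\; (x-p),
\end{align*}
so setting $y := x-p$ it suffices to prove that $\R_{L_{i_r}}\cdots\R_{L_{i_1}}y - y \in (\cap_{i=1}^{t} L_i)^{\perp}$ for every $y \in \mathcal{H}$.

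Next, fix $z \in \cap_{i=1}^{t} L_i$. Since each $L_i$ is a closed linear subspace, the projector $\Pro_{L_i}$ is self-adjoint, and hence so is the reflector $\R_{L_i} = 2\Pro_{L_i}-\Id$, which is also linear. Therefore the adjoint of the composition $\R_{L_{i_r}}\cdots\R_{L_{i_1}}$ is $\R_{L_{i_1}}\cdots\R_{L_{i_r}}$. Because $z\in L_{i_k}$ for every $k$, \cref{MetrProSubs8}\cref{MetrProSubs8:iv} gives $\R_{L_{i_k}}z = z$, and iterating yields $\R_{L_{i_1}}\cdots\R_{L_{i_r}}z = z$. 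Then
\begin{align*}
\innp{\R_{L_{i_r}}\cdots\R_{L_{i_1}}y - y,\,z} \;=\; \innp{y,\,\R_{L_{i_1}}\cdots\R_{L_{i_r}}z - z} \;=\; \innp{y,\,0} \;=\; 0,
\end{align*}
which completes the reduction.

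I do not expect any serious obstacle here: the only subtlety is that reflectors onto affine (not linear) subspaces are affine rather than linear, so one cannot pass to the adjoint directly on $\R_{U_i}$. That is exactly what the shift by $p \in \cap_i U_i$ circumvents, turning the problem into a statement about the linear reflectors $\R_{L_i}$ where the adjoint trick applies cleanly.
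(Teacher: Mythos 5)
Your proof is correct, but the key step is argued by a genuinely different route than the paper's. Both you and the paper first reduce to the linear reflectors $\R_{L_{i}}$ via \cref{lemma:RU:RL}\cref{lemma:RU:RL:R}; your choice of shift point $p\in\cap_{i=1}^{t}U_{i}$ matches the hypothesis of that lemma exactly, which is if anything more careful than the paper's own invocation of it. Where you diverge is in proving that $\R_{L_{i_{r}}}\cdots\R_{L_{i_{1}}}y-y$ is orthogonal to every $z\in\cap_{i=1}^{t}L_{i}$. The paper avoids adjoints entirely: it writes this difference as the telescoping sum of the one-step differences $\R_{L_{i_{j+1}}}w_{j}-w_{j}$ with $w_{j}:=\R_{L_{i_{j}}}\cdots\R_{L_{i_{1}}}y$, and observes via \cref{MetrProSubs8}\cref{MetrProSubs8:ii} that each such difference equals $-2\Pro_{L_{i_{j+1}}^{\perp}}w_{j}$, hence is orthogonal to $z\in L_{i_{j+1}}$. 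You instead move the whole composition across the inner product, using that each $\R_{L_{i}}=2\Pro_{L_{i}}-\Id$ is a bounded self-adjoint linear operator, so the adjoint of the composition is the reversed composition, which fixes $z$. Your argument is shorter and perfectly valid; its only extra ingredient is the self-adjointness of $\Pro_{L_{i}}$ (hence of $\R_{L_{i}}$), a standard fact that the paper does not list among its quoted preliminaries, whereas the paper's telescoping argument is self-contained in the facts it cites. What each buys: your adjoint trick extends verbatim to any finite composition of bounded self-adjoint linear operators each fixing $z$, while the paper's argument isolates the elementary geometric fact that every single reflection step moves a point only within $L_{d}^{\perp}$, which is the observation reused elsewhere in that section.
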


\begin{proof}
	By \cref{lemma:RU:RL}\cref{lemma:RU:RL:R}, for every $z \in \cap^{t}_{i=1} L_{i}$,
	\begin{align*}
	\innp{\R_{U_{i_{r}}}\cdots \R_{U_{i_{1}}}x-x, z}=  \innp{z + \R_{L_{i_{r}}}\cdots \R_{L_{i_{1}}}(x-z)-x, z}= \innp{ \R_{L_{i_{r}}}\cdots \R_{L_{i_{1}}}(x-z)-(x-z), z}.
	\end{align*}
	Hence, it suffices to prove
	\begin{align}
	(\forall y \in \mathcal{H}) \quad (\forall z \in \cap^{t}_{i=1} L_{i})   \quad \innp{\R_{L_{i_{r}}}\cdots \R_{L_{i_{1}}}y-y, z}=0.
	\end{align}
	Let $y \in \mathcal{H}$ and $z\in \cap^{t}_{i=1} L_{i}$.  Take an arbitrary $j \in \{1,2, \ldots,t\}$. By \cref{MetrProSubs8}\cref{MetrProSubs8:ii}
$
	\innp{\R_{L_{j}}(y)-y,z}=\innp{2(\Pro_{L_{j}}-\Id )y,z}= \innp{-2\Pro_{L_{j}^{\perp}}y,z} =0,
$
	which yields that
	\begin{align}\label{eq:prop:orth}
	(\forall w \in \mathcal{H}) \quad \left(\forall d \in \{1,2, \ldots,t\}\right) \quad \innp{\R_{L_{d}}(w)-w, z}=0.
	\end{align}
	Recall $\prod^{0}_{j=1}\R_{L_{i_{j}}} =\Id$. So we have
	\begin{align} \label{eq:lem:ElemSinIntUperp:RrrX}
	\R_{L_{i_{r}}}\R_{L_{i_{r-1}}}\cdots \R_{L_{i_{1}}}(y) -y
	= \sum^{r-1}_{j=0}  \Big(\R_{L_{i_{j+1}}}\R_{L_{i_{j}}}\cdots \R_{L_{i_{1}}}(y)- \R_{L_{i_{j}}}\cdots \R_{L_{i_{1}}}(y) \Big).
	\end{align}
	Hence,
	\begin{align*}
	\Innp{\R_{L_{i_{r}}}\R_{L_{i_{r-1}}}\cdots \R_{L_{i_{1}}}(y)-y,z }
	\stackrel{\cref{eq:lem:ElemSinIntUperp:RrrX}}{=} & \Innp{ \sum^{r-1}_{j=0}  \Big(\R_{L_{i_{j+1}}}\R_{L_{i_{j}}}\cdots \R_{L_{i_{1}}}(y)- \R_{L_{i_{j}}}\cdots \R_{L_{i_{1}}}(y) \Big),z } \\
	= & \sum^{r-1}_{j=0}  \Innp{ \R_{L_{i_{j+1}}}\big(\R_{L_{i_{j}}}\cdots \R_{L_{i_{1}}}(y)\big)- \R_{L_{i_{j}}}\cdots \R_{L_{i_{1}}}(y), z }\\
	\stackrel{\text{\cref{eq:prop:orth}}}{=} & 0.
	\end{align*}
	Hence, the proof is complete.
\end{proof}

\begin{proposition}  \label{prop:CCSxxIntUPerp}
Assume $\Id \in \mathcal{S}$. Let $L_{1}, L_{2},
\ldots, L_{t}$ be the closed linear subspaces defined in \cref{defn:L}.
Let $x \in \mathcal{H}$. Then the following hold:
	\begin{enumerate}
		\item \label{prop:CCSxxIntUPerp:1} $\CC{\mathcal{S}}x-x \in (\cap^{t}_{i=1} L_{i})^{\perp}$, that is,
		$ (\forall z \in\cap^{t}_{i=1} L_{i})$ $\innp{\CC{\mathcal{S}}x-x,z}=0$.
		\item  \label{prop:CCSxxIntUPerp:k} $(\forall k \in \mathbb{N}) $ $ \CC{\mathcal{S}}^{k}x-x \in (\cap^{t}_{i=1} L_{i})^{\perp},
		$
		that is,
		\begin{align} \label{eq:CCSK:prop:orth}
		(\forall k \in \mathbb{N}) \quad (\forall z \in\cap^{t}_{i=1} L_{i}) \quad \innp{\CC{\mathcal{S}}^{k}x-x,z}=0.
		\end{align}
	\end{enumerate}
\end{proposition}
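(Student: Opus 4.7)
The two parts are naturally sequential: part (i) is the geometric observation, and part (ii) is a clean telescoping/induction argument that reduces to (i).

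For (i), the plan is to exploit the assumption $\Id \in \mathcal{S}$ together with the characterization of $\CC{\mathcal{S}}x$ as a point in the affine hull $\aff(\mathcal{S}(x))$ (from \cref{thm:CCS:proper:NormPres:T}\cref{thm:CCS:proper:NormPres:T:prop}). Since $\Id \in \mathcal{S}$, one has $x \in \mathcal{S}(x)$, so
\begin{equation*}
\aff(\mathcal{S}(x)) = x + \spn\{Tx - x \mid T \in \mathcal{S}\}.
\end{equation*}
Hence $\CC{\mathcal{S}}x - x$ lies in $\spn\{Tx - x \mid T \in \mathcal{S}\}$. By \cref{lem:ElemSinIntUperp}, each generator $Tx - x$ (with $T = \R_{U_{i_r}} \cdots \R_{U_{i_1}} \in \mathcal{S}$) belongs to $(\cap^{t}_{i=1} L_i)^{\perp}$. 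Since $(\cap^{t}_{i=1} L_i)^{\perp}$ is a closed linear subspace, it absorbs finite linear combinations, so $\CC{\mathcal{S}}x - x \in (\cap^{t}_{i=1} L_i)^{\perp}$. (The case $\Id = \R_{U_{i_r}} \cdots \R_{U_{i_1}}$, i.e.\ the empty product, poses no trouble since then $Tx - x = 0$.)

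For (ii), the plan is a short induction (or telescoping) that applies (i) at every iterate. The base case $k=0$ is trivial since $0 \in (\cap^{t}_{i=1} L_i)^{\perp}$, and $k = 1$ is exactly (i). For the inductive step, assume $\CC{\mathcal{S}}^{k}x - x \in (\cap^{t}_{i=1} L_i)^{\perp}$ and write
\begin{equation*}
\CC{\mathcal{S}}^{k+1}x - x = \bigl(\CC{\mathcal{S}}(\CC{\mathcal{S}}^{k}x) - \CC{\mathcal{S}}^{k}x\bigr) + \bigl(\CC{\mathcal{S}}^{k}x - x\bigr).
\end{equation*}
Applying (i) with $x$ replaced by $\CC{\mathcal{S}}^{k}x \in \mathcal{H}$ shows the first bracket lies in $(\cap^{t}_{i=1} L_i)^{\perp}$, while the second does by the inductive hypothesis. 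Since $(\cap^{t}_{i=1} L_i)^{\perp}$ is a linear subspace, the sum belongs to it, completing the induction.

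I do not foresee any real obstacle: the only subtlety is making sure the inclusion $\CC{\mathcal{S}}x - x \in \spn\{Tx - x \mid T \in \mathcal{S}\}$ is justified (which needs $\Id \in \mathcal{S}$ so that $x$ is one of the base points used to form $\aff(\mathcal{S}(x))$), and that the induction step (ii) is set up so that (i) can be reapplied at the new point $\CC{\mathcal{S}}^{k}x$ without any additional hypotheses. Both are immediate from the statement of (i) and from $\CC{\mathcal{S}}$ mapping $\mathcal{H}$ to $\mathcal{H}$ by \cref{thm:CCS:proper:NormPres:T}\cref{thm:CCS:proper:NormPres:T:prop}.
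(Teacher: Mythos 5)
Your proof is correct and follows essentially the same route as the paper: part (i) reduces $\CC{\mathcal{S}}x-x$ to a finite linear combination of the vectors $Tx-x$, $T\in\mathcal{S}$, each of which lies in $(\cap^{t}_{i=1}L_{i})^{\perp}$ by \cref{lem:ElemSinIntUperp}, and part (ii) applies (i) at each iterate via an induction that is just the paper's telescoping sum in disguise. The only cosmetic difference is that the paper obtains the representation $\CC{\mathcal{S}}x=x+\sum_{j}\alpha_{j}(T_{j}x-x)$ from \cref{prop:CW:Welldefined:Formula}, whereas you get the same membership directly from $\CC{\mathcal{S}}x\in\aff(\mathcal{S}(x))$ and the identity $\aff(\mathcal{S}(x))=x+\spn\{Tx-x\mid T\in\mathcal{S}\}$ valid since $\Id\in\mathcal{S}$.
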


\begin{proof}
	\cref{prop:CCSxxIntUPerp:1}: By \cref{thm:CCS:proper:NormPres:T}\cref{thm:CCS:proper:NormPres:T:prop}, we know that $\CC{\mathcal{S}}$ is proper. Hence, by \cref{prop:CW:Welldefined:Formula} and $\Id \in \mathcal{S}$, there exist $n \in \mathbb{N}$ and $\alpha_{1}, \ldots, \alpha_{n} \in \mathbb{R}$ and $T_{1}, \ldots, T_{n} \in \mathcal{S}$ such that
	\begin{align} \label{eq:prop:orth:CW:Welldefined:Formula}
	\CC{\mathcal{S}}x=x+
	\sum^{n}_{j=1} \alpha_{j} (T_{j}x-x).
	\end{align}
	Let $z \in \cap^{t}_{i=1} L_{i}$.
	Since $\{T_{1}, \ldots, T_{n}\} \subseteq \mathcal{S}$, by  \cref{lem:ElemSinIntUperp},
	$
	\sum^{n}_{j=1} \alpha_{j} \innp{ T_{j}x-x,z} =0.
	$
	Therefore,
	\begin{align*}
	\innp{\CC{\mathcal{S}}x-x,z} \stackrel{\cref{eq:prop:orth:CW:Welldefined:Formula}}{=} \sum^{n}_{j=1} \alpha_{j} \innp{T_{j}x-x,z}= 0.
	\end{align*}
	Hence, \cref{prop:CCSxxIntUPerp:1} is true.
	
	\cref{prop:CCSxxIntUPerp:k}: When $k=0$, \cref{eq:CCSK:prop:orth} is trivial. By \cref{prop:CCSxxIntUPerp:1},
	\begin{align} \label{eq:CW:innp:assum}
	(\forall y \in \mathcal{H}) \quad (\forall z \in \cap^{t}_{i=1} L_{i}) \quad \innp{\CC{\mathcal{S}}y-y,z}=0.
	\end{align}
	Then for every $k \in \mathbb{N} \smallsetminus \{0\}$, and for every $z \in \cap^{m}_{i=1} L_{i}$,
	\begin{align*}
	\innp{\CC{\mathcal{S}}^{k}x-x,z} & = \Innp{\sum^{k-1}_{i=0}\big(\CC{\mathcal{S}}^{i+1}(x)-\CC{\mathcal{S}}^{i}(x) \big),z}\\
	& = \Innp{\sum^{k-1}_{i=0}\Big(\CC{\mathcal{S}}(\CC{\mathcal{S}}^{i}(x))-\CC{\mathcal{S}}^{i}(x) \Big),z}\\
	& = \sum^{k-1}_{i=0} \Innp{\CC{\mathcal{S}}(\CC{\mathcal{S}}^{i}(x))-\CC{\mathcal{S}}^{i}(x),z}\\
	& \stackrel{\text{\cref{eq:CW:innp:assum}}}{=}0.
	\end{align*}
	Hence, \cref{prop:CCSxxIntUPerp:k}  holds.
\end{proof}

%

\begin{remark} Assume $\Id \in \mathcal{S}$. Let $x \in \mathcal{H}$, and let $k \in \mathbb{N}$. Then
	\begin{align*}
	\Pro_{ \cap^{t}_{i=1} U_{i}} x -\Pro_{ \cap^{t}_{i=1} U_{i}} \CC{\mathcal{S}}^{k}x & =
	z + \Pro_{ \cap^{t}_{i=1} L_{i}} (x-z) -z-\Pro_{ \cap^{t}_{i=1} L_{i}} (\CC{\mathcal{S}}^{k}(x)-z) \quad (\text{by  \cref{fac:SetChangeProje}})\\
	&= \Pro_{ \cap^{t}_{i=1} L_{i}} (x-z) -\Pro_{ \cap^{t}_{i=1} L_{i}} \CC{\mathcal{S}_{L}}^{k}(x-z)  \quad (\text{ by \cref{lemma:RU:RL}\cref{lemma:RU:RL:CCSk}})\\
	& = \Pro_{ \cap^{t}_{i=1} L_{i}}  \left( (x-z) - \CC{\mathcal{S}}^{k}(x-z)  \right)  =0. \quad (\text{by \cref{prop:CCSxxIntUPerp}\cref{prop:CCSxxIntUPerp:k}})
	\end{align*}
	In fact, we proved $(\forall x \in \mathcal{H} )$ $\Pro_{ \cap^{t}_{i=1} U_{i}} \CC{\mathcal{S}}^{k}x =\Pro_{ \cap^{t}_{i=1} U_{i}} x $ which is a special case of \cref{prop:CCS:Equations}\cref{prop:CCS:Equations:CCSxk}.
\end{remark}

In the remainder of this subsection,  we
consider cases when the initial points of circumcentered isometry methods are 
drawn from special sets.

\begin{lemma} \label{lemma:com:affspan:U1Um}
Let $x$ be in $\mathcal{H}$. Then the following hold:
\begin{enumerate}
\item \label{lemma:com:affspan:U1Um:lem:AffSx:AffU1Um} Suppose $x \in \aff
(\cup^{t}_{i=1} U_{i})$. Then $\aff \mathcal{S}(x) \subseteq \aff
(\cup^{t}_{i=1} U_{i})$ and $(\forall k \in \mathbb{N})$
$\CC{\mathcal{S}}^{k}x \in \aff (\cup^{t}_{i=1} U_{i})$.
\item \label{lemma:com:affspan:U1Um:lem:SpnSx:SpanU1Um} Suppose $x \in \spn
(\cup^{t}_{i=1} U_{i})$. Then $\aff \mathcal{S}(x) \subseteq \spn
\mathcal{S}(x) \subseteq \spn (\cup^{t}_{i=1} U_{i})$ and $(\forall k \in
\mathbb{N})$ $\CC{\mathcal{S}}^{k}x \in \spn (\cup^{t}_{i=1} U_{i})$.
\end{enumerate}
\end{lemma}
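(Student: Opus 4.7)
My plan is to first reduce everything to showing that each operator in $\mathcal{S}$ preserves $\aff(\cup^{t}_{i=1}U_{i})$ (respectively $\spn(\cup^{t}_{i=1}U_{i})$), then to combine this with the affine‐hull characterization of the circumcenter given by \cref{thm:CCS:proper:NormPres:T}\cref{thm:CCS:proper:NormPres:T:prop} to induct on $k$.

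For \cref{lemma:com:affspan:U1Um:lem:AffSx:AffU1Um}, the key single step is: if $y \in \aff(\cup^{t}_{i=1}U_{i})$ and $j \in \{1,\dots,t\}$, then $\Pro_{U_{j}}(y) \in U_{j} \subseteq \cup^{t}_{i=1}U_{i} \subseteq \aff(\cup^{t}_{i=1}U_{i})$, so
\begin{align*}
\R_{U_{j}}(y) = 2\Pro_{U_{j}}(y) - y
\end{align*}
is an affine combination (coefficients $2$ and $-1$) of two points of $\aff(\cup^{t}_{i=1}U_{i})$, and therefore lies in $\aff(\cup^{t}_{i=1}U_{i})$. A straightforward induction on $r$ then yields $\R_{U_{i_{r}}}\cdots\R_{U_{i_{1}}}(x) \in \aff(\cup^{t}_{i=1}U_{i})$ for every element of $\Omega$, so $\mathcal{S}(x)\subseteq \aff(\cup^{t}_{i=1}U_{i})$, and taking the affine hull on both sides gives $\aff\mathcal{S}(x)\subseteq \aff(\cup^{t}_{i=1}U_{i})$. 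Then by \cref{thm:CCS:proper:NormPres:T}\cref{thm:CCS:proper:NormPres:T:i}, $\CC{\mathcal{S}}x \in \aff\mathcal{S}(x) \subseteq \aff(\cup^{t}_{i=1}U_{i})$. Induction on $k$ (using this statement applied to $\CC{\mathcal{S}}^{k-1}x$ in place of $x$) finishes the claim.

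For \cref{lemma:com:affspan:U1Um:lem:SpnSx:SpanU1Um}, the argument is identical up to replacing $\aff$ by $\spn$: if $y \in \spn(\cup^{t}_{i=1}U_{i})$, then $\Pro_{U_{j}}(y) \in U_{j} \subseteq \spn(\cup^{t}_{i=1}U_{i})$, and since $\spn(\cup^{t}_{i=1}U_{i})$ is a linear subspace it contains $2\Pro_{U_{j}}(y)-y = \R_{U_{j}}(y)$. Inducting on the length of the composition gives $\mathcal{S}(x)\subseteq \spn(\cup^{t}_{i=1}U_{i})$, and since the latter is in particular a linear (hence affine) subspace containing $\mathcal{S}(x)$, it contains both $\aff\mathcal{S}(x)$ and $\spn\mathcal{S}(x)$. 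The inclusion $\aff\mathcal{S}(x)\subseteq \spn\mathcal{S}(x)$ is general. Applying \cref{thm:CCS:proper:NormPres:T}\cref{thm:CCS:proper:NormPres:T:i} once more shows $\CC{\mathcal{S}}x \in \aff\mathcal{S}(x)\subseteq \spn(\cup^{t}_{i=1}U_{i})$, and induction on $k$ concludes.

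There is no real obstacle here; the only point to be careful about is ensuring that $\Pro_{U_{j}}(y)$ really lies in the set we claim, which is immediate since $\Pro_{U_{j}}(y) \in U_{j}$ and $U_{j}$ is contained in both $\aff(\cup^{t}_{i=1}U_{i})$ and $\spn(\cup^{t}_{i=1}U_{i})$.
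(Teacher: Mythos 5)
Your proposal is correct and follows essentially the same route as the paper: show that each reflector maps $\aff(\cup^{t}_{i=1}U_{i})$ (resp.\ $\spn(\cup^{t}_{i=1}U_{i})$) into itself via $\R_{U_{j}}y=2\Pro_{U_{j}}y-y$, induct on the length of the composition to get $\mathcal{S}(x)$ inside the set, invoke \cref{thm:CCS:proper:NormPres:T}\cref{thm:CCS:proper:NormPres:T:i} to place $\CC{\mathcal{S}}x$ in $\aff\mathcal{S}(x)$, and induct on $k$. No gaps; the only cosmetic difference is that you isolate the one-reflection step as a standalone observation, while the paper carries out the same induction inline.
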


\begin{proof}
	\cref{lemma:com:affspan:U1Um:lem:AffSx:AffU1Um}: Let $\R_{U_{i_{r}}}\cdots \R_{U_{i_{1}}}$ be an arbitrary but fixed element in $\mathcal{S}$. If $r=0$, $\R_{U_{i_{r}}}\cdots \R_{U_{i_{1}}}x =x \in \aff (\cup^{t}_{i=1} U_{i})$. Assume $r \geq 1$. Since $i_{1} \in \{1, \ldots, t\}$, $ \Pro_{U_{i_{1}}}x \in \aff (\cup^{t}_{i=1} U_{i})$. So
	\begin{align*}
	\R_{U_{i_{1}}} x = 2 \Pro_{U_{i_{1}}}x - x \in \aff (\cup^{t}_{i=1} U_{i}).
	\end{align*}
	Assume for some $j \in \{1, \ldots, r-1\}$,
	\begin{align*}
	\R_{U_{i_{j}}}\cdots \R_{U_{i_{1}}}x \in  \aff (\cup^{t}_{i=1} U_{i}).
	\end{align*}
	Now since $i_{j+1} \in \{1, \ldots, t\}$, thus $\Pro_{U_{i_{j+1}}}(\R_{U_{i_{j}}}\cdots \R_{U_{i_{1}}}x) \in \aff (\cup^{t}_{i=1} U_{i})$. Hence,
	\begin{align*}
	\R_{U_{i_{j+1}}}\R_{U_{i_{j}}}\cdots \R_{U_{i_{1}}}x = 2 \Pro_{U_{i_{j+1}}}(\R_{U_{i_{j}}}\cdots \R_{U_{i_{1}}}x) - \R_{U_{i_{j}}}\cdots \R_{U_{i_{1}}}x \in  \aff (\cup^{t}_{i=1} U_{i}).
	\end{align*}
	Hence, we have inductively proved $\R_{U_{i_{r}}}\cdots \R_{U_{i_{1}}}x \in  \aff (\cup^{t}_{i=1} U_{i})$.
	
	Since $\R_{U_{i_{r}}}\cdots \R_{U_{i_{1}}}x \in \mathcal{S}(x)$ is chosen
	arbitrarily, we conclude that $\mathcal{S}(x) \subseteq \aff (\cup^{t}_{i=1} U_{i})$
	which in turn yields $\aff \mathcal{S}(x) \subseteq \aff (\cup^{t}_{i=1} U_{i})$.
	
	Moreover, by \cref{thm:CCS:proper:NormPres:T}\cref{thm:CCS:proper:NormPres:T:prop},
	$
	\CC{\mathcal{S}}x \in \aff \mathcal{S}(x) \subseteq \aff (\cup^{t}_{i=1} U_{i}).
	$
	Therefore, an easy inductive argument deduce $(\forall k \in \mathbb{N})$ $\CC{\mathcal{S}}^{k}x  \in \aff (\cup^{t}_{i=1} U_{i})$.
	
	\cref{lemma:com:affspan:U1Um:lem:SpnSx:SpanU1Um}:  Using the similar technique showed in the proof of \cref{lemma:com:affspan:U1Um:lem:AffSx:AffU1Um}, we know that $x \in \spn (\cup^{t}_{i=1} U_{i})$ implies that 	$\mathcal{S}(x)   \subseteq \spn (\cup^{t}_{i=1} U_{i})$.
	The remaining part of the proof is similar with the proof in \cref{lemma:com:affspan:U1Um:lem:AffSx:AffU1Um}, so we omit it.
\end{proof}

\begin{corollary}\label{cor:CCSCommutePcapUi:PUiPerp}
	Assume $U_{1}, \ldots, U_{t}$ are closed linear subspaces in $\mathcal{H}$. Then the following hold:
	\begin{enumerate}
		\item \label{cor:CCSCommutePcapUi:PUiPerp:equa} $\CC{\mathcal{S}} \Pro_{(\cap^{t}_{i=1}U_{i})^{\perp}}= \CC{\mathcal{S}} - \Pro_{\cap^{t}_{i=1}U_{i}}= \Pro_{(\cap^{t}_{i=1}U_{i})^{\perp}} \CC{\mathcal{S}}$.
		
		\item \label{cor:CCSCommutePcapUi:PUiPerp:perp:X}  Let $x \in(\cap^{t}_{i=1} U_{i})^{\perp}$. Then $(\forall k \in \mathbb{N})$ $\CC{\mathcal{S}}^{k}x  \in (\cap^{t}_{i=1} U_{i})^{\perp}$.
	\end{enumerate}
\end{corollary}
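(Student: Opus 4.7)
Since $U_1,\ldots,U_t$ are closed linear subspaces, $M:=\cap^t_{i=1}U_i$ is a closed linear subspace, hence in particular a closed affine subspace, and the reflectors $\R_{U_1},\ldots,\R_{U_t}$ are all linear. Consequently every element of $\mathcal{S}$ is a linear operator, and by Lemma~\ref{lemma:capUi} we have $\varnothing\ne M\subseteq \cap^m_{j=1}\Fix T_j$. The plan is to combine the orthogonal decomposition $\Id=\Pro_{M}+\Pro_{M^{\perp}}$ from Fact~\ref{MetrProSubs8}\ref{MetrProSubs8:ii} with the quasitranslation property of $\CC{\mathcal{S}}$ (Corollary~\ref{cor:HomogeAdditiveModulo}\ref{cor:HomogeAdditiveModulo:AdditiveModulo}) and the projection identity from Proposition~\ref{prop:CCS:Equations}\ref{prop:CCS:Equations:CCSxk}.

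For \ref{cor:CCSCommutePcapUi:PUiPerp:equa}, fix $x\in\mathcal{H}$ and write $x=\Pro_{M^{\perp}}x+\Pro_{M}x$. Since $\Pro_{M}x\in M\subseteq \cap^m_{j=1}\Fix T_j$ and each $T\in\mathcal{S}$ is linear, Corollary~\ref{cor:HomogeAdditiveModulo}\ref{cor:HomogeAdditiveModulo:AdditiveModulo} gives
\[
\CC{\mathcal{S}}(x)=\CC{\mathcal{S}}\bigl(\Pro_{M^{\perp}}x\bigr)+\Pro_{M}x,
\]
which rearranges to the first equality $\CC{\mathcal{S}}\Pro_{M^{\perp}}=\CC{\mathcal{S}}-\Pro_{M}$. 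For the second equality, apply $\Id=\Pro_{M}+\Pro_{M^{\perp}}$ to $\CC{\mathcal{S}}(x)$ and then invoke Proposition~\ref{prop:CCS:Equations}\ref{prop:CCS:Equations:CCSxk} (with $W=M$, valid since $M$ is closed and affine) to replace $\Pro_{M}\CC{\mathcal{S}}(x)$ by $\Pro_{M}x$, obtaining
\[
\CC{\mathcal{S}}(x)=\Pro_{M}x+\Pro_{M^{\perp}}\CC{\mathcal{S}}(x),
\]
which is exactly $\CC{\mathcal{S}}-\Pro_{M}=\Pro_{M^{\perp}}\CC{\mathcal{S}}$.

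For \ref{cor:CCSCommutePcapUi:PUiPerp:perp:X}, let $x\in M^{\perp}$. By Fact~\ref{MetrProSubs8}\ref{MetrProSubs8:iv}, $x\in M^{\perp}$ is equivalent to $\Pro_{M}x=0$, and by Proposition~\ref{prop:CCS:Equations}\ref{prop:CCS:Equations:CCSxk} applied iteratively with $W=M$ one gets $\Pro_{M}\CC{\mathcal{S}}^{k}x=\Pro_{M}x=0$ for every $k\in\mathbb{N}$. Applying Fact~\ref{MetrProSubs8}\ref{MetrProSubs8:iv} in the reverse direction yields $\CC{\mathcal{S}}^{k}x\in M^{\perp}$. (Alternatively, one could derive this from part \ref{cor:CCSCommutePcapUi:PUiPerp:equa} via a one-line induction using the second equality.)

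There is no serious obstacle here: everything reduces to the linearity of the reflectors (so the circumcenter mapping is quasitranslating) plus the fixed projection identity. The only small point to verify carefully is that $M$ is closed and affine so that Proposition~\ref{prop:CCS:Equations}\ref{prop:CCS:Equations:CCSxk} is applicable; this is immediate because a finite intersection of closed linear subspaces is a closed linear subspace.
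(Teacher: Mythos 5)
Your argument is correct and follows essentially the same route as the paper: the first equality via the quasitranslation property of $\CC{\mathcal{S}}$ (Corollary~\ref{cor:HomogeAdditiveModulo}\ref{cor:HomogeAdditiveModulo:AdditiveModulo}, justified because the reflectors about linear subspaces are linear) and the second via $\Id=\Pro_{M}+\Pro_{M^\perp}$ together with Proposition~\ref{prop:CCS:Equations}\ref{prop:CCS:Equations:CCSxk}. Your direct derivation of part~\ref{cor:CCSCommutePcapUi:PUiPerp:perp:X} from Fact~\ref{MetrProSubs8}\ref{MetrProSubs8:iv} and the projection identity is only a cosmetic variation of the paper's induction from part~\ref{cor:CCSCommutePcapUi:PUiPerp:equa}, which you also note.
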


\begin{proof}
	\cref{cor:CCSCommutePcapUi:PUiPerp:equa}: Let $x \in \mathcal{H}$. By \cref{MetrProSubs8}\cref{MetrProSubs8:ii}, we get $\Pro_{(\cap^{t}_{i=1}U_{i})^{\perp}} = \Id - \Pro_{\cap^{t}_{i=1}U_{i}}$. By \cref{lemma:capUi}, $- \Pro_{\cap^{t}_{i=1}U_{i}}x \in \cap^{t}_{i=1}U_{i} \subseteq \cap^{t}_{j=1} \Fix T_{j}$. Applying \cref{cor:HomogeAdditiveModulo}\cref{cor:HomogeAdditiveModulo:AdditiveModulo} with $z =- \Pro_{\cap^{t}_{i=1}U_{i}}x$, we obtain
	$\CC{\mathcal{S}}(x - \Pro_{\cap^{t}_{i=1}U_{i}}x) =\CC{\mathcal{S}}x - \Pro_{\cap^{t}_{i=1}U_{i}}x$. Hence,
	\begin{align} \label{cor:CCSCommutePcapUi:PUiPerp:left}
	\CC{\mathcal{S}}(\Pro_{(\cap^{t}_{i=1}U_{i})^{\perp}}x)=  \CC{\mathcal{S}}(x - \Pro_{\cap^{t}_{i=1}U_{i}}x) = \CC{\mathcal{S}}x - \Pro_{\cap^{t}_{i=1}U_{i}}x.
	\end{align}
	On the other hand, substituting $W= \cap^{t}_{i=1}U_{i}$ in \cref{prop:CCS:Equations}\cref{prop:CCS:Equations:CCSxk}, we obtain that
	\begin{align} \label{cor:CCSCommutePcapUi:PUiPerp:right}
	\Pro_{(\cap^{t}_{i=1}U_{i})^{\perp}} (\CC{\mathcal{S}}x) = \CC{\mathcal{S}}x - \Pro_{\cap^{t}_{i=1}U_{i}} \CC{\mathcal{S}}x = \CC{\mathcal{S}}x - \Pro_{\cap^{t}_{i=1}U_{i}}x.
	\end{align}
	Thus, \cref{cor:CCSCommutePcapUi:PUiPerp:left} and \cref{cor:CCSCommutePcapUi:PUiPerp:right} yield
	\begin{align*}
	\CC{\mathcal{S}} \Pro_{(\cap^{t}_{i=1}U_{i})^{\perp}}= \CC{\mathcal{S}} -  \Pro_{\cap^{t}_{i=1}U_{i}}= \Pro_{(\cap^{t}_{i=1}U_{i})^{\perp}} \CC{\mathcal{S}}.
	\end{align*}
	
	\cref{cor:CCSCommutePcapUi:PUiPerp:perp:X}:  By \cref{cor:CCSCommutePcapUi:PUiPerp:equa}, $\CC{\mathcal{S}}x = \CC{\mathcal{S}} \Pro_{(\cap^{t}_{i=1} U_{i})^{\perp}} x = \Pro_{(\cap^{t}_{i=1} U_{i})^{\perp}} \CC{\mathcal{S}}x \in (\cap^{t}_{i=1} U_{i})^{\perp}$, which implies that
	\begin{align*}
	(\forall y \in (\cap^{t}_{i=1} U_{i})^{\perp}) \quad  \CC{\mathcal{S}} y \in (\cap^{t}_{i=1} U_{i})^{\perp}.
	\end{align*}
	Hence, we obtain \cref{cor:CCSCommutePcapUi:PUiPerp:perp:X} by induction.
\end{proof}

The following example tells us that in \cref{cor:CCSCommutePcapUi:PUiPerp}\cref{cor:CCSCommutePcapUi:PUiPerp:equa}, the condition \enquote{$U_{1}, \ldots, U_{t}$ are linear subspaces in $\mathcal{H}$} is indeed necessary.
\begin{example}  \label{exam:cor:CCSCommutePcapUi}
	Assume $\mathcal{H} = \mathbb{R}^{2}$ and $U_{1}:=\{(x_{1},x_{2}) \in \mathbb{R}^{2} ~|~ x_{2}=1\}$ and $U_{2} := \{ (x_{1},x_{2}) \in \mathbb{R}^{2} ~|~ x_{2}=x_{1}+1\}$.  Assume $\mathcal{S} =\{\Id, \R_{U_{1}}, \R_{U_{2}}\}$. Let $x :=(1,0) $. Since $U_{1} \cap U_{2} =\{(0,1)\}$ and since $\left(  U_{1} \cap U_{2} \right)^{\perp}=\{(x_{1},x_{2}) \in \mathbb{R}^{2} ~|~ x_{2}=0\}$, thus
	\begin{align*}
	\CC{\mathcal{S}} \Pro_{(U_{1} \cap U_{2})^{\perp}}x =(0,1)  \neq (0,0)= \CC{\mathcal{S}}x - \Pro_{U_{1} \cap U_{2}}x= \Pro_{(U_{1} \cap U_{2})^{\perp}} \CC{\mathcal{S}}x.
	\end{align*}
\end{example}

\subsection{Linear convergence of circumcentered reflection methods} \label{sec:LinearConve:Reflector}

This subsection is motivated by \cite[Theorem~3.3]{BCS2018}. In particular,
\cite[Theorem~3.3]{BCS2018} is \cref{prop:GeneLinConBCS} below for the
special case when $\{ \Id, \R_{U_{1}}, \R_{U_{2}}\R_{U_{1}}, \ldots,
\R_{U_{t}}\R_{U_{t-1}}\cdots \R_{U_{2}}\R_{U_{1}}\} = \mathcal{S}$ and
$U_{1}, \ldots, U_{t}$ are linear subspaces. The operator $T_{\mathcal{S}}$
defined in the \cref{prop:GeneLinConBCS} below is the operator $A$ defined in
{\rm\cite[Lemma~2.1]{BCS2018}}.

\begin{proposition} \label{prop:GeneLinConBCS}
	Assume that $\mathcal{H}=\mathbb{R}^{n}$ and that
	\begin{align*}
	\{ \Id, \R_{U_{1}}, \R_{U_{2}}\R_{U_{1}}, \ldots, \R_{U_{t}}\R_{U_{t-1}}\cdots \R_{U_{2}}\R_{U_{1}}\} \subseteq \mathcal{S}.
	\end{align*}
	Let $L_{1}, \ldots, L_{t}$ be the closed linear subspaces defined in \cref{defn:L}.
	Define $T_{\mathcal{S}} : \mathbb{R}^{n} \to \mathbb{R}^{n}$ by
	$
	T_{\mathcal{S}} := \frac{1}{t} \sum^{t}_{i=1} T_{i},
	$
	where $T_{1} := \frac{1}{2}(\Id + \Pro_{L_{1}})$ and $(\forall i \in \{2, \ldots, t\})$ $T_{i} := \frac{1}{2} (\Id + \Pro_{L_{i}} \R_{L_{i-1}} \cdots R_{L_{1}})$.
	Let $x \in \mathcal{H}$. Then $(\CC{\mathcal{S}}^{k}x)_{k \in \mathbb{N}}$ converges to $\Pro_{\cap^{t}_{i=1} U_{i} }x$ with a linear rate $\norm{T_{\mathcal{S}} \Pro_{(\cap^{t}_{i=1} L_{i})^{\perp} }}$ $\in$ $[0,1[\,$.
\end{proposition}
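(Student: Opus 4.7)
The plan is to reduce to the setting of linear subspaces via \cref{prop:LineConvCCS:CCSL} and then invoke \cref{theorem:CCSLinearConvTSFirmNone}\cref{theorem:CCSLinearConvTSFirmNone:LineaConv} applied to the pair $(\mathcal{S}_L, T_{\mathcal{S}})$. Fix any $z \in \cap_{i=1}^t U_i$; by \cref{prop:LineConvCCS:CCSL}, it suffices to prove linear convergence of $(\CC{\mathcal{S}_L}^k(x-z))_{k \in \mathbb{N}}$ to $\Pro_{\cap_{i=1}^t L_i}(x-z)$ at rate $\|T_{\mathcal{S}}\Pro_{(\cap_{i=1}^t L_i)^\perp}\|$. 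The assumption $\{\Id, \R_{U_1}, \R_{U_2}\R_{U_1}, \ldots, \R_{U_t}\cdots\R_{U_1}\} \subseteq \mathcal{S}$ immediately gives $\{\Id, \R_{L_1}, \R_{L_2}\R_{L_1}, \ldots, \R_{L_t}\cdots\R_{L_1}\} \subseteq \mathcal{S}_L$, and every element of $\mathcal{S}_L$ is now a linear isometry fixing $\cap_j L_j \ni 0$.

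Next, I would verify the hypotheses required by \cref{theorem:CCSLinearConvTSFirmNone}\cref{theorem:CCSLinearConvTSFirmNone:LineaConv}. Expanding $\Pro_{L_i} = \tfrac{1}{2}(\Id + \R_{L_i})$ rewrites $T_1 = \tfrac{3}{4}\Id + \tfrac{1}{4}\R_{L_1}$ and, for $i \geq 2$, $T_i = \tfrac{1}{2}\Id + \tfrac{1}{4}\R_{L_{i-1}}\cdots\R_{L_1} + \tfrac{1}{4}\R_{L_i}\R_{L_{i-1}}\cdots\R_{L_1}$ as affine combinations of three elements of $\mathcal{S}_L$; consequently $T_{\mathcal{S}} = \tfrac{1}{t}\sum_i T_i \in \aff(\mathcal{S}_L)$, and it is linear because each $L_j$ is a linear subspace. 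Each $T_i$ has the form $\tfrac{1}{2}(\Id + Q_i)$ with $Q_i$ a composition of a projector and finitely many reflectors, hence nonexpansive, so $T_i$ is $\tfrac{1}{2}$-averaged; \cref{fact:AlphAvera} then delivers that $T_{\mathcal{S}}$ is $\tfrac{1}{2}$-averaged as well.

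The main obstacle is the fixed-point inclusion $\Fix T_{\mathcal{S}} \subseteq \cap_{T \in \mathcal{S}_L} \Fix T = \cap_{j=1}^t L_j$. Since each $T_i$ is $\tfrac{1}{2}$-averaged with $\cap_j L_j \subseteq \Fix T_i$, \cref{fact:averaged:character} yields
\begin{equation*}
(\forall x \in \mathcal{H})\; (\forall y \in \cap_j L_j) \quad \|T_i x - y\|^2 + \|x - T_i x\|^2 \leq \|x - y\|^2.
\end{equation*}
Summing over $i$ and combining with the convexity bound $\|T_{\mathcal{S}} x - y\|^2 \leq \tfrac{1}{t}\sum_i \|T_i x - y\|^2$, we obtain, whenever $T_{\mathcal{S}} x = x$, that $\sum_i \|x - T_i x\|^2 \leq 0$, forcing $T_i x = x$ for every $i$. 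Then $T_1 x = x$ gives $\Pro_{L_1} x = x$, so $x \in L_1$; inductively, if $x \in L_1 \cap \cdots \cap L_{i-1}$, then $\R_{L_{i-1}}\cdots\R_{L_1}x = x$ together with $T_i x = x$ force $\Pro_{L_i} x = x$, so $x \in L_i$. Thus $\Fix T_{\mathcal{S}} \subseteq \cap_j L_j$, and the reverse inclusion is immediate from $T_{\mathcal{S}} \in \aff(\mathcal{S}_L)$.

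With all hypotheses verified, \cref{theorem:CCSLinearConvTSFirmNone}\cref{theorem:CCSLinearConvTSFirmNone:LineaConv} applied to $\mathcal{S}_L$ yields that $(\CC{\mathcal{S}_L}^k(x-z))$ converges to $\Pro_{\cap_j L_j}(x-z)$ with linear rate $\|T_{\mathcal{S}} \Pro_{(\cap_j L_j)^\perp}\| \in [0, 1[$; \cref{prop:LineConvCCS:CCSL} then transfers this to the claimed linear convergence of $(\CC{\mathcal{S}}^k x)$ to $\Pro_{\cap_{i=1}^t U_i} x$ at the same rate.
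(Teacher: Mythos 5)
Your proof is correct and follows essentially the same route as the paper: write $T_{1}=\tfrac34\Id+\tfrac14\R_{L_{1}}$ and $T_{i}=\tfrac12\Id+\tfrac14\R_{L_{i-1}}\cdots\R_{L_{1}}+\tfrac14\R_{L_{i}}\cdots\R_{L_{1}}$ to get $T_{\mathcal{S}}\in\aff(\mathcal{S}_{L})$, verify that $T_{\mathcal{S}}$ is linear, $\tfrac12$-averaged and has $\Fix T_{\mathcal{S}}=\cap_{i=1}^{t}L_{i}$, apply \cref{theorem:CCSLinearConvTSFirmNone}\cref{theorem:CCSLinearConvTSFirmNone:LineaConv} to $\mathcal{S}_{L}$, and transfer back via \cref{prop:LineConvCCS:CCSL}. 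The one genuine difference is that the paper imports the averagedness and the fixed-point identity from {\rm\cite[Lemma~2.1(i)--(ii)]{BCS2018}}, whereas you prove them from scratch: averagedness via \cref{defn:AlphaAverage} and \cref{fact:AlphAvera}, and the inclusion $\Fix T_{\mathcal{S}}\subseteq\cap_{i}L_{i}$ by summing the $\tfrac12$-averaged inequalities over $i$, using convexity of $\|\cdot\|^{2}$ to force $T_{i}x=x$ for every $i$ at a fixed point, and then an induction peeling off $L_{1},\dots,L_{t}$. This makes the proof self-contained (your induction in effect re-proves the detail the paper supplies in the remark following the proposition, where it justifies $\Fix T_{i}=L_{i}$ via {\rm\cite[Proposition~4.49]{BC2017}}), at the cost of a somewhat longer argument; the paper's version is shorter but leans on the external lemma.
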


\begin{proof}
	Now
	\begin{align*}
	T_{1} & = \frac{1}{2}(\Id + \Pro_{L_{1}})= \frac{1}{2} \Big(\Id +\frac{\Id + \R_{L_{1}}}{2}\Big) =\frac{3}{4}\Id +\frac{1}{4} R_{L_{1}} \\
	& \in \aff \{ \Id, \R_{L_{1}}, \R_{L_{2}}\R_{L_{1}}, \ldots, \R_{L_{t}}\R_{L_{t-1}}\cdots \R_{L_{2}}\R_{L_{1}}\} ,
	\end{align*}
	and for every $i \in \{2, \ldots, t\}$,
	\begin{align*}
	T_{i} & = \frac{1}{2} (\Id + \Pro_{L_{i}} \R_{L_{i-1}} \cdots \R_{L_{1}}) \\
	& =  \frac{1}{2} \left(\Id + \Big(\frac{\R_{L_{i}} + \Id }{2}\Big) \R_{L_{i-1}} \cdots \R_{L_{1}} \right) \\
	& =\frac{1}{2} \Id + \frac{1}{4} \R_{L_{i}}\R_{L_{i-1}} \cdots \R_{L_{1}} + \frac{1}{4} \R_{L_{i-1}} \cdots \R_{L_{1}} \\
	& \in \aff \{ \Id, \R_{L_{1}}, \R_{L_{2}}\R_{L_{1}}, \ldots, \R_{L_{t}}\R_{L_{t-1}}\cdots \R_{L_{2}}\R_{L_{1}}\},
	\end{align*}
	which yield that
	\begin{align*}
	T_{\mathcal{S}}=\frac{1}{t} \sum^{t}_{i=1} T_{i} \in \aff \{ \Id, \R_{L_{1}}, \R_{L_{2}}\R_{L_{1}}, \ldots, \R_{L_{t}}\R_{L_{t-1}}\cdots \R_{L_{2}}\R_{L_{1}}\} \subseteq \aff (\mathcal{S}_{L}).
	\end{align*}
	Using {\rm \cite[Lemma~2.1(i)]{BCS2018}}, we know the $T_{\mathcal{S}}$ is linear and $\frac{1}{2}$-averaged, and by \cite[Lemma~2.1(ii)]{BCS2018}, $\Fix T_{\mathcal{S}} = \cap^{t}_{i=1} L_{i}$. Hence, by \cref{theorem:CCSLinearConvTSFirmNone}\cref{theorem:CCSLinearConvTSFirmNone:LineaConv} and \cref{lemma:capUi}, we obtain that for every $y \in \mathcal{H}$, $ (\CC{\mathcal{S}_{L}}^{k}y)_{k \in \mathbb{N}}$ converges to $ \Pro_{ \cap^{t}_{i=1} L_{i} }y$ with a linear rate $\norm{T_{\mathcal{S}} \Pro_{(\cap^{t}_{i=1} L_{i})^{\perp} }}$ $\in$ $[0,1[\,$.
	Therefore, the desired result follows from \cref{prop:LineConvCCS:CCSL}.
\end{proof}

\begin{remark}
	In fact, \cite[Lemma~2.1(ii)]{BCS2018} is $\Fix T_{\mathcal{S}} = \cap^{t}_{i=1} L_{i}$.
	In the proof of \cite[Lemma~2.1(ii)]{BCS2018}, the authors claimed that \enquote{it is easy to see that $\Fix T_{i} =L_{i}$}. We provide more details here.  For every $i \in \{1,\ldots, m\}$, by \cite[Proposition~4.49]{BC2017}, we know that $\Fix T_{i} =\Fix \Pro_{L_{i}} \cap \Fix \R_{L_{i-1}}\cdots\R_{L_{1}} \subseteq L_{i}$. As \cite[Lemma~2.1(ii)]{BCS2018} proved that $\Fix T_{\mathcal{S}} \subseteq \cap^{m}_{i=1} \Fix T_{i}$, we get that $\Fix T_{\mathcal{S}}   \subseteq \cap^{m}_{i=1} L_{i}$. On the other hand, by definition of $T_{\mathcal{S}} $, we have  $ \cap^{m}_{i=1} L_{i} \subseteq \Fix T_{\mathcal{S}}$. Altogether, $\Fix T_{\mathcal{S}}  = \cap^{m}_{i=1} L_{i}$, which implies that \cite[Lemma~2.1(ii)]{BCS2018} is true.
\end{remark}	

The idea of the proofs in the following two lemmas is obtained from {\rm \cite[Lemma~2.1]{BCS2018}}.
\begin{lemma} \label{lem:AmP:TS}
	Assume that $\mathcal{H}=\mathbb{R}^{n}$ and that $\{\Id, \R_{U_{1}},\ldots, \R_{U_{t-1}},\R_{U_{t}}\} \subseteq \mathcal{S}$. Let $L_{1}, \ldots, L_{t}$ be the closed linear subspaces defined in \cref{defn:L}. Define the operator $T_{\mathcal{S}}: \mathbb{R}^{n} \to \mathbb{R}^{n}$ as $T_{\mathcal{S}} :=\frac{1}{t} \sum^{t}_{i=1}\Pro_{L_{i}}$.
	Then the following  hold:
	\begin{enumerate}
		\item \label{lem:AP:TS:AffHull} $T_{\mathcal{S}} \in \aff (\mathcal{S}_{L})$.
		\item \label{lem:AP:TS:LineFirmNone} $T_{\mathcal{S}}$ is linear and firmly nonexpansive.
		\item \label{lem:AP:TS:fix} $\Fix T_{\mathcal{S}}=\cap^{t}_{i=1} L_{i} = \cap_{F \in \mathcal{S}_{L}} \Fix F$.
	\end{enumerate}
\end{lemma}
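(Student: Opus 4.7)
My plan is to exploit the classical identity $\Pro_{L_i} = \tfrac{1}{2}(\Id + \R_{L_i})$ (valid since each $L_i$ is a closed linear subspace), which lets me rewrite
\[
T_{\mathcal{S}} \;=\; \frac{1}{t}\sum_{i=1}^{t}\Pro_{L_i} \;=\; \frac{1}{2}\Id \;+\; \frac{1}{2t}\sum_{i=1}^{t}\R_{L_i}.
\]
For part \cref{lem:AP:TS:AffHull}, I will observe that the hypothesis $\{\Id,\R_{U_1},\ldots,\R_{U_t}\} \subseteq \mathcal{S}$ forces $\{\Id,\R_{L_1},\ldots,\R_{L_t}\} \subseteq \mathcal{S}_{L}$ by construction of $\mathcal{S}_{L}$. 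The scalars above are $\tfrac{1}{2}$ for $\Id$ and $\tfrac{1}{2t}$ repeated $t$ times, which sum to $1$, so the displayed expression exhibits $T_{\mathcal{S}}$ as an affine combination of elements of $\mathcal{S}_{L}$, giving $T_{\mathcal{S}} \in \aff(\mathcal{S}_{L})$.

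For part \cref{lem:AP:TS:LineFirmNone}, linearity of $T_{\mathcal{S}}$ is immediate because each $\Pro_{L_i}$ is linear (projector onto a linear subspace) and linearity passes through convex combinations. For firm nonexpansiveness, I will invoke the well-known equivalence between firmly nonexpansive and $\tfrac{1}{2}$-averaged: each $\Pro_{L_i}$ is $\tfrac{1}{2}$-averaged, so by \cref{fact:AlphAvera} with weights $\omega_i = 1/t$ and constants $\alpha_i = 1/2$, $T_{\mathcal{S}}$ is $\tfrac{1}{2}$-averaged, hence firmly nonexpansive.

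For part \cref{lem:AP:TS:fix}, one inclusion is trivial: if $x \in \cap_{i=1}^{t} L_i$, then every $\Pro_{L_i}x = x$ and so $T_{\mathcal{S}}x = x$. For the converse, suppose $T_{\mathcal{S}}x = x$. Rewriting gives $\sum_{i=1}^{t}(x - \Pro_{L_i}x) = 0$, and by \cref{MetrProSubs8}\cref{MetrProSubs8:ii} this becomes $\sum_{i=1}^{t}\Pro_{L_i^\perp}x = 0$. Taking the inner product with $x$ and using self-adjointness and idempotency of the $\Pro_{L_i^\perp}$ yields $\sum_{i=1}^{t}\|\Pro_{L_i^\perp}x\|^{2}=0$, forcing $\Pro_{L_i^\perp}x = 0$ for each $i$, hence $x \in L_i$ for each $i$ by \cref{MetrProSubs8}\cref{MetrProSubs8:iv}. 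This proves $\Fix T_{\mathcal{S}} = \cap_{i=1}^{t} L_i$. For the second equality, note $\cap_{i=1}^{t} L_i \subseteq \cap_{F \in \mathcal{S}_L}\Fix F$ by \cref{lemma:capUi} applied to the subspaces $L_i$ (whose intersection contains $0$), while the reverse inclusion follows from $\{\R_{L_1},\ldots,\R_{L_t}\} \subseteq \mathcal{S}_L$: a common fixed point of all $F \in \mathcal{S}_L$ must in particular satisfy $\R_{L_i}x = x$, i.e., $x \in L_i$ for every $i$.

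I do not anticipate a substantive obstacle here; the only mildly nontrivial step is the self-adjointness inner-product trick in part \cref{lem:AP:TS:fix}, which is needed because the inclusion $\Fix T_{\mathcal{S}} \subseteq \cap L_i$ does not follow merely from $T_{\mathcal{S}}$ being an average of projectors without some use of the Hilbert space geometry.
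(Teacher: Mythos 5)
Your proposal is correct, and for parts \cref{lem:AP:TS:AffHull} and \cref{lem:AP:TS:LineFirmNone} it coincides with the paper's argument: the identity $\Pro_{L_i}=\tfrac{1}{2}(\Id+\R_{L_i})$ gives the affine-hull membership, and \cref{fact:AlphAvera} with $\omega_i=1/t$, $\alpha_i=1/2$ gives firm nonexpansiveness. Where you diverge is part \cref{lem:AP:TS:fix}: the paper does not compute anything there; it notes that each projector is quasinonexpansive and invokes \cite[Proposition~4.47]{BC2017} (fixed points of a convex combination of quasinonexpansive operators with a common fixed point equal the intersection of the individual fixed point sets) to get $\Fix T_{\mathcal{S}}=\cap_{i=1}^{t}L_i$, and then \cref{theorem:CCSLinearConvTSFirmNone}\cref{theorem:CCSLinearConvTSFirmNone:FixT} to upgrade this to $\cap_{F\in\mathcal{S}_L}\Fix F$. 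You instead prove $\Fix T_{\mathcal{S}}\subseteq\cap_i L_i$ by hand — writing $T_{\mathcal{S}}x=x$ as $\sum_i\Pro_{L_i^{\perp}}x=0$ via \cref{MetrProSubs8}\cref{MetrProSubs8:ii} and pairing with $x$ to force each $\Pro_{L_i^{\perp}}x=0$ — and obtain the second equality directly from $\{\Id,\R_{L_1},\ldots,\R_{L_t}\}\subseteq\mathcal{S}_L$ together with the analogue of \cref{lemma:capUi}; both steps are sound. Your route is more self-contained and makes transparent exactly where the Hilbert-space geometry (self-adjointness and idempotency of the orthogonal projectors) enters, at the cost of a computation the paper avoids; the paper's route is shorter and, by resting on quasinonexpansiveness rather than on the projectors being orthogonal projections onto subspaces, fits the general machinery it reuses elsewhere (e.g., in \cref{lem:AIdmP:TS}). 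Neither argument actually needs $\mathcal{H}=\mathbb{R}^{n}$ for this lemma.
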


\begin{proof}
	\cref{lem:AP:TS:AffHull}: Now $(\forall i \in \{1, \ldots,t\})$, $\Pro_{L_{i}}=\frac{\Id + \R_{L_{i}}} {2}$, so
	\begin{align*}
	T_{\mathcal{S}}=\frac{1}{t} \sum^{t}_{i=1}\Pro_{L_{i}} = \frac{1}{t} \sum^{t}_{i=1} \frac{\Id + \R_{L_{i}}} {2} \in \aff \{\Id, \R_{L_{1}},\ldots, \R_{L_{t-1}},\R_{L_{t}}\} \subseteq \aff (\mathcal{S}_{L}).
	\end{align*}
	
	\cref{lem:AP:TS:LineFirmNone}: Let $i \in \{1, \ldots, t\}$. 
	Because $\Pro_{L_{i}}$ is firmly nonexpansive, it is $\frac{1}{2}$-averaged. Using \cref{fact:AlphAvera}, we know $T_{\mathcal{S}}$ $\frac{1}{2}$-averaged, that is, it is firmly nonexpansive. In addition, because $(\forall i \in \{1, \ldots, t\})$ $L_{i}$ is linear subspace implies that $\Pro_{L_{i}}$ is linear, we know that $T_{\mathcal{S}}$ is linear.
	
	\cref{lem:AIdP:TS:fix}: The projection is firmly
	nonexpansive, so it is quasinonexpansive. Hence, the result follows from
 \cite[Proposition~4.47]{BC2017} and 
\cref{theorem:CCSLinearConvTSFirmNone}\cref{theorem:CCSLinearConvTSFirmNone:FixT}.
\end{proof}

\begin{lemma} \label{lem:AIdmP:TS}
	Assume that $\mathcal{H}=\mathbb{R}^{n}$ and that $\{\Id, \R_{U_{1}},\ldots, \R_{U_{t-1}},\R_{U_{t}}\} \subseteq \mathcal{S}$.  Let $L_{1}, \ldots, L_{t}$ be the closed linear subspaces defined in \cref{defn:L}.  Define the operator $T_{\mathcal{S}}: \mathbb{R}^{n} \to \mathbb{R}^{n}$ by $T_{\mathcal{S}} :=\frac{1}{t} \sum^{t}_{i=1}T_{i}$, where $( \forall i \in \{1,2,\ldots,t\}) $  $T_{i} := \frac{1}{2} (\Id +\Pro_{L_{i}})$.
	Then
	\begin{enumerate}
		\item \label{lem:AIdP:TS:AffHull} $T_{\mathcal{S}} \in \aff (\mathcal{S}_{L})$.
		\item \label{lem:AIdP:TS:LineFirmNone} $T_{\mathcal{S}}$ is linear and firmly nonexpansive.
		\item \label{lem:AIdP:TS:fix} $\Fix T_{\mathcal{S}}=\cap^{t}_{i=1} L_{i} = \cap_{F \in \mathcal{S}_{L}} \Fix F$.
	\end{enumerate}
\end{lemma}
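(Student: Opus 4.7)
The plan is to follow the template of the just-proved \cref{lem:AmP:TS}, adapting the algebra to the definition $T_{i} = \frac{1}{2}(\Id + \Pro_{L_{i}})$. For \cref{lem:AIdP:TS:AffHull}, I would rewrite $\Pro_{L_{i}} = \frac{1}{2}(\Id + \R_{L_{i}})$ so that $T_{i} = \frac{3}{4}\Id + \frac{1}{4}\R_{L_{i}}$, and consequently $T_{\mathcal{S}} = \frac{3}{4}\Id + \frac{1}{4t}\sum_{i=1}^{t} \R_{L_{i}}$; since the scalar coefficients sum to $\frac{3}{4} + t \cdot \frac{1}{4t} = 1$, this places $T_{\mathcal{S}} \in \aff\{\Id, \R_{L_{1}}, \ldots, \R_{L_{t}}\} \subseteq \aff(\mathcal{S}_{L})$, using $\{\Id, \R_{U_{1}}, \ldots, \R_{U_{t}}\} \subseteq \mathcal{S}$ to ensure the corresponding reflectors lie in $\mathcal{S}_{L}$.

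For \cref{lem:AIdP:TS:LineFirmNone}, linearity is immediate because each $L_{i}$ is a linear subspace, so $\Pro_{L_{i}}$ is linear, whence $T_{i}$ and $T_{\mathcal{S}}$ are linear. For firm nonexpansiveness I would read the identity $T_{i} = (1 - \tfrac{1}{4})\Id + \tfrac{1}{4}\R_{L_{i}}$ as exhibiting $T_{i}$ as $\tfrac{1}{4}$-averaged via \cref{defn:AlphaAverage}, since $\R_{L_{i}}$ is nonexpansive by \cref{lem:examples:normpreserving}\cref{lem:examples:normpreserving:R} combined with \cref{lem:normpreser:FixSet}\cref{lem:normpreser:FixSet:Nonexpansive}. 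Then \cref{fact:AlphAvera} applied with weights $\omega_{i} = 1/t$ shows $T_{\mathcal{S}}$ is $\tfrac{1}{4}$-averaged, and \cref{fact:averaged:character} with $\alpha = \tfrac{1}{4}$ gives $\|T_{\mathcal{S}}x - T_{\mathcal{S}}y\|^{2} + 3\|(\Id - T_{\mathcal{S}})x - (\Id - T_{\mathcal{S}})y\|^{2} \leq \|x - y\|^{2}$ for every $x,y$, which dominates the firm nonexpansiveness inequality of \cref{defn:Nonexpansive}\cref{FirmNonex}.

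For \cref{lem:AIdP:TS:fix}, I would observe that $T_{i}x = x \Leftrightarrow \Pro_{L_{i}}x = x \Leftrightarrow x \in L_{i}$ by \cref{MetrProSubs8}\cref{MetrProSubs8:iv}, so $\Fix T_{i} = L_{i}$. Since each $T_{i}$ is firmly nonexpansive (hence quasinonexpansive) and $\cap_{i=1}^{t} L_{i}$ is nonempty (it contains $0$), \cite[Proposition~4.47]{BC2017} yields $\Fix T_{\mathcal{S}} = \cap_{i=1}^{t} L_{i}$. The remaining equality $\cap_{F \in \mathcal{S}_{L}} \Fix F = \cap_{i=1}^{t} L_{i}$ splits into two easy inclusions: $\{\Id, \R_{L_{1}}, \ldots, \R_{L_{t}}\} \subseteq \mathcal{S}_{L}$ gives $\cap_{F}\Fix F \subseteq \cap_{i} L_{i}$, and every $F \in \mathcal{S}_{L}$ is a composition of reflectors $\R_{L_{j}}$, each of which fixes every point of $\cap_{i} L_{i}$, giving the reverse inclusion. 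The main obstacle is the firm nonexpansiveness step: in \cref{lem:AmP:TS} it reduced directly to $\tfrac{1}{2}$-averagedness of a convex combination of $\tfrac{1}{2}$-averaged projections, whereas here the extra convex averaging with $\Id$ yields only $\tfrac{1}{4}$-averagedness, so one must remember that $\alpha$-averaged with $\alpha \le \tfrac{1}{2}$ is strictly stronger than firm nonexpansiveness and deduce the latter via the characterization in \cref{fact:averaged:character}.
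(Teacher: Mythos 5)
Your proposal is correct and follows essentially the same route as the paper, which proves \cref{lem:AIdP:TS:AffHull} by the identical computation $T_{i}=\tfrac{3}{4}\Id+\tfrac{1}{4}\R_{L_{i}}$ and disposes of \cref{lem:AIdP:TS:LineFirmNone} and \cref{lem:AIdP:TS:fix} by declaring them \enquote{similar to \cref{lem:AmP:TS}}, i.e.\ via \cref{fact:AlphAvera} and \cite[Proposition~4.47]{BC2017}. Your only (harmless) deviations are cosmetic: for \cref{lem:AIdP:TS:LineFirmNone} you pass through $\tfrac{1}{4}$-averagedness and then note this dominates firm nonexpansiveness, whereas one can read $T_{i}=\tfrac{1}{2}(\Id+\Pro_{L_{i}})$ directly as $\tfrac{1}{2}$-averaged with $R=\Pro_{L_{i}}$ as in \cref{lem:AmP:TS}; and for the equality $\cap^{t}_{i=1}L_{i}=\cap_{F\in\mathcal{S}_{L}}\Fix F$ you argue both inclusions directly instead of citing \cref{theorem:CCSLinearConvTSFirmNone}\cref{theorem:CCSLinearConvTSFirmNone:FixT}, which amounts to the same thing.
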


\begin{proof}
	\cref{lem:AIdP:TS:AffHull}: 
	Now for every $i \in \{1, \ldots,t\}$, $T_{i}=\frac{1}{2} (\Id
	+\Pro_{L_{i}})=\frac{1}{2} \big(\Id +\frac{\Id + \R_{L_{i}}}{2}\big)
	=\frac{3}{4}\Id +\frac{1}{4} \R_{L_{i}}$. Hence,
	\begin{align*}
	T_{\mathcal{S}}=\frac{1}{t} \sum^{t}_{i=1}T_{i} = \frac{1}{t} \sum^{t}_{i=1} \Big(\frac{3}{4}\Id +\frac{1}{4} \R_{L_{i}}\Big)
	\in \aff \{\Id, \R_{L_{1}}, \R_{L_{2}}, \ldots, \R_{L_{t}}\} \subseteq \aff ( \mathcal{S}_{L} ).
	\end{align*}
	
	The proofs for \cref{lem:AIdP:TS:LineFirmNone} and \cref{lem:AIdP:TS:fix} are similar to the corresponding parts of the proof in \cref{lem:AmP:TS}.
\end{proof}

\begin{proposition}  \label{prop:CwAverProLinRate}
	Assume that $\mathcal{H}= \mathbb{R}^{n}$ and $\{\Id, \R_{U_{1}}, \ldots, \R_{U_{t-1}},\R_{U_{t}}\} \subseteq \mathcal{S}$. Then for every $x \in \mathcal{H}$, $ (\CC{\mathcal{S}}^{k}x)_{k \in \mathbb{N}}$ converges to $ \Pro_{ \cap^{t}_{i=1} U_{i} }x$ with a linear rate $\norm{(\frac{1}{t} \sum^{t}_{i=1}\Pro_{L_{i}})\Pro_{(\cap^{t}_{i=1} L_{i})^{\perp}}}$.
\end{proposition}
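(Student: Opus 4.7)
The plan is to reduce the statement to \cref{prop:LineConvCCS:CCSL} by proving linear convergence first in the linear-subspace setting, where the machinery of \cref{theorem:CCSLinearConvTSFirmNone} applies directly.

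First I would invoke \cref{lem:AmP:TS} with the candidate operator $T_{\mathcal{S}} := \frac{1}{t}\sum_{i=1}^{t}\Pro_{L_{i}}$. That lemma already provides everything needed to feed into \cref{theorem:CCSLinearConvTSFirmNone}: namely, $T_{\mathcal{S}} \in \aff(\mathcal{S}_{L})$, $T_{\mathcal{S}}$ is linear and firmly nonexpansive (hence $\tfrac{1}{2}$-averaged), and $\Fix T_{\mathcal{S}} = \cap^{t}_{i=1} L_{i} = \cap_{F \in \mathcal{S}_{L}} \Fix F$. Since $\{\Id, \R_{L_{1}}, \ldots, \R_{L_{t}}\} \subseteq \mathcal{S}_{L}$ consists of isometries (\cref{lem:examples:normpreserving}\cref{lem:examples:normpreserving:R}) and the common fixed-point set $\cap^{t}_{i=1} L_{i}$ is nonempty (it contains $0$), the set $\mathcal{S}_{L}$ satisfies the global hypotheses of \cref{sec:CircumcenterMethodIsome}.

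Next I would apply \cref{theorem:CCSLinearConvTSFirmNone}\cref{theorem:CCSLinearConvTSFirmNone:LineaConv} to $\mathcal{S}_{L}$ with this $T_{\mathcal{S}}$: for every $y \in \mathbb{R}^{n}$, the sequence $(\CC{\mathcal{S}_{L}}^{k}y)_{k \in \mathbb{N}}$ converges to $\Pro_{\cap^{t}_{i=1} L_{i}}y$ with linear rate
\[
\gamma = \Norm{T_{\mathcal{S}}\,\Pro_{(\cap^{t}_{i=1} L_{i})^{\perp}}} = \NNorm{\Big(\tfrac{1}{t}\sum^{t}_{i=1}\Pro_{L_{i}}\Big)\Pro_{(\cap^{t}_{i=1} L_{i})^{\perp}}} \in [0,1[\,.
\]
Finally, \cref{prop:LineConvCCS:CCSL} transports this linear convergence back to the affine setting: $(\CC{\mathcal{S}}^{k}x)_{k \in \mathbb{N}}$ converges to $\Pro_{\cap^{t}_{i=1} U_{i}}x$ with the same linear rate $\gamma$.

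There is no real obstacle beyond checking that the hypotheses line up: one just has to verify that the assumption $\{\Id, \R_{U_{1}}, \ldots, \R_{U_{t}}\} \subseteq \mathcal{S}$ translates into $\{\Id, \R_{L_{1}}, \ldots, \R_{L_{t}}\} \subseteq \mathcal{S}_{L}$ (which is immediate from the definition of $\mathcal{S}_{L}$), so that \cref{lem:AmP:TS} is applicable. Every other ingredient has been prepared in advance, so the proof amounts to little more than chaining the three results \cref{lem:AmP:TS}, \cref{theorem:CCSLinearConvTSFirmNone}\cref{theorem:CCSLinearConvTSFirmNone:LineaConv}, and \cref{prop:LineConvCCS:CCSL}.
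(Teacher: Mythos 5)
Your proposal is correct and follows essentially the same route as the paper: combine \cref{lem:AmP:TS} with \cref{theorem:CCSLinearConvTSFirmNone}\cref{theorem:CCSLinearConvTSFirmNone:LineaConv} applied to $\mathcal{S}_{L}$ and $T_{\mathcal{S}}=\frac{1}{t}\sum^{t}_{i=1}\Pro_{L_{i}}$, then transfer the linear convergence from the linear-subspace setting to the affine one via \cref{prop:LineConvCCS:CCSL}. Your additional checks (that $\{\Id,\R_{L_{1}},\ldots,\R_{L_{t}}\}\subseteq\mathcal{S}_{L}$ and that the hypotheses of the theorem are met) are exactly the verifications implicit in the paper's argument.
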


\begin{proof}
	Combining \cref{lem:AmP:TS} and \cref{theorem:CCSLinearConvTSFirmNone}\cref{theorem:CCSLinearConvTSFirmNone:LineaConv}, we know that for every $y \in \mathcal{H}$, $ (\CC{\mathcal{S}_{L}}^{k}y)_{k \in \mathbb{N}}$ converges to $ \Pro_{ \cap^{t}_{i=1} L_{i} }y$ with a linear rate $\norm{(\frac{1}{t} \sum^{t}_{i=1}\Pro_{L_{i}})\Pro_{(\cap^{t}_{i=1} L_{i})^{\perp}}}$.
	
	Hence, the required result comes from \cref{prop:LineConvCCS:CCSL}.
\end{proof}

\begin{proposition} \label{prop:W1reflection:conver}
	Assume that $\mathcal{H}=\mathbb{R}^{n}$ and $\{\Id, \R_{U_{1}}, \R_{U_{2}}, \ldots, \R_{U_{t}}\} \subseteq \mathcal{S}$. Denote $T_{\mathcal{S}} := \frac{1}{t} \sum^{t}_{i=1}T_{i} x$ where $(\forall i \in \{1,2, \ldots,t\})$ $T_{i} := \frac{1}{2} (\Id +\Pro_{L_{i}})$. Let $x \in \mathbb{R}^{n}$. Then $(\CC{\mathcal{S}}^{k}x)_{k \in \mathbb{N}}$ linearly converges to $\Pro_{\cap^{t}_{i=1} U_{i} }x$ with a linear rate $\norm{T_{\mathcal{S}}\Pro_{(\cap^{t}_{i=1} L_{i})^{\perp}}}$.
\end{proposition}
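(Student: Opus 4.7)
The plan is to run the same three-step scheme already used for \cref{prop:CwAverProLinRate}, simply swapping the role of \cref{lem:AmP:TS} for \cref{lem:AIdmP:TS}, since the new operator $T_{\mathcal{S}}=\tfrac{1}{t}\sum_{i=1}^{t}\tfrac{1}{2}(\Id+\Pro_{L_i})$ is the one tailored to the hypothesis $\{\Id,\R_{U_1},\ldots,\R_{U_t}\}\subseteq\mathcal{S}$.

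First, I would invoke \cref{lem:AIdmP:TS}, which packages exactly the three facts we need: (a) $T_{\mathcal{S}}\in\aff(\mathcal{S}_{L})$, obtained by rewriting $\tfrac{1}{2}(\Id+\Pro_{L_i})=\tfrac{3}{4}\Id+\tfrac{1}{4}\R_{L_i}$; (b) $T_{\mathcal{S}}$ is linear and firmly nonexpansive (in particular $\alpha$-averaged with $\alpha=\tfrac{1}{2}$), since each $\tfrac{1}{2}(\Id+\Pro_{L_i})$ is firmly nonexpansive and a convex combination of firmly nonexpansive operators is firmly nonexpansive by \cref{fact:AlphAvera}; and (c) $\Fix T_{\mathcal{S}}=\cap_{i=1}^{t}L_{i}=\cap_{F\in\mathcal{S}_L}\Fix F$.

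Next, these three properties are precisely the hypotheses of \cref{theorem:CCSLinearConvTSFirmNone}\cref{theorem:CCSLinearConvTSFirmNone:LineaConv} applied to the family $\mathcal{S}_L$ in $\mathbb{R}^{n}$. That theorem therefore yields, for every $y\in\mathbb{R}^n$, the linear convergence
\begin{equation*}
\CC{\mathcal{S}_{L}}^{k}y\;\longrightarrow\;\Pro_{\cap_{i=1}^{t}L_{i}}y
\quad\text{with rate}\quad \bigl\|T_{\mathcal{S}}\Pro_{(\cap_{i=1}^{t}L_{i})^{\perp}}\bigr\|\in[0,1[.
\end{equation*}

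Finally, I would transfer this back to the affine setting by applying \cref{prop:LineConvCCS:CCSL} with $y=x-z$ for any $z\in\cap_{i=1}^{t}U_{i}$: the equivalence asserted there immediately upgrades the linear convergence of $\CC{\mathcal{S}_L}^{k}(x-z)$ to $\Pro_{\cap_{i=1}^{t}L_i}(x-z)$ into the linear convergence of $\CC{\mathcal{S}}^{k}x$ to $\Pro_{\cap_{i=1}^{t}U_i}x$ with the same rate. There is no real obstacle here; the genuine work has already been done in \cref{lem:AIdmP:TS}, \cref{theorem:CCSLinearConvTSFirmNone}, and \cref{prop:LineConvCCS:CCSL}. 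The only mild point to verify is that the underlying global assumptions for those results hold in the present context, namely $\cap_{j}\Fix T_j\neq\varnothing$ (which follows from \cref{lemma:capUi}) and that every element of $\mathcal{S}$ is isometric (which follows from \cref{lem:examples:normpreserving}\cref{lem:examples:normpreserving:R} and \cref{lem:Composi:NormPreserOpera}).
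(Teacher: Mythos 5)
Your proposal is correct and follows essentially the same route as the paper: the paper's proof likewise invokes \cref{lem:AIdmP:TS} together with \cref{theorem:CCSLinearConvTSFirmNone}\cref{theorem:CCSLinearConvTSFirmNone:LineaConv} applied to $\mathcal{S}_{L}$, mirroring the argument of \cref{prop:CwAverProLinRate}, and then transfers the linear convergence from the linear-subspace setting back to the affine setting via \cref{prop:LineConvCCS:CCSL}. Your additional check of the standing hypotheses (nonemptiness of the fixed-point intersection and isometry of the elements of $\mathcal{S}$) is consistent with the paper's framework.
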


\begin{proof}
	Using the similar method used in the proof of \cref{prop:CwAverProLinRate}, and using \cref{lem:AIdmP:TS} and \cref{theorem:CCSLinearConvTSFirmNone}\cref{theorem:CCSLinearConvTSFirmNone:LineaConv}, we obtain the required result.
\end{proof}

Clearly, we can take $\mathcal{S}=\{\Id, \R_{U_{1}}, \R_{U_{2}}, \ldots, \R_{U_{t}}\}$ in \cref{prop:CwAverProLinRate,prop:W1reflection:conver}.
In addition, \cref{prop:CwAverProLinRate,prop:W1reflection:conver} tell us that for different $T_{\mathcal{S}} \in \aff (\mathcal{S}_{L})$, we may obtain different linear convergence rates of $(\CC{\mathcal{S}}^{k}x)_{k \in \mathbb{N}}$.

\subsection{Accelerating the Douglas--Rachford method} \label{subsec:AcceDRM}
In this subsection, we consider the case when $t=2$.
\begin{lemma} \label{lem:PU1U2FixTEqut}
	Let $L_{1}, L_{2}$ be the closed linear subspaces defined in \cref{defn:L}.  Let $z \in L_{1}+L_{2}$. Denote $T:=T_{L_{2},L_{1}}$ defined in \cref{defn:DRO}.  Assume $L_{1} \cap L_{2} \subseteq \cap_{F \in \mathcal{S}_{L}} \Fix F$. Then
	\begin{align*}
	(\forall k \in \mathbb{N}) \quad \Pro_{L_{1}\cap L_{2}}(z) = \Pro_{L_{1}\cap L_{2}}(\CC{\mathcal{S}_{L}}^{k}z) = \Pro_{\Fix T}(\CC{\mathcal{S}_{L}}^{k}z).
	\end{align*}
\end{lemma}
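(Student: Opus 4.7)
My plan is to split the required identity into its two equalities and handle each by invoking a single earlier result, after verifying that the hypotheses transfer from $\mathcal{S}$ and $U_{i}$ to $\mathcal{S}_{L}$ and $L_{i}$. Observe that each element of $\mathcal{S}_{L}$ is a composition of reflectors $\R_{L_{i}}$ associated with closed linear subspaces, hence an isometry by \cref{lem:examples:normpreserving}\cref{lem:examples:normpreserving:R} and \cref{lem:Composi:NormPreserOpera}; moreover $L_{1}\cap L_{2}$ is a closed linear (in particular, closed and affine) subspace, nonempty since it contains $0$, and by hypothesis $L_{1}\cap L_{2}\subseteq\cap_{F\in\mathcal{S}_{L}}\Fix F$. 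This puts us squarely in the setting of \cref{sec:CircumcenterMethodIsome} applied to $\mathcal{S}_{L}$.

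For the first equality, I apply \cref{prop:CCS:Equations}\cref{prop:CCS:Equations:CCSxk} with $W:=L_{1}\cap L_{2}$ and the isometry family $\mathcal{S}_{L}$, yielding $\Pro_{L_{1}\cap L_{2}}(\CC{\mathcal{S}_{L}}^{k}z)=\Pro_{L_{1}\cap L_{2}}(z)$ for every $k\in\mathbb{N}$.

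For the second equality I rely on \cref{lem:PUVFixSpnUV}, which tells us that for any $y\in\mathcal{H}$, $\Pro_{L_{1}\cap L_{2}}(y)=\Pro_{\Fix T}(y)$ is equivalent to $y\in\overline{L_{1}+L_{2}}$. Therefore it suffices to show that every iterate $\CC{\mathcal{S}_{L}}^{k}z$ lies in $\overline{L_{1}+L_{2}}$. Because $L_{1}$ and $L_{2}$ are linear subspaces, $\spn(L_{1}\cup L_{2})=L_{1}+L_{2}$, and the starting point satisfies $z\in L_{1}+L_{2}=\spn(L_{1}\cup L_{2})$.

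Now I invoke \cref{lemma:com:affspan:U1Um}\cref{lemma:com:affspan:U1Um:lem:SpnSx:SpanU1Um}, applied with the $L_{i}$ in the role of the $U_{i}$ and $\mathcal{S}_{L}$ in the role of $\mathcal{S}$: since $z\in\spn(L_{1}\cup L_{2})$, we obtain $\CC{\mathcal{S}_{L}}^{k}z\in\spn(L_{1}\cup L_{2})=L_{1}+L_{2}\subseteq\overline{L_{1}+L_{2}}$ for every $k\in\mathbb{N}$. Combining with \cref{lem:PUVFixSpnUV} gives $\Pro_{L_{1}\cap L_{2}}(\CC{\mathcal{S}_{L}}^{k}z)=\Pro_{\Fix T}(\CC{\mathcal{S}_{L}}^{k}z)$, and chaining with the first equality completes the proof. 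There is no real obstacle here; the only subtle point is verifying that \cref{lemma:com:affspan:U1Um,prop:CCS:Equations} legitimately transport to $\mathcal{S}_{L}$, which is immediate once one checks that $\mathcal{S}_{L}$ is a family of isometries with nonempty common fixed point set containing $L_{1}\cap L_{2}$.
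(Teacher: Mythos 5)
Your proof is correct and follows essentially the same route as the paper: the paper likewise obtains $\CC{\mathcal{S}_{L}}^{k}z\in\spn(L_{1}\cup L_{2})=L_{1}+L_{2}$ from \cref{lemma:com:affspan:U1Um}\cref{lemma:com:affspan:U1Um:lem:SpnSx:SpanU1Um}, then combines \cref{prop:CCS:Equations}\cref{prop:CCS:Equations:CCSxk} (with $W=L_{1}\cap L_{2}$) and \cref{lem:PUVFixSpnUV}. Your explicit verification that the hypotheses transfer to $\mathcal{S}_{L}$ is a welcome bit of extra care but does not change the argument.
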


\begin{proof}
	Using \cref{lemma:com:affspan:U1Um} \cref{lemma:com:affspan:U1Um:lem:SpnSx:SpanU1Um}, we get
	$
	(\CC{\mathcal{S}_{L}}^{k}z )_{k \in \mathbb{N}} \subseteq \spn(L_{1} \cup L_{2})= L_{1} +L_{2}.
	$
	Combining \cref{lemma:capUi}, \cref{prop:CCS:Equations}\cref{prop:CCS:Equations:CCSxk} (by taking $W= L_{1} \cap L_{2}$) with \cref{lem:PUVFixSpnUV}, we obtain that $(\forall k \in \mathbb{N})$ $ \Pro_{\Fix T}z = \Pro_{L_{1}\cap L_{2}}z= \Pro_{L_{1}\cap L_{2}}(\CC{\mathcal{S}_{L}}^{k}z) = \Pro_{\Fix T}(\CC{\mathcal{S}_{L}}^{k}z)$.
\end{proof}

\begin{corollary} \label{cor:PU1U2FixTEqut:CCSk}
	Let $L_{1}, L_{2}$ be the closed linear subspaces defined in \cref{defn:L}. Assume $L_{1} \cap L_{2} \subseteq \cap_{F \in \mathcal{S}_{L}} \Fix F$.   Let $x \in \mathcal{H}$. Let $K$ be a closed linear subspace of $\mathcal{H}$ such that
	\begin{align*}
	L_{1} \cap L_{2} \subseteq K \subseteq L_{1}+L_{2}.
	\end{align*}
	Denote $T:=T_{L_{2},L_{1}}$ defined in \cref{defn:DRO}. Then
	\begin{align*}
	(\forall k \in \mathbb{N}) \quad \Pro_{L_{1}\cap L_{2}}x = \Pro_{\Fix T}\Pro_{K}x =\Pro_{L_{1}\cap L_{2}}\Pro_{K}x =\Pro_{L_{1}\cap L_{2}}(\CC{\mathcal{S}_{L}}^{k}\Pro_{K}x ) = \Pro_{\Fix T}(\CC{\mathcal{S}_{L}}^{k}\Pro_{K}x ).
	\end{align*}
\end{corollary}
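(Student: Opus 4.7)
The plan is to combine Lemma \ref{lem:PFixTPWPUCapV} (applied to $U=L_{1}$ and $V=L_{2}$, which are closed linear subspaces so that $\overline{L_{1}+L_{2}} \supseteq L_{1}+L_{2}$) with the preceding Lemma \ref{lem:PU1U2FixTEqut} evaluated at the point $z := \Pro_{K}x$. The two lemmas directly partition the four-equation chain into two halves that meet in the middle at the common expression $\Pro_{L_{1}\cap L_{2}}\Pro_{K}x$.

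First, I would observe that the hypothesis $L_{1}\cap L_{2} \subseteq K \subseteq L_{1}+L_{2}$ already implies $L_{1}\cap L_{2} \subseteq K \subseteq \overline{L_{1}+L_{2}}$ (the inclusion into the closure is automatic). Thus Lemma \ref{lem:PFixTPWPUCapV} applies verbatim and yields the first two equalities,
\begin{align*}
\Pro_{\Fix T}\Pro_{K}x \;=\; \Pro_{L_{1}\cap L_{2}}\Pro_{K}x \;=\; \Pro_{L_{1}\cap L_{2}}x .
\end{align*}

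Next, set $z:=\Pro_{K}x$. Since $z\in K\subseteq L_{1}+L_{2}$, Lemma \ref{lem:PU1U2FixTEqut} applies to $z$ (the standing hypothesis $L_{1}\cap L_{2}\subseteq \cap_{F\in\mathcal{S}_{L}}\Fix F$ is the same in both statements), giving for every $k\in\mathbb{N}$,
\begin{align*}
\Pro_{L_{1}\cap L_{2}}(\Pro_{K}x) \;=\; \Pro_{L_{1}\cap L_{2}}\bigl(\CC{\mathcal{S}_{L}}^{k}\Pro_{K}x\bigr) \;=\; \Pro_{\Fix T}\bigl(\CC{\mathcal{S}_{L}}^{k}\Pro_{K}x\bigr).
\end{align*}

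Concatenating the two displays produces the full four-equality chain; reading them in the order stated in the corollary gives exactly the claim. There is no real obstacle: the corollary is essentially a packaging of Lemmas \ref{lem:PFixTPWPUCapV} and \ref{lem:PU1U2FixTEqut}, and the only point worth mentioning is the verification that the input $\Pro_{K}x$ lies in $L_{1}+L_{2}$, which is immediate from $\Pro_{K}x\in K$ and the assumption $K\subseteq L_{1}+L_{2}$.
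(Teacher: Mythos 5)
Your proposal is correct and matches the paper's own proof: the paper likewise obtains $\Pro_{L_{1}\cap L_{2}}x =\Pro_{L_{1}\cap L_{2}}\Pro_{K}x = \Pro_{\Fix T}\Pro_{K}x$ from \cref{lem:PFixTPWPUCapV} (using $\Pro_{K}x \in K \subseteq L_{1}+L_{2}$) and then applies \cref{lem:PU1U2FixTEqut} with $z=\Pro_{K}x$. Nothing is missing.
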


\begin{proof}
	Because $\Pro_{K}x \in K \subseteq L_{1}+L_{2}$. Then \cref{lem:PFixTPWPUCapV} implies that
	\begin{align} \label{eq:cor:PU1U2FixTEqut:CCSk:P}
	\Pro_{L_{1}\cap L_{2}}x  =\Pro_{L_{1}\cap L_{2}}\Pro_{K}x = \Pro_{\Fix T}\Pro_{K}x.
	\end{align}
	Applying  \cref{lem:PU1U2FixTEqut} with $z = \Pro_{K}x$, we get the desired result.
\end{proof}

Using \cref{cor:PU1U2FixTEqut:CCSk}, \cref{prop:CCS:Equations}
\cref{prop:CCS:Equations:CCSTSP}, \cref{fac:cFLess1}, \cref{fac:DRCFn} and an
idea similar to the proof of \cite[Theorem~1]{BCS2017}, we obtain the
following more general result, which is motivated by \cite[Theorem~1]{BCS2017}. 
In fact, \cite[Theorem~1]{BCS2017} reduces to
\cref{prop:CW:CCS:linear}\cref{prop:CW:CCS:linearconv} when
$\mathcal{H}=\mathbb{R}^{n}$ and $\mathcal{S} = \{\Id, \R_{U_{1}},
\R_{U_{2}}\R_{U_{1}} \}$.


\begin{proposition}  \label{prop:CW:CCS:General:linearconv}
	Let $L_{1}, L_{2}$ be the closed linear subspaces defined in \cref{defn:L}.    Assume $L_{1} \cap L_{2} \subseteq \cap_{F \in \mathcal{S}_{L}} \Fix F$.   Let $K$ be a closed affine subspace of $\mathcal{H}$ such that for $K_{L}=\pa K$,
	\begin{align*}
	L_{1}\cap L_{2} \subseteq K_{L} \subseteq L_{1}+L_{2}.
	\end{align*}
	Denote $T:=T_{U_{2},U_{1}}$ and $T_{L}:=T_{L_{2},L_{1}}$ defined in \cref{defn:DRO}. Denote the $c(L_{1}, L_{2})$ defined in \cref{defn:FredrichAngleClassical} by $c_{F}$.
	Assume there exists $d \in \mathbb{N} \smallsetminus \{0\}$ such that $T^{d} \in \aff \mathcal{S}$. 	Let $x \in \mathcal{H}$.  Then
	\begin{align*}
	(\forall k \in \mathbb{N}) \quad \norm{\CC{\mathcal{S}}^{k}\Pro_{K}x - \Pro_{U_{1}\cap U_{2}}x} \leq (c_{F})^{d k} \norm{\Pro_{K}x  - \Pro_{U_{1}\cap U_{2}}x}.
	\end{align*}
\end{proposition}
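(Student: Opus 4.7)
My plan is to argue by induction on $k \in \mathbb{N}$, controlling each application of $\CC{\mathcal{S}}$ by the operator $T^d$ via the Pythagoras identity in \cref{prop:CCS:Equations}\cref{prop:CCS:Equations:CCSTSP}, and then extracting the factor $c_F^d$ from the Douglas--Rachford contraction \cref{fac:DRCFn}. Set $y_k := \CC{\mathcal{S}}^{k}\Pro_{K}x$ and $p:=\Pro_{U_{1}\cap U_{2}}x$. By \cref{lemma:capUi}, $U_1 \cap U_2$ is a nonempty closed affine subset of $\cap_{T \in \mathcal{S}} \Fix T$, hence a valid choice of $W$ throughout. Since $L_1 \cap L_2 \subseteq K_L$, the residual $x - \Pro_K x$ lies in $K_L^\perp \subseteq (L_1 \cap L_2)^\perp$; combined with the affine projection identity $\Pro_{U_1 \cap U_2} a - \Pro_{U_1 \cap U_2} b = \Pro_{L_1 \cap L_2}(a - b)$ (immediate from \cref{fac:SetChangeProje}), this gives $p = \Pro_{U_1 \cap U_2} \Pro_K x$, and \cref{prop:CCS:Equations}\cref{prop:CCS:Equations:CCSxk} then yields $\Pro_{U_1 \cap U_2} y_k = p$ for every $k$.

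The heart of the induction is $\norm{y_k - p} \leq c_F^d \norm{y_{k-1} - p}$. Because $T^d \in \aff \mathcal{S}$ by hypothesis, \cref{prop:CCS:Equations}\cref{prop:CCS:Equations:CCSTSP} applied at $y_{k-1}$ with $T_{\mathcal{S}} = T^d$ gives
\begin{align*}
\norm{y_k - p}^2 + \norm{y_k - T^d y_{k-1}}^2 = \norm{T^d y_{k-1} - p}^2,
\end{align*}
so $\norm{y_k - p} \leq \norm{T^d y_{k-1} - p}$. To bound the right-hand side, I pass to the linear companion $T_L$: choosing $z \in (U_1 \cap U_2) \cap K$, the natural anchor in the setting of the proposition, automatic when $U_1 \cap U_2 \subseteq K$ and exemplified by the prototypical case $K = U_1$ of \cite[Theorem~1]{BCS2017}, iterating the identity $\R_{U_i} y = z + \R_{L_i}(y - z)$ (as in the proof of \cref{lemma:RU:RL}\cref{lemma:RU:RL:R}) yields $T^d y_{k-1} - z = T_L^d(y_{k-1} - z)$, while \cref{fac:SetChangeProje} gives $p - z = \Pro_{L_1 \cap L_2}(x - z)$. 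With $\tilde y := y_{k-1} - z$ the target becomes $\norm{T_L^d \tilde y - \Pro_{L_1 \cap L_2}(x-z)} \leq c_F^d \norm{\tilde y - \Pro_{L_1 \cap L_2}(x-z)}$. Since $z \in K$, $\Pro_K x - z = \Pro_{K_L}(x-z) \in K_L \subseteq L_1 + L_2$, and \cref{lemma:com:affspan:U1Um}\cref{lemma:com:affspan:U1Um:lem:SpnSx:SpanU1Um} applied to $\mathcal{S}_L$ keeps $\tilde y = \CC{\mathcal{S}_L}^{k-1}(\Pro_K x - z) \in L_1 + L_2 \subseteq \overline{L_1 + L_2}$. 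Hence \cref{lem:PUVFixSpnUV} identifies $\Pro_{\Fix T_L} \tilde y$ with $\Pro_{L_1 \cap L_2} \tilde y$, and \cref{prop:CCS:Equations}\cref{prop:CCS:Equations:CCSxk} for $\mathcal{S}_L$ combined with \cref{lem:PFixTPWPUCapV} identifies this further with $\Pro_{L_1 \cap L_2}(x - z)$; then \cref{fac:DRCFn} applied to $T_L^d$ closes the induction.

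The main obstacle is precisely the requirement that $\tilde y$ stay inside $\overline{L_1 + L_2}$: its component in $(\overline{L_1 + L_2})^\perp = L_1^\perp \cap L_2^\perp$ would be pointwise fixed by $T_L$ yet annihilated by $\Pro_{L_1 \cap L_2}$, so any nonzero such component would survive on the left-hand side of the target inequality and ruin the $c_F^d$ contraction. This is why the anchor $z$ must lie simultaneously in $U_1 \cap U_2$ and in $K$; this geometric condition is implicit in the statement and is satisfied in its motivating instances.
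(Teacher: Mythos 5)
Your proposal follows the same skeleton as the paper's proof: the Pythagoras identity \cref{prop:CCS:Equations}\cref{prop:CCS:Equations:CCSTSP} with $T_{\mathcal{S}}=T^{d}$, the identification of $\Pro_{\Fix T_{L}}$ with $\Pro_{L_{1}\cap L_{2}}$ along the iterates via \cref{lem:PUVFixSpnUV}, \cref{lem:PFixTPWPUCapV} and \cref{prop:CCS:Equations}\cref{prop:CCS:Equations:CCSxk}, the Douglas--Rachford rate \cref{fac:DRCFn}, and an induction on $k$. The only structural difference is where the affine-to-linear reduction is performed: the paper runs the entire induction at the linear level with initial point $\Pro_{K_{L}}x$, which lies in $K_{L}\subseteq L_{1}+L_{2}$ automatically, so no anchor is needed during the induction, and it passes back to the affine statement only in the final display (via \cref{lemma:RU:RL}\cref{lemma:RU:RL:CCSk} and \cref{fac:SetChangeProje}); you run the induction at the affine level and therefore need the anchor $z$ from the start. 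Granting your anchor, every step of your chain checks out.

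Concerning that anchor: you are right to flag it, but you should state $K\cap U_{1}\cap U_{2}\neq\varnothing$ as an additional hypothesis rather than call it implicit, because the proposition as printed does not assume it, and your argument genuinely uses $z\in K$ (to get $\Pro_{K}x-z=\Pro_{K_{L}}(x-z)\in L_{1}+L_{2}$). The paper's proof tacitly needs the same thing: in its last computation it replaces $\Pro_{K}x-u$ by $\Pro_{K_{L}}(x-u)$ for $u\in U_{1}\cap U_{2}$, and by \cref{fac:SetChangeProje} this identity holds only when $u\in K$. Moreover, the condition cannot simply be dropped: take $\mathcal{H}=\mathbb{R}^{3}$, $U_{1}=L_{1}=\mathbb{R}e_{1}$, $U_{2}=L_{2}=\mathbb{R}(\cos\theta\,e_{1}+\sin\theta\,e_{2})$ with $\theta\in\,]0,\pi/2[\,$, $\mathcal{S}=\{\Id,\R_{U_{2}}\R_{U_{1}}\}$, $d=1$, $K=e_{3}+\mathbb{R}e_{1}$ and $x=e_{3}$; all stated hypotheses hold with $c_{F}=\cos\theta<1$, yet $\R_{U_{2}}\R_{U_{1}}e_{3}=e_{3}$, so $\CC{\mathcal{S}}^{k}\Pro_{K}x=e_{3}$ for all $k$ while $\Pro_{U_{1}\cap U_{2}}x=0$, and the claimed inequality $1\leq\cos^{k}\theta$ fails for $k\geq1$ --- exactly the $(L_{1}^{\perp}\cap L_{2}^{\perp})$-component obstruction you describe in your last paragraph. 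So your proof (like the paper's) establishes the result under the extra assumption that $K$ meets $U_{1}\cap U_{2}$, which does hold in the intended applications such as \cref{prop:CW:CCS:linear} with $K=U_{1}$ or $K=U_{2}$; with that hypothesis made explicit, your argument is complete.
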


\begin{proof}
	By definition, $T^{d} \in \aff \mathcal{S}$ means that $T_{L}^{d} \in \aff \mathcal{S}_{L}$.
	Using \cref{cor:PU1U2FixTEqut:CCSk}, we get
	\begin{align} \label{eq:prop:CW:CCS:linearcon:equalites}
	(\forall n \in \mathbb{N}) \quad \Pro_{L_{1}\cap L_{2}}x = \Pro_{\Fix T_{L}} \Pro_{K_{L}}x =\Pro_{L_{1}\cap L_{2}}\Pro_{K_{L}}x=\Pro_{L_{1}\cap L_{2}}(\CC{\mathcal{S}_{L}}^{n}\Pro_{K_{L}}x) = \Pro_{\Fix T_{L}}(\CC{\mathcal{S}_{L}}^{n}\Pro_{K_{L}}x).
	\end{align}
	Since  $T^{d}_{L} \in \aff \mathcal{S}_{L}$, \cref{prop:CCS:Equations}\cref{prop:CCS:Equations:CCSTSP} implies that
	\begin{align}  \label{eq:prop:CW:CCS:linearcon:Inequality}
	(\forall y \in \mathcal{H}) \quad \norm{\CC{\mathcal{S}_{L}}(y)- \Pro_{L_{1} \cap L_{2}} y} \leq \norm{T^{d}_{L}(y)- \Pro_{L_{1} \cap L_{2}} y}.
	\end{align}
	Using \cref{fac:DRCFn}, we get
	\begin{align} \label{eq:prop:CW:CCS:linearcon:TCCS}
	(\forall y \in \mathcal{H}) \quad  \norm{T^{d}_{L}y - \Pro_{\Fix T_{L}}y} \leq c^{d}_{F} \norm{y - \Pro_{\Fix T_{L}}y}.
	\end{align}
If $k=0$, then the result is trivial. 
Thus, we assume that for some $k \geq 0$,  we have
	\begin{align} \label{eq:prop:CW:CCS:linearconv:K}
	\norm{\CC{\mathcal{S}_{L}}^{k}\Pro_{K_{L}}x - \Pro_{L_{1}\cap L_{2}}x}   \leq (c_{F})^{d k} \norm{\Pro_{K_{L}}x - \Pro_{L_{1}\cap L_{2}}x}.
	\end{align}
	Then
	\begin{align*}
	\norm{\CC{\mathcal{S}_{L}}^{k+1}\Pro_{K_{L}}x - \Pro_{L_{1}\cap L_{2}}x} & \stackrel{\cref{eq:prop:CW:CCS:linearcon:equalites}}{=}\norm{\CC{\mathcal{S}_{L}}(\CC{\mathcal{S}_{L}}^{k}\Pro_{K_{L}}x) - \Pro_{L_{1}\cap L_{2}}(\CC{\mathcal{S}_{L}}^{k}\Pro_{K_{L}}x)}\\
	& \stackrel{\cref{eq:prop:CW:CCS:linearcon:Inequality}}{\leq} \norm{T_{L}^{d} (\CC{\mathcal{S}_{L}}^{k}\Pro_{K_{L}}x) - \Pro_{L_{1}\cap L_{2}}(\CC{\mathcal{S}_{L}}^{k}\Pro_{K_{L}}x)} \\
	& \stackrel{\cref{eq:prop:CW:CCS:linearcon:equalites}}{=} \norm{T^{d}_{L} (\CC{\mathcal{S}_{L}}^{k}\Pro_{K_{L}}x) - \Pro_{\Fix T_{L}}(\CC{\mathcal{S}_{L}}^{k}\Pro_{K_{L}}x)} \\
	& \stackrel{\cref{eq:prop:CW:CCS:linearcon:TCCS}}{\leq} c^{d}_{F} \norm{ \CC{\mathcal{S}_{L}}^{k}\Pro_{K_{L}}x - \Pro_{\Fix T_{L}} (\CC{\mathcal{S}_{L}}^{k}\Pro_{K_{L}}x)} \\
	& \stackrel{\cref{eq:prop:CW:CCS:linearcon:equalites}}{=} c^{d}_{F} \norm{\CC{\mathcal{S}_{L}}^{k}\Pro_{K_{L}}x - \Pro_{L_{1}\cap L_{2}}x}\\
	& \stackrel{\cref{eq:prop:CW:CCS:linearconv:K}}{\leq} c^{d}_{F} (c_{F})^{d k} \norm{\Pro_{K_{L}}x - \Pro_{L_{1}\cap L_{_2}}x}\\
	&~ = (c_{F})^{d(k+1)} \norm{\Pro_{K_{L}}x - \Pro_{L_{1}\cap L_{2}}x}.
	\end{align*}
	Hence, we have inductively proved
	\begin{align} \label{eq:prop:CW:CCS:Results:L}
	(\forall k \in \mathbb{N}) \quad (\forall y\in \mathcal{H}) \quad 	\norm{\CC{\mathcal{S}_{L}}^{k}\Pro_{K_{L}}y - \Pro_{L_{1}\cap L_{2}}y}   \leq (c_{F})^{d k} \norm{\Pro_{K_{L}}y- \Pro_{L_{1}\cap L_{2}}y}.
	\end{align}
	Let $u \in U_{1} \cap U_{2}$. By \cref{lemma:RU:RL}\cref{lemma:RU:RL:CCSk}, we know that  $(\forall k \in \mathbb{N})$ $(\forall y \in \mathcal{H})$ $\CC{\mathcal{S}}^{k}y = u+ \CC{\mathcal{S}_{L}}^{k}(y-u)$ and  by \cref{fac:SetChangeProje}, we have $\Pro_{ \cap^{2}_{i=1} U_{i} }y = \Pro_{u +\cap^{2}_{i=1} L_{i} }y  = u+ \Pro_{ \cap^{2}_{i=1} L_{i} }(y-u)$. Hence we obtain that for every $k \in \mathbb{N}$ and for every $x \in \mathcal{H}$,
	\begin{align*}
	\norm{\CC{\mathcal{S}}^{k}(\Pro_{K}x) - \Pro_{U_{1}\cap U_{2}}x}  & = \norm{u + \CC{\mathcal{S}_{L}}^{k}(\Pro_{K}(x) -u) - u -\Pro_{L_{1}\cap L_{2}}(x-u)}  \\
	& =  \norm{\CC{\mathcal{S}_{L}}^{k}(\Pro_{K_{L}}(x-u) )  -\Pro_{L_{1}\cap L_{2}}(x-u)}  \\
	& \stackrel{\cref{eq:prop:CW:CCS:Results:L}}{\leq}  (c_{F})^{d k} \norm{\Pro_{K_{L}}(x-u)- \Pro_{L_{1}\cap L_{2}}(x-u)}\\
	&= (c_{F})^{d k} \norm{u+\Pro_{K_{L}}(x-u)-\left( u+ \Pro_{L_{1}\cap L_{2}}(x-u) \right)}\\
	&=  (c_{F})^{d k} \norm{\Pro_{K}x- \Pro_{U_{1}\cap U_{2}}x }.
	\end{align*}
Therefore, the proof is complete.
\end{proof}

Let us now provide an application of \cref{prop:CW:CCS:General:linearconv}. 

\begin{proposition} \label{prop:CW:CCS:linear}
	Assume that $U_{1}, U_{2}$ are two closed affine subspaces with $\pa U_{1}+\pa U_{2}$ being closed. Let $x \in \mathcal{H}$. Let $c_{F}$ be the cosine of the Friedrichs angle between $\pa U_{1}$ and $\pa U_{2}$. Then the following  hold:
	\begin{enumerate}
		\item \label{prop:CW:CCS:linearconv}
  Assume that $\{\Id, \R_{U_{2}}\R_{U_{1}}\} \subseteq \mathcal{S}$. Then each of the
  three sequences $(\CC{\mathcal{S}}^{k}(\Pro_{U_{1}}x))_{k \in \mathbb{N}}$,
  $(\CC{\mathcal{S}}^{k}(\Pro_{U_{2}}x))_{k \in \mathbb{N}}$ and
  $(\CC{\mathcal{S}}^{k}(\Pro_{U_{1} + U_{2}}x))_{k \in \mathbb{N}}$ converges
  linearly to $\Pro_{U_{1} \cap U_{2}}x$. Moreover, their rates of
  convergence are no larger than $c_{F} \in [0,1[\,$.
  \item \label{prop:TT:lineconv} Assume that $\{\Id, \R_{U_{2}}\R_{U_{1}},
  \R_{U_{2}}\R_{U_{1}}\R_{U_{2}}\R_{U_{1}}\} \subseteq \mathcal{S}$. Then the
  sequences $(\CC{\mathcal{S}}^{k}(\Pro_{U_{1}}x))_{k \in \mathbb{N}}$,
  $(\CC{\mathcal{S}}^{k}(\Pro_{U_{2}}x))_{k \in \mathbb{N}}$ and
  $(\CC{\mathcal{S}}^{k}(\Pro_{U_{1} + U_{2}}x))_{k \in \mathbb{N}}$ converge
  linearly to $\Pro_{U_{1} \cap U_{2}}x$. Moreover, their rates of
  convergence are no larger than $c^{2}_{F}$.
	\end{enumerate}
\end{proposition}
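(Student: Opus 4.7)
The plan is to deduce both items directly from \cref{prop:CW:CCS:General:linearconv} by choosing, in each case, appropriate data $(K,d)$ and verifying the hypotheses. First I would dispatch preliminary observations that are common to both parts. Setting $L_i := \pa U_i$, the assumption that $L_1+L_2$ is closed and \cref{fac:cFLess1} yield $c_F=c(L_1,L_2)\in [0,1[\,$. Since every element of $\mathcal{S}$ is a (possibly empty) composition of reflectors $\R_{U_1},\R_{U_2}$, each element of $\mathcal{S}_L$ is the corresponding composition of $\R_{L_1},\R_{L_2}$, and any point of $L_1\cap L_2$ is fixed by both $\R_{L_1}$ and $\R_{L_2}$; hence $L_1\cap L_2\subseteq\cap_{F\in\mathcal{S}_L}\Fix F$, which is the standing hypothesis of \cref{prop:CW:CCS:General:linearconv}.

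Next I would check that each of the three choices $K\in\{U_1,U_2,U_1+U_2\}$ is a closed affine subspace of $\mathcal{H}$ with $\pa K\in\{L_1,L_2,L_1+L_2\}$, and that $L_1\cap L_2\subseteq \pa K\subseteq L_1+L_2$ holds in each of these three situations (for $K=U_1+U_2$, note that $\pa(U_1+U_2)=L_1+L_2$, which is closed by hypothesis). This produces, for any admissible $d$, the estimate
\begin{equation*}
\norm{\CC{\mathcal{S}}^{k}\Pro_{K}x-\Pro_{U_{1}\cap U_{2}}x}\le (c_F)^{dk}\,\norm{\Pro_{K}x-\Pro_{U_{1}\cap U_{2}}x},
\end{equation*}
for every $k\in\mathbb{N}$ and every $x\in\mathcal{H}$, which is the desired linear convergence with rate $(c_F)^{d}$.

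For \cref{prop:CW:CCS:linearconv} I would take $d=1$: by definition of the Douglas--Rachford operator, $T=T_{U_2,U_1}=\tfrac{1}{2}\Id+\tfrac{1}{2}\R_{U_2}\R_{U_1}$, and the assumption $\{\Id,\R_{U_2}\R_{U_1}\}\subseteq \mathcal{S}$ immediately gives $T\in\aff\mathcal{S}$. For \cref{prop:TT:lineconv} I would take $d=2$: expanding
\begin{equation*}
T^{2}=\bigl(\tfrac{1}{2}\Id+\tfrac{1}{2}\R_{U_2}\R_{U_1}\bigr)^{2}=\tfrac{1}{4}\Id+\tfrac{1}{2}\R_{U_2}\R_{U_1}+\tfrac{1}{4}\R_{U_2}\R_{U_1}\R_{U_2}\R_{U_1},
\end{equation*}
and since the coefficients $\tfrac{1}{4},\tfrac{1}{2},\tfrac{1}{4}$ sum to $1$, the assumption $\{\Id,\R_{U_2}\R_{U_1},\R_{U_2}\R_{U_1}\R_{U_2}\R_{U_1}\}\subseteq\mathcal{S}$ yields $T^{2}\in\aff\mathcal{S}$. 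Applying \cref{prop:CW:CCS:General:linearconv} in each of the three choices of $K$ then delivers the rates $c_F$ and $c_F^{2}$, respectively.

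I do not anticipate any serious obstacle: the whole argument is bookkeeping against the hypotheses of \cref{prop:CW:CCS:General:linearconv}. The only mildly delicate point is the affine combination showing $T^{2}\in\aff\mathcal{S}$ in the second item (making sure the three coefficients sum to one), and verifying that the parallel subspace of $U_1+U_2$ is $L_1+L_2$ so that closedness of $L_1+L_2$ translates into closedness of the affine set $K=U_1+U_2$ and the chain $L_1\cap L_2\subseteq L_1+L_2\subseteq L_1+L_2$ holds trivially.
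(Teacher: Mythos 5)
Your proposal is correct and follows essentially the same route as the paper: verify $\pa U_{1}\cap \pa U_{2}\subseteq\cap_{F\in\mathcal{S}_{L}}\Fix F$, take $K_{L}$ to be any of $\pa U_{1}$, $\pa U_{2}$, $\pa U_{1}+\pa U_{2}$, and apply \cref{prop:CW:CCS:General:linearconv} with $d=1$ for the first item and $d=2$ for the second, using \cref{fac:cFLess1} for $c_{F}\in[0,1[\,$. The only cosmetic difference is in item (ii): you expand $T^{2}_{U_{2},U_{1}}$ by hand (legitimate because $\R_{U_{2}}\R_{U_{1}}$ is affine, cf.\ \cref{fact:PR}\cref{fact:PR:Affine}, so the affine-combination expansion is valid even for affine, nonlinear reflectors), whereas the paper cites \cite[Proposition~4.13(i)]{BOyW2018Proper} for the same fact.
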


\begin{proof}
	Clearly, under the conditions of each statement, $\pa U_{1}  \cap \pa U_{2} \subseteq \cap_{F \in \mathcal{S}_{L}} \Fix F$. In addition, we are able to substitute $K_{L}$ in \cref{prop:CW:CCS:General:linearconv} by any one of $\pa U_{1}$, $\pa U_{2}$ or $ \pa U_{1}+\pa U_{2}$.
	
	\cref{prop:CW:CCS:linearconv}:  Since $\{\Id, \R_{U_{2}}\R_{U_{1}}\} \subseteq \mathcal{S}$,
	\begin{align*}
	T_{U_{2},U_{1}} :=\frac{\Id +\R_{U_{2}}\R_{U_{1}}}{2} \in \aff \{\Id, \R_{U_{2}}\R_{U_{1}}\} \subseteq \aff \mathcal{S}.
	\end{align*}
	Substitute $d =1$ in \cref{prop:CW:CCS:General:linearconv} to obtain
	\begin{align*}
	(\forall k \in \mathbb{N}) \quad \norm{\CC{\mathcal{S}}^{k}\Pro_{K_{L}}x- \Pro_{U_{1}\cap U_{2}}x} \leq c^{k}_{F} \norm{\Pro_{K_{L}}x - \Pro_{U_{1}\cap U_{2}}x}.
	\end{align*}
	Because $\pa U_{1}+\pa U_{2}$ is closed, by \cref{fac:cFLess1}, we know that $c_{F} \in [0,1[\,$.
	
	\cref{prop:TT:lineconv}: Since $\{\Id, \R_{U_{2}}\R_{U_{1}}, \R_{U_{2}}\R_{U_{1}}\R_{U_{2}}\R_{U_{1}}\} \subseteq \mathcal{S}$, by \cite[Proposition~4.13(i)]{BOyW2018Proper}, we know that
	\begin{align*}
	T^{2}_{U_{2},U_{1}} = \left( \frac{\Id +\R_{U_{2}}\R_{U_{1}}}{2}\right)^{2} \in \aff \mathcal{S}.
	\end{align*}
	The remainder of the proof is similar with the proof in \cref{prop:CW:CCS:linearconv} above. The only difference is that this time we substitute $d =2$ but not $d =1$.
\end{proof}

The following example shows that the special address for the initial points in \cref{prop:CW:CCS:linear} is necessary.
\begin{example} \label{exam:DROLimNotInIntersec}
	Assume that $U_{1},U_{2}$ are two closed linear subspaces in $\mathcal{H}$ such that $U_{1}+U_{2}$ is closed. Assume $\mathcal{S} = \{\Id, \R_{U_{2}}\R_{U_{1}}\}$. Let $x \in \Big(\mathcal{H} \smallsetminus (U_{1}+U_{2})\Big)$.  Clearly, $U_{1} \cap U_{2} \subseteq  \cap_{T \in  \mathcal{S}} \Fix T$. But
	\begin{align*}
	\lim_{k \rightarrow \infty}  \CC{\mathcal{S}}^{k}x = \Pro_{\Fix \CC{\mathcal{S}}}x \not \in U_{1} \cap U_{2}.
	\end{align*}
\end{example}

\begin{proof}
	By definition of $\mathcal{S}$ and by \cref{fact:form:m2:Oper}, $\CC{\mathcal{S}} = T_{U_{2},U_{1}}$,
	where the $T_{U_{2},U_{1}}$ is the Douglas--Rachford operator defined in \cref{defn:DRO}.
	By assumptions, \cref{fac:cFLess1} and \cref{fac:DRCFn} imply that $(\CC{\mathcal{S}}^{k}x)_{k \in \mathbb{N}}$ converges linearly to $\Pro_{\Fix \CC{\mathcal{S}}}x$. So
	\begin{align} \label{eq:CCSkPFixCCSx}
	\lim_{k \rightarrow \infty} \CC{\mathcal{S}}^{k}x = \Pro_{\Fix \CC{\mathcal{S}}}x.
	\end{align}
	Since $x \not \in U_{1} + U_{2} =\overline{U_{1} + U_{2}}$, \cref{lem:PUVFixSpnUV} yields that
	\begin{align} \label{eq:prop:DROLimNotInIntersec}
	\Pro_{\Fix \CC{\mathcal{S}}}x \neq \Pro_{U_{1} \cap U_{2}}x.
	\end{align}
	
	Assume to the contrary $\Pro_{\Fix \CC{\mathcal{S}}}x \in U_{1} \cap U_{2}$. By \cref{thm:CWP}\cref{prop:CWP:norm:conv}  and \cref{eq:CCSkPFixCCSx}, we get $\Pro_{\Fix \CC{\mathcal{S}}}x = \Pro_{U_{1} \cap U_{2}}x$, which contradicts \cref{eq:prop:DROLimNotInIntersec}.
	
	Therefore, $\lim_{k \rightarrow \infty} \CC{\mathcal{S}}^{k}x = \Pro_{\Fix \CC{\mathcal{S}}}x  \not \in U_{1} \cap U_{2}$.
\end{proof}

\subsection{Best approximation for the intersection of finitely many affine subspaces} \label{subsec:BestApproFiniteAffine}
In this subsection, our main goal is to apply
\cref{prop:CW:CCS:linear}\cref{prop:CW:CCS:linearconv} to find the best
approximation onto the intersection of finitely many affine subspaces. 
Unless stated otherwise, let $\I := \{1,\ldots,N\}$ with $N \geq 1$
and let $\mathcal{H}^{N}$ be the real Hilbert space obtained by endowing the Cartesian product $\bigtimes_{i \in \I } \mathcal{H}$ with the usual vector space structure and with the inner product $(\mathbf{x},\mathbf{y}) \mapsto  \sum^{N}_{i =1}  \innp{x_{i},y_{i}}$, where $\mathbf{x} = (x_{i})_{i \in \I}$ and $\mathbf{y}=(y_{i})_{i \in \I}$ (for details, see \cite[Proposition~29.16]{BC2017}).

Let $(\forall i \in \I)$ $C_{i}$ be a nonempty closed convex subset of $\mathcal{H}$. Define two subsets of $\mathcal{H}^{N}$:
\begin{align*}
\mathbf{C} := \bigtimes_{i \in \I } C_{i} \quad \text{and} \quad \mathbf{D} := \left\{(x)_{i \in \I} \in \mathcal{H}^{N} ~|~ x \in \mathcal{H} \right\},
\end{align*}
which are both closed and convex (in fact, $\mathbf{D}$ is a linear subspace).

\begin{fact}  {\rm \cite[Propositions~29.3 and 29.16]{BC2017}} \label{fac:PDC1CN}
	Let $\mathbf{x} :=(x_{i})_{i \in \I}$.  Then
	\begin{enumerate}
		\item $\Pro_{\mathbf{C}}\mathbf{x}  =  \left( \Pro_{C_{i}} x_{i} \right)_{i \in \I }  $.
		\item $\Pro_{\mathbf{D}}\mathbf{x} = \left(   \frac{1}{N}  \sum_{i \in \I }  x_{i} \right)_{i \in \I } $.
	\end{enumerate}
\end{fact}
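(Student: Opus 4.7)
The plan is to verify both formulas directly from the definition of projection onto a closed convex set, exploiting the product structure of $\mathcal{H}^{N}$. For part (i), I would start from the characterization of $\Pro_{\mathbf{C}}\mathbf{x}$ as the unique minimizer of $\|\mathbf{x} - \mathbf{c}\|^{2}$ over $\mathbf{c} \in \mathbf{C}$. Since the inner product on $\mathcal{H}^{N}$ decomposes componentwise, the objective splits as
\begin{align*}
\|\mathbf{x} - \mathbf{c}\|^{2} = \sum_{i \in \I} \|x_{i} - c_{i}\|^{2},
\end{align*}
and the constraint $\mathbf{c} \in \mathbf{C} = \bigtimes_{i \in \I} C_{i}$ likewise decouples into the independent constraints $c_{i} \in C_{i}$. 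The minimization thus reduces to $N$ independent one-coordinate problems, each solved uniquely by $c_{i} = \Pro_{C_{i}} x_{i}$, which yields the claim.

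For part (ii), I would use that $\mathbf{D}$ is a closed linear subspace and apply the orthogonality characterization: $\Pro_{\mathbf{D}} \mathbf{x}$ is the unique element of $\mathbf{D}$ whose residual lies in $\mathbf{D}^{\perp}$. Setting $\bar{y} := \tfrac{1}{N}\sum_{i \in \I} x_{i}$ and $\mathbf{y} := (\bar{y})_{i \in \I} \in \mathbf{D}$, for every $\mathbf{z} = (z)_{i \in \I} \in \mathbf{D}$ one computes
\begin{align*}
\innp{\mathbf{x} - \mathbf{y}, \mathbf{z}} = \sum_{i \in \I} \innp{x_{i} - \bar{y}, z} = \Innp{\sum_{i \in \I} x_{i} - N\bar{y},~z} = \innp{0, z} = 0,
\end{align*}
which confirms $\mathbf{x} - \mathbf{y} \in \mathbf{D}^{\perp}$ and therefore $\Pro_{\mathbf{D}}\mathbf{x} = \mathbf{y}$. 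Alternatively, completing the square via
\begin{align*}
\sum_{i \in \I} \|x_{i} - y\|^{2} = N\Norm{y - \bar{y}}^{2} + \sum_{i \in \I} \|x_{i}\|^{2} - N\|\bar{y}\|^{2}
\end{align*}
identifies $\bar{y}$ as the unique minimizer over $y \in \mathcal{H}$.

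There is no genuine obstacle in either step; both verifications are elementary consequences of the product Hilbert space structure together with the standard characterizations of projections onto closed convex sets and closed linear subspaces. The only point requiring mild care is to invoke the correct formula $\innp{\mathbf{x}, \mathbf{y}} = \sum_{i \in \I} \innp{x_{i}, y_{i}}$ for the inner product defining $\mathcal{H}^{N}$, which the paper has already recorded immediately prior to the statement.
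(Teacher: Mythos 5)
Your verification is correct. The paper gives no proof of this statement---it is quoted as a Fact from \cite[Propositions~29.3 and 29.16]{BC2017}---and your componentwise splitting of the distance for (i) together with the orthogonality (or completing-the-square) characterization for (ii) is precisely the standard argument found in that reference, so nothing further is needed.
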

The following two results are clear from the definition of the 
sets $\mathbf{C}$ and $\mathbf{D}$.
\begin{lemma} \label{lemma:CcapD:charac}
	Let $x \in \mathcal{H}$. Then $
	(x,\ldots,x) \in  \mathbf{C} \cap  \mathbf{D}  \Leftrightarrow x \in  \cap_{i \in \I}C_{i}$.
\end{lemma}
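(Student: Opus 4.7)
The plan is to verify both implications by unpacking the definitions of the product set $\mathbf{C}$ and the diagonal subspace $\mathbf{D}$. Since the claimed equivalence is purely set-theoretic and involves no analysis, no quantitative estimates or limits are required, and I expect no serious obstacle at any step.

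First I would handle the forward direction. Assume $(x,\ldots,x) \in \mathbf{C} \cap \mathbf{D}$. Membership in $\mathbf{D}$ is automatic from the shape of the tuple and requires no further argument. Membership in $\mathbf{C} = \bigtimes_{i \in \I} C_{i}$ means, by the very definition of a Cartesian product, that the $i$-th coordinate lies in $C_{i}$ for every $i \in \I$; since every coordinate equals $x$, this gives $x \in C_{i}$ for all $i \in \I$, hence $x \in \cap_{i \in \I} C_{i}$.

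For the converse, suppose $x \in \cap_{i \in \I} C_{i}$. Then for each $i \in \I$, $x \in C_{i}$, so $(x)_{i \in \I} = (x,\ldots,x) \in \bigtimes_{i \in \I} C_{i} = \mathbf{C}$. Moreover, by definition of $\mathbf{D}$, the constant tuple $(x,\ldots,x)$ lies in $\mathbf{D}$. Therefore $(x,\ldots,x) \in \mathbf{C} \cap \mathbf{D}$, completing the proof.

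The only potential pitfall is making sure that the argument is presented as a genuine equivalence rather than implicitly using one direction to prove the other; this is easily avoided by treating each implication separately as above. Given how transparent the statement is, the proof should amount to little more than a couple of lines invoking the definitions of $\mathbf{C}$ and $\mathbf{D}$ recalled just before the lemma.
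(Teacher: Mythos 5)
Your proof is correct and is exactly the definitional unpacking the paper has in mind: the paper states this lemma (together with the following proposition) without proof, declaring it clear from the definitions of $\mathbf{C}$ and $\mathbf{D}$. Your two-directional verification fills in precisely that routine argument, so there is nothing to add.
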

\begin{proposition} \label{prop:CcapD:Character}
	Let $x \in \mathcal{H}$. Then
$
	\Pro_{ \mathbf{C} \cap  \mathbf{D} } (x, \ldots,x)  = (\Pro_{ \cap^{N}_{i=1}C_{i}}x, \ldots, \Pro_{ \cap^{N}_{i=1}C_{i}}x).
$
\end{proposition}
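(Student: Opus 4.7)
The plan is to reduce the projection onto $\mathbf{C}\cap\mathbf{D}$ to a projection in $\mathcal{H}$. By \cref{lemma:CcapD:charac}, we have the identification
\begin{align*}
\mathbf{C}\cap\mathbf{D} = \{(y,\ldots,y) \in \mathcal{H}^N \mid y \in \cap_{i \in \I} C_i\}.
\end{align*}
Thus the membership in $\mathbf{C}\cap\mathbf{D}$ is completely parametrized by a single point $y \in \cap_{i \in \I} C_i$, and any candidate projection is necessarily of the form $(y,\ldots,y)$.

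Next I would compute the $\mathcal{H}^N$-distance from $(x,\ldots,x)$ to such a candidate. Using the definition of the inner product on $\mathcal{H}^N$, one directly obtains
\begin{align*}
\|(x,\ldots,x) - (y,\ldots,y)\|^2 = \sum_{i=1}^N \|x-y\|^2 = N\|x-y\|^2.
\end{align*}
Minimizing the left-hand side over $(y,\ldots,y) \in \mathbf{C}\cap\mathbf{D}$ is therefore equivalent to minimizing $\|x-y\|^2$ over $y \in \cap_{i \in \I} C_i$. Since each $C_i$ is nonempty, closed, and convex, so is $\cap_{i \in \I} C_i$ (assumed nonempty so that the projection is well-defined), and the unique minimizer is $y = \Pro_{\cap_{i \in \I} C_i}(x)$.

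Finally, I would invoke uniqueness of best approximation onto the closed convex set $\mathbf{C}\cap\mathbf{D}$ in the Hilbert space $\mathcal{H}^N$ to conclude
\begin{align*}
\Pro_{\mathbf{C}\cap\mathbf{D}}(x,\ldots,x) = (\Pro_{\cap_{i \in \I} C_i}(x), \ldots, \Pro_{\cap_{i \in \I} C_i}(x)).
\end{align*}
There is no real obstacle here beyond setting up the identification in \cref{lemma:CcapD:charac}; the only step requiring care is observing that every candidate projection must already lie on the diagonal, which is immediate once the intersection $\mathbf{C}\cap\mathbf{D}$ has been described explicitly.
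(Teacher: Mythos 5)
Your proof is correct and fills in precisely the argument the paper leaves implicit (the paper states \cref{prop:CcapD:Character} as ``clear from the definition'' without a written proof): you identify $\mathbf{C}\cap\mathbf{D}$ with the diagonal over $\cap_{i\in\I}C_{i}$ via \cref{lemma:CcapD:charac}, note that $\norm{(x,\ldots,x)-(y,\ldots,y)}^{2}=N\norm{x-y}^{2}$, and conclude by uniqueness of best approximations. This is the same reasoning the paper intends, with the only (appropriately flagged) caveat that $\cap_{i\in\I}C_{i}\neq\varnothing$ is needed for the projections to exist.
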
	

\begin{fact} \label{fact:vonNeum} {\rm \cite[Corollary~5.30]{BC2017}}
	Let $t$ be a strictly positive integer, set $\J :=\{1,\ldots,t\}$, let $(U_{j})_{j \in \J}$ be a family of closed affine subspaces of $\mathcal{H}$ such that $\cap^{t}_{j=1} U_{j} \neq \varnothing$. Let $x_{0} \in \mathcal{H}$. Set $(\forall n \in \mathbb{N})$ $x_{n+1} := \Pro_{U_{t}}\cdots \Pro_{U_{1}}x_{n}$. Then $x_{n} \rightarrow \Pro_{\cap^{t}_{j=1} U_{j} }x_{0}$.
\end{fact}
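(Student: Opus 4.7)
The plan is to reduce the affine case to the linear one via translation, identify the fixed-point set of the cyclic projection operator as $\cap^{t}_{j=1} U_{j}$, and then combine Fej\'er monotonicity, averagedness, and the demiclosedness principle to obtain (weak, and ultimately strong) convergence to the best approximation.

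First I would fix any $z \in \cap^{t}_{j=1} U_{j}$ and set $L_{j} := \pa U_{j} = U_{j} - z$ for each $j$. By \cref{fac:SetChangeProje}, $\Pro_{U_{j}}(y) = z + \Pro_{L_{j}}(y-z)$ for every $y \in \mathcal{H}$, and an easy induction on $t$ gives $\Pro_{U_{t}} \cdots \Pro_{U_{1}}(y) = z + \Pro_{L_{t}} \cdots \Pro_{L_{1}}(y-z)$. Since also $\Pro_{\cap^{t}_{j=1} U_{j}}(x_{0}) = z + \Pro_{\cap^{t}_{j=1} L_{j}}(x_{0}-z)$ by \cref{fac:SetChangeProje}, it suffices to prove the claim when each $U_{j}$ is a closed linear subspace. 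Writing $T := \Pro_{L_{t}} \cdots \Pro_{L_{1}}$, I would next show $\Fix T = \cap^{t}_{j=1} L_{j}$. The inclusion $\supseteq$ is immediate, and for $\subseteq$, given $Tx = x$, set $y_{0} := x$ and $y_{j} := \Pro_{L_{j}} y_{j-1}$; the Pythagorean identity for projections onto linear subspaces (\cref{MetrProSubs8}\cref{MetrProSubs8:ii}) yields $\norm{y_{j-1}}^{2} = \norm{y_{j}}^{2} + \norm{y_{j-1} - y_{j}}^{2}$, so telescoping with $y_{t} = Tx = x = y_{0}$ forces $y_{j-1} = y_{j} \in L_{j}$ for every $j$, whence $x \in \cap^{t}_{j=1} L_{j}$.

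Each $\Pro_{L_{j}}$ is firmly nonexpansive, hence $\tfrac{1}{2}$-averaged, and so $T$ is itself averaged (by a standard calculation on compositions of averaged operators); in particular $T$ is nonexpansive with $\Fix T = \cap^{t}_{j=1} L_{j} \neq \varnothing$. Standard consequences are that $(T^{n}x_{0})_{n \in \mathbb{N}}$ is Fej\'er monotone with respect to $\Fix T$ (hence bounded, by \cref{fact:Fejer:Bounded}) and that $T$ is asymptotically regular, i.e.\ $T^{n}x_{0} - T^{n+1}x_{0} \to 0$. The demiclosedness principle for nonexpansive operators then implies that every weak sequential cluster point of $(T^{n}x_{0})$ lies in $\Fix T$, so \cref{fact:FejerWeakCluster}\cref{fact:FejerWeakCluster:ii} delivers weak convergence $T^{n}x_{0} \weakly \Pro_{\cap^{t}_{j=1} L_{j}}(x_{0})$. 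The main obstacle is upgrading from weak to norm convergence in a general Hilbert space; the classical remedy, due to Halperin, exploits the linearity of $T$: decompose $x_{0} = \Pro_{\cap^{t}_{j=1} L_{j}}(x_{0}) + v$ with $v \in (\cap^{t}_{j=1} L_{j})^{\perp}$, so that $T^{n}x_{0} = \Pro_{\cap^{t}_{j=1} L_{j}}(x_{0}) + T^{n}v$ by linearity, and show $\norm{T^{n}v} \to 0$; the sequence $(\norm{T^{n}v})_{n \in \mathbb{N}}$ is nonincreasing (since $0 \in \Fix T$ and $T$ is nonexpansive), $T^{n}v \weakly 0$ by the weak convergence already established, and a monotonicity-plus-weak-limit argument (or the direct spectral analysis of Halperin's original proof) forces the norms to zero. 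Translating back by $z$ yields $x_{n} \to \Pro_{\cap^{t}_{j=1} U_{j}}(x_{0})$, completing the proof.
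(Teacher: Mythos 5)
The paper does not prove this statement at all: it is quoted as a Fact with the citation {\rm \cite[Corollary~5.30]{BC2017}}, i.e.\ it is the classical von Neumann--Halperin theorem on cyclic projections, so your attempt can only be measured against that cited result. Up to the weak-convergence stage your argument is sound: the translation reduction via \cref{fac:SetChangeProje}, the identification $\Fix\big(\Pro_{L_{t}}\cdots\Pro_{L_{1}}\big)=\cap^{t}_{j=1}L_{j}$ by telescoping the Pythagorean identities, and the chain averagedness $\Rightarrow$ Fej\'er monotonicity and asymptotic regularity $\Rightarrow$ demiclosedness $\Rightarrow$ weak convergence to $\Pro_{\cap^{t}_{j=1}L_{j}}x_{0}$ via \cref{fact:FejerWeakCluster}\cref{fact:FejerWeakCluster:ii} are all correct.

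The genuine gap is the final, decisive step. Writing $T^{n}x_{0}=\Pro_{\cap^{t}_{j=1}L_{j}}x_{0}+T^{n}v$ with $v\in(\cap^{t}_{j=1}L_{j})^{\perp}$, you must show $\norm{T^{n}v}\to 0$, and you assert that the nonincreasing norms together with $T^{n}v\weakly 0$ ``force the norms to zero.'' That implication is false for a general linear nonexpansive operator: the right shift $S$ on $\ell^{2}$ is a linear isometry with $S^{n}e_{1}=e_{n+1}\weakly 0$ while $\norm{S^{n}e_{1}}\equiv 1$. Hence the weak-to-strong upgrade cannot follow from monotonicity of norms plus weak nullity alone; it must exploit the specific structure of $T$ as a cyclic product of orthogonal projections, and this is precisely the nontrivial content of Halperin's theorem (for $t=2$, of von Neumann's spectral-theoretic proof). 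Your fallback, ``the direct spectral analysis of Halperin's original proof,'' is an appeal to the very result being proved rather than an argument. In finite dimensions the upgrade is automatic since weak and norm convergence coincide, but the Fact is stated in an arbitrary Hilbert space, so as written the proposal does not close this step: you must either carry out Halperin's argument in detail or simply cite the result, as the paper does.
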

Using \cref{fact:vonNeum} and \cref{prop:CcapD:Character}, we obtain the following interesting by-product, which can be treated as a new method to solve the best approximation problem associated with  $ \cap^{N}_{i=1}C_{i}$.
\begin{proposition} \label{prop:Proj:byproduct}
	Assume $(\forall i \in \I)$ $C_{i}$ is a closed affine subspace of $\mathcal{H}$ with $ \cap^{N}_{i=1}C_{i} \neq \varnothing$. 	Let $x \in \mathcal{H}$. Then the following  hold:
	\begin{enumerate}
		\item  \label{prop:Proj:byproduct:vonNeu}$\Pro_{ \mathbf{C} \cap  \mathbf{D} } (x, \ldots,x) = \lim_{k \to \infty} (\Pro_{ \mathbf{D} } \Pro_{\mathbf{C} })^{k} (x, \ldots,x) $.

		\item \label{prop:Proj:byproduct:converg}	

		Denote by $Q:=\frac{1}{N}(\Pro_{C_{1}} + \ldots + \Pro_{C_{N}} )$, then $$Q^{k}x \to \Pro_{ \cap^{m}_{i=1}C_{i}}x.$$
	\end{enumerate}	
\end{proposition}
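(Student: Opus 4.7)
The plan is to reduce both statements to a single application of the von Neumann alternating projection theorem (\cref{fact:vonNeum}) in the product space $\mathcal{H}^{N}$, with the two affine subspaces $\mathbf{C}$ and $\mathbf{D}$ playing the role of the $U_{j}$'s, and then unwind the two explicit projection formulas in \cref{fac:PDC1CN} to obtain the concrete iteration in $\mathcal{H}$.

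For \cref{prop:Proj:byproduct:vonNeu}, I would first verify the hypotheses of \cref{fact:vonNeum} with $t=2$, $U_{1}:=\mathbf{C}$, $U_{2}:=\mathbf{D}$ in the Hilbert space $\mathcal{H}^{N}$. Since each $C_{i}$ is a closed affine subspace of $\mathcal{H}$, the product $\mathbf{C}=\bigtimes_{i\in\I}C_{i}$ is a closed affine subspace of $\mathcal{H}^{N}$, and $\mathbf{D}$ is a closed linear (hence affine) subspace by construction. The assumption $\cap^{N}_{i=1}C_{i}\neq\varnothing$ together with \cref{lemma:CcapD:charac} shows $\mathbf{C}\cap\mathbf{D}\neq\varnothing$. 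Applying \cref{fact:vonNeum} to the starting point $\mathbf{x}_{0}:=(x,\ldots,x)$ yields
\begin{align*}
(\Pro_{\mathbf{D}}\Pro_{\mathbf{C}})^{k}(x,\ldots,x)\;\to\;\Pro_{\mathbf{C}\cap\mathbf{D}}(x,\ldots,x),
\end{align*}
which is exactly the claim.

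For \cref{prop:Proj:byproduct:converg}, I would compute one iterate of $\Pro_{\mathbf{D}}\Pro_{\mathbf{C}}$ applied to a diagonal point $(y,\ldots,y)$ using \cref{fac:PDC1CN}: first $\Pro_{\mathbf{C}}(y,\ldots,y)=(\Pro_{C_{1}}y,\ldots,\Pro_{C_{N}}y)$, and then $\Pro_{\mathbf{D}}$ averages the coordinates to return the diagonal point $(Qy,\ldots,Qy)$. An easy induction then gives $(\Pro_{\mathbf{D}}\Pro_{\mathbf{C}})^{k}(x,\ldots,x)=(Q^{k}x,\ldots,Q^{k}x)$ for every $k$. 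Combining this identity with \cref{prop:Proj:byproduct:vonNeu} and \cref{prop:CcapD:Character} gives
\begin{align*}
(Q^{k}x,\ldots,Q^{k}x)\;\to\;\Pro_{\mathbf{C}\cap\mathbf{D}}(x,\ldots,x)=(\Pro_{\cap^{N}_{i=1}C_{i}}x,\ldots,\Pro_{\cap^{N}_{i=1}C_{i}}x),
\end{align*}
so reading off the first coordinate yields $Q^{k}x\to\Pro_{\cap^{N}_{i=1}C_{i}}x$, as desired.

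There is really no substantial obstacle: all the nontrivial analytic work (convergence of alternating projections onto affine subspaces with nonempty intersection) is absorbed into \cref{fact:vonNeum}, and the only thing to check is that the diagonal embedding is compatible with $\Pro_{\mathbf{C}}$ and $\Pro_{\mathbf{D}}$ in the way needed to collapse the product iteration onto the scalar iteration $Q^{k}$. The one point to be careful about is that the diagonal is preserved by $\Pro_{\mathbf{D}}\Pro_{\mathbf{C}}$ but \emph{not} by $\Pro_{\mathbf{C}}$ alone; thus the induction must be carried out on the composition $\Pro_{\mathbf{D}}\Pro_{\mathbf{C}}$ (not on each factor separately), which is precisely the order already dictated by applying \cref{fact:vonNeum} with $U_{1}=\mathbf{C}$, $U_{2}=\mathbf{D}$.
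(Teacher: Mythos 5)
Your proposal is correct and follows essentially the same route as the paper: part (i) is exactly the application of \cref{fact:vonNeum} with $t=2$ to the closed affine subspaces $\mathbf{C}$ and $\mathbf{D}$ in $\mathcal{H}^{N}$, and part (ii) is precisely the combination of \cref{fac:PDC1CN}, \cref{prop:CcapD:Character} and part (i) that the paper invokes, with your diagonal-iteration computation $(\Pro_{\mathbf{D}}\Pro_{\mathbf{C}})^{k}(x,\ldots,x)=(Q^{k}x,\ldots,Q^{k}x)$ simply making explicit what the paper leaves to the reader. No gaps.
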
	

\begin{proof}
	Since $(\forall i \in \I)$ $C_{i}$ is closed affine subspace of $\mathcal{H}$ with $ \cap^{N}_{i=1}C_{i} \neq \varnothing$, thus $\mathbf{C}$ is closed affine subspace of $\mathcal{H}^{N}$ and $\mathbf{C} \cap \mathbf{D} \neq \varnothing$. By definition of $\mathbf{D}$, it is a linear subspace of $\mathcal{H}^{N}$.
	
	\cref{prop:Proj:byproduct:vonNeu}: The result is from \cref{fact:vonNeum} by taking $t=2$ and considering the two closed affine subspaces $\mathbf{C}$ and $\mathbf{D}$ in $\mathcal{H}^{N}$.

	\cref{prop:Proj:byproduct:converg}: Combine \cref{fac:PDC1CN}, \cref{prop:CcapD:Character} with the above \cref{prop:Proj:byproduct:vonNeu}  to obtain the desired results.
\end{proof}

\begin{fact} {\rm \cite[Lemma~5.18]{BB1996}} \label{fac:BoldCplusDClosed}
	Assume each set $C_{i}$ is a closed linear subspace. Then $C^{\perp}_{1} + \cdots + C^{\perp}_{N}$ is closed if and only if $\mathbf{D} + \mathbf{C}$ is closed.
\end{fact}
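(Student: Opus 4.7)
\medskip

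\noindent\textbf{Proof proposal for \cref{fac:BoldCplusDClosed}.}

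The plan is to dualize and transport the closedness question via a natural continuous linear surjection. First I would compute the orthogonal complements of the two subspaces in play. Since each $C_i$ is a closed linear subspace of $\mathcal{H}$, the product $\mathbf{C}=\bigtimes_{i\in\I}C_i$ is a closed linear subspace of $\mathcal{H}^N$ whose orthogonal complement is
\[
\mathbf{C}^{\perp}=\bigtimes_{i\in\I}C_i^{\perp}.
\]
For the diagonal $\mathbf{D}$, a direct computation using \cref{MetrProSubs8}\cref{MetrProSubs8:ii} and \cref{fac:PDC1CN} gives
\[
\mathbf{D}^{\perp}=\Bigl\{(y_i)_{i\in\I}\in\mathcal{H}^{N}\;\Big|\;\sum_{i\in\I}y_i=0\Bigr\}.
\]

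Next I would reduce the question to one about $\mathbf{C}^{\perp}+\mathbf{D}^{\perp}$. Using the classical Hilbert-space duality for sums of closed subspaces (namely, for closed linear subspaces $U,V$ of a Hilbert space, $U+V$ is closed if and only if $U^{\perp}+V^{\perp}$ is closed; this follows from the identities $(U+V)^{\perp}=U^{\perp}\cap V^{\perp}$ and $\overline{U+V}=(U^{\perp}\cap V^{\perp})^{\perp}$ combined with Banach's closed-range theorem), we have
\[
\mathbf{D}+\mathbf{C}\text{ is closed in }\mathcal{H}^{N}\;\Longleftrightarrow\;\mathbf{D}^{\perp}+\mathbf{C}^{\perp}\text{ is closed in }\mathcal{H}^{N}.
\]

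Now I would introduce the continuous linear surjection
\[
T\colon\mathcal{H}^{N}\to\mathcal{H},\qquad T(y_1,\ldots,y_N)=y_1+\cdots+y_N,
\]
which has $\ker T=\mathbf{D}^{\perp}$ and satisfies $T(\mathbf{C}^{\perp})=C_1^{\perp}+\cdots+C_N^{\perp}$. By the open mapping theorem, $T$ induces a topological isomorphism $\mathcal{H}^{N}/\ker T\to\mathcal{H}$, and under this identification $T(\mathbf{C}^{\perp})$ corresponds to the image of $\mathbf{C}^{\perp}+\ker T$ under the (open, continuous) quotient map. Consequently,
\[
T(\mathbf{C}^{\perp})=C_1^{\perp}+\cdots+C_N^{\perp}\text{ is closed in }\mathcal{H}\;\Longleftrightarrow\;\mathbf{C}^{\perp}+\mathbf{D}^{\perp}\text{ is closed in }\mathcal{H}^{N}.
\]
Chaining the two equivalences yields the stated result.

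The main obstacle is the duality step transferring closedness of a sum to closedness of the sum of complements; this is not merely formal — it rests on the closed range theorem (or, equivalently, on the fact that a bounded surjection between Hilbert spaces is open). Once that is granted, the rest is bookkeeping with the explicit forms of $\mathbf{C}^{\perp}$ and $\mathbf{D}^{\perp}$ and the obvious surjection $T$.
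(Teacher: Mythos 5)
The paper does not prove this statement at all: it is recorded as a \cref{fac:BoldCplusDClosed} quoted verbatim from \cite[Lemma~5.18]{BB1996}, so there is no internal argument to compare yours against. Your reconstruction is correct. The computations $\mathbf{C}^{\perp}=\bigtimes_{i\in\I}C_i^{\perp}$, $\mathbf{D}^{\perp}=\{(y_i)_{i\in\I}:\sum_{i\in\I}y_i=0\}$, $\ker T=\mathbf{D}^{\perp}$ and $T(\mathbf{C}^{\perp})=C_1^{\perp}+\cdots+C_N^{\perp}$ are all right, and the two nontrivial ingredients are legitimate: the two-subspace duality (for closed subspaces $U,V$, $U+V$ is closed iff $U^{\perp}+V^{\perp}$ is closed) is classical and, as you note, is the real crux --- it would be cleaner to cite it outright rather than sketch it via the closed-range theorem, since it is exactly the angle machinery behind \cref{fac:cFLess1} combined with the symmetry $c(U,V)=c(U^{\perp},V^{\perp})$ (both in \cite{D2012}); and the transfer of closedness along the bounded surjection $T$ via the open mapping theorem is sound because $\mathbf{C}^{\perp}+\ker T$ is saturated. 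A small streamlining you might prefer: one checks directly that $\mathbf{C}^{\perp}+\mathbf{D}^{\perp}=T^{-1}\bigl(C_1^{\perp}+\cdots+C_N^{\perp}\bigr)$, so continuity of $T$ gives one implication and surjectivity/openness of $T$ gives the other, avoiding the quotient-space bookkeeping; but your version as written is already a complete and valid proof of the fact the paper only cites.
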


The next proposition shows that we can use the circumcenter method induced by reflectors to solve the best approximation problem associated with finitely many closed affine subspaces.
Recall that for each affine subspace $U$, we denote the linear subspace paralleling $U$ as $\pa U$, i.e., $\pa U := U-U$.
\begin{proposition} \label{prop:feasiProbCCSLineConv}
Assume $U_{1}, \ldots ,U_{t}$ are closed affine subspaces in
$\mathcal{H}$, with $\cap^{t}_{i=1} U_{i} \neq \varnothing$ and $ (\pa
U_{1})^{\perp}+\cdots+(\pa U_{t})^{\perp}$ being closed. Set $\J:=\{1,
\ldots, t\}$, $\mathbf{C}:= \bigtimes_{j
\in \J} U_{i}$, and $\mathbf{D}:= \{(x, \ldots, x) \in \mathcal{H}^{t}~ |~ x
\in \mathcal{H}\}$. Assume $\{\Id, \R_{\mathbf{C}}\R_{\mathbf{D}}\}
\subseteq \mathcal{S}$ or $\{\Id, \R_{\mathbf{D}}\R_{\mathbf{C}}\}
\subseteq \mathcal{S} $. Let $x \in \mathcal{H}$ and set $\mathbf{x} :=
(x, \ldots, x) \in \mathcal{H}^{t} \cap \mathbf{D}$. Then $(
\CC{\mathcal{S}}^{k} \mathbf{x} )_{k \in \mathbb{N}}$ converges to $
\Pro_{\mathbf{C} \cap \mathbf{D}} \mathbf{x} =
(\Pro_{\cap^{t}_{i=1}U_{i}}x, \ldots, \Pro_{\cap^{t}_{i=1}U_{i}}x)$
linearly.
\end{proposition}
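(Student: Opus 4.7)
The plan is to interpret the problem in the product Hilbert space $\mathcal{H}^{t}$ and apply \cref{prop:CW:CCS:linear}\cref{prop:CW:CCS:linearconv} to the two closed affine subspaces $\mathbf{C}$ and $\mathbf{D}$. First, I would verify that the hypotheses of \cref{prop:CW:CCS:linear}\cref{prop:CW:CCS:linearconv} are satisfied when we take the two affine subspaces to be $\mathbf{C}$ and $\mathbf{D}$. Since every $U_{i}$ is closed and affine and $\mathbf{D}$ is a closed linear subspace, both $\mathbf{C}$ and $\mathbf{D}$ are closed affine subspaces of $\mathcal{H}^{t}$. The assumption $\cap^{t}_{i=1}U_{i} \neq \varnothing$ together with \cref{lemma:CcapD:charac} gives $\mathbf{C} \cap \mathbf{D} \neq \varnothing$.

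The crucial hypothesis to check is that $\pa \mathbf{C} + \pa \mathbf{D}$ is closed. Note that $\pa \mathbf{D} = \mathbf{D}$ (since $\mathbf{D}$ is a linear subspace) and $\pa \mathbf{C} = \bigtimes_{i \in \J} \pa U_{i}$. Applying \cref{fac:BoldCplusDClosed} to the closed linear subspaces $(\pa U_{i})_{i \in \J}$, the assumption that $(\pa U_{1})^{\perp}+\cdots+(\pa U_{t})^{\perp}$ is closed translates exactly to $\mathbf{D} + \bigtimes_{i \in \J}\pa U_{i}$ being closed, which is $\pa \mathbf{C}+\pa \mathbf{D}$. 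Hence the closedness hypothesis is verified.

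Next I would observe that since $\mathbf{x}=(x,\ldots,x) \in \mathbf{D}$, one has $\mathbf{x} = \Pro_{\mathbf{D}}\mathbf{x}$. Now applying \cref{prop:CW:CCS:linear}\cref{prop:CW:CCS:linearconv} with the roles of $U_{1},U_{2}$ played by $\mathbf{C},\mathbf{D}$ (or $\mathbf{D},\mathbf{C}$, depending on which of the two cases $\{\Id,\R_{\mathbf{C}}\R_{\mathbf{D}}\}\subseteq \mathcal{S}$ or $\{\Id,\R_{\mathbf{D}}\R_{\mathbf{C}}\}\subseteq \mathcal{S}$ holds) yields that $(\CC{\mathcal{S}}^{k}(\Pro_{\mathbf{D}}\mathbf{x}))_{k\in\mathbb{N}} = (\CC{\mathcal{S}}^{k}\mathbf{x})_{k\in\mathbb{N}}$ converges linearly to $\Pro_{\mathbf{C}\cap\mathbf{D}}\mathbf{x}$, with rate bounded by the cosine of the Friedrichs angle between $\pa\mathbf{C}$ and $\pa\mathbf{D}$.

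Finally, the identification $\Pro_{\mathbf{C}\cap\mathbf{D}}\mathbf{x} = (\Pro_{\cap^{t}_{i=1}U_{i}}x,\ldots,\Pro_{\cap^{t}_{i=1}U_{i}}x)$ is exactly \cref{prop:CcapD:Character}. I do not expect any serious obstacle; the only subtle point is the translation of the closedness condition from the sum of orthogonal complements $(\pa U_{i})^{\perp}$ to $\pa\mathbf{C}+\pa\mathbf{D}$ via \cref{fac:BoldCplusDClosed}, and the verification that $\mathbf{x}\in\mathbf{D}$ so that $\mathbf{x}$ qualifies as a valid initial point for the linear-rate result.
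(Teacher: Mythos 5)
Your proof is correct and takes essentially the same route as the paper: both arguments rest on applying \cref{prop:CW:CCS:linear}\cref{prop:CW:CCS:linearconv} in the product space $\mathcal{H}^{t}$ to the pair $\mathbf{C},\mathbf{D}$ (with $\mathbf{x}=\Pro_{\mathbf{D}}\mathbf{x}$ as an admissible starting point in either of the two cases), on \cref{fac:BoldCplusDClosed} to convert the hypothesis that $(\pa U_{1})^{\perp}+\cdots+(\pa U_{t})^{\perp}$ is closed into closedness of $\pa\mathbf{C}+\mathbf{D}$, and on \cref{prop:CcapD:Character} to identify the limit. The only difference is cosmetic: the paper first works with the parallel linear subspaces $\pa\mathbf{C}$, $\mathbf{D}$ and the family $\mathcal{S}_{L}$ and then transfers back via \cref{prop:LineConvCCS:CCSL}, whereas you invoke the affine-subspace formulation of \cref{prop:CW:CCS:linear} directly.
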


\begin{proof}
	Denote $\mathbf{C_{L}} := \bigtimes_{j \in \J} \pa U_{j}$. Clearly, $\mathbf{C_{L}} = \pa \mathbf{C} $.  Now $\pa  U_{1}, \ldots , \pa U_{t}$ are closed linear subspaces implies that $\mathbf{C_{L}}$ is closed linear subspace.
	It is clear that $\mathbf{D} =\pa \mathbf{D}$ is a closed linear subspace. Because $  (\pa U_{1})^{\perp}+\cdots+(\pa U_{t})^{\perp}$ is closed, by \cref{fac:BoldCplusDClosed}, we get $\mathbf{C_{L}}+\mathbf{D}$ is closed. Then using  \cref{prop:CW:CCS:linear}\cref{prop:CW:CCS:linearconv}, we know there exists a constant $c_{F} \in [0,1[$ such that
	\begin{align*}
	\label{eq:prop:feasiProbCCSLineConv:L}
	(\forall k \in \mathbb{N})\quad (\forall \mathbf{y} \in \mathbf{D}) \quad \norm{\CC{\mathcal{S}_{L}}^{k}\mathbf{y} -\Pro_{\mathbf{C_{L}}\cap \mathbf{D}} \mathbf{y}}=\norm{\CC{\mathcal{S}_{L}}^{k}\Pro_{\mathbf{D}}\mathbf{y} -\Pro_{\mathbf{C_{L}}\cap \mathbf{D}} \mathbf{y} } \leq c^{k}_{F}   \norm{\Pro_{\mathbf{D}}\mathbf{y} -\Pro_{\mathbf{C_{L}}\cap \mathbf{D}} \mathbf{y} },
	\end{align*}
	which imply that $( \CC{\mathcal{S}_{L}}^{k} (\mathbf{x}-\mathbf{u}) )_{k \in \mathbb{N}}$ linearly converges to $\Pro_{\mathbf{C_{L}}\cap \mathbf{D}} (\mathbf{x}-\mathbf{u})$ for any $u \in \cap^{t}_{i=1} U_{i}$ and $\mathbf{u}= (u, \ldots, u)$.
	Hence, by \cref{prop:LineConvCCS:CCSL}, we conclude that $( \CC{\mathcal{S}}^{k} \mathbf{x} )_{k \in \mathbb{N}}$ linearly converges to $\Pro_{\mathbf{C}\cap \mathbf{D}} \mathbf{x}$.
	Since by \cref{prop:CcapD:Character}, $ \Pro_{\mathbf{C} \cap \mathbf{D}} \mathbf{x} =(\Pro_{\cap^{t}_{i=1}U_{i}}x, \ldots, \Pro_{\cap^{t}_{i=1}U_{i}}x)$, thus
	$( \CC{\mathcal{S}}^{k} \mathbf{x} )_{k \in \mathbb{N}}$ linearly converges to $(\Pro_{\cap^{t}_{i=1}U_{i}}x, \ldots, \Pro_{\cap^{t}_{i=1}U_{i}}x)$.
\end{proof}

\section{Numerical experiments} \label{sec:NumericalExperiment}
In order to explore the convergence rate of the circumcenter methods, in this
section we use the performance profile introduced by Dolan and Mor\'e
\cite{DM2002} to compare circumcenter methods induced by reflectors developed
in \cref{sec:CircumMethodReflectors} with the Douglas--Rachford method (DRM) and
the method of alternating projections (MAP) for solving the best approximation
problems associated with linear subspaces. (Recall that by
\cref{prop:LineConvCCS:CCSL}, for any convergence results on circumcenter
methods induced by reflectors associated with linear subspaces, we will
obtain the corresponding equivalent convergence result on that associated
with affine subspaces.)

In the whole section, given a pair of closed and linear subspaces, $U_{1}, U_{2}$,  and  a initial point $x_{0}$, the problem we are going to solve is to
\begin{empheq}[box=\mybluebox]{equation*}
\text{find the best approximation } \overline{x} :=\Pro_{U_{1} \cap U_{2}}x_{0}.
\end{empheq}
Denote the cosine of the Friedrichs angle between $U_{1}$ and $U_{2}$ by  $c_{F}$.
It is well known that the sharp rate of the linear convergence of  DRM and MAP for finding $\Pro_{U_{1}\cap U_{2}}x_{0}$  are $c_{F}$ and $c^{2}_{F}$  respectively (see, \cite[Theorem~4.3]{BCNPW2014} and \cite[Theorem~9.8]{D2012} for details).   Hence, if $c_{F}$ is \enquote{small}, then we expect  DRM and MAP converge to $\Pro_{U_{1} \cap U_{2}}x_{0}$ \enquote{fast}, but if $c_{F} \approx 1$, the two classical solvers should converge to $\Pro_{U_{1} \cap U_{2}}x_{0}$ \enquote{slowly}.
The  $c_{F}$ associated with the problems in each experiment below is randomly chosen from some certain range.

\subsection{Numerical preliminaries} \label{eq:NumericalProlimi}
Dolan and Mor\'e define a benchmark in terms of a set $\mathbf{P}$ of benchmark problems, a set $\mathbf{S}$ of optimization solvers, and a convergence measure matrix $\mathbf{T}$. Once these components of a benchmark are defined, performance profile can be used to compare the performance of the solvers.

We assume $\mathcal{H}= \mathbb{R}^{1000}$. In every one of our experiment, we randomly generate $10$ pairs of linear subspaces, $U_{1}, U_{2}$ with Friedrichs angles in certain range. We create pairs of linear subspaces with particular Friedrichs angle by \cite{GK1987}. For each pair of subspaces, we choose randomly $10$ initial points, $x_{0}$. This results in a total of $100$ problems, that constitute our set $\mathbf{P}$ of benchmark problems.
Set
\begin{align*}
& \mathcal{S}_{1} := \{ \Id, R_{U_{1}}, R_{U_{2}}\},\quad \mathcal{S}_{2} := \{ \Id, R_{U_{1}}, R_{U_{2}}R_{U_{1}}\},\\
& \mathcal{S}_{3} := \{ \Id, R_{U_{1}}, R_{U_{2}}, R_{U_{2}}R_{U_{1}}\}, \quad
\mathcal{S}_{4} := \{ \Id, R_{U_{1}}, R_{U_{2}}, R_{U_{2}}R_{U_{1}}, R_{U_{1}}R_{U_{2}}, R_{U_{1}}R_{U_{2}}R_{U_{1}}\}.
\end{align*}
Notice that
\begin{empheq}{equation*}
\text{$\CC{S_{2}}$ is the C--DRM operator $C_{T}$ in \cite{BCS2017}}
\end{empheq}
and hence, it is also the CRM operator $C$ in \cite{BCS2018} when $m=2$.

Our test algorithms and sequences to monitor are as follows.
\noindent
\begin{table}[H]
	\centering
	\begin{tabu} to 0.9\textwidth { m{9cm}  m{5.5cm} }
		\hline
		Algorithm & Sequence to monitor\\
		\hline
		Douglas--Rachford method  & $P_{U_{1}} (\frac{1}{2}(\Id + R_{U_{2}}R_{U_{1}}))^{k}(x_{0})$ \\
		Method of alternating projections & $(P_{U_{2}}P_{U_{1}})^{k}(x_{0})$ \\
		Circumcenter method induced by $\mathcal{S}_{1}$ & $(\CC{\mathcal{S}_{1}})^{k}(x_{0})$ \\
		Circumcenter method induced by $\mathcal{S}_{2}$ & $(\CC{\mathcal{S}_{2}})^{k}(x_{0})$ \\
		Circumcenter method induced by $\mathcal{S}_{3}$ & $(\CC{\mathcal{S}_{3}})^{k}(x_{0})$ \\
		Circumcenter method induced by $\mathcal{S}_{4}$ & $(\CC{\mathcal{S}_{4}})^{k}(x_{0})$ \\
		\hline
	\end{tabu}
	\caption{Forming the set of solvers $\mathbf{S}$} \label{tab:Test algorithms}
\end{table}
Hence,  our set $\mathbf{S}$ of optimization solvers is subset of the set consists of the six algorithms above.

For every $i \in \{1,2,3,4\}$, 
we calculate the operator $\CC{\mathcal{S}_{i}}$ by applying
\cref{prop:CW:Welldefined:Formula}, and for notational simplicity,
\begin{empheq}[box=\mybluebox]{equation*}
\text{we denote the circumcenter method induced by}~\mathcal{S}_{i}~\text{by}~\CC{\mathcal{S}_{i}}.
\end{empheq}
We use $10^{-6}$ as the tolerance employed in our stopping criteria and we
terminate the algorithm when the number of iterations reaches $10^{6}$ (in
which case the problem is declared unsolved). For each problem $p$ with the
exact solution being $\overline{x}=\Pro_{U_{1} \cap U_{2}}x_{0}$, and for
each solver $s$, the performance measure considered in the
whole section is either
\begin{empheq}[box=\mybluebox]{equation} \label{eq:tps:NumberIterate}
t_{p,s} =\text{the smallest}~k~\text{such that}~ \norm{a^{(k)}_{p,s}-\overline{x}} \leq 10^{-6}~\text{with}~k \leq 10^{6},
\end{empheq}
or 
\begin{empheq}[box=\mybluebox]{equation} \label{eq:tps:Runtime}
t_{p,s} =\text{the run time used until the smallest }~k~\text{such that}~ \norm{a^{(k)}_{p,s}-\overline{x}} \leq 10^{-6}~\text{with}~k \leq 10^{6},
\end{empheq} 
where $a^{(k)}_{p,s}$ is the $k^{\text{th}}$ iteration of solver $s$ to solve
problem $p$.
We would not have access to $\overline{x}=\Pro_{U_{1} \cap
U_{2}}x_{0}$ in applications, but we use it here to see the true performance of
the algorithms.
After collecting the related performance matrices, $\mathbf{T} = (t_{p,s})
_{100 \times \card(\mathbf{S})}$, we use the \texttt{perf.m} file in Dolan
and Mor\'e \cite{Cops} to generate the plots of performance profiles. All of
our calculations are implemented in \texttt{Matlab}.

\subsection{Performance evaluation} \label{subsec:PerformEvalu}
In this subsection, we present the performance profiles from  four experiments. 
(We ran many other experiments and the results were similar to the ones shown here.) 
The cosine of the Friedrichs angels of the four experiments are from  
$[0.01,0.05[ , [0.05,0.5[, [0.5, 0.9[$ and $[0.9,0.95[$ respectively.  In each one of the  four experiments, we  randomly generate 10 pairs of linear subspaces with the cosine of Friedrichs angles, $c_{F}$, in the corresponding range, and 
as we mentioned in the last subsection,  for each pair of subspaces, we choose randomly $10$ initial points, $x_{0}$, which gives us 100 problems in each experiment. The outputs  of every one of our four experiments are the pictures of performance profiles with performance measure shown in \cref{eq:tps:NumberIterate} (the left-hand side pictures in \Cref{fig:ProfProfSixAlgorithms_v12,fig:ProfProfSixAlgorithms_v34}) and with performance measure shown in \cref{eq:tps:Runtime} (the right-hand side ones in \Cref{fig:ProfProfSixAlgorithms_v12,fig:ProfProfSixAlgorithms_v34}) 

According to \cref{fig:ProfProfSixAlgorithms_v12}, we conclude that when $c_{F} \in [0.01,0.5[$, $\CC{\mathcal{S}_{4}}$ needs the smallest number of iterations to satisfy the inequality shown in \cref{eq:tps:NumberIterate}, that MAP is the fastest to attain the inequality shown in \cref{eq:tps:Runtime}, and that  $\CC{\mathcal{S}_{3}}$  takes the second place in terms of both required number of iterations and run time. 
Note that the circumcentered reflection methods need to solve the linear system 
 (see \cref{prop:CW:Welldefined:Formula}). 
Hence,
it is reasonable that MAP is the the fastest although MAP needs more number
of iterations than circumcentered reflection methods.

\begin{figure}[H]
	\centering
	\noindent
	\hspace*{-0.8cm}
	\begin{tabular}{ccc}
		\subfloat[]{\includegraphics[scale=0.4]{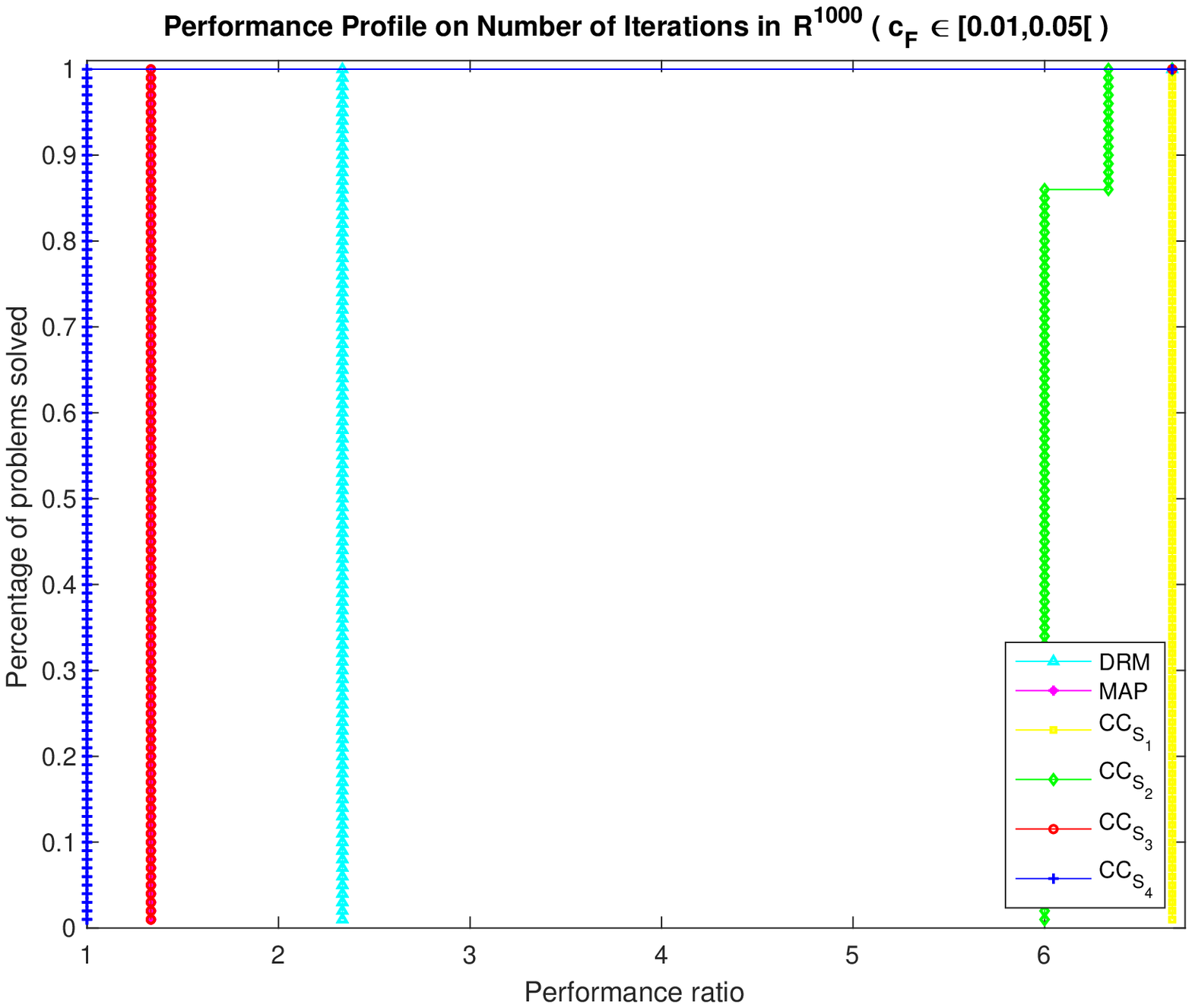}}
		\hspace*{0.5cm}
		\subfloat[]{\includegraphics[scale=0.4]{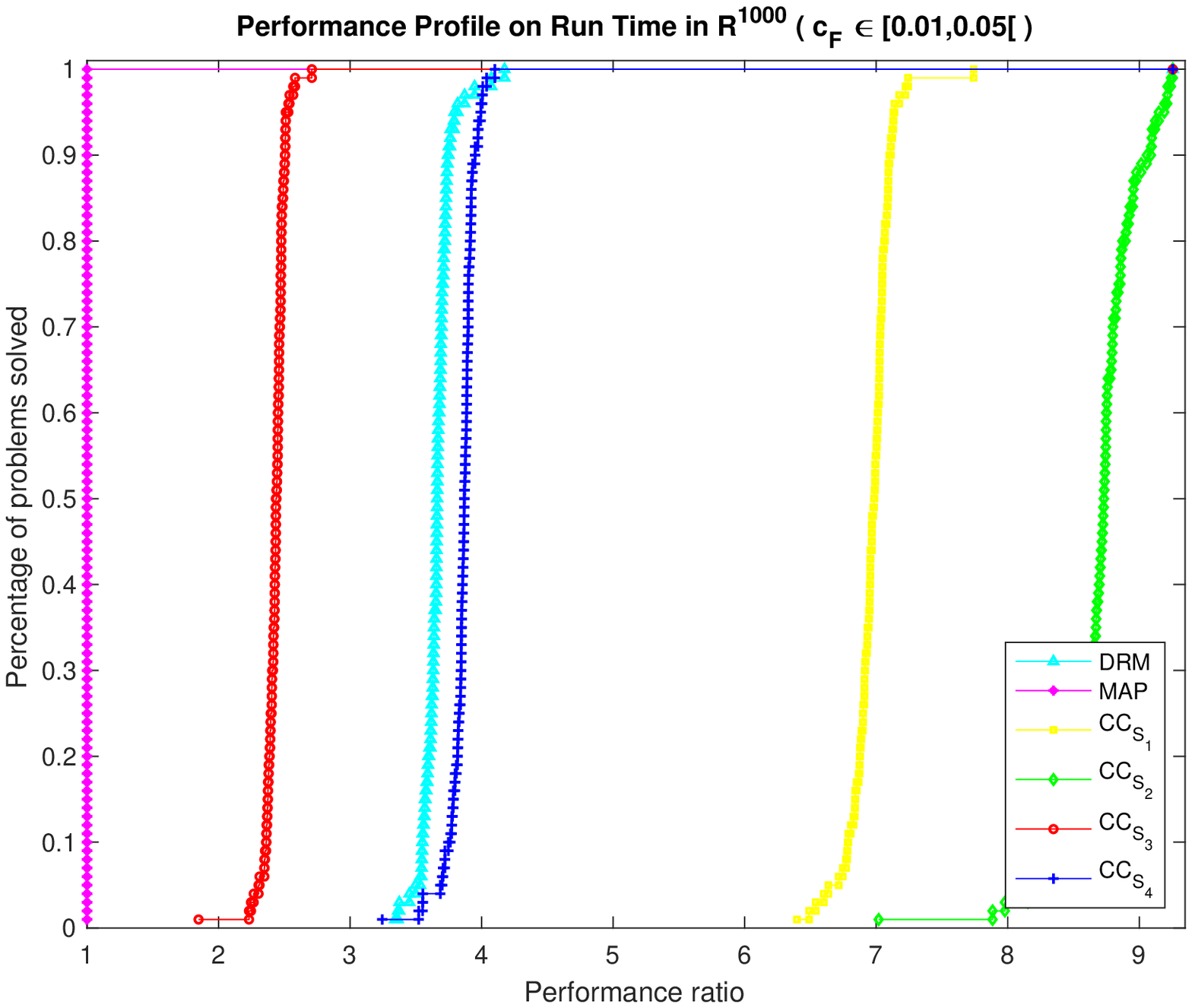}}\\
		\subfloat[]{\includegraphics[scale=0.4]{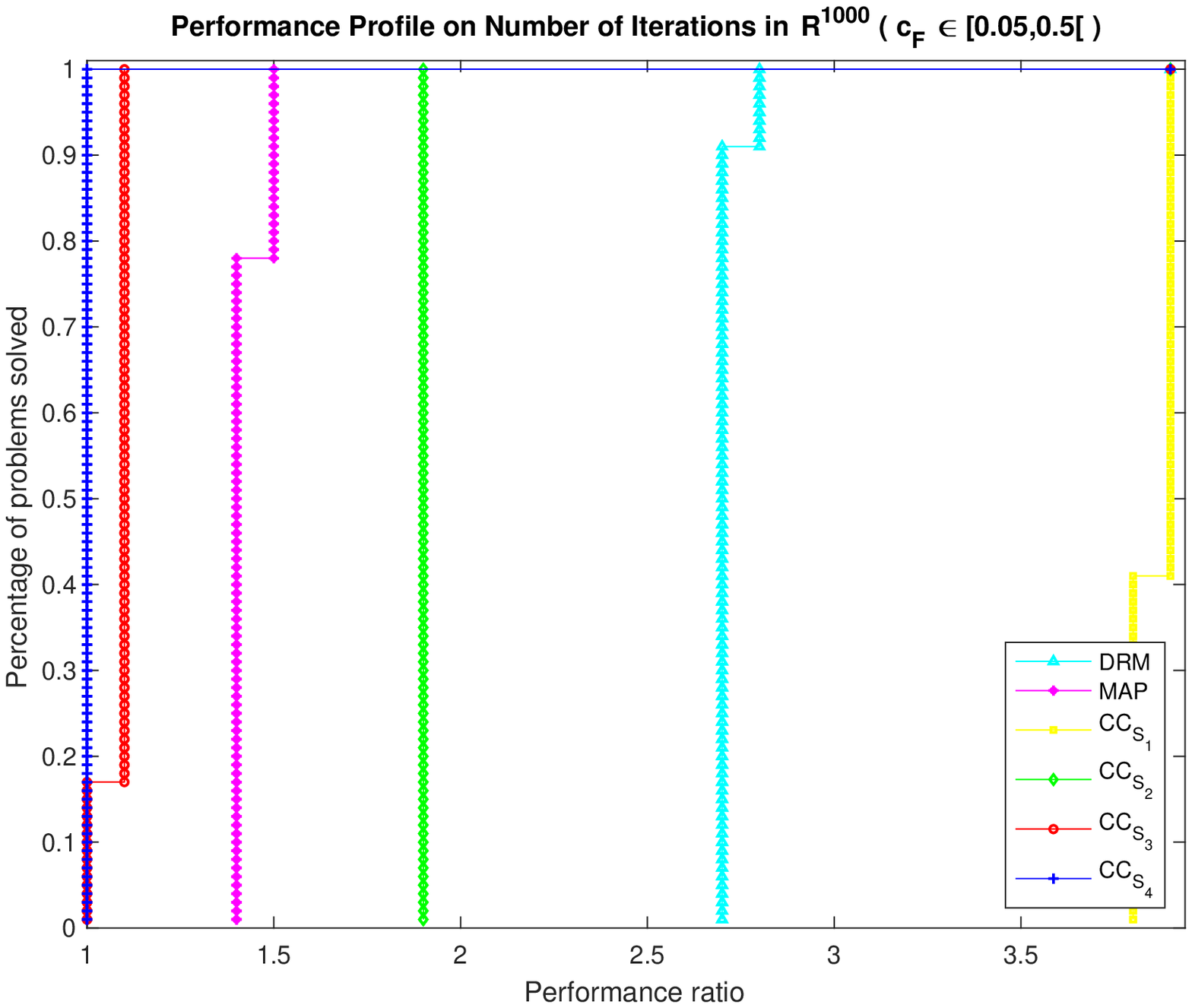}}
		\hspace*{0.5cm}
		\subfloat[]{\includegraphics[scale=0.4]{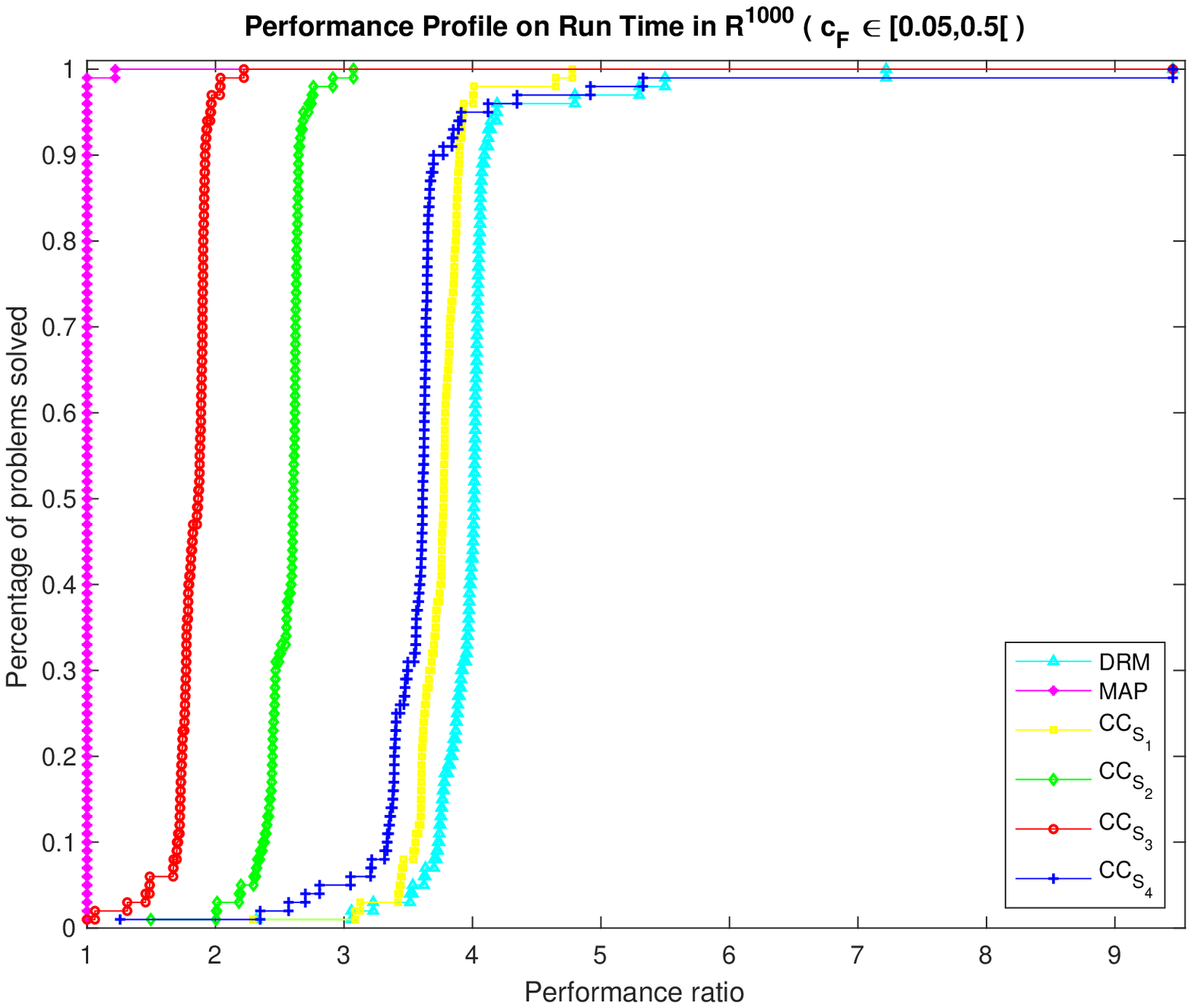}}
	\end{tabular}
	\caption{Performance profiles on six solvers for $c_{F} \in [0.01,0.5[$} \label{fig:ProfProfSixAlgorithms_v12}
	\hspace*{-0.5cm}
\end{figure}

%
From \cref{fig:ProfProfSixAlgorithms_v34}(a)(b),  we know that when $c_{F} \in [0.5,0.9[$, 
the number of iterations required by $\CC{\mathcal{S}_{2}}$ and
$\CC{\mathcal{S}_{3}}$ are similar (the lines from $\CC{\mathcal{S}_{2}}$ and
$\CC{\mathcal{S}_{3}}$ almost overlap) and dominate the other 4
algorithms, and $\CC{\mathcal{S}_{2}}$ is the fastest followed closely by MAP
and $\CC{\mathcal{S}_{3}}$.
By \cref{fig:ProfProfSixAlgorithms_v34}(c)(d), we find that when $c_{F} \in [0.9,0.95[$ in which case MAP and DRM  are very slow for solving  the best approximation problem, $\CC{\mathcal{S}_{3}}$ needs the least number of iterations and is the fastest in every one of the 100 problems.   

Note that in $\mathbb{R}^{1000}$, the calculation of projections takes the
majority time in the whole time to solve the problems. As we mentioned
before, we apply the \cref{prop:CW:Welldefined:Formula} to calculate our
circumcenter mappings: $\CC{\mathcal{S}_{1}}$, $\CC{\mathcal{S}_{2}}$,
$\CC{\mathcal{S}_{3}}$ and $\CC{\mathcal{S}_{4}}$. Because the largest number
of the operators in our $\mathcal{S}$ is $ 6$ (attained for 
$\mathcal{S}_{4}$), the size of the Gram matrix in
\cref{prop:CW:Welldefined:Formula} is less than or equal $5 \times 5$. As it
is shown in \cref{fig:ProfProfSixAlgorithms_v34}(a)(c), the methods 
$\CC{\mathcal{S}_{2}}$, $\CC{\mathcal{S}_{3}}$ and $\CC{\mathcal{S}_{4}}$
need fewer iterations to solve the problems than MAP and DRM. It is
well-known that MAP and DRM are very slow when $c_{F}$ is close to 1.
It is not surprising that \cref{fig:ProfProfSixAlgorithms_v34}(b) shows that
$\CC{\mathcal{S}_{2}}$ is the fastest when for $c_{F} \in [0.5,0.9[$ and
\cref{fig:ProfProfSixAlgorithms_v34}(d) illustrates that
$\CC{\mathcal{S}_{3}}$ is the fastest for $c_{F} \in [0.9,0.95[$.

\begin{figure}[H]
	\centering
	\noindent
	\hspace*{-0.8cm}
	\begin{tabular}{ccc}
			\subfloat[]{\includegraphics[scale=0.4]{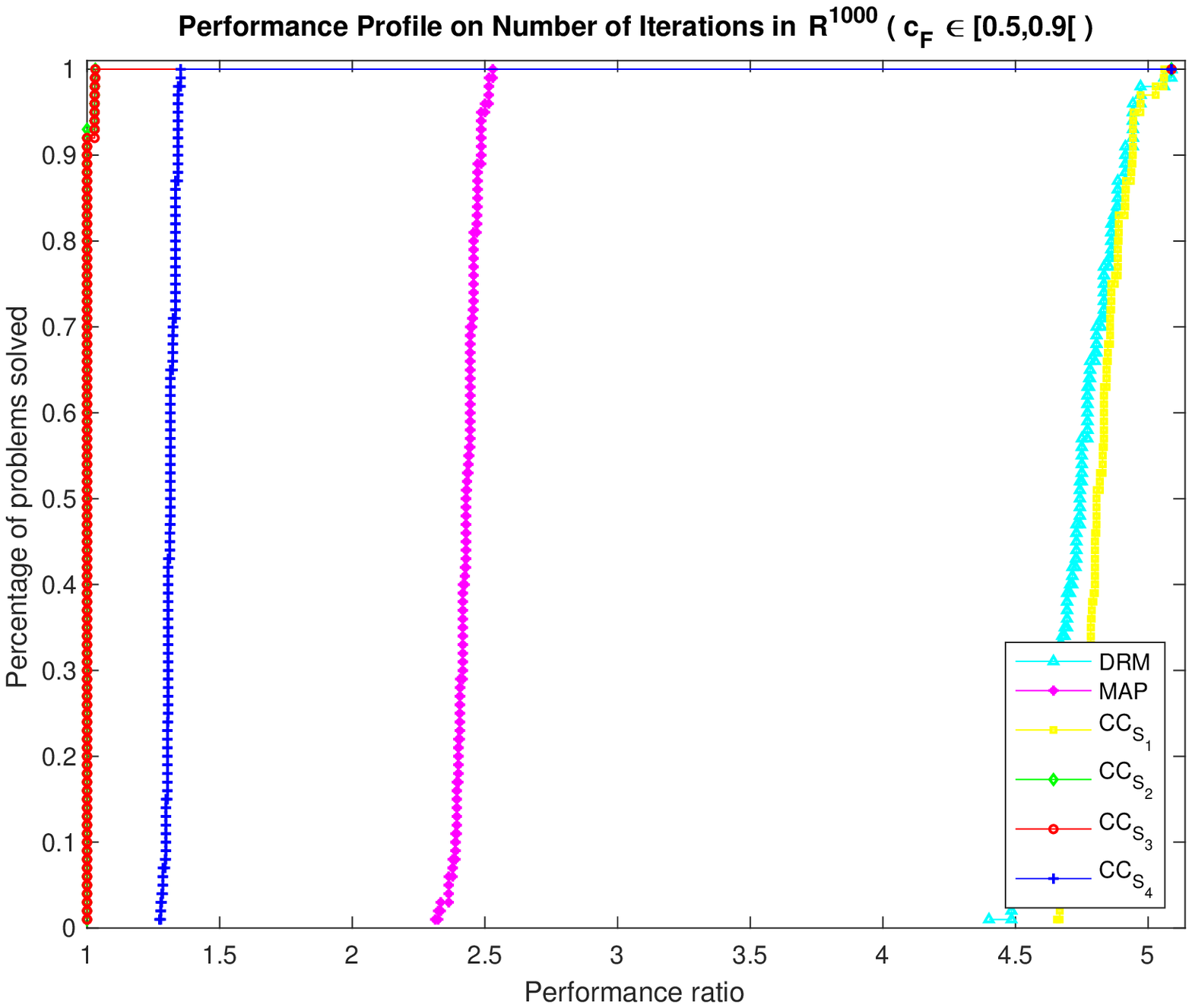}}
		\hspace*{0.5cm}
		\subfloat[]{\includegraphics[scale=0.4]{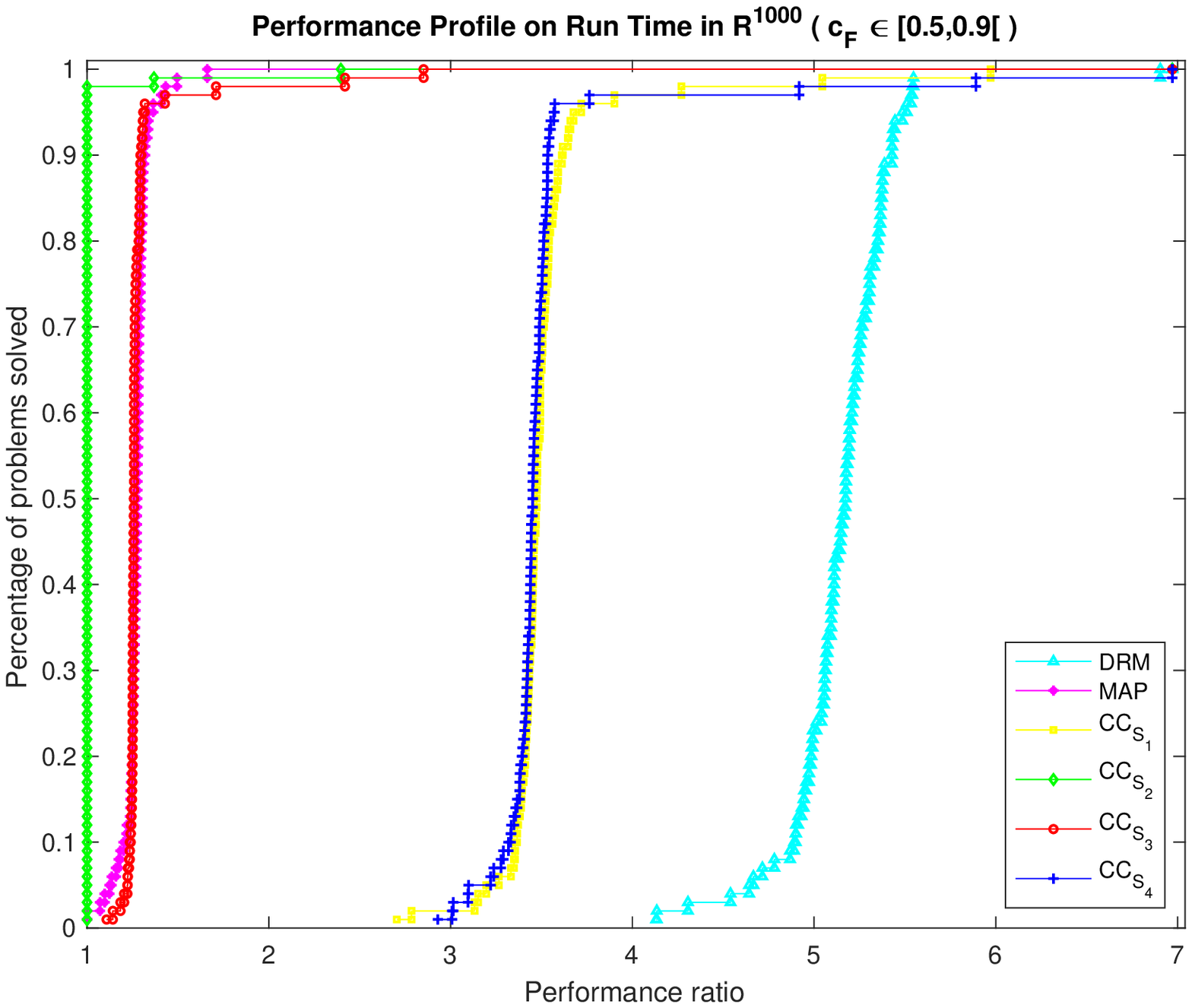}}\\
			\subfloat[]{\includegraphics[scale=0.4]{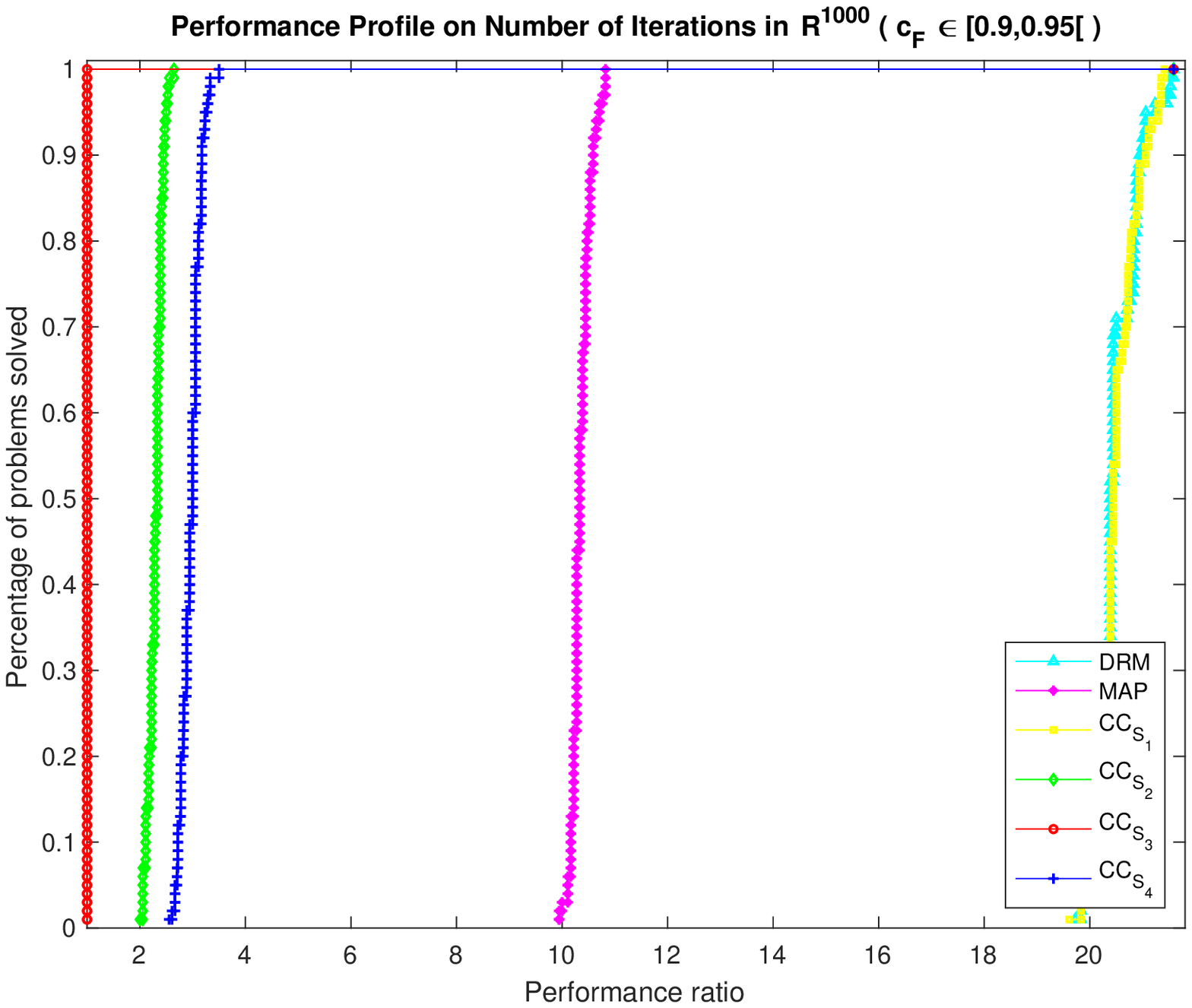}}
		\hspace*{0.5cm}
		\subfloat[]{\includegraphics[scale=0.4]{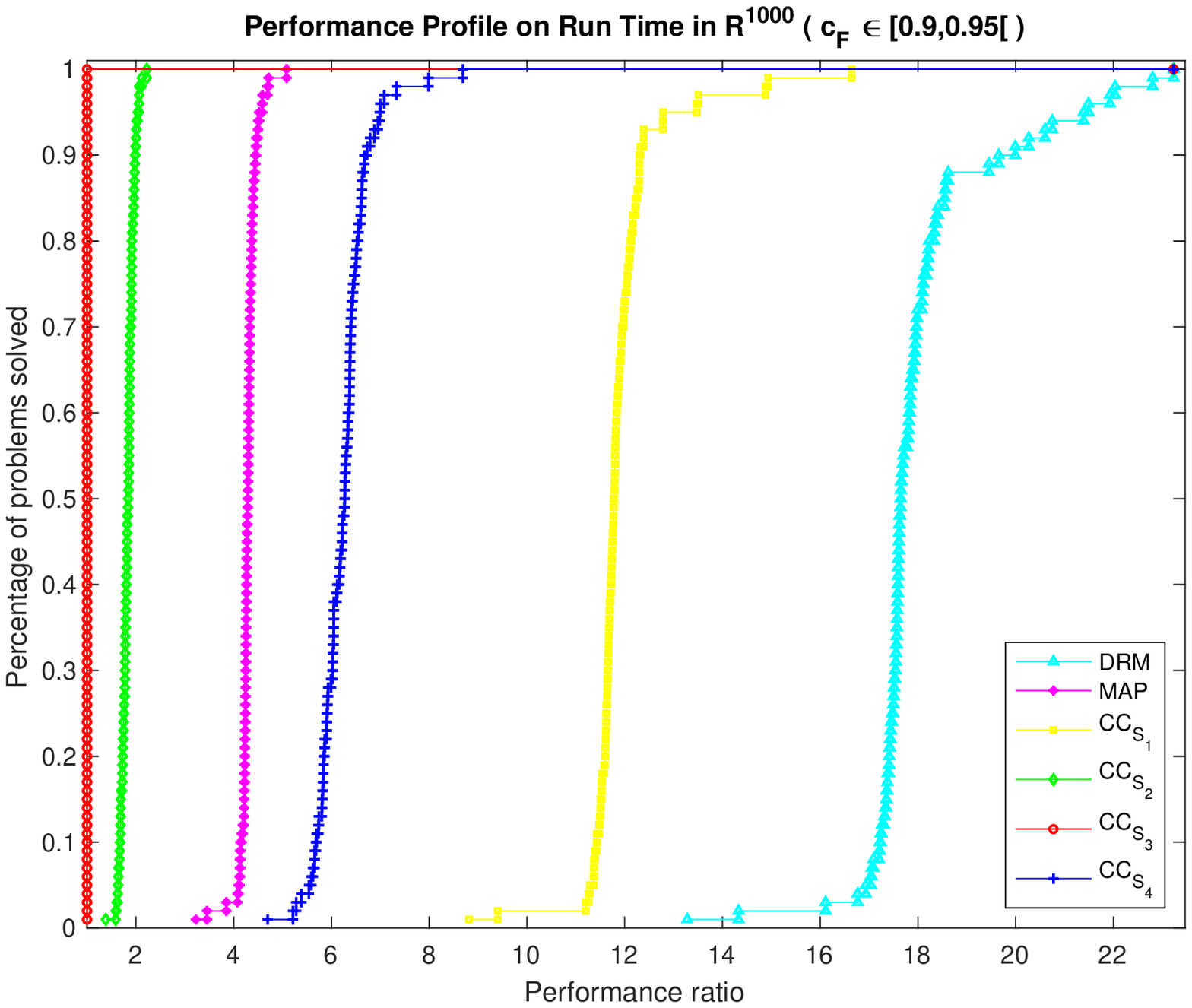}}
	\end{tabular}
	\caption{Performance profiles on six solvers for $c_{F} \in [0.5,0.95[$} \label{fig:ProfProfSixAlgorithms_v34}
	\hspace*{-0.5cm}
\end{figure}

The main conclusions that can be drawn from our experiments are the following.

When $c_{F} \in [0.01,0.5[$ is small, $\CC{\mathcal{S}_{4}}$ is the winner in
terms of number of iterations and MAP is the best solver with consideration
of the required run time. $\CC{\mathcal{S}_{3}}$ takes the second place in
performance profiles with both of the performance measures
\cref{eq:tps:NumberIterate} and \cref{eq:tps:Runtime} for $c_{F} \in
[0.01,0.5[$.

When $c_{F} \in [0,5,0.9[$, Behling, Bello Cruz and Santos' method 
$\CC{\mathcal{S}_{2}}$ is the optimal solver and the performance of
$\CC{\mathcal{S}_{3}}$ is outstanding for both
the required number of iterations and run time. 

When $c_{F} \in [0,9,0.95[$,
$\CC{\mathcal{S}_{3}}$ is the best option with regard to both required
number of iterations and run time.

Altogether, if the user does not have an idea about the range of $c_{F}$, 
then we recommend $\CC{\mathcal{S}_{3}}$.

\section{Concluding remarks}

Generalizing some of our work in \cite{BOyW2018Proper} and using the idea in
\cite{BCS2017}, we showed the properness of the circumcenter mapping induced
by isometries, which allowed us to study the circumcentered isometry methods.
Sufficient conditions for the (weak, strong, linear) convergence of the
circumcentered isometry methods were presented. In addition, we provided
certain classes of linear convergent circumcentered reflection methods and
established some of their applications. Numerical experiments suggested that
three (including the C--DRM introduced in \cite{BCS2017}) out of our
four chosen circumcentered reflection methods dominated the DRM and
MAP in terms of number of iterations for every pair of linear subspaces with the cosine of Friedrichs angle $c_{F} \in [0.01,0.95[$. Although MAP is fastest to solve the related problems when $c_{F} \in [0.01,0.5[$ and C--DRM is the fastest when $c_{F} \in [0.5,0.9[$, 
one of our new circumcentered reflection methods is a competitive 
choice when we have no prior knowledge on the Friedrichs angle $c_{F}$. 

We showed the weak convergence of certain class of circumcentered isometry
methods in \cref{theore:CM:WeakConver}. Naturally, we may ask whether 
strong convergence holds. If $\mathcal{S}$ consists of isometries and
$\cap_{T \in \mathcal{S}} \Fix T \neq \varnothing$, then
\cref{thm:CCS:proper:NormPres:T}\cref{thm:CCS:proper:NormPres:T:prop} shows
the properness of $\CC{\mathcal{S}}$. Assuming additionally that
$(\CC{\mathcal{S}}^{k}x)_{k \in \mathbb{N}}$ has a norm cluster in $\cap_{T
\in \mathcal{S}} \Fix T$, \cref{thm:CWP}\cref{prop:CWP:clust} says that
$(\CC{\mathcal{S}}^{k}x)_{k \in \mathbb{N}}$ converges to $\Pro_{\cap_{T \in
\mathcal{S}} \Fix T}x$. Another question is: Can one find more general
condition on $\mathcal{S}$ such that $\CC{\mathcal{S}}$ is proper and
$(\CC{\mathcal{S}}^{k}x)_{k \in \mathbb{N}}$ has a norm cluster in $\cap_{T
\in \mathcal{S}} \Fix T$ for some $x \in \mathcal{H}$? 
These are interesting questions to explore in future work. 

\section*{Acknowledgements}
The authors thank two anonymous referees and the editors for their
constructive comments and professional handling of the manuscript. HHB and XW
were partially supported by NSERC Discovery Grants.

\addcontentsline{toc}{section}{References}

\small 

\bibliographystyle{abbrv}

\end{document}